\documentclass[12pt,a4paper,reqno]{amsart}
\usepackage[utf8]{inputenc}    
\usepackage[T1]{fontenc}       

\usepackage[margin=3cm]{geometry}
\usepackage[colorlinks=true]{hyperref}
\usepackage[dvipsnames]{xcolor} % colors
\usepackage{amsmath,amssymb,amsthm,enumerate,mathtools,stmaryrd}
\usepackage{mathbbol}
\usepackage{tikz}
\usetikzlibrary{decorations.pathmorphing}
\usetikzlibrary{arrows.meta, positioning}
\usepackage{float}
\usepackage{booktabs}

\usepackage{cjhebrew}
\newcommand{\hej}{\text{{\<h>}}}

\newcommand{\oM}{\overline{\mathcal{M}}}

\newcommand{\suma}{\mathbf{a}}

\newcommand{\bOm}{{\mathbb{\Pi}}}
\newcommand{\bPsi}{{\mathbb{\Psi}}}
\newcommand{\bA}{{\mathbb{A}}}

\newcommand{\Q}{\mathbb{Q}}
\newcommand{\C}{\mathbb{C}}
\newcommand{\Z}{\mathbb{Z}}
\newcommand{\Coeff}[1]{\mathop{\mathrm{Coeff}}\nolimits_{\left[#1\right]}}

\newcommand{\SRT}{\mathrm{SRT}}
\newcommand{\bD}{{\mathbb{D}}}

\DeclareMathOperator{\DR}{DR}
\def\d{{\partial}}

\usepackage[textsize=tiny]{todonotes}	% todos
\presetkeys{todonotes}{color=Apricot}{} 
\numberwithin{equation}{section}

\definecolor{airforceblue}{rgb}{0.36, 0.54, 0.66}
\definecolor{azure}{rgb}{0.0, 0.5, 1.0}

\hypersetup{
 colorlinks=true,
 citecolor=azure,
 linkcolor=airforceblue,
 urlcolor=Blue}

\newtheorem{theorem}{Theorem}

\newtheorem{lemma}{Lemma}[section]
\newtheorem{proposition}[lemma]{Proposition}
\newtheorem{corollary}[lemma]{Corollary}

\theoremstyle{remark}

\newtheorem{remark}[lemma]{Remark}

\theoremstyle{definition}

\newtheorem{definition}[lemma]{Definition}

\usepackage[
    backend=biber,
    isbn=false,
    url=false,
    giveninits=true,
    style=alphabetic,
    sorting=nyt,
    maxalphanames=99,
    minalphanames=1,
    maxbibnames=99
]{biblatex}

\AtEveryBibitem{%
  \clearfield{issn}%
  \clearlist{language}%
}

\begin{document}

%\title[New integrable systems for CohFTs]{New integrable systems associated to cohomological field theories}

\title[Integrable observables for cohomological field theories]{Beyond descendants: integrable observables for cohomological field theories}

\author[X.~Blot]{Xavier Blot}
\address{X.~B.: Korteweg-de Vriesinstituut voor Wiskunde, Universiteit van Amsterdam, Postbus 94248, 1090GE Amsterdam, Nederland}
\email{x.j.c.v.blot@uva.nl}	

\author[D.~Lewa\'nski]{Danilo Lewa\'nski}
\address{D.~L.: Sezione di Matematica, Dipartimento MIGe, Universit\`a degli studi di Trieste, via Valerio 12/1, 34127	Trieste (IT) \& Istituto Nazionale di Fisica Nucleare (INFN), Sezione di Trieste (IT)}
\email{danilo.lewanski@units.it} 

\author[S.~Shadrin]{Sergey Shadrin}
\address{S.~S.: Korteweg-de Vriesinstituut voor Wiskunde, Universiteit van Amsterdam, Postbus 94248, 1090GE Amsterdam, Nederland}
\email{s.shadrin@uva.nl}	

\begin{abstract}
We introduce the concept of integrable observables and propose them as alternatives to the standard Witten's psi classes (a.k.a.~descendants in $2D$ quantum gravity) to be coupled with cohomological field theories and their generalisations. The main property of integrable observables is that they retain the integrability properties. 

We present three examples of integrable observables. The first two recover the Dub\-ro\-vin–Zhang and double ramification hierarchies, while revealing new structural features in this framework. The third, a new example, builds on recently established properties of the so-called $\mathbb{\Pi}$-class, extending them and placing this class naturally within the theory of integrable systems. 

Notably, our integrable observables framework yields a proof that the new $\mathbb{\Pi}$-hierarchies are Miura equivalent both to the Dub\-ro\-vin–Zhang hierarchies and to the double ramification hierarchies. A new very short proof of Witten's conjecture is also provided.
\end{abstract}

\maketitle
\tableofcontents

\setcounter{section}{-1}
\section{Preface: history and context} 

In 1991 Witten~\cite{Witten-KdV} conjectured that the generating function of the intersection numbers of $\psi$-classes on the moduli spaces of curves $\oM_{g,n}$ solves the Korteweg-de Vries integrable hierarchy of evolutionary PDEs (for history and a new proof, see Sec.~\ref{sec: new proof WK}). This started an intensive study of the interaction of different mathematics avatars of topological string theory, such as quantum singularity theory and Gromov-Witten theory (formalized as cohomological field theories, or CohFTs, introduced in~\cite{KontsevichManin}), with integrable systems. 

\subsection{Dubrovin-Zhang theory} Dubrovin and Zhang came up in~\cite{Dubrovin-Zhang-g1} with a classification project based on the theory of Frobenius manifolds, introduced by Dubrovin in~\cite{Dub-TFT}. Their approach was fully developed it in~\cite{dubrovin2001normalformshierarchiesintegrable} proposing a formal system of axioms that should identify a unique bi-Hamiltonian tau-symmetric dispersive integrable system associated to an input Frobenius manifold, whose dispersionless limit is the principal hierarchy of Dubrovin~\cite{Dub-TFT}. They proved uniqueness, but not existence, which was fully proved much later in~\cite{LiuWangZhang}. An excellent concise survey of their approach and important intermediate steps is available in~\cite[Sec.~1]{lorenzoni2026generalisedbihamiltonianstructureshydrodynamic}.

Dubrovin and Zhang also identified in~\cite{dubrovin2001normalformshierarchiesintegrable} a particular tau function of their (back then conjectural) integrable systems with the Givental formula~\cite{Giv} canonically associated to a calibrated Frobenius manifold. By Teleman's classification~\cite{Tel}, there is a unique semi-simple homogeneous cohomological field theory associated to a given Frobenius manifold, with Givental's formula giving this partition function coupled to $\psi$-classes. This linked Dubrovin-Zhang theory to the realm of cohomological field theories.

A natural question back then (besides the existence of the Dubrovin-Zhang hierarchies, which was reduced to the polynomiality property in~\cite{dubrovin2001normalformshierarchiesintegrable}, although left as an open problem) was whether the Dubrovin-Zhang hierarchies can also be defined for a wider class of CohFTs. Indeed, according to~\cite{Tel} the homogeneous semi-simple CohFTs in the realm of all CohFTs form just isolated points, or, in some cases, finite dimensional subvarieties inside an infinite dimensional space. 

To resolve this issue, the Hamiltonian tau-symmetric integrable systems for the partition function for general semi-simple CohFTs was constructed in~\cite{BPS1,BPS2}. By the uniqueness result of Dubrovin and Zhang, this also settled the existence problem for all structures conjectured in~\cite{dubrovin2001normalformshierarchiesintegrable}, with the exception of existence of the second Hamiltonian structure. Indeed, the second Hamiltonian structure does demand homogeneity and is out of reach of any method that doesn't employ it.

It is important to stress the difference between the axiomatic approach of Dubrovin-Zhang and all subsequent developments, that we discuss here and below. The axiomatic approach of~\cite{dubrovin2001normalformshierarchiesintegrable,LiuWangZhang} works in a much more restrictive setting: it in fact requires homogeneity. On the other hand, it only requires \emph{genus zero} data, providing a remarkable full scale alternative to the Givental-Teleman theory, with an extra advantage that it remains entirely in the realm of integrable systems. Moreover, it offers a large variety of extra tools to control the higher genus data, just in terms of the genus zero input (see e.g.~\cite{ShaWang}). All other approaches, in particular the one of~\cite{BPS1,BPS2}, demand to take as input the CohFT (or some of its generalised versions, as we discuss below) in \emph{all genera}.

The construction of~\cite{BPS1,BPS2} is based on the so-called tautological relations proposed in Eguchi-Xiong~\cite{EguchiXiong}. Thus, it can be applied to non-semi-simple CohFTs, and more generally to the so-called partial CohFTs~\cite{LiuRuanZhang}, although in these cases the methods of \emph{op.~cit.} do not allow to establish the polynomiality of the equations, of the Hamiltonians, and of the Poisson bracket. A different but equivalent construction was proposed in~\cite{BS22}, where these properties were reduced to a system of different tautological relations, later proved in~\cite{BSS25} building up on the key insights in~\cite{BLS-DRDZ}.

In short, one can summarise Dubrovin-Zhang theory as follows: 
\begin{itemize}
    \item The approach of~\cite{dubrovin2001normalformshierarchiesintegrable,LiuWangZhang} works only for semi-simple homogenenous CohFTs, but uses only genus zero data as input. Because of the homogeneity assumption, it does provide bi-Hamiltonian structures.
    \item The approach of~\cite{BPS1,BPS2} works for semi-simple CohFTs without assumption of homogeneity, but it requires as input all genera data, lacks bi-Hamiltonian structure, and the methods of proofs in these papers are not working outside the realm of semi-simple CohFTs (although the construction itself can be extended to general partial CohFTs).
    \item The approach of~\cite{BS22,BLS-DRDZ,BSS25} works for general partial CohFTs. It does use as input all genera data, it lacks bi-Hamiltonian structure, but it ultimately gives the most general result in this context. 
\end{itemize}
In all these cases we call the constructed integrable systems the \emph{Dubrovin-Zhang hierarchies}.

\subsection{Buryak's double ramification hierarchies} Buryak proposed in~\cite{Bur} a drastically different construction of a Hamiltonian integrable system associated to a CohFT, based on the so-called double ramification (DR) cycles, which are explicit cohomology classes on the moduli space of stable curves. These integrable systems are usually called the \emph{DR hierarchies} in the literature. The construction of Buryak might be seen as inspired by symplectic field theory context~\cite{Eliashberg-SFT}, and it works for any CohFT, without any restrictions. It was noticed in~\cite{BR-spin} that it can be applied to partial CohFTs as well, also without any further assumption. 

Buryak conjectured that DR hierarchies are Miura equivalent to the Dubrovin-Zhang hierarchies. This conjecture was known under the name of DR/DZ equivalence. Initially the conjecture was posed in~\cite{Bur} in the semi-simple case, but it was further developed to a much more precise statement that also identified tau structures and gave a concrete path towards a proof in~\cite{BDGR1,BDGR20,BGR19,BS22}. In particular, the latter reference formulated it as a conjecture for partial CohFTs (alongside the conjecture of the existence of the Dubrovin-Zhang hierarchies for partial CohFTs, as we mentioned above).

The DR/DZ equivalence was proved for semi-simple CohFTs in~\cite{BLS-DRDZ}, and for a general partial CohFT it was reduced in \emph{op. cit.} to the so-called master relation, which was then proved in~\cite{BSS25}.

\subsection{Beyond Hamiltonian structure} A natural question, first studied in the dispersionless limit, addresses which axioms of the Frobenius manifold (designed to classify Dubrovin's principal hierarchies, which are bi-Hamiltonian and tau symmetric) can be dropped without affecting integrability. In~\cite{LorenzoniPedtoniRaimondo} Dubrovin's construction was generalized to a correspondence between the so-called \emph{flat F-manifolds} (known in the literature outside integrable systems under the name \emph{hypercommutative algebras with a unit}~\cite{Getzler}) and integrable systems of conservation laws. This correspondence delivered a rich variety of new structural results, examples, and conjectures, see e.g.~\cite{ArsLor1,ArsLor2,lorenzoni2026generalisedbihamiltonianstructureshydrodynamic}.

It was noticed in~\cite{BR-spin} that Buryak's DR construction can be applied to produce integrable systems of conservation laws from the so-called F-CohFTs, the higher genera analog of flat F-manifolds. This approach was studied in detail in~\cite{ABLR}. In~\cite{BS22} the Dubrovin-Zhang theory for general partial CohFTs was also proposed for F-CohFTs and the existence (polynomiality) of the corresponding system of conservation laws and equivalence to the DR construction of conservation laws were reduced to a system of tautological relations proved subsequently in~\cite{BLS-DRDZ,BSS25}.

Note that the approach of~\cite{BS22} uses \emph{all genera} data input. A proposal on how to restore the original Dubrovin-Zhang axiomatic approach in the context of flat F-manifolds, that would employ purely \emph{genus zero} data, was recently developed in~\cite{lorenzoni2026generalisedbihamiltonianstructureshydrodynamic}.

\subsection{The \texorpdfstring{$\bOm$}{hhej} class} \label{sub:sec:hej} The study of the tau function of the DR hierarchy, and in particular its quantum analog~\cite{BDGR20,Blot,BLS-Quantum} led the first two authors of the present paper to realize that many correlators can be alternatively described by the intersection numbers with the so-called $\hej$-classes, which is a particular family of specializations of the $\Omega$-classes, or Chiodo classes, whose Chern characters were introduced and computed in~\cite{Chiodo08}, and it was considered as and proved to be a CohFT in~\cite{LPSZ17}. See e.g. \cite{GLN23} for a summary on their properties. In the case of the \emph{classical} (as opposed to quantum, which we do not discuss in this paper) correlators one has to multiply the $\hej$ classes by the top Chern class of the Hodge bundle, and the resulting classes are called $\bOm$-classes in~\cite{BLS-Omega}.

In Dubrovin-Zhang theory (both for partial CohFTs and F-CohFTs) the partition functions are always described in terms of intersections with $\psi$-classes. In the DR approach to integrability it was shown that the partition functions can be described via the so-called $\bA$-classes~\cite{BDGR20,BGR19,BS22}. It was noticed in~\cite{BLRS,BLS-Omega} that the $\bA$-classes interact with $\bOm$ classes in exactly the same way as it was conjectured to interact with $\psi$-classes, and in fact this similarity was the main source of ideas that led to the proofs of existence of the Dubrovin-Zhang hierarchies and their equivalence to the DR hierarchies in the most general situation in~\cite{BLS-DRDZ,BSS25}.

A natural question is whether the $\bOm$ classes carry integrability intrinsically. In the present paper we answer this question in the affirmative by revisiting the notion of coupling to gravity, traditionally understood as intersection with $\psi$-classes, and introducing the concept of \emph{integrable observables}. This framework places $\psi$-, $\bA$-, and $\bOm$-classes on equal footing as integrable observables, and provides a uniform mechanism that recovers the Dubrovin–Zhang and double ramification hierarchies, while also yielding a new class of integrable systems, that deserves further investigation per sé. It would be also interesting to know whether integrable observables different from $\psi$-classes, as e.g. $\bOm$-classes, have some physics meaning arising from $2D$ quantum gravity.

%%%%%%%%%%%%%%%%%%%%%%%%%%%%%%%%%%%%%%%%%%%%%%%%%%%%%%%%%%%%%%%%%%%
\vspace{1cm}
\section{Introduction}
\label{sec:intro}
%%%%%%%%%%%%%%%%%%%%%%%%%%%%%%%%%%%%%%%%%%%%%%%%%%%%%%%%%%%%%%%%%%%

\subsection{Cohomological field theories and their generalisations} 
There are several natural algebraic structures on the collection of the moduli spaces of curves $\{\oM_{g,n}\}$, $g\geq 0$, $n\geq 0$, $2g-2+n>0$, that are relevant for the connection to integrable systems: a modular operad, a cyclic operad graded by genus, or, once we distinguish one of the marked point and consider the spaces $\{\oM_{g,n+1}\}$, an operad graded by genus. It is not the exhaustive list, one can also consider the most natural structure of properad or wheeled prop, but they are not connected to fundamental types of integrability.

Let $V$ be a finite dimensional vector space over $\C$ equipped with a non-degenerate bilinear symmetric map $\eta\colon V^{\otimes 2} \to \C$. Then $\{\mathrm{Hom}(V^{\otimes n}, \C)\}$ can be equipped with a structure of a cyclic operad, or a modular operad, and the representations of $\{H_\bullet (\oM_{g,n}), \C\}$ in $\{\mathrm{Hom}(V^{\otimes n}, \C)\}$ are called \emph{partial cohomological field theories (P-CohFTs)} in the case of genus graded cyclic operad structure or \emph{cohomological field theories (CohFTs)} in the case of modular operad structure. 

The latter one was proposed by Kontsevich and Manin in~\cite{KontsevichManin} as an axiomatic way to capture the key properties of Gromov-Witten theories. It is the most studied object in this context, but it turns out that for connections to integrability the less restrictive and more general concept of partial cohomological field theory is already sufficient. 

Let $V$ be a vector space over $\C$, not necessarily finite dimensional. $\{\mathrm{Hom}(V^{\otimes n}, V)\}$ is canonically equipped with a structure of an operad and the representations of $\{H_\bullet (\oM_{g,n+1}), \C\}$ in $\{\mathrm{Hom}(V^{\otimes n}, V)\}$ are called \emph{F cohomological field theories (F-CohFTs)}. 

This paper focuses on universal integrability properties of P-CohFTs and F-CohFTs, and the main goal of this paper is to break the tradition that goes back to Witten~\cite{Witten-KdV} to couple cohomological field theories to gravity using, as the fundamental system of observables, the so-called $\psi$-classes (gravitational descendants).

Before we proceed, a few remarks and conventions are in order. First of all, in the context of operadic structures it is natural to consider $\Z_2$-graded P-CohFTs and F-CohFTs (represented in $\Z_2$ vector spaces), and it does lead to non-trivial integrable systems as well. But we omit this more general framework for simplicity, in particular, we assume that we only take even classes on moduli spaces.
%, that is, $\bullet\in 2\Z_{\geq 0}$ in the notation above. 
Second, we solely use P-CohFTs and F-CohFTs in the rest of the text, but each time we state anything for P-CohFTs, it does work also for usual CohFTs. Third, we focus on P-CohFTs and F-CohFTs equipped with a \emph{unit} that we introduce below.

\subsection{Precise definitions of P-CohFT and F-CohFT with a unit}

Let $V$ be a finite dimentional vector space with a chosen vector $e\in V$ and a non-degenerate bilinear symmetric map  $\eta\in \mathrm{Hom}(V^{\otimes 2},\C)$.
A ($V$-valued) \emph{P-CohFT} is a system of linear maps of vector spaces
\begin{align}
    \mathfrak{pc}_{g,n}\colon V^{\otimes n} \to H^\bullet (\oM_{g,n},\C)
\end{align}
satisfying the following conditions:
\begin{description}
    \item[commutativity]  $\mathfrak{pc}_{g,n}$ is $\mathfrak{S}_n$-equivariant with respect to the action on $V^{\otimes n}$ by permutation of the factors and the action on $H^\bullet (\oM_{g,n},\C)$ induced by the automorphisms of $\oM_{g,n}$ caused by permutation of labels of the marked points. 
    \item[flat unit] Let $\pi\colon \oM_{g,n+1}\to \oM_{g,n}$ be the forgetful morphisms, that is, the map that forgets the last marked point and stabilizes the curve. Then $\pi^*\mathfrak{pc}_{g,n} = \mathfrak{pc}_{g,n+1}(e)$, where $e$ is fed as the last argument. Moreover, $\mathfrak{pc}_{0,3}(e) = \eta$. 
    \item[factorization] Let $\mathrm{gl}\colon \oM_{g_1,n_1+1}\times\oM_{g_2,n_2+1} \to \oM_{g_1+g_2,n_1+n_2}$ be the gluing morphisms, that is, the map that creates a nodal curve out of two curves by identifying into a node their last marked points and identifying the labels on the first and the second component with the ordered subsets $I_1,I_2\subseteq \{1,\dots,n_1+n_2\}$, $|I_1|=n_1$, $|I_2|=n_2$. Then 
    \begin{align}
        & \mathrm{gl}^*\mathfrak{pc}_{g_1+g_2,n_1+n_2}\Big(\Vec{\bigotimes}_{i=1}^{n_1+n_2} v_i\Big) = \\ \notag & \eta^{\alpha\beta} \mathfrak{pc}_{g_1,n_1+1}\Big(\Vec{\bigotimes}_{i\in I_1} v_i \otimes e_\alpha \Big) \otimes \mathfrak{pc}_{g_2,n_2+1}\Big(\Vec{\bigotimes}_{i\in I_2} v_i \otimes e_\beta \Big),
    \end{align}
    where by $\Vec{\bigotimes}$ we denote the ordered tensor product with an index that runs over an ordered set, and $\eta^{\alpha\beta} e_\alpha\otimes e_\beta \in V^{\otimes 2}$ is a bivector $\eta^{-1}$ dual to $\eta$ expressed in the basis of $V = \langle e_1=e, e_2,\dots,e_N\rangle$. Here and below we always assume the summation over repeated Greek indices. 
\end{description}

\begin{remark} In order to define a more traditional concept of CohFT one has to enhance the factorization property with one more condition. Let $\mathrm{gl'}\colon \oM_{g-1,n+2}\to\oM_{g,n}$ that creates a nodal curve of the arithmetic genus one higher by identifying the $(n+1)$-st and $(n+2)$-nd marked point into a node. Then a P-CohFT is a CohFT if we additionally require that 
  \begin{align}
        & \mathrm{gl'}^*\mathfrak{pc}_{g,n}\Big(\Vec{\bigotimes}_{i=1}^{n} v_i\Big) =  \eta^{\alpha\beta} \mathfrak{pc}_{g-1,n+2}\Big(\Vec{\bigotimes}_{i=1}^{n} v_i \otimes e_\alpha \otimes e_\beta\Big).
    \end{align}
\end{remark}

Examples of P-CohFTs include Gromov-Witten theories of target varieties, Witten's $r$-spin classes and more general classes coming from quantum singularity theory of Fan-Jarvis-Ruan, etc. (which all are also examples of CohFTs). The authentic examples of P-CohFTs that are not CohFTs come, for instance, from the study of quantum singularity theory with symmetries, see~\cite{LiuRuanZhang} where this concept was first introduced.

Let $V$ be a vector space with a chosen vector $e\in V$. It is now convenient to assume that the marked points of moduli spaces $\oM_{g,n+1}$ are labeled from $0$ to $n$. A ($V$-valued) \emph{F-CohFT} \cite{ABLR,ABLR-2} is a system of linear maps of vector spaces 
\begin{align}
    \mathfrak{fc}_{g,n}\colon V^{\otimes n} \to V\otimes H^\bullet (\oM_{g,n+1},\C)
\end{align}
satisfying the following conditions:
\begin{description}
    \item[commutativity] $\mathfrak{fc}_{g,n}$ is $\mathfrak{S}_n$-equivariant with respect to the action on $V^{\otimes n}$ by permutation of the factors and the action on $H^\bullet (\oM_{g,n+1},\C)$ induced by the automorphisms of $\oM_{g,n+1}$ caused by permutation of the labels $\{1,\dots,n\}$ of the marked points (that is, the zeroth marked point is left intact). 
    \item[flat unit] Let $\pi\colon \oM_{g,n+2}\to \oM_{g,n+1}$ be the forgetful morphisms, that is, the map that forgets the last marked point and stabilizes the curve. Then $\pi^*\mathfrak{fc}_{g,n} = \mathfrak{fc}_{g,n+1}(e)$, where $e$ is fed as the last argument. Moreover, $\mathfrak{fc}_{0,2}(e) = \mathrm{Id}_V$. 
    \item[factorization] Let $\mathrm{gl}\colon \oM_{g_1,n_1+2}\times\oM_{g_2,n_2+1} \to \oM_{g_1+g_2,n_1+n_2+1}$ be the gluing morphisms, that is, the map that creates a nodal curve out of two curves by identifying into a node the last marked point on the first curve with the zeroth marked point on the second curve and identifying the remaining non-zero labels on the first component and all non-zero labels on the second component with the ordered subsets $I_1,I_2\subseteq \{1,\dots,n_1+n_2\}$, $|I_1|=n_1$, $|I_2|=n_2$. Then 
    \begin{align}
        & \mathrm{gl}^*\mathfrak{fc}_{g_1+g_2,n_1+n_2}\Big(\Vec{\bigotimes}_{i=1}^{n_1+n_2} v_i\Big) = \\ \notag & \mathfrak{fc}_{g_1,n_1+1}\Big(\Vec{\bigotimes}_{i\in I_1} v_i \otimes \mathfrak{fc}_{g_2,n_2}\Big(\Vec{\bigotimes}_{i\in I_2} v_i \Big) \Big),
    \end{align}
    where we assume that the substitution of $\mathfrak{fc}_{g_2,n_2}\Big(\Vec{\bigotimes}_{i\in I_2} v_i \otimes e_\beta \Big)$ as an argument in $\mathfrak{fc}_{g_1,n_1+1}$ acts as a right module with respect to the tensor multiplication by classes in $H^\bullet (\oM_{g_2,1+n_2},\C)$, that is, the right hand side takes values in $V\otimes H^\bullet (\oM_{g_1,1+n_1+1},\C) \otimes H^\bullet (\oM_{g_2,1+n_2},\C)$.
\end{description}

In genus $0$ this concept is reduced to the so-called flat F manifolds, or, in the formal setup and without unit, to hypercommutative algebras~\cite{LorenzoniPedtoniRaimondo,Getzler}.
Examples of F-CohFTs can be obtained, for instance, by forgetting part of the structure of given P-CohFTs. It is a canonical construction, and it reads
\begin{align} \label{eq:From-PCohFT-to-FCohFT}
    \eta\bigg(v_0,\mathfrak{fc}_{g,n}\Big(\Vec{\bigotimes}_{i=1}^{n} v_i\Big)\bigg)
    =
    \bigg[\mathfrak{pc}_{g,n+1}(\Big(\Vec{\bigotimes}_{i=1}^{n} v_i \otimes v_0\Big)\bigg]^{\circlearrowright},
\end{align}
where $[\cdots]^\circlearrowright$ is the operation of relabeling marked points 
%of ``$\mod (n+1)$ relabeling'' of the marked points 
that makes the 
%$(n+1)$-st 
last marked point the zeroth one, which is needed to match the convention we used in the definition of F-CohFT.

\begin{remark} \label{rem:infinite-dim} Both P-CohFTs and F-CohFTs can be considered with an infinite-dimensional target space $V$ (for P-CohFTs one might need to replace the bilinear symmetric map $\eta$ with a symmetric $2$-tensor in the basic setup), and there are meaningful examples, cf.~\cite{BR-partial}. However, to simplify the presentation of the basic constructions below one might always assume that $V$ is finite dimensional. Moreover, we shall always work with a fixed basis $V=\langle e_1,\dots,e_N\rangle$, $N=\dim V$, and assume that $e=e_1$. 
\end{remark}

\subsection{Tautological classes and coupling to gravity} The P-CohFTs and F-CohFTs can provide physically relevant information once one integrates the corresponding cohomology classes over arbitrary given cycles on the moduli spaces of curves. The structure of the cohomology of $\oM_{g,n}$ over $\C$ is quite involved (even in the case one is interested just in the even degree part of it), and one typically restricts attention to the cycles that are Poincar\'e dual to the so-called \emph{tautological classes}.

The cohomological tautological rings $RH^\bullet(\oM_{g,n})$ are defined as the minimal system of subalgebras of $H^\bullet(\oM_{g,n},\Q)$ that is closed under the push-forwards with respect to all natural maps between the moduli spaces of curves, namely, the projections $\pi\colon \oM_{g,n+1}\to \oM_{g,n}$, the gluing maps $\mathrm{gl}\colon \oM_{g_1,n_1+1}\times\oM_{g_2,n_2+1} \to \oM_{g_1+g_2,n_1+n_2}$, and the gluing maps $\mathrm{gl'}\colon \oM_{g-1,n+2}\to\oM_{g,n}$. 

 Of particular importance are the so-called $\psi$-classes, $\psi_i \in RH^2(\oM_{g,n})$, $i=1,\dots,n$, defined as follows: $\psi_i$ is the first Chern classes of the line bundle over $\oM_{g,n}$ whose fiber at a point in $\oM_{g,n}$ represented by a stable curve $C$ with the marked points $x_1,\dots,x_n\in C$ is given by $T^*_{x_i}C$. 

\begin{remark} It is often convenient to consider the Chow version of the tautological rings denoted by $R^\bullet(\oM_{g,n})$. In these cases we use the Chow grading which is half of the cohomological grading, e.g., $\psi_i \in R^1(\oM_{g,n})$. 
\end{remark}

The partition function that stores the information about the correlators of a given $(V,\eta)$-valued P-CohFT is defined as
\begin{align}
    Z & = \exp \big(\epsilon^{-2} F \big) = \exp\bigg( \sum_{g=0}^\infty \epsilon^{2g-2} F_g(\{t^{\alpha,d}\}) \bigg)
    \\ \notag &\coloneqq \exp\bigg( \sum_{g,n} \frac{\epsilon^{2g-2}}{n!} \sum_{d_1,\dots,d_n=0}^\infty \int_{\oM_{g,n}} \mathfrak{pc}\Big( \Vec{\bigotimes}_{i=1}^n e_{\alpha_i}\Big) \prod_{i=1}^n \psi_i^{d_i} \prod_{i=1}^n t^{\alpha_i,d_i} \bigg)
\end{align}
where $e_\alpha$, $\alpha =1,\dots,N$, form a basis of $V = \langle e_1,\dots,e_N\rangle$, $\epsilon$ and $t^{\alpha,d}$, $d\geq 0$, are formal variables. 

The particular choice of $\psi$-classes as the key ``observables'' to form a partition function $Z$ has, beyond its motivation from theoretical physics, at least three deep mathematical reasons:
\begin{itemize}
    \item If $\{\mathfrak{pc}_{g,n}\}$ is a CohFT (to be a P-CohFT is not enough for this property), then the full collection of integrals $\int_{\oM_{g,n}} \mathfrak{pc}\big( \Vec{\bigotimes}_{i=1}^n e_{\alpha_i}\big) \prod_{i=1}^n \psi_i^{d_i}$ determines uniquely and with explicit formulas all integrals of the shape $\int_{\oM_{g,n}} \mathfrak{pc}\big( \Vec{\bigotimes}_{i=1}^n e_{\alpha_i}\big) \cdot\mathfrak{a}$, where $\mathfrak{a}\in R^\bullet(\oM_{g,n})$ is an arbitrary tautological class. 
    \item If $\{\mathfrak{pc}_{g,n}\}$ is a CohFT with extra properties (it must be homogeneous), then it is expected that $Z$ satisfies the so-called Virasoro constraints, that is, $Z$ is annihilated by a system of differental operators of a particular shape that form a half of the Virasoro algebra. It is proved in some situations, but remains an open conjecture in general. 
    %for semi-simpleConjecturally, this property holds in more general situation(s) as well. 
    \item \emph{With no extra assumptions} on a P-CohFT $\{\mathfrak{pc}_{g,n}\}$, $Z$ appears to be a tau function of a Hamiltonian integrable systems whose dispersionless limit is the so-called principal hierarchy, which is a Hamiltonian integrable system of hydrodynamic type~\cite{BS22,BLS-DRDZ,BSS25}, see Section \ref{sec:integrable} for more details.
\end{itemize}

The third property (\emph{integrability}) is especially remarkable since it does not require any extra assumptions on a P-CohFT and it allows to fully embed the study of P-CohFTs into the realm of Hamiltonian dispersive deformations of hydrodynamic integrable systems. In fact, it is a manifestation that virtually all problems in enumerative geometry possess strong integrability properties that can be used to universally resolve them. 

It is also the only property that survives when one considers a $V$-valued F-CohFTs. More precisely, in this case one can assemble the generating series for the correlators into the so-called vector potential of an F-CohFT, which is indeed a system of formal vector fields on the loop space of $V^*$. For clarity, let us assume that $V$ is finite dimensional with a fixed basis $\{ e_1,\dots,e_N\}$, and let $\{e^{1},\dots,e^N\}$ be the dual basis. Then define 
\begin{align}
    X & = \sum_{g,n} \frac{\epsilon^{2g}}{n!} \sum_{d_1,\dots,d_n=0}^\infty \int_{\oM_{g,1+n}} e^{\alpha_0} \bigg(\mathfrak{fc}\Big( \Vec{\bigotimes}_{i=1}^n e_{\alpha_i}\Big) \bigg) %\psi_0^{d_0} 
    \prod_{i=1}^n \psi_i^{d_i} \prod_{i=1}^n t^{\alpha_i,d_i} \frac{\partial}{\partial t^{\alpha_0,0}}
\end{align}
This vector field %(for $d_0=0$) 
serves as a vector potential related in a particular universal way (mimicking the properties of a tau function) to a topological solution of an integrable system of conservation laws. 

We focus on the integrability property and the main message of this paper is that in this regard the $\psi$-classes are not as exceptional as it might seem from the Witten's theory of topological gravity, and we formulate general conditions and provide examples of two more systems of classes that turn partition functions of P-CohFTs into tau functions of Hamiltonian integrable systems and vector potentials of F-CohFTs into integrable systems of conservation laws. 

\subsection{Integrable observables} The goal of this section is to formulate a list of formal requirements for a collection of classes in $R^\bullet(\oM_{g,n})$ that allows to replace the monomials of $\psi$-classes in the discussion above retaining the integrability properties of P-CohFTs and F-CohFTs. The definition below is motivated by a practical guide to construction of integrable systems from P-CohFTs and F-CohFTs that we explain in the next sections. 

\begin{definition} [Integrable observables] \label{def:int-obs}
    We call a collection of polynomial-valued classes 
    \begin{align}
        O_{g,n}(a_1,\dots,a_n) \in R^\bullet (\oM_{g,n})\otimes_\Q \Q[a_1,\dots,a_n],        
    \end{align}
 $g\geq 0$, $n\geq 1$, $2g-2+n>0$, \emph{integrable observables} if it satifies the following list of properties:
\begin{enumerate}
    \item \emph{String and dilaton equations:} Let $\pi\colon \oM_{g,n+1}\to \oM_{g,n}$ be the map that forgets the last marked point. Then
    \begin{align} \label{eq:push-forward-O}
        \pi_* O_{g,n}(a_1,\dots,a_n,b) = \left(\sum_i a_i + b\left(2g-2+n\right)\right) O_{g,n}(a_1,\dots,a_n) + O(b^2).
    \end{align}
    Here the string equation is obtained by taking the constant term in $b$ of this relation and the dilaton equation is obtained by extracting the coefficient of $b^1$. We also call this property the \emph{push-forward property}. 
    \item \emph{Genus 0 normalization:} 
    \begin{align}
        O_{0,n}(a_1,\dots,a_n) = \prod_{i=1}^n \frac{1}{1-a_i\psi_i}.
    \end{align}
    \item \emph{Homogeneity:} 
        \begin{equation}
        \Coeff{\prod_{i=1}^n a_i^{d_i}} O_{g,n}(a_1, \dots, a_n) \in R^{\sum d_i} (\oM_{g,n}),
        \end{equation} 
        where $\Coeff{\prod_{i=1}^n a_i^{d_i}}$ denotes the operator that extracts the coefficient of the monomial $\prod_{i=1}^n a_i^{d_i}$ from the polynomial next to it.

    \item \emph{Leveled rooted tree relations LRT-$1$ and LRT-$2$:} %for $g,n,m\geq0$ such that $2g-2+n+m>0$
    %\begin{align}
    %    \deg_a \left({}^{O}\!B_{g,n}^{m} - \bA_{g,n}^{m}\right) \leq 2g-2+m.
    %\end{align}
    these are some complicated properties of the classes that involve the sum over stable rooted trees with a level structure. We introduce them in Sec.~\ref{subsec: stable rooted tree relations}, see Def.~\ref{def: stable rooted tree relations}. 
\end{enumerate}

Both leveled rooted tree relations LRT-$1$ and LRT-$2$ define integrable observables, but the properties of the associated integrable systems are going to be different. When we need to specify what type of leveled rooted tree relations is used, we refer to \emph{type $1$ integrable observables} or \emph{type $2$ integrable observables}, respectively.  
\end{definition}

The definition might look complicated at first glance, especially the leveled rooted tree relations that are still to be defined, but it is motivated by the fact that it properly stores the universal integrability properties.  In particular, we have the following. 

\begin{theorem}[Precise version: Proposition~\ref{prop: integrability of integrable observables for F-CohFT}] \label{thm: integrable observables vector potential}
(1) For any integrable observables $\{O_{g,n}\}$, and any F-CohFT $\{\mathfrak{fc}_{g,n}\}$, the vector field 
\begin{align}
    \label{vector potential integrable observable}
    {}^O\!X & \coloneqq \sum_{g,n} \frac{\epsilon^{2g}}{n!} \sum_{d_1,\dots,d_n=0}^\infty \int_{\oM_{g,n+1}} e^{\alpha_0} \bigg(\mathfrak{fc}_{g,n}\Big( \Vec{\bigotimes}_{i=1}^n e_{\alpha_i}\Big) \bigg)
    \times \\ \notag & \qquad \qquad \qquad 
    \Coeff{\prod_{i=1}^n a_i^{d_i}} O_{g,n+1}(0,a_1,\dots,a_n) \prod_{i=1}^n t^{\alpha_i,d_i} \frac{\partial}{\partial t^{\alpha_0,0}}
\end{align}
is the vector potential of the so-called topological solution of an integrable system of conservation laws. Moreover, in the dispersionless limit $\epsilon\to 0$ the integrable system is independent of the choice the integrable observable. 

(2) If the integrable observables are of type $1$ then this integrable system is Miura equivalent to the DR hierarchy of conservation laws associated to $\{\mathfrak{fc}_{g,n}\}$. 

(3) If the integrable observable are of type $2$, than the fluxes of this integrable system can be described by explicit formulas. 
\end{theorem}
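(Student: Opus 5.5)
The proof follows the scheme already used for $\psi$-classes in \cite{BS22,BLS-DRDZ,BSS25}, with the $\psi$-monomials replaced throughout by the coefficients of $O_{g,n}$. The first step is to produce a candidate system of conservation laws directly from ${}^O\!X$. Set $w^{\alpha} \coloneqq \eta^{\alpha\mu}\partial_{t^{\mu,0}}\partial_{t^{1,0}}\cdot$ (in the F-CohFT setting, the component of ${}^O\!X$ differentiated once in $t^{1,0}$). Define the candidate fluxes $\Omega^{\alpha}_{\beta,d}$ as the $t^{\beta,d}$-derivatives of the vector potential, restricted to $t^{*,\geq 1}=0$ and $t^{\alpha,0}=w^{\alpha}$ (i.e.\ written in normal coordinates). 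The system is then $\partial_{t^{\beta,d}} w^{\alpha} = \partial_x \Omega^{\alpha}_{\beta,d}$.

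For this to make sense, two things must hold.

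\textbf{(a) Well-posedness.} The string equation, i.e.\ the $b^0$ part of the push-forward property \eqref{eq:push-forward-O}, gives $\partial_{t^{1,0}}{}^O\!X = \sum t^{\alpha,d+1}\partial_{t^{\alpha,d}}{}^O\!X + (\text{genus }0\text{ initial term})$. This identifies $x=t^{1,0}$ and shows that the correlators are determined by their restriction to $t^{*,\ge1}=0$. The dilaton part ($b^1$) controls the $\epsilon$-grading. Homogeneity ensures that each coefficient has the correct degree, so the $\epsilon^{2g}$ term is a differential polynomial of degree $2g$, provided polynomiality holds.

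\textbf{(b) Polynomiality and compatibility of the flows.} This is where the leveled rooted tree relations enter. I would expand $\partial_{t^{\beta,p}}\partial_{t^{\gamma,q}}{}^O\!X$ at $t^{*,\ge1}=0$. The plan is to use LRT-$1$ or LRT-$2$ (Def.~\ref{def: stable rooted tree relations}) to rewrite the relevant classes $O_{g,n+1}(0,a_1,\dots)$ as sums over leveled rooted trees. In these trees each vertex contributes an $\bA$-type or degree-bounded class, and the levels encode the composition of $\partial_x$ and substitution of $w$. Gluing via the factorization property of $\mathfrak{fc}$ turns such tree sums into differential polynomials in $w$. The correlators with $t^{*,\ge 1}=0$, pulled back along forgetful maps, collapse to a finite expression because the degree bound in the LRT relations exceeds $\dim\oM_{g,n+1}$ for large $n$. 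This yields both polynomiality of the fluxes and the identity $\partial_{t^{\gamma,q}}\Omega_{\beta,p} = \partial_{t^{\beta,p}}\Omega_{\gamma,q}$, hence commuting flows. The vector potential then solves the system with the topological initial datum $w|_{t^{*,\ge1}=0}=x\,e_1$, by genus $0$ normalization.

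The dispersionless claim is immediate. At $\epsilon=0$ only $g=0$ contributes, and the genus $0$ normalization forces $O_{0,n+1}(0,a)=\prod 1/(1-a_i\psi_i)$. The genus zero part therefore coincides with the $\psi$-class potential, i.e.\ the principal hierarchy of the flat F-manifold.

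For (2), I would compare with the DR theory. The DR vector potential is built from the $\bA$-classes, which by \cite{BS22,BLS-DRDZ} satisfy the same LRT-$1$ relations. Both tree expansions agree up to terms of degree $\le 2g-2+m$ in $a$. These terms define a Miura transformation $w\mapsto \tilde w$ given by explicit tree sums, in the same way the DZ/DR comparison in \cite{BLS-DRDZ} produces the normal Miura map. For (3), I would note that LRT-$2$ expresses the two-point derivatives in closed form as sums over two-level trees, which read off as explicit formulas for $\Omega_{\beta,d}$.

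I expect the main obstacle to be step (b): matching the combinatorics of leveled trees with the differential-polynomial structure, in particular bounding the $a$-degree so that only finitely many jets appear. I would first carry out the argument for $\psi$-classes, where it recovers \cite{BSS25}, and then check that only the push-forward property and the LRT relations were used.
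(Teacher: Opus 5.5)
There is a genuine gap, and it is in how you produce the fluxes. Restricting $\d_{t^{\beta,d}}{}^O\!X^\alpha$ to $t^{*,\geq 1}=0$ cannot give dispersive fluxes. By the string equation, on that locus $w^{\alpha,0}_{\mathrm{top}}=t^{\alpha,0}$, $w^{\alpha,1}_{\mathrm{top}}=\delta^\alpha_1$ and $w^{\alpha,d}_{\mathrm{top}}=0$ for $d\geq 2$, to all orders in $\epsilon$. So your $\Omega^\alpha_{\beta,d}$ are functions of $w^{*,0}$ alone; for the trivial theory, the term $\frac{\epsilon^2}{12}w^{1,2}$ of the KdV flux is invisible there. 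Relatedly, the string equation only removes insertions of $t^{1,0}$; it does not reduce higher-genus descendant correlators to primary ones.

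The paper works on the full phase space instead. By Remark~\ref{rem:RemarkIdentification}, $w^{\alpha,d}_{\mathrm{top}}=t^{\alpha,d}+\delta^\alpha_1\delta^d_1+\cdots$ is an invertible change of variables, so every series in $\{t^{\alpha,d}\}$ is a series in the jets. Identities of differential polynomials can then be checked at $w=w_{\mathrm{top}}$. This includes commutativity of the flows, which follows from the symmetry of mixed $t$-derivatives once $w_{\mathrm{top}}$ solves all flows.

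The missing key step is Lemma~\ref{lemma: vector potential expansion}, which uses only string, dilaton and homogeneity. It gives ${}^O\!X^\alpha=R^\alpha|_{w=w_{\mathrm{top}}}+(\text{remainder})$. Here $R^\alpha$ is built from the $a$-degree $2g-1$ part of $\Coeff{b_1^0}{}^O\!B^1_{g,n}$, and the remainder is a series in the $t$-variables built from its part of $a$-degree $\geq 2g$. Similarly, $\d_{t^{\beta,p}}{}^O\!X^\alpha=R^\alpha_{\beta,p}|_{w=w_{\mathrm{top}}}+(\text{remainder})$, using the $a$-degree $2g$, resp.\ $\geq 2g+1$, parts of $\Coeff{b_1^pb_2^0}{}^O\!B^2_{g,n}$. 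The leveled trees come from substituting $w_{\mathrm{top}}$, itself a derivative of ${}^O\!X$, back into $R$. The LRT relations are precisely statements about these remainders. No dimension count is involved: for large $n$ it is $\dim\oM_{g,n+m}$ that exceeds the bound $2g-2+m$, not conversely.

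This also shows that your step (b) cannot treat both types in the same way. LRT-$2$ says the flux remainder vanishes, so $\d_{t^{\beta,p}}w^\alpha_{\mathrm{top}}=\d_xR^\alpha_{\beta,p}|_{w=w_{\mathrm{top}}}$ with explicit $R^\alpha_{\beta,p}$. This gives (1) and (3) for type $2$. No second derivatives of ${}^O\!X$ are needed; expanding them would involve ${}^O\!B^3$, i.e.\ LRT-$3$, which is not assumed.

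LRT-$1$, however, only concerns ${}^O\!B^1$, i.e.\ the vector potential itself, and says nothing about fluxes. What it gives is ${}^O\!X^\alpha-X^{\DR,\alpha}=R^\alpha|_{w=w_{\mathrm{top}}}$, because $\bA^1_{g,n}$ only has $a$-degrees $\geq 2g$ and coincides with $\Coeff{b_1^0}{}^O\!B^1_{g,n}$ there. Hence $u^\alpha_{\text{F-str}}=w^\alpha_{\mathrm{top}}-\d_xR^\alpha|_{w=w_{\mathrm{top}}}$. Polynomial commuting fluxes with $w_{\mathrm{top}}$ as a solution are then \emph{imported} from the DR hierarchy of conservation laws by inverting the explicit Miura map $u=w-\d_xR$. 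This uses Buryak--Rossi's commutativity, together with $X^{\DR}$ being the DR vector potential and $u_{\text{F-str}}$ a solution.

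So for type $1$, part (2) is not a comparison made after (1): it is the proof of (1), and it needs DR input that your plan does not use at that stage. A detour through the tree expansion of ${}^{\bA}\!X$ is also unnecessary, since LRT-$1$ compares directly with $\bA^1_{g,n}$, i.e.\ with $X^{\DR}$. Your dispersionless argument via genus-$0$ normalization is fine.
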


\begin{theorem}[Precise version: Proposition~\ref{prop: integrability of integrable observables for P-CohFT}] \label{thm: integrable observables tau function} (1) For any integrable observables $\{O_{g,n}\}$, and any P-CohFT $\{\mathfrak{pc}_{g,n}\}$, the partition function
\begin{align}
    \label{tau funtion integrable observable}
    {}^{O}\!Z = \exp\bigg(\frac{{}^O\!F}{\epsilon^2}\bigg)& \coloneqq \exp\bigg( \sum_{g,n} \frac{\epsilon^{2g-2}}{n!} \sum_{d_1,\dots,d_n=0}^\infty \int_{\oM_{g,n}} \mathfrak{pc}_{g,n}\Big( \Vec{\bigotimes}_{i=1}^n e_{\alpha_i}\Big) \times \\ \notag & \qquad \qquad \qquad \Coeff{\prod_{i=1}^n a_i^{d_i}} O_{g,n}(a_1,\dots,a_n) \prod_{i=1}^n t^{\alpha_i,d_i} \bigg)
\end{align}
is the tau function of the so-called topological solution of a Hamiltonian integrable system.  Moreover, in the dispersionless limit $\epsilon\to 0$ the integrable system is independent of the choice the integrable observable.

(2) If the integrable observables are of type $1$ then this Hamiltonian integrable system is normal Miura equivalent to the Hamiltonian DR hierarchy associated to $\{\mathfrak{pc}_{g,n}\}$ considered in its normal coordinates.

(3) If the integrable observable are of type $2$, then the Hamiltonians and the fluxes of this integrable system can be described by explicit formulas.

(4) If the integrable observables are of type $1$ and type $2$ simultaneously, then the normal Miura transformation to the DR hierarchy is also given by explicit formulas.
\end{theorem}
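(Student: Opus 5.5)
The plan is to follow the Buryak–Shadrin strategy of~\cite{BS22}, as completed in~\cite{BLS-DRDZ,BSS25}. The $\psi$-classes are replaced throughout by $O_{g,n}$, and at each step I check that only the axioms of Definition~\ref{def:int-obs} are used.

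\emph{Step 1: the dispersionless limit.} By the genus $0$ normalization, ${}^O\!F_0$ is the usual genus $0$ descendant potential of the P-CohFT. The equations of the principal hierarchy are therefore satisfied by ${}^O\!F_0$. This already shows that the limit $\epsilon\to 0$ does not depend on $O$.

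\emph{Step 2: the candidate system.} Define two-point functions
\begin{align}
\Omega^{O}_{\alpha,p;\beta,q}=\frac{\partial^2\,{}^O\!F}{\partial t^{\alpha,p}\partial t^{\beta,q}}, \qquad w^\alpha=\eta^{\alpha\mu}\frac{\partial^2\,{}^O\!F}{\partial t^{\mu,0}\partial t^{1,0}}.
\end{align}
The Hamiltonians, Poisson structure and fluxes are then defined as in~\cite{BS22}, through the generating series of the classes ${}^O\!B^m_{g,n}$. These are obtained by summing over leveled rooted trees decorated by $O$ in place of $\psi$.

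The string and dilaton equations (the push-forward property) and homogeneity give the following:
\begin{itemize}
\item the string equation for ${}^O\!Z$;
\item $\partial_x=\partial_{t^{1,0}}$;
\item genus expansion bounds showing that each $\epsilon^{2g}$-coefficient of $\Omega^O$ is a differential polynomial in $w$ of the correct degree, \emph{once polynomiality is known}.
\end{itemize}
Commutativity and tau-symmetry of the flows are then formal consequences, since $\Omega^O$ are second derivatives of a single function. The F-CohFT case (Theorem~\ref{thm: integrable observables vector potential}) is handled in the same way, using ${}^O\!X$ and $O_{g,n+1}(0,a_1,\dots,a_n)$ with the root point carrying no observable.

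\emph{Step 3: polynomiality.} This is the main obstacle. I would reduce it, exactly as in~\cite{BS22}, to a degree bound of the form
\begin{align}
\deg_a\bigl({}^O\!B^m_{g,n}-\bA^m_{g,n}\bigr)\le 2g-2+m.
\end{align}
Here $\bA$ is the DR-side class. This bound is precisely what LRT-$1$ is designed to encode. Given LRT-$1$, the difference between the $O$-system and the DR system is then a normal Miura transformation. Its generating function is built from the tree sums, and the argument of~\cite{BLS-DRDZ} for $\psi$ carries over verbatim once its master relation is replaced by LRT-$1$. Polynomiality then follows from polynomiality of the DR hierarchy, and part (2) is immediate.

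For type $2$ (part (3)) I would use LRT-$2$ directly. It expresses the relevant tree sums as classes whose pairing with the P-CohFT yields explicit differential polynomials, giving explicit formulas for the Hamiltonians and fluxes. For part (4), combining both types gives both the DR side and the $O$ side explicitly, so the Miura transformation of Step 3 is also explicit.

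The hardest point is checking that the tree-level manipulations in~\cite{BLS-DRDZ,BSS25} use nothing about $\psi$ beyond the push-forward property, the genus $0$ normalization and the LRT relations. In particular, the removal of unstable vertices and the treatment of the root have to be verified. I would check these first.
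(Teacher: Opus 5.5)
Your plan follows essentially the paper's route, adapting \cite{BS22} mutatis mutandis. String, dilaton and homogeneity alone split ${}^O\!F$ and its first and second $t$-derivatives into a $t$-part built from ${}^O\!B^0_{g,n}$, ${}^O\!B^1_{g,n}$, ${}^O\!B^2_{g,n}$ plus a differential polynomial $R$ evaluated at $w=w_{\mathrm{top}}$. LRT-$1$, pushed forward to LRT-$0$ by the string equation, identifies the $t$-part of ${}^O\!F$ with $F^{\DR}$, so $F^{\DR}={}^O\!F-R|_{w=w_{\mathrm{top}}}$ is the normal Miura transformation. Through this transformation, not formally from the $\Omega^O$ being second derivatives of one function, the system inherits polynomiality and its tau-symmetric Hamiltonian structure from the DR side. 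LRT-$2$ kills the $t$-part of the second derivatives, which gives the explicit fluxes and Hamiltonian densities and, for part (4), an explicit $\partial_{t^{\mu,0}}R$.

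The one correction is where you place the work. With LRT-$1$/LRT-$2$ taken as axioms, nothing in \cite{BLS-DRDZ,BSS25} needs re-verifying: those papers prove the relations for $\psi$, and the localization in \cite{BSS25} is $\psi$-specific. What must be checked is only that the \cite{BS22} decomposition uses nothing about $\psi$ beyond string, dilaton and homogeneity.
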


In order to be explained, both theorems need an extra piece of theory. In particular, we have to explain how we pass from tau function and vector potential to integrable systems, what we mean by explicit formulas in the case of type $2$ integrable observables, and we also have to introduce the concepts of Hamiltonian DR hierarchy~\cite{Bur,BDGR1,BR-spin} and DR hierarchy of conservation laws~\cite{BR-spin,ABLR} that 
%are strongly motivated by the constructions in the symplectic field theory~\cite{Eliashberg-SFT} and that 
we use here as canonical reference points. We provide all necessary constructions in Section~\ref{sec:integrable}. 

\begin{remark}
    Towards integrability, one can relax some hypothesis of integrable observables. To get Theorems~\ref{thm: integrable observables vector potential} and~\ref{thm: integrable observables tau function}, we do not require the integrable observables to lie in the tautological rings, it holds in the cohomology or the Chow rings of $\oM_{g,n}$. Second, if we drop the genus $0$ normalization, both theorems remain valid, though the resulting hierarchy need not to share the same dispersionless limit  $\epsilon \to 0$. However, no such example is known.
\end{remark}

\subsection{The leveled rooted tree relations} \label{subsec: stable rooted tree relations} The goal of this Section is to complete Definition~\ref{def:int-obs} by defining the \emph{leveled rooted tree relations} for the classes $O_{g,n}(a_1,\dots,a_n)$. This requires to introduce a notation and a variety of structures for rooted stable trees. 

\subsubsection{Basic notation for trees}
Let $\SRT_{g,n,m}$ be the set of stable rooted trees of total genus $g$, with $n$ regular legs $\sigma_1,\dots,\sigma_n$ and $m$ extra legs $\sigma_{n+1},\dots,\sigma_{n+m}$, which we refer to as ``frozen'' legs and must always be attached to the root vertex. For a $T\in \SRT_{g,n,m}$ we use the following notation:
\begin{itemize}
	\item $H(T)$ is the set of half-edges of $T$.
	\item $L(T),L_r(T),L_f(T)\subset H(T)$ are the sets of all, regular, and frozen legs of $T$, respectively. $L(T) = L_r(T)\sqcup L_f(T)$.
	\item $H_e(T)\coloneqq H(T)\setminus L(T)$.
	\item $\iota\colon H_e(T)\to H_e(T)$ is the involution that interchanges the half-edges that form an edge.
	\item $E(T)$ is the set of edges of $T$, $E\cong H_e(T)/\iota$.
	\item $H_+(T)\subset H(T)$ is the set of the so-called ``positive'' half-edges that consists of all regular legs of $T$ and of half-edges in $H(T)\setminus L(T)$ directed away from the root at the vertices where they are attached,
	$H_+(T)\cong E(T)\cup L_{r}(T)$; 
	\item $H_-(T)\subset H(T)$ is the set of the so-called ``negative'' half-edges that consists of all frozen legs of $T$ and of half-edges in $H(T)\setminus L(T)$ directed towards the root at the vertices where they are attached, $H_-(T)\cong E(T)\cup L_{f}(T)$;
	\item $V(T),V_{nr}(T)$ are the sets of vertices and non-root vertices of $T$. 
	\item $v_r\in V(T)$ is the root vertex of $T$; $V(T)=\{v_r(T)\}\sqcup V_{nr}(T)$.
	\item For a $v\in V(T)$, $H(v),H_+(v),H_-(v)$ are all, positive, and negative half-edges attached to $v$, respectively. Obviously, $|H_-(v_r)|=m$ and for any $v\in V_{nr}(T)$ we have $|H_-(v)|=1$.
	\item For a $v\in V(T)$ let $g(v)\in \Z_{\geq 0}$ be the genus assigned to $v$. The stability condition means that $2g(v)-2+|H(v)|>0$.
	%\[\chi(v)\coloneqq 2g(v)-2+|H(v)|>0.\] 
	The genus condition reads $\sum_{v\in V(T)} g(v) = g.$
	%\[
	%\sum_{v\in V(T)} g(v) = g.
	%\]
	\item We say that a vertex or a (half-)edge $x$ is a descendant of a vertex or a (half-)edge $y$ if $y$ is on the unique path connecting $x$ to $v_r$. 
	\item For an $h\in H_+(T)$ let $DL(h)$ be the set of all legs that are descendants to $h$, including $h$ itself. Note that $DL(h)\subseteq L_r(T)$ for any $h\in H_+(T)$ and $DL(l)=\{l\}$ for $l\in L_r(T)$. 
	%\item For an $h\in H_+(T)$ let $DH(h)$ be the set of all positive half-edges that are descendants to $h$, \emph{excluding} $h$. For instance, for $l\in L_r(T)$ we have $DH(l) = \emptyset$, and for $h\in H_+(T)\setminus L_r(T)$ we have $DH(h) \supseteq DL(h)$. 
	\item For an $e\in E(T)$ let $DL(e)$ be the set of all legs that are descendants to $e$. Note that $DL(e)\subseteq L_r(T)$ for any $e\in E(T)$. 
	\item For an $v\in V(T)$ let $DL(v)$ be the set of all regular legs that are descendants to $v$. In particular, $DL(v_r) = L_r(T)$. 
	\item For a $v\in V(T)$ let $DV(v)\subset V(T)$ be the subset of all vertices that are descendants of $v$, including $v$ itself. For instance, $DV(v_r) = V(T)$. %Let 
	%\[
	%D\chi(v)\coloneqq \sum_{v'\in DV(v)} \chi(v').
	%\]
\end{itemize}

\subsubsection{Ring of coefficients}
Consider the polynomial ring
\begin{align}
	Q\coloneqq \Q[a_1,\dots,a_n,b_1,\dots,b_m]
\end{align} 
and define $a\colon H_+(T) \to Q$, $a\colon E(T)\to Q$, and $a\colon V(T)\to Q$ (abusing notation we use the same symbol $a$ for all of these maps) by 
\begin{align}
%	a(\sigma_{n+j})& \coloneqq b_j, & j=1,\dots,m; 
%    \\ \notag
%	a(\sigma_i)& \coloneqq a_i, & i=1,\dots,n; 
%    \\ \notag 
	a(h)& \coloneqq \textstyle\sum_{l\in DL(h)} a(l), & h\in H_+(T); 
    \\ \notag 
	a(e)& \coloneqq \textstyle\sum_{l\in DL(e)} a(l),& e\in E(T); 
    \\ \notag 
	a(v)& \coloneqq \textstyle\sum_{l\in DL(v)} a(l), & v\in V(T).
\end{align} 
So, in particular, $a(\sigma_i)= a_i$, $i=1,\dots,n$. The variables $b_i$ are associated to the frozen legs $\sigma_{n+i}$, $i=1,\dots,m$.

\begin{figure}[h!]
\label{fig:SRT}
\begin{center}
    \begin{tikzpicture}[
        level/.style={font=\small, inner sep=1pt, fill=white}
    ]
    
    % Nodes with custom styles
    \node[draw, circle, minimum size=1cm, inner sep=0pt] (g0) at (-0.4,0) {$g_0$};
    \node[draw, circle, minimum width=1cm, minimum height=1cm] (g1) at (2.7,1) {$g_1$};
    \node[draw, circle, minimum width=1cm, minimum height=1cm] (g2) at (5.5,1) {$g_2$};
    \node[draw, circle, minimum size=1cm, inner sep=0pt] (g3) at (3,-1.5) {$g_3$};
    
    % Edges
    \draw (g0) -- (g1);
    \draw (g1) -- (g2);
    \draw (g0) -- (g3);
    
    % Wavy arrows into q0
    \draw[decorate, decoration={snake, segment length=4pt}] (-1.2,0.5) -- (g0);
    \draw[decorate, decoration={snake, segment length=4pt}] (-1.2,-0.5) -- (g0);
    
    % Extra stubs
    % \draw (g0) -- ++(0.8,0.8);
    % \draw (g1) -- ++(0.8,0.8);
    % \draw (g3) -- ++(1,0.5);
    % \draw (g3) -- ++(1,-0.5);
    % \draw (g2) -- ++(1,0);
    % \draw (g2) -- ++(0.8,0.8);
    \draw (g0) -- ++(0.74,0.7);
    \draw (g1) -- ++(0.8,0.7);
    \draw (g3) -- ++(0.8,0.4);
    \draw (g3) -- ++(0.8,-0.4);
    \draw (g2) -- ++(0.8,-0.4);
    \draw (g2) -- ++(0.8,0.4);
    
    % Math expression
    \node at (-1.4,0.5) {\scriptsize ${b_1}$};
    \node at (-1.4,-0.5) {\scriptsize ${b_2}$};
    
    \node at (0.5,0.85) {\scriptsize ${a_1}$};
    \node at (1.25,0.1) {\scriptsize ${a_2 + a_3 + a_6}$};
    \node at (0.5,-0.7) {\scriptsize ${a_4 + a_5}$};
    
    \node at (3.7,1.8) {\scriptsize ${a_2}$};
    \node at (3.8,1.2) {\scriptsize ${a_3 + a_6}$};
    
    \node at (4,-1) {\scriptsize ${a_4}$};
    \node at (4,-1.9) {\scriptsize ${a_5}$};
    
    \node at (6.5,0.6) {\scriptsize ${a_3}$};
    \node at (6.5,1.4) {\scriptsize ${a_6}$};
    
    \end{tikzpicture}    
\end{center}
    \caption{A stable rooted graph with root vertex $v_0$ of genus $g_0$. The vertex $v_2$ of genus $g_2$ is descendant of the vertices $v_1$ of genus $g_1$ and of course of the root $v_0$. Half-edges (both leaves and half-edges of edges) decorated with polynomials in the $a_i$ are positive; those decorated with $b_j$ (frozen legs, represented by wavy lines) and the remaining half-edges of the edges are negative.}
\end{figure}

\subsubsection{Levels}
We enhance the structure of a stable rooted tree to the so-called leveled stable rooted tree (of genus $g$, with $n$ regular and $m$ frozen legs). Let $T\in\SRT_{g,n,m}$. A function $\ell\colon V(T)\to\Z_{\geq 0}$ is called a \textit{level function} if the following conditions are satisfied:
\begin{itemize}
	\item The value of $\ell$ on the root vertex is zero ($\ell(v_r) = 0$).
	\item If $v'\in DV(v)$ and $v'\not=v$, then $\ell(v')>\ell(v)$. 
	\item There are no empty levels, that is, for any $0\leq i \leq \max \ell(V(T))$ the set $\ell^{-1}(i)$ is non-empty. 
\end{itemize}
 A leveled stable rooted tree is a pair $(T,\ell)$. The height of a leveled stable rooted tree $(T,\ell)$ is $\ell(T)\coloneqq \max \ell(V(T))$. Let $\mathcal{L}(T)$ denote the set of level functions on $T$. 

Once we fix a level function $\ell\in \mathcal{L}(T)$, we extend the definition of a genus function $g\colon V(T) \to \Z_{\geq 0}$ to the level genus function $g^{\mathrm{lvl}}\colon \{0,\dots,\ell(T)\}\to \Z_{\geq 0}$ defined as 
\begin{align}
	g^{\mathrm{lvl}} (l) \coloneqq \sum_{v\in V(T), \ell(v) \leq l} g(v). 
\end{align}
Of course, 	$g^{\mathrm{lvl}} (\ell(T))=g$. 

\subsubsection{Degree}
We enhance the structure of a stable rooted tree $T\in SRT_{g,n,m}$ with a so-called \textit{degree function} $d\colon V(T) \to \Z_{\geq 0}$. 
Let $\mathcal{D}(T)$ denote the set of degree labels on $T$. 

In the presence of a level function $\ell\in \mathcal{L}(T)$, we associate with a degree function $d$  a \textit{level degree function} $d^{\mathrm{lvl}}\colon \{0,\dots,\ell(T)\}\to \Z_{\geq 0}$, defined by 
\begin{align}
	d^{\mathrm{lvl}} (l) \coloneqq \sum_{v\in V(T), \ell(v) \leq l} d(v) + |\{v\in V(T), \ell(v) \leq l\}|-1. 
\end{align}
Define also 
$
d(T)\coloneqq d^{\mathrm{lvl}} (\ell((T)).
$

\subsubsection{The leveled rooted tree classes}
Let $T\in\SRT_{g,n,m}$. Assign to each $v\in V(T)$ the moduli space of curves $\overline{\mathcal{M}}_{g(v),|H(v)|}$, where the first $|H_{+}(v)|$ marked points correspond to the positive half-edges attached to $v$ and ordered in an arbitrary but fixed way and the the last $|H_{-}(v)|$ marked points correspond to the negative half-edges attached to $v$ (it is just one half edge for a non-root vertex, and for the root vertex we order them following the order of frozen legs $\sigma_{n+1},\dots,\sigma_{n+m}$). Consider the class associated to each vertex:
\begin{equation}
\label{eq:def-O(v)}
O(v)\coloneqq\begin{cases}
O_{g(v),|H(v)|}\left(a(h_1),\dots,a(h_{|H_+(v)|}),b_{1},\dots,b_{m}\right), & v=v_{r};\\
O_{g(v),|H(v)|}\left(a(h_1),\dots,a(h_{|H_+(v)|}),0\right). & v\neq v_{r}.
\end{cases}
\end{equation}

For each $d\geq 0$, define
\begin{align}
 O(v)_{\deg_{R^\bullet} =d}
\end{align} to be the  homogeneous component of degree $d$ of the class $O(v)$ with respect to the Chow grading. By the homogeneity property in Definition~\ref{def:int-obs}, this coincides with the  homogeneous component of degree $d$, viewed as a polynomial in the variables $a_1,\dots,a_n,b_1\dots,b_m$.
We also define 
\begin{align}
O(v)_{\deg_a=d}
\end{align}
to be the homogeneous component of degree $d$ of the class $O(v)$ as a polynomial in  $a_1,\dots,a_n$ exclusively, treating $b_1,\dots,b_m$ as constants.
Then $O(v)_{\deg_a=d} =  O(v)_{\deg_{R^\bullet}=d}$ for $v\not= v_r$, and for the root vertex
\begin{align}
\Coeff{\prod_{i=1}^m b_i^{d_i}} O(v)_{\deg_a=d} = \Coeff{\prod_{i=1}^m b_i^{d_i}} (O(v))_{\deg_{R^\bullet}=d+\sum_{i=1}^m d_i}.
\end{align}
% or, in other words,
% \begin{align}
%     (O(v_r))_{\deg_a=d} & = 
%     \sum_{d_1,\dots,d_m=0}^\infty \prod_{i=1}^m b_i^{d_i} \Coeff{\prod_{i=1}^m b_i^{d_i}} O_{g(v_r),|H(v_r)|}\big(a(h_1),\dots,
%     \\ \notag & \qquad \qquad \qquad \qquad \qquad \qquad a(h_{|H_+(v_r)|}),b_{1},\dots,b_{m}\big)_{\deg_{R^*}=d-\sum_{i=1}^n d_i}.
% \end{align}
%
Only the notion of degree in the variables $a_1,\dots,a_n$ is involved in the following definition. 

% Previously: For each $d\geq 0$ define $(O(v))_{d}$ to be the homogeneous component of degree $d$ of the class $O(v)$ as a polynomial in the variables $a_1,\dots,a_n$. Note that it is not exactly the same as the homogenenous component of $O(v)$ with respect to the Chow grading, since there might be non-trivial dependence on the variables $b_1,\dots,b_m$ and we have the homogeneity condition. That is, if we denote by $O(v)_{\{d\}}$ the component of $O(v)$ in $R^d(\oM_{g(v),|H(v)|})$, then $(O(v))_{d} =  (O(v))_{\{d\}}$ for $v\not= v_r$, and for the root vertex 
% \begin{align}
% \Coeff{\prod_{i=1}^m b_i^{d_i}} (O(v))_{d} = \Coeff{\prod_{i=1}^m b_i^{d_i}} (O(v))_{\{d-\sum_{i=1}^m d_i\}}
% \end{align}
% or, in other words,
% \begin{align}
%     (O(v_r))_{d} & = 
%     \sum_{d_1,\dots,d_m=0}^\infty \prod_{i=1}^m b_i^{d_i} \Coeff{\prod_{i=1}^m b_i^{d_i}} O_{g(v_r),|H(v_r)|}\big(a(h_1),\dots,
%     \\ \notag & \qquad \qquad \qquad \qquad \qquad \qquad \qquad \qquad a(h_{|H_+(v_r)|}),b_{1},\dots,b_{m}\big)_{\{d-\sum_{i=1}^n d_i\}}.
% \end{align}

\begin{definition}
For each $(g,n,m)$ such that $2g-2+n+m>0$ define the class
\begin{equation}
{}^{O}\!B_{g,n}^{m}\in R^{*}\left(\overline{\mathcal{M}}_{g,n+m}\right)\otimes_{\mathbb{Q}}Q
\end{equation}
as
\begin{align}
{}^{O}\!B_{g,n}^{m}\coloneqq\sum_{\substack{T\in\SRT(g,n,m),d\in\mathcal{D}(T),\ell\in\mathcal{L}(t)\\
\forall i<\ell(T)\colon d^{\mathrm{lvl}}(i)\leq2g^{\mathrm{lvl}}(i)-2+m
}
}(-1)^{\ell(T)}\biggl(\prod_{e\in E(T)}a(e)\biggr)(\mathrm{gl}_{T})_{*}\bigotimes_{v\in V(T)}O(v)_{\deg_a=d(v)}.
\end{align}
Here 
\begin{align}
    (\mathrm{gl}_{T})_{*} \colon \bigotimes_{v\in V(T)}R^{*}(\overline{\mathcal{M}}_{g(v),|H(v)|})\otimes_{\mathbb{Q}}Q \to R^{*}(\overline{\mathcal{M}}_{g,n+m})\otimes_{\mathbb{Q}}Q
\end{align} is the boundary pushforward map. 
\end{definition}
 
\begin{figure}[h!] \label{fig:SRT:levels}

\begin{center}
    \begin{tikzpicture}[
        level/.style={font=\small, inner sep=1pt, fill=white}
    ]
    
    % Nodes with custom styles
    \node[draw, circle, minimum size=1cm, inner sep=0pt] (g0) at (-0.4,0) {$g_0$};
    \node[draw, circle, minimum width=1cm, minimum height=1cm] (g1) at (2.7,1) {$g_1$};
    \node[draw, circle, minimum width=1cm, minimum height=1cm] (g2) at (5.5,1) {$g_2$};
    \node[draw, circle, minimum size=1cm, inner sep=0pt] (g3) at (3,-1.5) {$g_3$};
    
    % Edges
    \draw (g0) -- (g1);
    \draw (g1) -- (g2);
    \draw (g0) -- (g3);
    
    % Wavy arrows into q0
    \draw[decorate, decoration={snake, segment length=4pt}] (-1.2,0.5) -- (g0);
    \draw[decorate, decoration={snake, segment length=4pt}] (-1.2,-0.5) -- (g0);
    
    % Extra stubs
    \draw (g0) -- ++(0.8,0.6);
    \draw (g1) -- ++(0.8,0.5);
    \draw (g3) -- ++(0.8,0.4);
    \draw (g3) -- ++(0.8,-0.4);
    \draw (g2) -- ++(0.8,-0.4);
    \draw (g2) -- ++(0.8,0.4);
    
    % Dashed curves
    \draw[dashed] plot [smooth, tension=1] coordinates {(-0.9,-2.5) (-0.5,-1.7) (1,-0.3) (1,1.8) (1,2.4)};
    \draw[dashed] plot [smooth, tension=1] coordinates {(1,-2.5) (1.5,-1) (3.5,0.5) (4,2.4)};
    
    % Level labels
    \node[level] at (-0.3,2.35) {Level 0};
    \node[level] at (2.5,2.35) {Level 1};
    \node[level] at (5.3,2.35) {Level 2};
    
    % Math expression
    \node at (-0.4,-2.5) {\scriptsize $\leq 2g_0$};
    \node at (2,-2.5) {\scriptsize $\leq 2(g_0 + g_1)$};
    \node at (-0.4,0.8) {\scriptsize $O(v_0)_{d(v_0)}$};
    %\node at (0.8,0.3) {\scriptsize $O(v_0)_{d_6}$};
    %\node at (0.8,-0.3) {\scriptsize $O(v_0)_{d_7}$};
    
    \node at (2.7,1.75) {\scriptsize $O(v_1)_{d(v_1)}$};
    %\node at (3.8,1.2) {\scriptsize $O(v_1)_{d_8}$};
    
    \node at (3,-0.75) {\scriptsize $O(v_3)_{d(v_3)}$};
    %\node at (4.2,-1.7) {\scriptsize $O(v_3)_{d_5}$};
    
    \node at (5.6,1.75) {\scriptsize $O(v_2)_{d(v_2)}$};
    
    \end{tikzpicture}    
\end{center}
    \caption{A leveled stable rooted tree with a degree function and decorated with cohomological classes $O(v_i)$ involved in the definition of ${}^{O}\!B_{g,n=6}^{m=2}$, where $g=g_0+g_1+g_2+g_3$. The level function is represented with the dashed
lines: the root is assigned level 0, the vertex 1 lies at level 1, and the vertices 2 and 3 are
at level 2. The degrees $d(v_i)$ are degrees of the homogeneous polynomials in the $a_1,\dots,a_n$ extracted from each of the $O(v_i)$ for every vertex $v_i$; they must satisfy $d(v_0) \leq 2g_0$ and $d(v_0) +d(v_1) + 1 \leq 2(g_0+g_1)$.} 
\end{figure}

\begin{remark}
	\label{rem:homogeneity} The class ${}^{O}\!B_{g,n}^{m}$ satisfies the following homogeneity property 
\begin{align}
    \big({}^{O}\!B_{g,n}^{m}\big)_{\deg_{R^\bullet}=d} \in R^{d}\left(\overline{\mathcal{M}}_{g,n+m}\right)\otimes_\Q \Q[a_1,\dots,a_n,b_1,\dots,b_m]
\end{align}
is a homogeneous polynomial of  degree $d$ in the variables $a_1,\dots,a_n,b_1,\dots,b_m$. This follows from the homogeneity condition in Definition~\ref{def:int-obs}.
\end{remark}

\subsubsection{The \texorpdfstring{$\protect\bA$}{AA} and \texorpdfstring{$\protect\bA^1$}{AA1} classes}\label{Sec: def A and A1}
Let $a_1, \dots, a_n$ be positive integers. Recall that we use the notation $\suma \coloneqq \sum_{i=1}^n a_i$. For each $g\geq 0$, $n\geq 1$ consider the moduli space 
\begin{align}
	\overline{\mathcal{M}}_{g}^{\sim}(\mathbb{P}^1,a_1,\dots,a_n,-\suma), \qquad \suma \coloneqq \sum_{i=1}^n a_i
\end{align}
of rubber stable maps to $(\mathbb{P}^1,0,\infty)$, where the profile over $0$ is $(a_1, \dots, a_n)$, and the profile over $\infty$ is $(-\suma)$.
Its projection onto the source curve is denoted by 
\begin{align}
	s\colon \overline{\mathcal{M}}_{g}^{\sim}(\mathbb{P}^1,a_1,\dots,a_n,-\suma) \to \oM_{g,n+1}.
\end{align}
Consider also its projection on the target curve
\begin{equation}
t\colon \oM_{g}^{\sim}(\mathbb{P}^1,a_1,\dots,a_n,-\suma)\to LM_{2g-1+n},
\end{equation}
where $LM_{m}$ denotes the Losev-Manin space with $m$ marked points.
Let $t^*\psi_0$ be the pull-back by $t$ of the $\psi$-class at the point $0$ in the cohomology (or Chow ring) of the Losev-Manin space. 

Define 

\begin{equation} \label{eq: A-1}
\bA_{g,n}^{1}(a_{1},\dots,a_{n})\coloneqq\lambda_{g}s_{*}\left(\frac{1}{1-t^{*}\psi_{0}}\left[\overline{\mathcal{M}}_{g}^{\sim}\left(\mathbb{P}^{1},a_{1},\dots,a_{n},-\suma\right)\right]^{\mathrm{vir}}\right),
\end{equation}
where $\lambda_g$ is the top Chern class of the Hodge bundle on $\oM_{g,n+1}$. 

Define also 
\begin{equation}
\bA_{g,n}\coloneqq \frac{1}{\suma}\pi_{*}\bA_{g,n}^{1}, 
%\quad
%\bA^m_{g,n}\coloneqq 0 \; \text{ for } m \geq 2,
\end{equation}
where $\pi\colon \oM_{g,n+1} \to \oM_{g,n}$ is the morphism forgetting the last marked point. 

\begin{remark} The classes $\bA_{g,n} \in R^* (\oM_{g,n})\otimes_\Q \Q[a_1,\dots,a_n]$, $\bA^1_{g,n}\in R^* (\oM_{g,n+1})\otimes_\Q \Q[a_1,\dots,a_n]$. Moreover, both classes possess the homogeneity property as in Def.~\ref{def:int-obs}. These properties are proved in Sec.~\ref{subsec: polynomiality genus 0 A}.
\end{remark}

\subsubsection{The leveled rooted tree relations} 

\begin{definition}[LRT-$1$ and LRT-$2$ relations] \label{def: stable rooted tree relations} 
We define the following relations.
\begin{itemize}
    \item We say that the polynomial-valued classes $\{O_{g,n}(a_1,\dots,a_n)\}$ satisfy \textit{the leveled rooted tree relations of type $1$ or LRT-$1$}, if for any $g\geq 0$, $n\geq 1$, $2g-1+n>0$, 
\begin{align} \label{eq: type $1$ relation}
    \deg_a \left(\Coeff{b_1^0}{}^{O}\!B_{g,n}^{1}-\bA_{g,n}^{1}\right)\leq2g-1.
\end{align}
    \item They satisfy the \textit{leveled rooted tree relations of type $2$ or LRT-$2$}, if for any $g\geq 0$, $n\geq 0$, $2g+n>0$, 
\begin{align}
    \deg_a {}^{O}\!B_{g,n}^{2}\leq 2g.
\end{align}
\end{itemize}

\end{definition}
This is the fourth property that one has to require in Definition~\ref{def:int-obs}.

\begin{remark}
    More generally, Theorems~\ref{thm: integrable observables vector potential} and~\ref{thm: integrable observables tau function} remain valid if one replaces the leveled rooted tree relation of type $2$ by the following weaker condition: for any $g\geq 0$, $n\geq 0$, $2g+n>0$, 
\begin{align}
    \deg_a \Coeff{b_1^pb_2^0}{}^{O}\!B_{g,n}^{2}\leq 2g
\end{align}
for any $p\geq 0$.
\end{remark}

\begin{remark} \label{rem: full system of leveled rooted tree relations}
It is also geometrically very natural to consider a full version of the leveled rooted tree relations. For $g,n,m\geq0$, such that $2g-2+n+m>0$, the \textit{full system of leveled rooted tree relations} reads:

\begin{align}
    \text{LRT-m:}\qquad & \deg_a {}^{O}\!B_{g,n}^{m} & & \leq2g-2+m, \qquad  m\geq 2; \\
\text{LRT-1:}\qquad & \deg_a \left(\Coeff{b_1^0}{}^{O}\!B_{g,n}^{1}-\bA_{g,n}^{1}\right) & & \leq 2g-1; 
\\ 
\text{LRT-0:}\qquad &\deg_a \left({}^{O}\!B_{g,n}^{0} - \bA_{g,n} \right)& & \leq 2g-2.
\end{align} 
%It is natural to call the last relation the leveled rooted tree relations of type zero (LRT-0). 
However, among these relations only the type $1$ and type $2$ relations have immediate connection to integrability, which we discuss in detail in Sec.~\ref{sec:integrable}. We will show in Lemma~\ref{lemma: Type 0 relation} that LRT-$1$ implies LRT-0.
The leveled rooted tree relations for $m>2$, whose consequence are discussed Sec.~\ref{sec: application mLRT relations}, do not have direct connection with integrability.
\end{remark}

\begin{remark} There is also a different type of relations that we call \emph{master relations} and introduce below. Their special cases can serve as an alternative for the leveled rooted tree relations in the definition of integrable observables. We discuss this alternative in Sec.~\ref{sec:srt-master}.
\end{remark}

\subsection{Old and new 
examples of integrable observables}
In this section, we present three examples of integrable observables. 
The first one is the classical Witten's $\psi$-classes arranged into generating $n$-point functions. This example is so far the only one fully studied in the literature. The second example is the $\bA$-class that leads to the DR hierarchy but in an unconventional way, and thus needs a revision and proofs of its key properties. The third example is new and constitutes the main geometric result of this paper. 

The original Witten approach with $\psi$ classes is reproduced in the following way. On each moduli space $\oM_{g,n}$, $g\geq 0$, $n\geq 1$, $2g-2+n>0$, we consider the following classes arranged into generating polynomials: 
\begin{align}
	\bPsi_{g,n}(a_1,\dots,a_n) \coloneqq %\frac{1}
    {\prod_{i=1}^n \frac{1}{1-a_i\psi_i}}.
\end{align}

\begin{theorem} \label{thm: Psi} The classes $\{\bPsi_{g,n}\}$ are integrable observables of type $1$ and of type $2$.
\end{theorem}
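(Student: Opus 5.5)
We check the four conditions of Definition~\ref{def:int-obs} in turn. The first three are classical. Homogeneity holds because $\Coeff{\prod a_i^{d_i}}\bPsi_{g,n}=\prod\psi_i^{d_i}\in R^{\sum d_i}$. Genus $0$ normalization holds by definition.

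For the push-forward property we use the comparison $\psi_i=\pi^*\psi_i+D_{i,n+1}$ on $\oM_{g,n+1}$. This gives
\[
\prod_{i=1}^n\frac{1}{1-a_i\psi_i}=\pi^*\prod_{i=1}^n\frac{1}{1-a_i\psi_i}+\sum_{i=1}^n D_{i,n+1}\,\frac{a_i}{1-a_i\psi_i}\cdots .
\]
We then push forward the product with $1$ and with $b\psi_{n+1}$, and use $\pi_*\psi_{n+1}=2g-2+n$ together with $\psi_{n+1}D_{i,n+1}=0$. This yields the string equation (the factor $\sum a_i$) and the dilaton equation (the factor $b(2g-2+n)$) in generating form; higher powers of $b$ fall into the $O(b^2)$ term. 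All of this is standard.

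The substance is the LRT relations, and here the plan is to recognise ${}^{\bPsi}\!B^m_{g,n}$ as a known object. For $\bPsi$, the vertex classes are products of $\frac1{1-a(h)\psi_h}$. Summing over leveled trees with alternating signs $(-1)^{\ell(T)}$ and edge weights $a(e)$ is exactly the structure arising when one expands $\frac{1}{1-a\psi}$-type classes with respect to boundary strata via the comparison of $\psi$ classes along a chain of levels. The truncation condition $d^{\mathrm{lvl}}(i)\le 2g^{\mathrm{lvl}}(i)-2+m$ is the degree bound making such a sum a polynomial of bounded degree.

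I would first prove LRT-$2$ by showing that ${}^{\bPsi}\!B^2_{g,n}$ coincides with the class whose $a$-degree bound $\le 2g$ was established in \cite{BS22,BSS25} (the tautological relations reducing polynomiality of the Dubrovin--Zhang hierarchy). The strategy is to identify both sides as sums over the same leveled trees with the same local vertex contributions, after matching the convention for the two frozen legs. For LRT-$1$ I would similarly identify $\Coeff{b_1^0}{}^{\bPsi}\!B^1_{g,n}-\bA^1_{g,n}$ with the difference appearing in the master relation of \cite{BLS-DRDZ,BSS25}, whose degree bound $\le 2g-1$ is precisely the DR/DZ master relation proved in \cite{BSS25}.

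If direct identification fails, the fallback is an induction on height. Cut at the top level, use the string and dilaton properties to handle vertices whose degree saturates the bound, and show that the top-degree part in $a$ telescopes. In the case of LRT-$1$, the part of degree $\ge 2g$ should match the localization formula for $\lambda_g s_*\frac{1}{1-t^*\psi_0}[\cdot]^{\mathrm{vir}}$, computed via the $\bA$ genus-$0$ polynomiality of Sec.~\ref{subsec: polynomiality genus 0 A}, where $\lambda_g$ kills all but rational-tail configurations.

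The main obstacle is this combinatorial identification for LRT-$1$. It requires matching the leveled-tree sum with the rubber-map localization/boundary expansion of $\bA^1_{g,n}$, including signs and edge factors $a(e)$, and verifying that the degree truncations agree level by level. I would test the identification first in genus $0$, where $\bA^1_{0,n}$ is explicit and both sides reduce to products of $\psi$ classes.
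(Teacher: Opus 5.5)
Your proposal matches the paper's treatment. Homogeneity and genus-$0$ normalization are immediate, your derivation of the string/dilaton property from $\psi_i=\pi^*\psi_i+D_{i,n+1}$ is the standard one, and the leveled rooted tree relations for $\{\bPsi_{g,n}\}$ are imported from the literature exactly as the paper does, via the master relations proved in \cite{BSS25} combined with the reduction in \cite{BLS-DRDZ}. One correction: LRT-$1$ (resp.\ LRT-$2$) does not come from a literal identification of $\Coeff{b_1^0}{}^{\bPsi}\!B^1_{g,n}-\bA^1_{g,n}$ with the master-relation class; it is deduced from M-$1$ (resp.\ M-$1$ \& M-$2$) by the combinatorial implication of Lemma~\ref{lem: LRS - master equivalence}, so your height-induction/localization fallback is unnecessary.
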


The corresponding integrable systems associated to F-CohFTs and P-CohFTs by Theorems~\ref{thm: integrable observables vector potential} and~\ref{thm: integrable observables tau function}
are called the \textit{Dubrovin-Zhang hierarchies}.

\smallskip

The DR integrable systems, used as canonical reference points in Theorems~\ref{thm: integrable observables vector potential} and~\ref{thm: integrable observables tau function} can themselves be constructed with the classes $\{\bA_{g,n}\}$ used as integrable observables:

\begin{theorem} \label{thm: A} The classes $\{\bA_{g,n}\}$ are integrable observables of type $1$.
\end{theorem}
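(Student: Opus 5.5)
We check the four properties of Definition~\ref{def:int-obs} for $O_{g,n}=\bA_{g,n}$ in turn. Polynomiality and homogeneity of $\bA_{g,n}$ are established in Sec.~\ref{subsec: polynomiality genus 0 A}, which we assume. For the genus $0$ normalization we use $\lambda_0=1$. In genus $0$ the rubber space $\oM^{\sim}_0(\mathbb{P}^1,a_1,\dots,a_n,-\suma)$ is birational to $\oM_{0,n+1}$ with trivial obstruction theory. The class $t^*\psi_0$ pulled back from Losev--Manin is expressed on the source via the branch morphism, and $\frac{1}{\suma}\pi_*\bA^1_{0,n}$ is computed by localization or by the standard comparison $t^*\psi_0=\suma\,\psi_{n+1}-(\text{boundary})$. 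We expect to recover $\prod_i(1-a_i\psi_i)^{-1}$, since the same computation underlies the genus-$0$ statement in Sec.~\ref{subsec: polynomiality genus 0 A}. We would follow that computation rather than redo it.

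For the string and dilaton equations (the push-forward property) we forget a marked point carrying weight $b$. The rubber moduli with an extra point of multiplicity $b$ over $0$ maps to the one without it, after forgetting the point and adjusting the profile at $\infty$ from $-(\suma+b)$ to $-\suma$. Modulo $b^2$, the DR-type cycle $s_*[\cdot]^{\mathrm{vir}}$ is polynomial in the multiplicities. By Hain's formula and the $\lambda_g$ factor, $\lambda_g\,\DR$ reduces to compact type, where it is $\lambda_g\exp$ of a quadratic form in the $a_i$. The push-forward of the $b$-linear part then produces the factor $b(2g-2+n)$ through $\pi_*\psi$-type terms and the $\kappa_0$ contribution. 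The $b^0$ term gives the factor $\suma$, which cancels the normalization $\frac{1}{\suma}$ in the definition of $\bA_{g,n}$. This is the known string/dilaton behaviour of the DR hierarchy's $\bA$-classes from \cite{BDGR20,BGR19,BS22}, and we would adapt those proofs to the $\bA^1\to\bA$ normalization.

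The main step is LRT-$1$, namely $\deg_a(\Coeff{b_1^0}{}^{\bA}\!B^1_{g,n}-\bA^1_{g,n})\le 2g-1$. We would identify $\bA^1_{g,n}$ with a sum over leveled rooted trees by degenerating the rubber target. The expansion of $\frac{1}{1-t^*\psi_0}$ together with the relation $t^*\psi_0=\suma\psi_{n+1}-\sum(\text{boundary divisors of the target})$ produces exactly the level structures, with vertex contributions given by rubber integrals at each level. It also produces edge weights $a(e)$ and the sign $(-1)^{\ell(T)}$. Each level is a disconnected rubber map whose components, after forgetting the frozen point, give the $\bA$-classes at non-root vertices and an $\bA^1$-type class at the root.

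The truncation condition $d^{\mathrm{lvl}}(i)\le 2g^{\mathrm{lvl}}(i)-1$ should then follow from a dimension count. A component of $\lambda_g\DR$ in Chow degree exceeding $2g-1$ in $a$ vanishes beyond the polynomial range, by the $\lambda_g$-vanishing/DR degree bound: $\lambda_g\DR_g$ has degree $2g$ and terms of higher $a$-degree are controlled. We expect this level-by-level degree bookkeeping, matching the truncation in the definition of ${}^{O}\!B$ with the vanishing of high-degree parts, to be the main obstacle. We would first try to reduce it to the master relation of \cite{BLS-DRDZ,BSS25}, which is stated in a similar leveled-tree form, and which we would invoke directly.
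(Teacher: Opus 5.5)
The genuine gap is in LRT-$1$, which you yourself flag as the main obstacle; your treatment of polynomiality, homogeneity and the genus-$0$ normalization defers to the literature just as the paper does. Passing to a master relation is the right move, since Lemma~\ref{lem: LRS - master equivalence} gives $\text{M-}1\Leftrightarrow\text{LRT-}1$ for any family. But you cannot then invoke the master relation of \cite{BLS-DRDZ,BSS25} ``directly''. The class ${}^O\Xi^1_{g,n}$ contains $O$ itself, both as the one-vertex term and at the root of every tree. What those references prove is the case $O=\bPsi$, and \cite{BLS-Omega} proves the case $O=\bOm$ at $b=0$. M-$1$ for $O=\bA$ is exactly the statement to be proved, and your proposal gives no argument for it. The suggested dimension count cannot replace one, because LRT-$1$ is a nontrivial tautological relation, not a vanishing statement. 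The class $\bA^1_{g,n}$ contains $\lambda_g s_*(t^*\psi_0^d[\cdots]^{\mathrm{vir}})$ in Chow degree $2g+d$ for every $d\geq 0$, while $\lambda_g\DR_g$ sits in degree exactly $2g$. So no degree bound on $\lambda_g\DR_g$ can account for the truncation $d^{\mathrm{lvl}}(i)\leq 2g^{\mathrm{lvl}}(i)-1$ in ${}^{\bA}\!B^1_{g,n}$.

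What is missing is the mechanism the paper actually uses, which has three steps.
\begin{enumerate}
\item The root of ${}^{\bA}\!B^1_{g,n}$ carries $\Coeff{b_1^0}$ of $\bA_{g(v_r),|H(v_r)|}(\dots,b_1)$, not an $\bA^1$-type class. The paper identifies $\Coeff{b_1^0}\bA^1_{g,n+1}(a_1,\dots,a_n,b_1)$ with the class $\bA^{1,\star}_{g,n}$, built from rubber maps with an extra \emph{free} marked point. This uses the splitting $t^*\psi_0=b_1 s^*\psi_{n+1}+D_{n+1}$, polynomiality of $\DR$, and the relation $\psi_0=\Delta_1$ on the Losev--Manin space.
\item This rewrites the M-$1$ expression as $\frac{1}{\suma}\pi_*$ of a one-level tree sum with $\bA^{1,\star}$ at the root and $\bD$ at the other vertices. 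The term $\bD_{g,n+1}$ arises when the root is contracted by $\pi$.
\item That sum equals $\lambda_g\DR_g(a_1,\dots,a_n,0,-\suma)$ plus $\lambda_g s_*(t^*Y[\cdots]^{\mathrm{vir}})$ for an explicit class $Y$ on $LM_{2g+n}$. Topological recursion on $LM_{2g+n}$ gives $Y=0$, so the degree is exactly $2g$.
\end{enumerate}
Your idea of degenerating the target and expanding $1/(1-t^*\psi_0)$ points in this direction. Without the free-point class, the reduction to one-level trees, and the cancellation $Y=0$, it does not produce the relation.

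A smaller point concerns the push-forward property. Hain's formula controls only the $t^*\psi_0$-free part of $\bA^1$. The coefficient $2g-2+n$ appears only after dividing the $\bA^1$-level coefficient $2g-1+n$ by $\suma+b$ and expanding in $b$. Citing \cite{BGR19} is legitimate only together with \cite[Lemma 2.6]{BLS-DRDZ}. The paper instead proves it directly via the splitting above, \cite[Prop.~2.12]{BLS-Quantum}, and the degeneration formulas of \cite{BSSZ}.
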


The DR hierarchies of conservations laws is defined for F-CohFTs in~\cite{BR-spin}, and in Hamiltonian form for P-CohFTs in~\cite{Bur,BR-spin} that can also be considered in the normal coordinates~\cite{BDGR1}. It follows from~\cite{BDGR1,BDGR20,BS22} in combination with the further observations and simplifications in~\cite{BLS-DRDZ,BLS-Quantum} that using $\{\bA_{g,n}\}$ as integrable observables  with an F-CohFT (resp., P-CohFT) yields the DR hierarchy in ``normal'' coordinates. We discuss this in details in Sec.~\ref{sec:integrable}. 

\smallskip

% \begin{remark} Results of~\cite{BDGR1,BDGR20,BS22} in combination with the further observations and simplifications in~\cite{BLS-DRDZ,BLS-Quantum} imply that using these integrable observables with an F-CohFT (resp., P-CohFT), one obtains that DR hierarchy in natural (resp., normal) coordinates. We discuss this further in Sec.~\ref{sec:integrable}.
% \end{remark}

Let us also present a genuinely new example of integrable observables. More precisely, the classes themselves are not new, but their interpretation and usage as integrable observables is a new result. 
To this end, we recall the definition of the so-called $\Omega$-class. Let $g,n$ be two nonnegative integers such that $2g-2+n>0$. Let $r$ and $s$ be integers such that $r$ is positive, and let $a_1, \ldots, a_n$ be integers satisfying the modular constraint
\begin{equation} \label{eq: modular constraint}
	\suma \equiv (2g-2+n)s \pmod{r}
\end{equation}
(recall that $\suma \coloneqq \sum_{i=1}^n a_i$).
We denote by  $\overline{\mathcal{M}}_{g}^{r,s}(a_1, \ldots, a_n)$ the moduli stack of $r$-spin structures parametrizing $r$-th roots $L$ of the line bundle
\begin{equation}
	L^{\otimes r} \cong \omega_{C, \log}^{\otimes s}\biggl(-\sum_{i=1}^n a_i p_i \biggr),
\end{equation}
where $\omega_{C, \log} = \omega_C(\sum_i p_i)$ is the log-canonical bundle of the stable curve $C$. The modular condition~\eqref{eq: modular constraint} is a necessary, but also sufficient, condition for the existence of a $r$th root. Let $\pi \colon \overline{\mathcal{C}}_{g}^{r,s}(a_1, \ldots, a_n) \to \overline{\mathcal{M}}_{g}^{r,s}(a_1, \ldots, a_n)$ be the universal curve, and $\mathcal{L} \to \overline{\mathcal C}_{g}^{r,s}(a_1, \ldots, a_n)$ the universal $r$-th root. 
There is moreover a natural forgetful morphism
\begin{equation}
    \label{epsilon map}
	\epsilon \colon
	\overline{\mathcal{M}}^{r,s}_{g}(a_1, \ldots, a_n)
	\longrightarrow
	\overline{\mathcal{M}}_{g,n}
\end{equation}
which forgets the line bundle. The $\Omega$-classes are defined as
\begin{equation}\label{eqn:Omega}
	\Omega_{g,n}^{[x]}(r,s;a_1,\dots,a_n)
	=
	\epsilon_{\ast}
	\left(\sum_{i\geq 0} x^i c_i\left( -R^*\pi_* \mathcal L \right) 
	\right)
	\in
	R^{*}(\overline{\mathcal{M}}_{g,n}).
\end{equation}
The $\bOm$-classes are defined as a very specific parametrisation of the $\Omega$-classes, precisely as
\begin{align}
	\bOm_{g,n}(a_1,\dots,a_n) \coloneqq 
	%\lambda_g \hej_{g,n}(a_1,\dots,a_n) = 
	\suma^{1-g} \lambda_g \Omega^{[\suma]}_{g,n}(\suma,0;-a_1,\dots,-a_n).
\end{align}

\begin{remark}
Due to its importance, in particular for the computations with the quantized integrable systems, the class $\suma^{1-g} \Omega^{[\suma]}_{g,n}(\suma,0;-a_1,\dots,-a_n)$ got a special notation $\hej_{g,n}(a_1,\dots,a_n)$ in~\cite{BLS-Omega,BLS-Quantum}. With respect to this notation $\bOm_{g,n} = \lambda_g\hej_{g,n}$, cf.~\cite[Def.~3.1]{BLS-Omega}.
%, where $\hej$ is the class mentioned in Section~\ref{sub:sec:hej}.
\end{remark}

\begin{remark} This way the classes $\bOm_{g,n}(a_1,\dots,a_n)$ are only defined for $a_1,\dots,a_n\in \Z$. One can show, however, that  $\bOm_{g,n}(a_1,\dots,a_n)$ for integer $a_1,\dots,a_n$ are the values of a polynomial in $n$ variables, and abusing notation we denote it by $\bOm_{g,n}(a_1,\dots,a_n)$ as well. Thus, $\bOm_{g,n}(a_1,\dots,a_n) \in R^* (\oM_{g,n})\otimes_\Q \Q[a_1,\dots,a_n]$. Note also that
these classes possess the same homogeneity property as in Definition~\ref{def:int-obs}, see Sec.~\ref{subsec: Poly Homo g0 Hejgn}.
%.  and Remark~\ref{rem:homogeneity}.
\end{remark}

\begin{remark} By properties of the $\Omega$-class (see \cite{BLS-Omega, GLN23}) we have:
\begin{equation}
\label{eq:Omega-negative-ai-psi}
\Omega^{[\suma]}_{g,n}(\suma,0;-a_1,\dots,-a_n) = \frac{\Omega^{[\suma]}_{g,n}(\suma,0;\suma-a_1,\dots,\suma-a_n)}{\prod_{i=1}^n (1-a_i\psi_i)}
\end{equation}
so that 
\begin{equation}\label{eq:hey:omega}
\bOm_{g,n}(a_1,\dots,a_n) = \suma^{1-g} \lambda_g \Omega^{[\suma]}_{g,n}(\suma, 0; \suma - a_1,\dots,\suma - a_n) \bPsi_{g,n}(a_1,\dots,a_n).
\end{equation}    
\end{remark}

\begin{theorem} \label{thm: Hej} The classes $\{\bOm_{g,n}\}$ are integrable observables of type $1$ and of type $2$.
\end{theorem}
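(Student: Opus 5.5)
We have to check the four properties of Definition~\ref{def:int-obs} for $\bOm_{g,n}$; the first three are essentially structural and the last is the substance.

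\emph{Polynomiality, homogeneity and genus $0$ normalization.} We use Chiodo's formula for the Chern character of $-R^*\pi_*\mathcal L$. Its terms are Bernoulli polynomials $B_{d+1}(a_i/r)$, $B_{d+1}(s/r)$ and boundary contributions, multiplied by $r^{d}$ and weighted by $x^{\cdot}$. After the specialisation $r=x=\suma$, $s=0$, $a_i\mapsto -a_i$, the $r^{2g-1}$ degree of the pushforward $\epsilon_*$ combines with the prefactor $\suma^{1-g}$, so that each degree-$d$ Chow component becomes a homogeneous polynomial of degree $d$ in the $a_i$. The factor $\lambda_g$ kills all boundary strata with a non-separating edge, which is what allows polynomiality along all of $\Z^n$. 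In genus $0$, $\lambda_0=1$ and the root is unique. Formula~\eqref{eq:hey:omega} reduces the statement to showing that $\Omega^{[\suma]}_{0,n}(\suma,0;\suma-a_1,\dots,\suma-a_n)=\suma^{-1}$. This holds because for $r$-spin with $s=0$ and weights $r-a_i\in[0,r)$, $R^0\pi_*\mathcal L=0$ and $R^1\pi_*\mathcal L=0$ by degree count in genus $0$, so only the degree-$0$ term survives. This gives $\bOm_{0,n}=\bPsi_{0,n}$.

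\emph{String and dilaton equations.} We would use the known pullback property $\pi^*\Omega_{g,n}(r,s;a_1,\dots,a_n)=\Omega_{g,n+1}(r,s;a_1,\dots,a_n,s)$ together with $\pi^*\lambda_g=\lambda_g$. Adding a point with $a_{n+1}=b$ changes $\suma$ to $\suma+b$ and the last weight to $-b$. We expand to first order in $b$, using the $\psi$-factor in~\eqref{eq:hey:omega}: $\pi_*\frac{1}{1-b\psi_{n+1}}\prod_i\frac{1}{1-a_i\psi_i}$ produces $\sum a_i$ at order $b^0$ after the string correction and $2g-2+n$ at order $b^1$. We then also have to verify that the $b$-derivative of the prefactor $(\suma+b)^{1-g}$ and of the Chiodo parameters cancels. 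Alternatively we would cite the corresponding computation in \cite{BLS-Omega}, where the string/dilaton behaviour of $\hej$ was established, and only check the dilaton coefficient via the known degree-$(2g-2+n)$ identity.

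\emph{LRT-$1$ and LRT-$2$.} This is the main obstacle. Our plan is to import the results of \cite{BLS-Omega,BLRS}, where it was shown that $\bA$-classes interact with $\bOm$ exactly as conjectured for $\psi$. Concretely, that work proves a relation expressing $\bA^1_{g,n}$ (resp.\ $\bA_{g,n}$) as a sum over stable rooted trees of $\bOm$-classes at vertices with edge factors $a(e)$, modulo terms of $a$-degree $\le 2g-1$. The strategy is to reorganise that sum over trees into the leveled sum ${}^{\bOm}\!B^1_{g,n}$: the level structure with the inequalities $d^{\mathrm{lvl}}(i)\le 2g^{\mathrm{lvl}}(i)-1$ arises by iterating the relation inductively on genus and truncating, i.e.\ by inverting the tree transformation. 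The difference is then controlled by homogeneity (Remark~\ref{rem:homogeneity}). For LRT-$2$ we would use the localization on $\oM_g^\sim(\mathbb P^1,\dots)$ with two points at $\infty$, or equivalently the $m=2$ analogue of the $\Omega$–DR relation via the $\lambda_g$-vanishing of DR cycles with two negative entries, $\lambda_g\DR_g(a,-b_1,-b_2)$. The $\lambda_g\Omega$ terms appear through the formula $\lambda_g\DR=\lambda_g\cdot$(degree-$g$ part of $\Omega$ at $r\to$ polynomial). Degree counting shows that the components of $a$-degree $>2g$ vanish, because $\lambda_g$ times a class of degree $>2g-3+n+2$ vanishes on rational-tails-type strata. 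The delicate point is matching precisely the level inequalities and the signs $(-1)^{\ell(T)}$ with the recursive tree expansion. We expect this bookkeeping, rather than any new geometric input, to be the crux.
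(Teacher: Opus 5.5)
Your treatment of polynomiality, homogeneity and LRT-$1$ is acceptable. LRT-$1$ only involves $\Coeff{b_1^0}$, i.e.\ the specialization $b=0$, and this is exactly what \cite{BLS-Omega} proves, already in leveled form, so no reorganisation of trees is needed.

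The genuine gap is LRT-$2$. It bounds $\deg_a\Coeff{b_1^pb_2^0}{}^{\bOm}\!B^2_{g,n}$ for \emph{every} $p$. It therefore involves root classes $\bOm(v_r)$ with non-trivial dependence on the frozen variables $b_1,b_2$. Nothing in \cite{BLRS,BLS-Omega} covers $b\neq0$. No bookkeeping can produce it either, because the combinatorial step (master relations $\Rightarrow$ LRT) is already Lemma~\ref{lem: LRS - master equivalence}.

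What is missing is the paper's new geometric input: the $b$-dependent geometric master relations of Prop.~\ref{prop: geometric master}. One localizes on twisted stable maps to $(\mathbb{P}^1[A],0)$ with $A=\suma+\sum_j b_j$, realising the frozen legs as \emph{orbifold} markings of twist $A-b_j$.
\begin{itemize}
\item These markings are automatically mapped to $\infty$.
\item The vertex over $\infty$ carries an $A$-spin structure and, by \eqref{eq:Omega-negative-ai-psi}, contributes exactly $\bOm_{g,n+m}(a_1,\dots,a_n,b_1,\dots,b_m)\prod_j(1-b_j\psi_{n+j})$. Here $r=x=A$ is the sum of \emph{all} arguments, as $\bOm$ requires.
\item Stable vertices over $0$ give $\bD$.
\end{itemize}
Extracting negative powers of $u$, pushing forward along $s$ and multiplying by $\lambda_g$ gives GM-$m$. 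Lemmas~\ref{lem: geometric master} and~\ref{lem: LRS - master equivalence} then yield M-$1$, M-$2$ and LRT-$2$.

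The substitutes you propose do not work:
\begin{itemize}
\item Rubber moduli have no torus action left to localize with, since the $\mathbb{C}^*$ has been quotiented out.
\item Localization with an ordinary $\mathbb{P}^1$ target produces Hodge classes, not $\Omega$-classes.
\item The formula of \cite{JPPZ} realises $\DR_g$ as the constant term in $r$ of the degree-$g$ part of an $\Omega$-class for large $r$. That is a different specialization from $r=x=\suma$.
\item There is no known vanishing of $\lambda_g\DR_g(a,-b_1,-b_2)$ yielding the relation.
\item Dimension counting only gives $\lambda_g R^{>2g-1+n}(\oM_{g,n+2})=0$. This bounds the Chow degree $\deg_a+p$ of the $b_1^p$-coefficient, which is far weaker than $\deg_a\le 2g$.
\end{itemize}

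Two smaller points. For the dilaton equation you correctly isolate what must vanish, namely \eqref{eq:vanishing-term}, but you neither prove it nor can cite it: only the string equation is in the literature \cite{GLN23}. The paper proceeds as follows:
\begin{itemize}
\item It restricts to compact type and writes the $\Omega$-factor as $(\suma+b)^{2g-1}e^{f}$ (Lemma~\ref{lem:dilaton:hey}), so the prefactor contributes $g/\suma$.
\item It checks the cancellation of this with $\partial_b f$ through explicit push-forwards and Bernoulli identities (Lemma~\ref{lem:dilaton:hey:2}).
\end{itemize}

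In genus $0$ your reduction to $\Omega^{[\suma]}_{0,n}(\suma,0;\suma-a_1,\dots,\suma-a_n)=\suma^{-1}$ is right, but the reason is wrong. Since $\deg|L|=-(n-1)$, the class $-R^\bullet\pi_*\mathcal L$ has rank $n-2$, so $R^1\pi_*\mathcal L\neq0$ for $n\geq3$. What is true is that its positive-degree Chern classes vanish. For $n=4$, with $y_i=a_i/\suma$, the degree-one term is proportional to $\frac16-\sum_iB_2(y_i)+\sum_{j\ge2}B_2(y_1+y_j)=0$. You should take this from \cite[Lemma~3.8]{BLRS}, as the paper does.
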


\begin{definition}[$\bOm$-hierarchy] The system of conservation laws associated to an F-CohFT (resp. the Hamiltonian tau-symmetric integrable system associated to a P-CohFT) associated to the integrable observables $\{\bOm_{g,n}\}$ is called the \emph{$\bOm$-hierarchy}.
\end{definition}

Theorems~\ref{thm: integrable observables vector potential} and~\ref{thm: integrable observables tau function} imply that these three hierarchies are equivalent via Miura transformations. Further relations between these three hierarchies are discussed in Sec.~\ref{sec: interrelations}.

We summarize the integrable observable properties of the three instances in the table below.

{\tiny
\begin{table}[H]
\centering
\begin{tabular}{|p{3.1cm}|p{3.1cm}|p{3.1cm}|p{3.1cm}|p{3.1cm}|}
\hline
Integrable    Observables & $\bPsi$ & $\bOm$ & $\bA$ \\ \hline\hline

String equation  
& %\begin{minipage}[t]{2.8cm}\centering
\cite[Eq.~(2.36)]{Witten-KdV}
%\end{minipage}
& %\begin{minipage}[c]{3.0cm}\centering
\cite[Thm 4.1(v)]{GLN23} or Sec.~\ref{subsubsec: push-forward HHej}
%\end{minipage}
& %\begin{minipage}[c]{3cm}\centering
\cite[Prop.~4.2]{BGR19} w/
\cite[Lemma 2.6]{BLS-DRDZ}, new proof in Sec.~\ref{subsubsec: push-forward A}
%\end{minipage}
\\ \hline

Dilaton equation
& %\begin{minipage}[t]{2.8cm}\centering
\cite[Eq.~(2.44)]{Witten-KdV}
%\end{minipage}
& %\begin{minipage}[c]{3.0cm}\centering
Sec.~\ref{subsubsec: push-forward HHej}
%\end{minipage}
& %begin{minipage}[c]{3cm}\centering
\cite[Prop.~4.5]{BGR19} w/ \cite[Lemma 2.6]{BLS-DRDZ}, new proof in Sec.~\ref{subsubsec: push-forward A}
%\end{minipage}
\\ \hline

Genus zero 
& %\begin{minipage}[c]{3.1cm}\centering
by definition
%\end{minipage}
& %\begin{minipage}[c]{3.1cm}\centering
\cite[Lemma 3.8]{BLRS} w/ Eq.~(\ref{eq:Omega-negative-ai-psi})
%\end{minipage}
& %\begin{minipage}[c]{3.1cm}\centering
%\cite[Thm.~2.3]{BS22},\\
%\cite[Thm 2.8]{BLS-DRDZ},\\
%\cite[Sec.~2.2]{BSS25}
\cite[Prop.~4.4]{BGR19} w/
\cite[Lemma 2.6]{BLS-DRDZ}
%\end{minipage}
\\ \hline

%\begin{minipage}[c]{2.8cm}\centering
Homogeneity
%\end{minipage} 
& %\begin{minipage}[c]{2.8cm}\centering
by definition
%\end{minipage}
& %\begin{minipage}[c]{3cm}\centering
\cite[Prop.~3.4]{BLRS}
%\end{minipage}
& \cite[Prop.~2.3]{BLS-Quantum} 
\\ \hline

%\begin{minipage}[c]{3cm}\centering
LRT-$1$
%\end{minipage}
& %\begin{minipage}[c]{2.8cm}\centering
\cite[Thm~2.1]{BSS25} w/ \cite[Thm.~3]{BLS-DRDZ}
%\end{minipage}
& %\begin{minipage}[c]{3.0cm}\centering
\cite[Thm.~3.4]{BLS-Omega}
%\end{minipage}
& Sec.~\ref{subsubsec:master-A} 
\\ \hline

%\begin{minipage}[c]{3.1cm}\centering
LRT-$2$
%\end{minipage}
& %\begin{minipage}[c]{3.1cm}\centering
\cite[Thm~2.1]{BSS25} w/ \cite[Thm.~3]{BLS-DRDZ}
%\end{minipage}
& 
%\begin{minipage}[c]{3.0cm}\centering
Prop.~\ref{prop: geometric master} $\&$ Sec.~\ref{sec:localization}
%\end{minipage}
& $\qquad\quad\quad$ \textemdash 
\\ \hline
\end{tabular}
\end{table}
}

Note that in many cases there are different possible proofs available in the literature, and we discuss below the possible alternative routes and in some cases give new proofs for the required statements. 

\subsection{Organization of the paper}  In Sec.~\ref{sec:srt-master} we discuss the system of leveled rooted tree relations and their relatives called master relations, which gives concrete tools to prove them. In Sec.~\ref{sec:integrable} we survey how integrable observables are used for constructions of integrable systems, formulate precise versions of Theorems~\ref{thm: integrable observables vector potential} and~\ref{thm: integrable observables tau function}, and discuss relations between these systems. In Sec.~\ref{sec:proofs} we give proofs (or, in some cases, guides to the literature once the proofs are already available) of all integrable observables properties for the three examples that we have, outsourcing the localization argument that we need for the leveled tree relations for $\{\bOm_{g,n}\}$ to Sec.~\ref{sec:localization}. Finally, in Sec.~\ref{sec:examples} we present some computations given as examples. 

\subsection{Acknowledgments} X.~B.~and S.~S.~are supported by the Dutch Research Council, grant no.~OCENW.M.21.233. D.~L.~is supported by the University of Trieste and by the INdAM group
GNSAGA.

%%%%%%%%%%%%%%%%%%%%%%%%%%%%%%%%%%%%%%%%%%%%%%%%%%%%%%%%%%%%%%%%%%%
\vspace{1cm}
\section{Leveled rooted tree relations and master relations}
\label{sec:srt-master}
%%%%%%%%%%%%%%%%%%%%%%%%%%%%%%%%%%%%%%%%%%%%%%%%%%%%%%%%%%%%%%%%%%%

\subsection{Full system of leveled rooted tree relations}

As we mentioned in Remark~\ref{rem: full system of leveled rooted tree relations}, it is natural to consider the  leveled rooted tree relations of type~$m$ (LRT-$m$), which we recall here for convenience:
\begin{align}
    \text{LRT-m:}\qquad & \deg_a {}^{O}\!B_{g,n}^{m} & & \leq2g-2+m, \qquad  m\geq 2; \\
\text{LRT-1:}\qquad & \deg_a \left(\Coeff{b_1^0}{}^{O}\!B_{g,n}^{1}-\bA_{g,n}^{1}\right) & & \leq 2g-1; 
\\ 
\text{LRT-0:}\qquad &\deg_a \left({}^{O}\!B_{g,n}^{0} - \bA_{g,n} \right)& & \leq 2g-2.
\end{align} 

The type $1$ relation has the following direct corollary:
\begin{lemma}[LRT-$1$ implies LRT-0] \label{lemma: Type 0 relation} If $\{O_{g,n}\}$ satisfy the leveled rooted tree relations of type $1$, then for any $g\geq 0$, $n\geq 1$, $2g+n>0$, we have
\begin{align} \label{eq: LRT relation 0}
    \deg_a \left({}^{O}\!B_{g,n}^{0}-\bA_{g,n}\right)\leq
    2g-2.
\end{align}
\end{lemma}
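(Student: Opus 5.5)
The plan is to push forward the LRT-$1$ relation along the map $\pi\colon\oM_{g,n+1}\to\oM_{g,n}$ that forgets the frozen leg $\sigma_{n+1}$. By definition $\bA_{g,n}=\frac{1}{\suma}\pi_*\bA^1_{g,n}$, so it suffices to establish an identity
\begin{align}
\pi_*\Coeff{b_1^0}{}^{O}\!B_{g,n}^{1} = \suma\,{}^{O}\!B_{g,n}^{0} + (\text{terms of }\deg_a\leq 2g-1).
\end{align}
Granting this, LRT-$1$ gives $\deg_a\bigl(\pi_*\Coeff{b_1^0}{}^{O}\!B^1_{g,n} - \pi_*\bA^1_{g,n}\bigr)\leq 2g-1$, because $\pi_*$ does not change $\deg_a$. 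Dividing by $\suma$ then yields $\deg_a({}^{O}\!B^0_{g,n}-\bA_{g,n})\leq 2g-2$. The division needs justification, which is handled below.

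To compute $\pi_*$ tree by tree, observe that the frozen leg sits on the root vertex. Forgetting it therefore acts only on the root factor $O(v_r)=O_{g_0,|H(v_r)|}(a(h_1),\dots,a(h_k),b_1)$, setting $b_1=0$. When the root stays stable after forgetting, the push-forward property \eqref{eq:push-forward-O} at $b=0$ (the string equation) gives $\pi_*O(v_r)|_{b_1=0}=\bigl(\sum_j a(h_j)\bigr)O_{g_0,k}(\dots)=\suma\, O_{g_0,k}(\dots)$. Here we use that $\sum_j a(h_j)=\suma$, since every regular leg descends from exactly one positive half-edge at the root. This factor has $\deg_a$ one higher than $O(v_r)_{\deg_a=d}$.

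The level constraints then need to be compared. For $m=1$ they read $d^{\mathrm{lvl}}(i)\leq 2g^{\mathrm{lvl}}(i)-1$ for $i<\ell(T)$, while for $m=0$ they read $\leq 2g^{\mathrm{lvl}}(i)-2$. The trees and levels match bijectively, and a term of root degree $d$ in $B^1$ becomes a term of root degree $d$ times $\suma$. The difference between the two sums is therefore exactly the contributions in which some level $i<\ell(T)$ has $d^{\mathrm{lvl}}(i)=2g^{\mathrm{lvl}}(i)-1$, and for these I must bound the total $\deg_a$. The total $\deg_a$ of a term is $\sum_v d(v)+|E(T)|=d(T)$. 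After the extra factor $\suma$ it is $d(T)+1$, and I expect the terms with saturated intermediate levels to still contribute $\deg_a\leq 2g-1$.

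This last bound is the main obstacle. The top level is unconstrained, so degrees there can be large; what is actually needed is that these excess contributions are divisible by $\suma$ with quotient of degree $\leq 2g-2$ in the relevant range. My first attempt would be to reorganize the sum by the lowest saturated level $i$ and cut the tree there. The part above level $i$ then assembles into pieces of the same shape as $B^1$ or $B^0$ for smaller $(g,n)$, and induction on $g$ (applying LRT-$1$ in lower genus) would kill the high-degree part.

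Unstable roots also need attention. When the root is $(0,3)$ with the frozen leg and two positive half-edges, forgetting the leg contracts an edge. The contraction pairs the term with a tree having one fewer level, which explains the sign $(-1)^{\ell(T)}$ and the factor $a(e)$. I would check directly that these contributions cancel, or reproduce $\suma\,{}^{O}\!B^0$ in the $\oM_{0,2}$-type case. Finally, the division by $\suma$ is legitimate because both sides are polynomial classes, ${}^{O}\!B^0_{g,n}$ and $\bA_{g,n}$ by polynomiality, so the degree bound on the quotient follows from the bound on the product.
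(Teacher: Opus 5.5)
Your overall route is the paper's: push forward along the map forgetting the frozen leg, apply the string equation at the root, and divide by $\suma$. However, the degree bookkeeping in the middle step is wrong, and the ``main obstacle'' you leave open is an artifact of that error. As written, the argument is incomplete.

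After taking $\Coeff{b_1^0}$, the root factor $O(v_r)_{\deg_a=d}$ is the Chow-degree-$d$ part of $O_{g_0,k+1}(a(h_1),\dots,a(h_k),0)$. Since $\pi_*$ lowers Chow degree by one, the string equation gives
\[
\pi_*\bigl[O_{g_0,k+1}(a(h_1),\dots,a(h_k),0)\bigr]_{\deg_{R^\bullet}=d}=\suma\,\bigl[O_{g_0,k}(a(h_1),\dots,a(h_k))\bigr]_{\deg_{R^\bullet}=d-1}.
\]
So a term with root degree $d$ becomes $\suma$ times a term with root degree $d-1$, not $d$. Your version would raise $\deg_a$ by one, which contradicts your own (correct) remark that $\pi_*$ preserves $\deg_a$.

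The root lies on every level, so every $d^{\mathrm{lvl}}(i)$ drops by exactly one. The constraint $d^{\mathrm{lvl}}(i)\le 2g^{\mathrm{lvl}}(i)-1$ therefore becomes exactly $d^{\mathrm{lvl}}(i)\le 2g^{\mathrm{lvl}}(i)-2$. Terms with $d(v_r)=0$ push forward to zero, consistent with the bijection $d'(v_r)=d(v_r)-1\geq 0$. Hence
\[
\pi_*\Coeff{b_1^0}{}^{O}\!B^1_{g,n}=\suma\,{}^{O}\!B^0_{g,n}
\]
holds exactly. There is no saturated-level remainder, and no induction on $g$ is needed. This exact identity is what the paper's one-line proof relies on.

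Your concern about unstable roots is also vacuous. When $\ell(T)\geq 1$, the level-$0$ constraint reads $d(v_r)\le 2g(v_r)-1$, which forces $g(v_r)\geq 1$. The root also carries at least one positive half-edge, so it stays stable after forgetting the frozen leg. Consequently $\pi_*$ commutes with $(\mathrm{gl}_T)_*$ without contracting any edge, and no pairing with trees of fewer levels ever arises. The only possible genus-$0$ root is in the one-vertex tree, where stability after forgetting is exactly the condition $2g-2+n>0$ for ${}^{O}\!B^0_{g,n}$.

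With the identity in hand, pushing LRT-$1$ forward gives $\deg_a\bigl(\suma\,({}^{O}\!B^0_{g,n}-\bA_{g,n})\bigr)\le 2g-1$. Since $\suma$ is a non-zero-divisor in $R^*(\oM_{g,n})\otimes_\Q\Q[a_1,\dots,a_n]$, the bound $2g-2$ follows, as you say.
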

\begin{proof}
    Apply the push-forward that forgets the last marked point in~\eqref{eq: type $1$ relation} and divide by $\suma$ (the latter factor emerges in the push-forward of $\Coeff{b_1^0} {}^{O}\!B_{g,n}^{1}$ by the string equation for $\{O_{g,n}\}$, and this matches the definition of $\bA_{g,n}$).
\end{proof}

% As we mentioned in Remark~\ref{rem: full system of leveled rooted tree relations}, for $m\geq 2$ it is natural to consider the $m$-th leveled rooted tree relations (SRT-$m$) defined as 
% \begin{align}
%     \deg_a {}^{O}\!B_{g,n}^{m} & \leq2g-2+m.
% \end{align}
% So, in particular, for $m=2$ the $2$-nd leveled rooted tree relations are more general than the type $2$ relations, since they hold for any $\Coeff{b_1^p b_2^q}$, while for type $2$ we restrict ourselves for $q=0$. Finally, by $1$-st leveled rooted tree relations we call exactly the type $1$ relations~\eqref{eq: type $1$ relation}.

% Denote the full set of $m$-th leveled tree relations by $m\text{-L}$, $m\geq 0$. Let $m\text{-L}_0\coloneqq \Coeff{\prod_{i=1}^m b_i^0} m\text{-L}$, $m\geq 2$.

Denote by $\text{LRT}_0\text{-}m\coloneqq \Coeff{\prod_{i=1}^m b_i^0} \text{LRT-}m$, $m\geq 2$.
Then Lemma~\ref{lemma: Type 0 relation} has the following extension:

\begin{lemma} \label{lem:push-forward} $\pi_* (\text{LRT}_0\text{-}m) = \text{LRT}_0\text{-}(m-1)$ for $m\geq 3$ and $\pi_* (\text{LRT}_0\text{-}2) = 0$. 
\end{lemma}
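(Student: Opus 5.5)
The plan is to compute $\pi_*$ of $\Coeff{\prod_i b_i^0}{}^{O}\!B^{m}_{g,n}$ tree by tree, where $\pi$ forgets the last frozen leg $\sigma_{n+m}$. Since frozen legs are always attached to the root, $\pi_*$ only affects the root vertex class. Setting $b_m=0$ and using the string part of the push-forward property~\eqref{eq:push-forward-O} gives
\begin{equation*}
\pi_*O_{g(v_r),|H(v_r)|}\bigl(a(h_1),\dots,a(h_k),b_1,\dots,b_{m-1},0\bigr)=\Bigl(\sum_{j}a(h_j)+\sum_{i<m}b_i\Bigr)\,O_{g(v_r),|H(v_r)|-1}(\dots).
\end{equation*}
After extracting $\Coeff{\prod b_i^0}$, the prefactor becomes $\sum_j a(h_j)=a(v_r)=\suma$, which is the same for every tree. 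So every tree whose root stays stable after forgetting $\sigma_{n+m}$ contributes $\suma$ times the corresponding term of $\Coeff{b^0}{}^{O}\!B^{m-1}_{g,n}$.

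The next step is to compare the two sides at the level of Chow/$a$-degree. The $a$-degree of each term equals $d(T)=\sum_v d(v)+|V(T)|-1$, since the edge factors $a(e)$ account for the $|V|-1$. After push-forward, the root class loses one Chow degree but gains the factor $\suma$, so the root degree label effectively drops from $d(v_r)$ to $d(v_r)-1$. Hence every $d^{\mathrm{lvl}}(i)$ drops by exactly one. The level truncation $d^{\mathrm{lvl}}(i)\le 2g^{\mathrm{lvl}}(i)-2+m$ therefore becomes exactly the truncation $d^{\mathrm{lvl}}(i)\le 2g^{\mathrm{lvl}}(i)-2+(m-1)$ defining ${}^{O}\!B^{m-1}$. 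The global bound $2g-2+m$ for the total degree similarly becomes $2g-3+m$ after dividing by $\suma$. I need to check carefully that the degree bookkeeping for the root (where $\deg_a$ and $\deg_{R^\bullet}$ differ because of the $b$'s) is compatible with this shift; restricting to $b^0$ should make it clean. The conclusion is that $\pi_*$ applied to the statement ``the components of $\Coeff{b^0}{}^{O}\!B^m_{g,n}$ of $a$-degree $>2g-2+m$ vanish'' is $\suma$ times the statement ``the components of $\Coeff{b^0}{}^{O}\!B^{m-1}_{g,n}$ of $a$-degree $>2g-3+m$ vanish''. This is the content of $\pi_*(\text{LRT}_0\text{-}m)=\text{LRT}_0\text{-}(m-1)$.

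For $m\ge 3$ the root keeps at least two frozen legs and at least one positive half-edge, so it stays stable and no further issue arises. The exceptional situation is $m=2$. Here the root may be of genus $0$ with exactly the two frozen legs and one positive half-edge $h$, and forgetting $\sigma_{n+2}$ destabilises it. I expect this to be the main obstacle. My plan is to show that these unstable-root trees cancel exactly against the stable-root trees. The idea is the following.
\begin{itemize}
\item An unstable-root tree, after contraction, becomes the tree $T'$ obtained by deleting the root and making its child vertex the new root, carrying $\sigma_{n+1}$.
\item This tree has one fewer level, hence the opposite sign $(-1)^{\ell}$.
\item The factor $a(e)=\suma$ coming from the removed edge matches the $\suma$ produced by the string equation on the stable-root trees.
\item The genus $0$ normalization $O_{0,3}=1$ ensures that the vertex classes agree.
\item The level inequalities also match: on the unstable-root tree the constraint at level $0$ reads $d^{\mathrm{lvl}}(0)=d(v_r)=0\le m-2=0$, and the higher levels shift by exactly the one vertex and one degree accounted for above.
\end{itemize}
Verifying this matching of level constraints, with the correct index shift, is the delicate step, and I would check it first on a two-level example. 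If the cancellation goes through, $\pi_*(\text{LRT}_0\text{-}2)=0$ identically.
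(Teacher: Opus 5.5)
Your proposal is correct and follows the route the paper relies on: the paper defers to the argument of BLRS, which uses only the string equation at the root together with the combinatorics of leveled trees. Trees whose root stays stable give $\suma\cdot\Coeff{b^0}{}^{O}\!B^{m-1}_{g,n}$ via the shift $d(v_r)\mapsto d(v_r)-1$. For $m=2$, the steps you flagged as delicate do go through: the genus-$0$ root's unique child is forced to sit alone on level $1$, which gives the sign flip, the edge factor $a(e)=\suma$, and the exact shift of the level constraints. Hence $\pi_*\Coeff{b_1^0b_2^0}{}^{O}\!B^{2}_{g,n}=\suma\,\Coeff{b_1^0}{}^{O}\!B^{1}_{g,n}-\suma\,\Coeff{b_1^0}{}^{O}\!B^{1}_{g,n}=0$.
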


This lemma is an easy observation proved in~\cite{BLRS} for classes $\{\bOm_{g,n}\}$. Since the proof only uses the combinatorics of leveled graphs and the string equation, it can be universally stated for any $\{O_{g,n}\}$ satisfying the string equation.

\begin{remark} \label{rem:LiuWang} Interestingly enough, we see that the statement of Lemma~\ref{lem:push-forward} allows to derive $\text{LRT}_0\text{-}m$ from $\text{LRT}_0\text{-}m'$ for $m'>m$. However, the way these relations are applied to integrable systems (see Sec.~\ref{sec:integrable}) rather suggests that there should be a way to derive $\text{LRT}\text{-}m$ from $\text{LRT}\text{-}2$ for any $m\geq 2$. This is totally not obvious from the structure of these relations, however, it was proved in the context of classes $\{\bPsi_{g,n}\}$ in~\cite{LiuWang}.
\end{remark}

\subsection{The master relations}
For any $g\geq 0, n\geq 1$ such that $2g-2+n \geq 0$ and for any $a_1,\dots, a_{n}$, let 
\begin{align}
	\bD_{g,n+1}(a_1,\dots,a_{n})& \coloneqq -\frac{\lambda_{g}\DR_{g}\big(a_1,\dots,a_{n},-\suma\big)}{(1+\suma\psi_{n+1})} 
\end{align} 
where $\suma \coloneqq\sum_{i=1}^{n} a_i$.
If $T\in \SRT_{g,n,m}$ and $v\in V_{nr}(T)$, we introduce
\begin{align}
	\bD(v)& \coloneqq \bD_{g(v),|H(v)|}(a(h_1),\dots,a(h_{|H_+(v)|}))
	\in R^*(\oM_{g(v),|H(v)|})\otimes_{\Q}Q. 
\end{align}
For $m\geq1$, consider the following class in $R^*(\oM_{g,n+m})\otimes_{\Q}Q$:
\begin{align} \label{eq:masterrel}
& {}^O\Xi_{g,n}^{m}(a_{1},\dots,a_{n},b_{1},\dots,b_{m}) \coloneqq 
\\ \notag & \qquad 
\delta_{m,1}\mathbb{D}_{g,n+1}\left(a_{1},\dots,a_{n}\right)+O_{g,n+m}\left(a_{1},\dots,a_{n},b_{1},\dots,b_{m}\right) 
\\ \notag & \qquad 
  +\sum_{\substack{T\in\SRT(g,n,m)\\ 
\ell\in\mathcal{L}(t),\,\ell(T)=1
}
}\biggl(\prod_{e\in E(T)}a(e)\biggr)(b_{T})_{*}\bigg(O(v_{r})\otimes\bigotimes_{v\in V_{nr}(T)}\bD(v)\bigg),
\end{align}
recalling that $O(v_r)$ is defined in Eq.~\eqref{eq:def-O(v)}.
Note that the Chow degree $d$ of ${}^O\Xi_{g,n}^{m}$ is a homogeneous polynomial of degree $d$ in the variables $a_{1},\dots,a_{n},b_{1},\dots,b_{m}$.

\begin{definition}[Master relations] \label{def: master relation} For $m\geq 2$, we say that the classes $\{O_{g,n}\}$ satisfy the \textit{$m$-th master relation}, or M-$m$, if
\begin{align}
\deg_{a} {}^O\Xi_{g,n}^{m} \leq 2g-2+m. 
\end{align}
For $m=1$, we say that the classes $\{O_{g,n}\}$ satisfy the \textit{$1$-st master relation}, or M-1, if   
\begin{align} \label{eq: m=1 master}
\deg_a \Big( \Coeff{b_1^0} {}^O\Xi_{g,n}^{1}(a_1,\dots,a_n,b_1) 
%- \bA^1_{g,n}(a_1,\dots,a_n) 
\Big) \leq 2g-1.
\end{align}
Note that in this case $\deg_{R^\bullet} = \deg_a$ by homogeneity.
\end{definition}

\subsection{Connections between relations} 
% Denote the full set of $m$-th master relations by $\text{M-}m$, $m\geq 1$. Denote the full set of $m$-th leveled tree relations by $\text{LRT-}m$, $m\geq 0$. 
We have the following system of logical implications between the sets of relations:

\begin{lemma} \label{lem: LRS - master equivalence} $\text{M-}1 \Leftrightarrow \text{LRT-}1$. Moreover, $\text{M-}1\ \&\ \text{M-}m\Rightarrow \text{LRT-}m$. 
\end{lemma}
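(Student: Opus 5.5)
The plan is to compare the two sides vertex by vertex. The only structural difference between ${}^O\Xi^m_{g,n}$ and ${}^O\!B^m_{g,n}$ is that $\Xi$ has one level of non-root vertices decorated by $\bD(v)$. In $B$ these are replaced by an iterated, degree-truncated tower of levels decorated by $O(v)$.

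\textbf{Step 1: rewrite M-$1$ as an expansion of $\bD$.} Read M-$1$ at a non-root vertex, that is, with $b_1=0$ and a single negative half-edge. It says that
\begin{align*}
\bD_{g,n+1}(a_1,\dots,a_n) = -O_{g,n+1}(a_1,\dots,a_n,0) - \sum_{T,\ \ell(T)=1} \Big(\prod_{e}a(e)\Big)(b_T)_*\Big(O(v_r)\otimes\bigotimes_{v\in V_{nr}}\bD(v)\Big)
\end{align*}
modulo terms of $a$-degree $\leq 2g-1$. I will substitute this identity recursively into every $\bD(v)$ on the right-hand side. Each substitution adds one level and one factor $-1$. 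Induction on $2g-2+n$ makes the process terminate, since the vertex classes live on strictly smaller moduli spaces.

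The edge factors $a(e)$ raise the degree by one per edge. This is exactly the term $|\{v:\ell(v)\le l\}|-1$ in $d^{\mathrm{lvl}}$. With this bookkeeping, the error term dropped at a vertex of level $\ge i$ contributes only to trees in which levels $\geq i$ have total low degree. Equivalently, the constraint $d^{\mathrm{lvl}}(i)\le 2g^{\mathrm{lvl}}(i)-2+m$ holds for all $i$ below the top level.

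The outcome of Step 1 is an identity of the form $-\bD_{g,n+1}\equiv \Coeff{b_1^0}{}^O\!B^1_{g,n}$ modulo $a$-degree $\le 2g-1$. More precisely, this holds up to the top-level term of the tree sum, and I expect that term to be treated by the sign convention $(-1)^{\ell(T)}$.

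\textbf{Step 2: identify $-\bD$ with $\bA^1$ in high degree.} I need $\deg_a(\bA^1_{g,n}+\bD_{g,n+1})\leq 2g-1$, or the exact identity relating $\bA^1$ to $\lambda_g\DR_g/(1+\suma\psi_{n+1})$. I plan to take this from the rubber-map localization description of $\bA^1$ (cf.~\cite{BLS-DRDZ}). The argument is that $\lambda_g$ kills all contributions with non-trivial higher-genus edges, and the $t^*\psi_0$ tower reproduces $\bD$ plus lower degree terms. Combining this with Step 1 gives M-$1\Rightarrow$ LRT-$1$.

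For the converse, I will run the same substitution backwards, by induction on $(g,n)$. Assume LRT-$1$ in all smaller cases, so that $\bD(v)$ may be replaced by truncated $B$-trees at non-root vertices. Then collapse the one-level sum in $\Xi^1$ into ${}^O\!B^1$. Since $\Xi^1-(B^1-\bA^1)$ becomes a combination of low-degree terms, this yields LRT-$1\Rightarrow$ M-$1$.

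\textbf{Step 3: M-$1$ and M-$m$ imply LRT-$m$.} Start from $\Xi^m_{g,n}$, which has no $\delta_{m,1}$ term. Its root carries $O(v_r)$ with the frozen variables $b_j$, and its non-root vertices carry $\bD$. Apply the Step 1 expansion, using M-$1$, to each non-root $\bD(v)$. The result is exactly ${}^O\!B^m_{g,n}$ modulo classes whose $a$-degree is at most $2g-2+m$. This uses that the root vertex sits at level $0$ and that the constraint at level $0$ is $d(v_r)\le 2g(v_r)-2+m$. Hence M-$m$ gives LRT-$m$.

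The main obstacle will be the degree bookkeeping in Step 1. I have to check that every discarded term satisfies the bound needed at every intermediate level, not only at the top. The product of edge factors must match the $|\{v:\ell(v)\le l\}|-1$ correction, and the genus bounds at different levels must add up correctly. The second delicate point is the exact sign and normalization in Step 2, which identifies $-\bD$ with the high-degree part of $\bA^1$. My first attempt there would be to verify it in genus $0$, where $\lambda_0=1$ and both sides are explicit, before invoking the general localization argument.
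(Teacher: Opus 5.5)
There is a genuine gap. The outcome you announce for Step~1, $\Coeff{b_1^0}{}^O\!B^1_{g,n}\equiv-\bD_{g,n+1}$ modulo $a$-degree $\le 2g-1$, is false. So is the input you want in Step~2, $\deg_a(\bA^1_{g,n}+\bD_{g,n+1})\le 2g-1$. Take $(g,n)=(0,3)$:
\begin{itemize}
\item Every tree with $\ell(T)\ge 1$ violates the level-$0$ constraint $d(v_r)\le -1$. Hence $\Coeff{b_1^0}{}^O\!B^1_{0,3}=O_{0,4}(a_1,a_2,a_3,0)=1+\suma\,[\mathrm{pt}]$ on $\oM_{0,4}$.
\item This is also $\bA^1_{0,3}$: its degree-one part is forced by $\frac{1}{\suma}\pi_*\bA^1_{0,3}=\bA_{0,3}=1$.
\item However $-\bD_{0,4}=1-\suma\psi_4=1-\suma\,[\mathrm{pt}]$.
\item M-$1$ nevertheless holds here: $\bD_{0,4}+O_{0,4}(a_1,a_2,a_3,0)-\sum_{j<k}(a_j+a_k)\,\delta_{jk}=0$, with $\delta_{jk}=[\mathrm{pt}]$.
\end{itemize}

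The error is the substitution ``modulo $a$-degree $\le 2g-1$''. Low-degree classes are not an ideal for grafting. A term of degree $2h-1$ at an upper vertex of genus $h$, times $a(e)$ and a root part of degree $\ge 2g(v_r)$, lands in degree $\ge 2g$. If you keep the terms exactly, your recursive substitution produces the \emph{untruncated} leveled sum plus grafted copies of the M-$1$ defect; here you use $\sum_{\ell\in\mathcal{L}(T)}(-1)^{\ell(T)}=(-1)^{|V_{nr}(T)|}$. It is precisely the trees removed by the truncation that separate $-\bD$ from $\bA^1$; in the example they account for the difference $2\suma\,[\mathrm{pt}]$. Note also that $d^{\mathrm{lvl}}(i)$ bounds the part of the tree at levels $\le i$, which contains the root, not the levels $\ge i$. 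Your converse and your Step~3 rely on the same replacement of $\bD(v)$ by truncated trees, so they inherit the problem.

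What is needed instead has two parts. Write $X\circ Y$ for one-level grafting: a root from $X$ and, at each regular leg, either nothing or a vertex from $Y$, with the factors $a(e)$. Let $\mathcal{O}$ and $\mathcal{D}$ be the families $O_{g,n+1}(a_1,\dots,a_n,0)$ and $\bD_{g,n+1}$. Call a class low (resp.\ high) if its degree is $\le 2g-1$ (resp.\ $\ge 2g$).

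First, you need a tree-level, not vertexwise, relation between $\bA^1$ and $\bD$, namely $(1+\bA^1)\circ(1+\mathcal{D})=1$, i.e.
\[
\bA^1_{g,n}+\bD_{g,n+1}+\sum_{T\in\SRT(g,n,1),\,\ell(T)=1}\Big(\prod_{e\in E(T)}a(e)\Big)(b_T)_*\Big(\bA^1(v_r)\otimes\bigotimes_{v\in V_{nr}(T)}\bD(v)\Big)=0,
\]
with $\bA^1(v_r)\coloneqq\bA^1_{g(v_r),|H_+(v_r)|}(a(h_1),\dots,a(h_{|H_+(v_r)|}))$. This is where rubber maps and the Losev--Manin boundary enter: one compares $\frac{1}{1-t^*\psi_0}$ with $\frac{1}{1+\suma\psi_{n+1}}$, as with the class $Y$ in the proof for $\bA^{1,\star}$. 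In genus $0$, where $\bA^1_{0,n}=\bPsi_{0,n+1}(a_1,\dots,a_n,0)$, the identity is just M-$1$ for $\bPsi$.

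Second, the truncation has to be handled structurally:
\begin{itemize}
\item Cutting a leveled tree below its top level shows that $\beta\coloneqq\Coeff{b_1^0}{}^O\!B^1$ satisfies $(1-\beta_<)\circ(1+\mathcal{O})=1+\beta_\ge$, where $\beta_<$ and $\beta_\ge$ are its low and high parts.
\item M-$1$ reads $(1+\mathcal{O})\circ(1+\mathcal{D})=1+\Xi$ with $\Xi\coloneqq\Coeff{b_1^0}{}^O\Xi^1$ low. Combined with the identity above, $1+\mathcal{O}=(1+\Xi)\circ(1+\bA^1)$.
\item A low root with low upper vertices is low, so $(1-\beta_<)^{-1}$ is again of the form $1+{}$low.
\item Factorizations $(1+\text{low})\circ(1+\text{high})$ are unique, by induction on $2g-1+n$. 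This gives $\beta_\ge=\bA^1$, which is LRT-$1$, and the argument also runs backwards.
\end{itemize}

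For $m\ge 2$ no identity for $\bA^1$ is needed. Let $\mathcal{O}^{(m)}$ be the root family $O_{g,n+m}(a_1,\dots,a_n,b_1,\dots,b_m)$, with low now meaning $\deg_a\le 2g-2+m$. The same cut gives $({}^O\!B^m)_{\ge}+({}^O\!B^m)_{<}\circ(1+\mathcal{O})=\mathcal{O}^{(m)}$. Meanwhile M-$m$ and M-$1$ give $\mathcal{O}^{(m)}=\big({}^O\Xi^m\circ(1+\Xi)^{-1}\big)\circ(1+\mathcal{O})$, whose first factor is low. Uniqueness then forces $({}^O\!B^m)_{\ge}=0$.
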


The statements of this Lemma are proved in~\cite{BLS-DRDZ,BLS-Omega} in application to $\{\bPsi_{g,n}\}$ and $\{\bOm_{g,n}\}$ classes, but the proofs use only combinatorics of graphs, properties of double ramification cycles / rubber relative stable maps to the projective line (to treat the $\mathbb{D}_{g,n+1}\left(a_{1},\dots,a_{n}\right)$ class in the master relations), and combinatorics of graphs. 

This suggests some alternatives for the fourth condition in the definition of integrable observables (Def.~\ref{def:int-obs}). Namely, for type $1$ integrability to can alternatively demand $\text{M-}1$ instead of $\text{LRT-}1$. For type $2$ integrability we can demand $\text{M-}1\ \&\ \text{M-}2$. The master relations are easier to prove from the scratch, and indeed, for $\{\bA_{g,n}\}$ we prove below $\text{M-}1$ rather than $\text{LRT-}1$ thus establishing the type $1$ integrability.

The natural statement that can be uniformly proved for $\{\bOm_{g,n}\}$ and $\{\bPsi_{g,n}\}$ is yet a bit different, and we formulate it in the next subsection.

\subsection{The geometric master relation} For $m\geq1$, consider the following class in $R^*(\oM_{g,n+m})\otimes_{\Q}Q$:
\begin{align}
& {}^O\Upsilon_{g,n}^{m}(a_{1},\dots,a_{n},b_{1},\dots,b_{m})  \coloneqq \\
&\delta_{m,1}\mathbb{D}_{g,n+1}\left(a_{1},\dots,a_{n}\right)+O_{g,n+m}\left(a_{1},\dots,a_{n},b_{1},\dots,b_{m}\right)\prod_{i=1}^{m}\left(1-b_{i}\psi_{i+n}\right)\nonumber \\ \notag 
 & +\sum_{\substack{T\in\SRT(g,n,m)\\
\ell\in\mathcal{L}(t),\,\ell(T)=1
}
}\biggl(\prod_{e\in E(T)}a(e)\biggr)(b_{T})_{*}\bigg(O(v_{r})\otimes\bigotimes_{v\in V_{nr}(T)}\bD(v)\bigg)\prod_{i=1}^{m}\left(1-b_{i}\psi_{i+n}\right).
\end{align}
That is, compared to ${}^O\Xi_{g,n}^{m}$ we just multiplied the two non-exceptional summands in~\eqref{eq:masterrel} by $\prod_{i=1}^{m}(1-b_{i}\psi_{i+n})$. In particular, in the case of $\{O_{g,n}=\bPsi_{g,n}\}$, we have
\begin{align}
    {}^\bPsi\Upsilon_{g,n}^{m}(a_{1},\dots,a_{n},b_{1},\dots,b_{m}) 
\end{align}
does not depend on $b_1,\dots,b_m$. 

\begin{definition} Let $m\geq 1$. For a collection of polynomial valued classes $\{O_{g,n}(a_1,\dots,a_n)\}$ satisfying the homogeneity assumption, we say that is satisfies the $m$-th \emph{geometric master relations}, or GM-$m$, if 
\begin{align}
    \deg_{R^\bullet} {}^O\Upsilon_{g,n}^{m}(a_{1},\dots,a_{n},b_{1},\dots,b_{m}) \leq 2g-2+m.
\end{align} 
It satisfies geometric master relations if GM-$m$ holds for any $m \geq 1$.
\end{definition}

This new definition is justified by the following statements and remarks. First, the clear advantage of geometric master relations is that we use the geometrically defined degree $\deg_{R^\bullet}$ in their formulation, and this does help to prove them in examples.

\begin{lemma} \label{lem: geometric master} For a collection of polyomial valued classes $\{O_{g,n}(a_1, \dots, a_n)\}$ we have that GM-$m$ implies M-$m$, for any $m\geq 1$.
\end{lemma}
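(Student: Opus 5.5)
The plan is to relate the two classes directly. Write $P \coloneqq \prod_{i=1}^m (1-b_i\psi_{i+n})$. Comparing definitions,
\begin{align}
{}^O\Upsilon_{g,n}^{m} = \delta_{m,1}\bD_{g,n+1}(1-P) + {}^O\Xi_{g,n}^{m}\, P,
\end{align}
where we used that $P$ multiplies exactly the non-exceptional part ${}^O\Xi_{g,n}^{m}-\delta_{m,1}\bD_{g,n+1}$. Since $P$ is invertible in $R^*(\oM_{g,n+m})\otimes_\Q Q$ (the $\psi$-classes are nilpotent), we can solve for ${}^O\Xi$:
\begin{align}
{}^O\Xi_{g,n}^{m} = P^{-1}\,{}^O\Upsilon_{g,n}^{m} + \delta_{m,1}\bD_{g,n+1}\big(1-P^{-1}\big), \qquad P^{-1}=\prod_{i=1}^m\frac{1}{1-b_i\psi_{i+n}}.
\end{align}
The key point is that $P^{-1}$ involves only the variables $b_i$, so it has $\deg_a=0$. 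Everything else is bookkeeping of two gradings.

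\textbf{Step 1.} By homogeneity, the Chow degree-$d$ part of ${}^O\Upsilon$ is a homogeneous polynomial of total degree $d$ in $a,b$. Hence its $a$-degree is at most $d$. So GM-$m$, i.e.\ vanishing of $({}^O\Upsilon)_{\deg_{R^\bullet}=d}$ for $d>2g-2+m$, implies $\deg_a {}^O\Upsilon\leq 2g-2+m$. Here I must check that ${}^O\Upsilon$ inherits homogeneity. This holds because each summand does: $O$ by assumption, $\bD$ by definition, the edge factors $a(e)$ compensate the codimension-one gluings, and $1-b_i\psi_{i+n}$ is homogeneous of degree zero in the combined grading.

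\textbf{Step 2.} Multiplying by $P^{-1}$ does not increase $\deg_a$. Hence $\deg_a\big(P^{-1}\,{}^O\Upsilon\big)\leq 2g-2+m$. For $m\geq 2$ this already yields M-$m$.

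\textbf{Step 3 ($m=1$).} This is the only delicate point. The extra term $\bD_{g,n+1}(1-P^{-1})$ is divisible by $b_1$, so it disappears after applying $\Coeff{b_1^0}$. It remains to note that $\Coeff{b_1^0}$ commutes with the bound: $\Coeff{b_1^0}(P^{-1}\,{}^O\Upsilon)=\Coeff{b_1^0}{}^O\Upsilon$, since $P^{-1}\equiv 1 \bmod b_1$. This coefficient has $a$-degree at most $2g-1$ by Step 1, which is exactly \eqref{eq: m=1 master}.

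I expect the main obstacle to be justifying homogeneity of ${}^O\Upsilon$ in Step 1, specifically for the $\bD$ and tree terms. The remainder is a short algebraic manipulation.
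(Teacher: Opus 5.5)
Your proof is correct and follows the same route as the paper. You relate ${}^O\Upsilon^m_{g,n}$ and ${}^O\Xi^m_{g,n}$ through the factor $\prod_{i=1}^m(1-b_i\psi_{i+n})$, which does not change $\deg_a$, use homogeneity to get $\deg_a\le\deg_{R^\bullet}$, and for $m=1$ pass to the $b_1^0$ coefficient, where both this factor and the term $\bD_{g,n+1}(1-P^{-1})$ play no role. Inverting $P$ and writing out that correction term explicitly is only slightly more careful bookkeeping of the identity the paper uses.
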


% \begin{proof} For $m=1$ we just take the constant term in $b_1$ in ${}^O\Upsilon_{g,n}^{1}(a_{1},\dots,a_{n},b_{1})$ in order to obtain $1\text{-M}$. For $m\geq 2$ note that
% \begin{align}
%     &
%     \Coeff{\prod_{i=1}^m b_i^{k_i}}{}^O\Xi_{g,n}^{m}(a_{1},\dots,a_{n},b_{1},\dots,b_{m}) = 
%     \\ \notag & \sum_{\substack{0 \leq l_i\leq k_i, \\ i=1,\dots,m}} \prod_{i=1}^m \psi_{n+i}^{l_i} \Coeff{\prod_{i=1}^m b_i^{k_i-l_i}} {}^O\Upsilon_{g,n}^{m}(a_{1},\dots,a_{n},b_{1},\dots,b_{m}).
% \end{align}
% Thus 
% \begin{align}
%     &
%     \deg_a \Coeff{\prod_{i=1}^m b_i^{k_i}}{}^O\Xi_{g,n}^{m}(a_{1},\dots,a_{n},b_{1},\dots,b_{m}) 
%     \\ \notag &
%     \leq \max_{\substack{0 \leq l_i\leq k_i, \\ i=1,\dots,m}}
%      \deg_a \Coeff{\prod_{i=1}^m b_i^{k_i-l_i}} {}^O\Upsilon_{g,n}^{m}(a_{1},\dots,a_{n},b_{1},\dots,b_{m}).
% \end{align}
% Since $\deg_a {}^O\Upsilon_{g,n}^{m} = \deg_{R^\bullet} {}^O\Upsilon_{g,n}^{m} - \deg_{b} {}^O\Upsilon_{g,n}^{m}$ \xtodo{This is wrong!}\todo{But the next formulas is still OK, right?}, the latter expression is equal to 
% \begin{align}
%     \max_{\substack{0 \leq l_i\leq k_i, \\ i=1,\dots,m}}
%      \deg_{R^\bullet} \Coeff{\prod_{i=1}^m b_i^{k_i-l_i}} {}^O\Upsilon_{g,n}^{m}(a_{1},\dots,a_{n},b_{1},\dots,b_{m}) - \sum_{i=1}^m (k_i-l_i),
% \end{align}
% which is $\leq 2g-2+m$ by assumption. 
% \end{proof}

\begin{proof}
% [Alternative proof] \xtodo{please check}\todo{I think, it is correct. Should we replace the previous proof with this one?} 
For $m=1$, we just take the constant term in $b_1$ in ${}^O\Upsilon_{g,n}^{1}(a_{1},\dots,a_{n},b_{1})$ in order to obtain $\text{M-}1$. For $m\geq 2$, observe that
\begin{align}
    {}^O\Upsilon_{g,n}^{m}(a_{1},\dots,a_{n},b_{1},\dots,b_{m}) = {}^O\Xi_{g,n}^{m}(a_{1},\dots,a_{n},b_{1},\dots,b_{m}) \prod_{i=1}^m(1-b_i\psi_{i+n}). \label{eq: master -- geometric master}
\end{align}
Moreover, $\deg_{R^\bullet}{}^O\Upsilon_{g,n}\geq\deg_{a}{}^O\Upsilon_{g,n}$. Thus, the geometric master relation implies 
\begin{align}
    \deg_{a}{}^O\Upsilon_{g,n} \leq 2g-2+m,\quad m\geq 2. 
\end{align}
The $m$th master relation for $m\geq 2$ then follows from  Eq.~(\ref{eq: master -- geometric master}) and the fact that 
\begin{align}
    \deg_{a}\left({}^O\Xi_{g,n} \prod_{i=1}^m(1-b_i\psi_{i+n})\right) =\deg_{a}{}^O\Xi_{g,n}.
\end{align}
\end{proof}

\begin{remark} Note that for $\{\bPsi_{g,n}\}$ the geometric master relations trivially coincide with the master relations. 
\end{remark}

\begin{proposition} \label{prop: geometric master}
    The collections of classes  $\{\bPsi_{g,n}\}$ and $\{\bOm_{g,n}\}$ satisfy the geometric master relations. 
\end{proposition}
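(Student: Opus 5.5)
We plan to prove the geometric master relations by realising ${}^O\Upsilon^m_{g,n}$, for both $O=\bPsi$ and $O=\bOm$, as the non-equivariant limit of an equivariant localization computation on a suitable moduli space of relative (rubber) stable maps to $\mathbb{P}^1$. The vanishing then comes from the fact that a $\lambda_g$-twisted virtual class has bounded dimension. The table in the introduction already indicates that LRT-$1$ for both families is known (\cite{BSS25,BLS-DRDZ} for $\bPsi$, \cite{BLS-Omega} for $\bOm$). By Lemma~\ref{lem: LRS - master equivalence} this gives M-$1$. We therefore concentrate on GM-$m$ for all $m$, keeping GM-$1$ in the same framework so that the constant term in $b_1$ reproduces M-$1$ as a consistency check.

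\emph{Case $\bPsi$.} Here ${}^\bPsi\Upsilon^m_{g,n}$ does not depend on $b$, since the factors $\prod(1-b_i\psi_{i+n})$ cancel the $b$-dependence in the root vertex. The relation therefore reduces to a statement in $a_1,\dots,a_n$ alone. We would deduce it from the master relation for $\psi$-classes proved in \cite{BSS25}, in the form stated in \cite{BLS-DRDZ} (Thm.~3 / Thm.~2.1). That relation is exactly a degree bound for the sum over depth-one rooted trees with $\bD$-decorations at the non-root vertices. It is obtained by localizing $\lambda_g$ times the virtual class of rubber maps, with $n$ points of contact orders $a_i$ over $0$ and $m$ frozen points free, pushed forward with $1/(1-t^*\psi_0)$ insertions. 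The computation should only require matching the normalisations of $\bD_{g,n+1}$ and the sign $\delta_{m,1}$ term: for $m=1$ the unstable/trivial-bubble contribution produces $\bD_{g,n+1}(a_1,\dots,a_n)$ itself.

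\emph{Case $\bOm$.} We would use the localization formula in Section~\ref{sec:localization}. Consider the moduli space of $r$-th roots on stable maps to $\mathbb{P}^1$ relative to $\infty$, or equivalently Chiodo's class computed via the $\C^*$-equivariant pushforward of $c(-R\pi_*\mathcal L)$ twisted by $\lambda_g$, with $r=\suma$ and $s=0$. The plan is to write $\lambda_g\,\Omega$ at a vertex as an equivariant integral. The fixed loci split into a root component over $0$ and rubber components over $\infty$. By \eqref{eq:hey:omega}, the vertex over $0$ contributes $O(v_r)\prod(1-b_i\psi_{i+n})$, which is exactly why the geometric version carries these factors. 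The components over $\infty$ contribute $\lambda_g\DR/(1+a\psi)=-\bD$, and the edge factors give $\prod a(e)$. A degree count then shows that the polynomial in the equivariant parameter vanishes in Chow degree above $2g-2+m$. This yields the bound on $\deg_{R^\bullet}$, with homogeneity used to identify Chow degree with polynomial degree. Polynomiality in $a_i$ (from large $r$ and the polynomiality results cited for $\bOm$) lets us pass from integer specialisations to polynomial identities.

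\textbf{Main obstacle.} The main obstacle is the $\bOm$ localization bookkeeping. One must check that the Chiodo-class contributions of the edges (roots of unity and twisted sectors at nodes) reduce exactly to the factor $a(e)$. One must also check that the $r$-dependence combines into $\suma^{1-g}$ at each vertex consistently with $r=\suma$ globally. I would first do this in genus $0$ and for trees of one edge, then argue inductively using the factorization of $\lambda_g$ on trees (since $\lambda_g$ kills non-tree graphs).
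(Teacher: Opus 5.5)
Your outline for $\bOm$ is the paper's outline: $\mathbb{C}^*$-localization with a single spin vertex as the root, rubber vertices contributing $\lambda_g\DR/(1+a\psi)=-\bD$, edge factors $a(e)$, vanishing of negative powers of the equivariant parameter, and $\lambda_g$ killing non-tree graphs. For $\bPsi$ the paper likewise just cites \cite{BSS25}; your sketch of how that relation is proved is not needed.

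\textbf{The gap is the choice of moduli space for $\bOm$.} You take $r=\suma$, $s=0$, and give the frozen points no orbifold structure. Then the root vertex can only carry a Chiodo class of level $\suma$, with trivial twist at the frozen legs. At best this recovers the specialization $b_1=\dots=b_m=0$, which is already \cite{BLS-Omega}. By \eqref{eq:hey:omega} applied with $n+m$ arguments, the class needed at the root is
\[
A^{1-g}\lambda_g\,\Omega^{[A]}_{g,n+m}(A,0;A-a_1,\dots,A-a_n,A-b_1,\dots,A-b_m)\prod_{i=1}^n\frac{1}{1-a_i\psi_i},\qquad A=\suma+\textstyle\sum_{j}b_j .
\]
So the spin level must be $A$, not $\suma$. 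The frozen points must also carry twists $A-b_j$ while contributing no factor $1/(1-b_j\psi_{n+j})$. Nothing in your setup produces the $b_j$. The sentence ``by \eqref{eq:hey:omega} the vertex contributes $O(v_r)\prod(1-b_i\psi_{i+n})$'' presupposes exactly what has to be constructed.

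\textbf{The missing idea is an orbifold target.} Use twisted stable maps to $\mathbb{P}^1[A]$, with one orbifold point $B\mathbb{Z}_A$ at $\infty$, relative to $0$. Take profile $(a_1,\dots,a_n)$ over $0$, and make the $m$ frozen markings \emph{orbifold} markings of twist $A-b_j$. Having nontrivial stabilizers, these markings are automatically mapped to the orbifold point. They therefore enter the Chiodo class of the vertex over $\infty$ without being edges, which is precisely why ${}^{O}\Upsilon$ carries $\prod_j(1-b_j\psi_{n+j})$.

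The same twist condition justifies the structure of the fixed loci that you only assert. Twists at each vertex over $\infty$ must sum to a multiple of $A$, and edge degrees sum to $\suma$. This forces a unique vertex over $\infty$, so the graphs are exactly the rooted trees of height at most one. It also yields the $\delta_{m,1}\bD_{g,n+1}$ term: it comes from the graph whose vertex over $\infty$ is unstable, carrying the single frozen leg and one edge of degree $A-b_1=\suma$. Your $\bOm$ paragraph does not account for this term at all.

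With this setup, every contribution (after $s_*$ and multiplication by $\lambda_g$) has the form $A^{1-m}u^{2g-2+m}\sum_d c_d\,u^{-d}$, with $c_d$ of Chow degree $d$. The equivariant virtual class has no negative powers of $u$, and this gives GM-$m$ directly.
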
 

As an immediate corollary $\{\bPsi_{g,n}\}$ and $\{\bOm_{g,n}\}$ satisfy the master relations $\text{M-}m$, which, by the discussion above, implies the integrability of both type $1$ and type $2$, once the other necessary properties are established. We prove this proposition for $\{\bOm_{g,n}\}$ in Sec.~\ref{sec:localization} as a part of the proof of Thm.~\ref{thm: Hej}, and for $\{\bPsi_{g,n}\}$ it is proved in~\cite{BSS25}.

\begin{remark}
	Specializing to $b_{1}=\cdots=b_{m}=0$, the geometric and ordinary master relations are equivalent. In this specialization the statement of Prop.~\ref{prop: geometric master} for $\{\bOm_{g,n}\}$ was conjectured in~\cite{BLRS} and proved in \cite[Theorem 3.4]{BLS-Omega}.
\end{remark}

%%%%%%%%%%%%%%%%%%%%%%%%%%%%%%%%%%%%%%%%%%%%%%%%%%%%%%%%%%%%%%%%%%%
\vspace{1cm}
\section{From integrable observables to integrable systems}
\label{sec:integrable}
%%%%%%%%%%%%%%%%%%%%%%%%%%%%%%%%%%%%%%%%%%%%%%%%%%%%%%%%%%%%%%%%%%%

In this section we survey the main statements spread over the literature~\cite{BDGR1,BDGR20,BGR19,BS22} that allow to use the properties of integrable observables to construct integrable systems, either Hamiltonian, as in the case of P-CohFTs, or systems of conservation laws, as in the case of F-CohFTs. The constructions in these works are applied solely to Witten's integrable observables $\{\bPsi_{g,n}\}$, but as we shall see, all arguments are universal in the framework of Def.~\ref{def:int-obs}.  

In general, we follow closely the exposition of~\cite[Sec.~4]{BS22}; all results in this Section are a straightforward reformulation and generalization of the results in \emph{op.~cit.}

The section is organized as follows. We start in Sec.~\ref{sec: basic setup} with an introduction to the basic setup for evolutionary integrable systems (following~\cite{dubrovin2001normalformshierarchiesintegrable,BDGR1}). Sec.~\ref{sec: DR F-CohFT} and Sec.~\ref{sec: DR for P-CohFT}  introduce the canonical ``reference'' integrable systems: the DR system of conservation laws associated to an F-CohFT and the Hamiltonian DR hierarchy associated to a P-CohFT, respectively.
%~\cite{Bur,BR-spin}
%~\cite{BR-spin,ABLR}.
 
We then explain in Sec.~\ref{sec: F-CohFT to conservation laws} (resp. Sec.~\ref{sec: P-CohFT to hamilto}) how to pass from an F-CohFT (resp., P-CohFT) to a system of conservation laws (resp., Hamiltonian system) for an arbitrary collection of integrable observables, giving precise version of Thm.~\ref{thm: integrable observables vector potential} (resp., Thm.~\ref{thm: integrable observables tau function}), together with their proofs in Sec.~\ref{sec: proofs and further details}.

Next, we identify in Sec.~\ref{sec: identification A and DR} the $\bA$-hierarchies and the DR hierarchies, and derive explicit coordinate change formulas. 
%which yields an explicit change of coordinates between two natural presentation of the DR hierarchies. 
Finally we list in Sec.~\ref{sec: interrelations} properties of the integrable systems  associated to the integrable observables of Thm.~\ref{thm: Psi}, \ref{thm: A}, and \ref{thm: Hej}.

Note that in this section, we take Thm.~\ref{thm: Psi}, \ref{thm: A}, and \ref{thm: Hej} for granted; their proofs are deferred to Sec.~\ref{sec:proofs} and~\ref{sec:localization}.

\subsection{Basic setup for evolutionary equations}
\label{sec: basic setup}
The phase space is the formal loop space that can be realized as the spectrum of 
\begin{align}\label{eq:jet:coords}
\mathcal{A}_w[[\epsilon]]\coloneqq \C[[\epsilon,\{w^\alpha\}_{\alpha=1,\dots,N}]][\{w^{\alpha,d}\}_{{\alpha=1,\dots,N;\; d\geq 1}}].	
\end{align}
 Here $w^{\alpha} = w^{\alpha,0}$ are the formal dependent variables, and  $w^{\alpha,d}$ play the role of the jet coordinates for the dependence on the spatial variable $x$. The formal variable $\epsilon$ is the dispersion parameter. In particular $\partial_x \colon \mathcal{A}_w[[\epsilon]]\to \mathcal{A}_w[[\epsilon]]$ is given by
 \begin{align}
 	\d_x \coloneqq \sum_{d=0}^\infty w^{\alpha,d+1} {\partial}_{w^{\alpha,d}}.
 \end{align}
 Define the grading $\deg_{\d_x}$ on $\mathcal{A}_w[[\epsilon]]$ by $\deg_{\d_x} (w^{\alpha,d})\coloneqq d$, $\deg_{\d_x} (\epsilon)\coloneqq -1$. Let $(\mathcal{A}_w[[\epsilon]])_{\deg_{\d_x}=k}$ denote the component of $\mathcal{A}_w[[\epsilon]]$ of homogeneous $\deg_{\d_x}$ degree $k$. 
 
An \textit{evolutionary equation} is an equation of the form
\begin{align} \label{eq: basic evolutionaty equation}
	{\partial_t} w^\alpha = P^\alpha,
\end{align}
where $P^\alpha\in\mathcal{A}_w[[\epsilon]]_{\deg_{\d_x} = 1}$.  

We say that  $P^\alpha$ is a \textit{conservation law} if 
\begin{equation}
    P^\alpha = \partial_x Q^\alpha
\end{equation} 
for some $Q^\alpha\in\mathcal{A}_w[[\epsilon]]_{\deg_{\d_x} = 0}$ called \textit{flux}. Two evolutionary equations $\d_{t^i} w^\alpha = P^\alpha_i$, $i=1,2$, are \textit{compatible} (or, equivalently, the flows $\d_{t^i}$ commute), if 
\begin{equation}
    \d_{t^1}(P_2^\alpha) = \d_{t^2}(P_1^\alpha), \qquad \alpha=1,\dots,N.
\end{equation} 
An \textit{integrable system} is a system of mutually compatible evolutionary equations.  

It is useful to consider the action action of the group of Miura transformations, which is the semidirect product of two subgroups, the so-called Miura transformations of the first and the second kind. \textit{Miura transformations of the first kind} are the changes of the dependent variables  $\tilde w^{\alpha} = W^\alpha(w^1,\dots,w^N)$, where $W^\alpha$, $\alpha=1,\dots,N,$ are formal power series in $w^1,\dots,w^N$ with vanishing constant terms and $\det(\d W^\alpha /\d w^\beta)|_{w^1=\cdots=w^N=0}\not=0$. We will not need them in this paper, due to normalization condition in genus $0$ (i.e. at $\epsilon = 0$). 
\textit{Miura transformations of the second kind} (also called Miura transformations close to the identity) are the changes of the dependent variables of the type $\tilde w^{\alpha} = w^\alpha + \epsilon U^\alpha$, where $U^\alpha_k\in \mathcal{A}_w[[\epsilon]]_{\deg_{\d_x}=1}$. 

Let 
\begin{equation}
    \mathcal{F}_w[[\epsilon]]\coloneqq \mathcal{A}_w[[\epsilon]] / \d_x \mathcal{A}_w[[\epsilon]]
\end{equation} be the associated space of \textit{local functionals} and let 
\begin{equation}
    \int\!dx \colon \mathcal{A}_w[[\epsilon]] \to \mathcal{F}_w[[\epsilon]], 
\qquad f\mapsto \bar f \coloneqq \int f dx
\end{equation} 
be the projection that sends densities to their functionals (the degree $\deg_{\d_x}$ descends to $\mathcal{F}_w[[\epsilon]]$. Let $\delta_{w^\alpha} \colon \mathcal{F}_w[[\epsilon]]\to \mathcal{A}_w[[\epsilon]]$ be the variational derivatives, defined by
\begin{equation}
    \delta_{w^\alpha}\coloneqq \sum_{d\geq 0} (-\d_x)^d \partial_{w^{\alpha,d}}.
\end{equation} 
A bilinear map $\mathcal{F}_w[[\epsilon]]\times \mathcal{F}_w[[\epsilon]] \to \mathcal{F}_w[[\epsilon]]$, 
\begin{equation}
    (\bar f_1,\bar f_2) \mapsto [\bar f_1,\bar f_2] \coloneqq \iint (\delta_{w^\alpha} \bar f_1) \{w^\alpha(x),w^\beta(y)\} (\delta_{w^\beta} \bar f_2) dxdy,
\end{equation}
where $\{w^\alpha(x),w^\beta(y)\} = \sum_{d\geq 0} A^{\alpha\beta}_d\delta^{(d)}(x-y)$, $A^{\alpha\beta}_d \in \mathcal{A}_w[[\epsilon]]_{\deg_{\d_x}=1-d}$, gives a Poisson bracket if it satisfies the Jacobi identity. 

We say that Eq.~\eqref{eq: basic evolutionaty equation} possess a Hamiltonian structure if it can be written as 
\begin{align}
\d_t w^\alpha = \sum_{d\geq 0} A^{\alpha\beta}_d \d_x^d \delta_{w^\beta} \bar h
\end{align}
for a Poisson bracket $\sum_{d\geq 0} A^{\alpha\beta}_d\delta^{(d)}(x-y)$ and a Hamiltonian $\bar h \in \mathcal{F}_w[[\epsilon]]_{\deg_{\d_x}=0}$. If the Hamiltonians $\bar h_i$, $i=1,2$, commute with respect to the Poisson bracket, then their flows commute as well, so one can define an integrable system by a Poisson bracket and a system of pairwise commuting Hamiltonians.

We say that a bracket $\{w^\alpha(x),w^\beta(y)\} = \sum_{d\geq 0} A^{\alpha\beta}_d\delta^{(d)}(x-y)$ has \textit{hydrodynamical limit}, if at $\epsilon=0$ it reduces to \begin{equation}
    g^{\alpha\beta}(\{w^{*,0}\}) \delta'(x-y) + \Gamma^{\alpha\beta}_\gamma (\{w^{*,0}\}) w^{\gamma,1} \delta(x-y)
\end{equation} 
for a non-degenerate metric $g_{\alpha,\beta}$. With our normalization condition in genus $0$, we shall always have $g^{\alpha\beta} = \eta^{\alpha\beta}$, where $\eta$ is the constant metric defined by the bilinear symmetric map $\eta\colon V^{\otimes 2}\to \C$ on the underlying vector space $V$ and $\Gamma^{\alpha\beta}_\gamma=0$. 

All integrable systems that we study below have infinite set of commuting flows indexed by $(\alpha,p)$, $\alpha =1,\dots,N$, $p=0,\dots,\infty$, that is, they are of the form 

\begin{align}
    \label{eq: typical conservation laws}
    \d_{t^{\beta,p}} w^\alpha = \d_x Q^\alpha_{\beta,p} 
\end{align}
in the case of the conservation laws, or, in the case of Hamiltonian systems, we have
\begin{align}
    \d_{t^{\beta,p}} w^\alpha = \sum_{d\geq 0} A^{\alpha\beta}_d \d_x^d \delta_{w^\beta} \bar h_{\beta,p}.
\end{align}
Here $\{\bar h_{\beta,p}\}$ are the pairwise commuting Hamiltonians. 

In the former case, pick a solution $w^{\alpha}_{{\rm str}}$ of Eq.~(\ref{eq: typical conservation laws}) such that $Q^{\alpha}_{\beta,p}|_{w^{\gamma,q}=\partial^{q}_{t^{1,0}}w^{\gamma}_{\mathrm{str}}}dt^{\beta,p}$ is closed. Since the closeness implies exactness, there exists a unique vector (up to constant term)
\begin{equation}
X^{\alpha}(\{t^{\gamma,q}\})\partial_{t^{\alpha,0}},
\end{equation}
called \textit{vector potential}, such that
\begin{equation}
\label{eq: def vector potential}
Q^{\alpha}_{\beta,p}|_{w^{\gamma,q}=\partial^{q}_{t^{1,0}}w^{\gamma}_{\mathrm{str}}}=\partial_{t^{\beta,p}}X^{\alpha}.
\end{equation}
In the coordinates such that $w^{\alpha}=Q^{\alpha}_{1,0}$, we therefore have $w^{\alpha}_{\mathrm{str}}=\partial_{t^{1,0}}X^{\alpha}$. We repeatedly use the following: if $w^{\alpha}_{\mathrm{str}}=\partial_{t^{1,0}}X^{\alpha}$ satisfies the string equation (to be explicitly defined below)%\stodo{I added ``(to be explicitly defined below)''. Otherwise it all good, just one suggestion in the e-mail.}
, then the fluxes $Q^{\alpha}_{\beta,p}$ are reconstructed as the unique elements of $\mathcal{A}_{w}[[\epsilon]]_{\deg_{\partial_{x}}=0}$ satisfying Eq.~(\ref{eq: def vector potential}) (see Remark~\ref{rem:RemarkIdentification} below). In that sense we say that the system is controlled by the vector potential $X^{\alpha}(\{t^{\gamma,q}\})\partial_{t^{\alpha,0}}$.

% In the former case, we say that the system is controlled by the  so-called \textit{vector potential}
% \begin{align}
% F^\alpha(\{t^{\gamma,q}\})\d_{t^{\alpha,0}}
% \end{align}
% (note that we assign to this term in a bit different meaning than in~\cite{ABLR}) and a special solution 
% \begin{align}
% w^{\alpha}_{\mathrm{str}} = \d_{t^{1,0}}F^\alpha
% \end{align}
% (typically distinguished below by the string equation) if all fluxes $Q^{\alpha}_{\beta,p}$ are reconstructed as the unique elements of $\mathcal{A}_w[[\epsilon]]_{\deg_{\d_x} = 0}$ satisfying
% \begin{align}
%     Q^{\alpha}_{\beta,p}|_{w^{\gamma,q} = \d_{t^{1,0}}^q w^{\gamma}_{\mathrm{str}}} = \d_{t^{\beta,p}} F^\alpha
% \end{align}
% as formal power series in the variables $\{t^{\zeta,k}\}$. Notice that a vector potential exists for any integrable system of this type, since the closeness of $Q^{\alpha}_{\beta,p}|_{w^{\gamma,q} = \d_{t^{1,0}}^q w^{\gamma}_{\mathrm{str}}} dt^{\beta,p}$ implies its exactness.

In the Hamiltonian case, there is a notion of tau symmetric integrable system (see~\cite{BDGR1} for a comprehensive introduction),  in which one can define \textit{tau functions}. For our purposes, what matters is that a choice of tau function $Z$ (or rather its logarithm $F(\{t^{\gamma,q}\})=\epsilon^2\log Z$) together with a special solution $w^{\alpha}_{\mathrm{str}} = \eta^{\alpha\mu} \d_{t^{1,0}}\d_{t^{\mu,0}} F$ controls particular choices of densities and fluxes. More precisely, all densities $h_{\beta,p}$ and fluxes $Q^{\alpha}_{\beta,p}$ are reconstructed as the unique elements of $\mathcal{A}_w[[\epsilon]]_{\deg_{\d_x} = 0}$ satisfying
\begin{align}
    h_{\beta,p}|_{w^{\gamma,q}  = \d_{t^{1,0}}^q w^{\gamma}_{\mathrm{str}}} &= \d_{t^{\beta,p+1}}\d_{t^{1,0}} F
    \\ \notag 
    Q^{\alpha}_{\beta,p}|_{w^{\gamma,q}  = \d_{t^{1,0}}^q w^{\gamma}_{\mathrm{str}}} & = \eta^{\alpha\mu}\d_{t^{\beta,p}}\d_{t^{\mu,0}} F
\end{align}
as formal power series in the variables $\{t^{\zeta,k}\}$.
% In the Hamiltonian case, there is a concept of tau symmetry (see~\cite{BDGR1} for a comprehensive introduction), which for our purposes means that we can control particular choices of densities of the Hamiltonians and equations through a choice of tau function $Z$ (more precisely, we always use its logarithm $F(\{t^{\gamma,q}\})=\epsilon^2\log Z$) and a special solution $w^{\alpha}_{\mathrm{str}} = \eta^{\alpha\mu} \d_{t^{1,0}}\d_{t^{\mu,0}} F$ such that all densities $h_{\beta,p}$ and fluxes $Q^{\alpha}_{\beta,p}$ are reconstructed as the unique elements of $\mathcal{A}_w[[\epsilon]]_{\deg_{\d_x} = 0}$ satisfying
% \begin{align}
%     h_{\beta,p}|_{w^{\gamma,q}  = \d_{t^{1,0}}^q w^{\gamma}_{\mathrm{str}}} &= \d_{t^{\beta,p+1}}\d_{t^{1,0}} F
%     \\ \notag 
%     Q^{\alpha}_{\beta,p}|_{w^{\gamma,q}  = \d_{t^{1,0}}^q w^{\gamma}_{\mathrm{str}}} & = \eta^{\alpha\mu}\d_{t^{\beta,p}}\d_{t^{\mu,0}} F
% \end{align}
% as formal power series in $\{t^{\zeta,k}\}$. 
There is a subgroup of the group of Miura transformations of second kind that preserves the existence of the tau structure and the dispersionless limit and whose action is extended in a coherent way on all ingredients of the integrable system, the so-called \textit{normal Miura transformations}. They are given by $\tilde w^\alpha = w^\alpha + \eta^{\alpha\mu}\d_x \d_{t^{\mu,0}} G$ for any $G\in \mathcal{A}_w[[\epsilon]]_{\deg_{\d_x} = -2}$, and we evaluate $\d_{t^{\mu,0}} G$ as an element of $\mathcal{A}_w[[\epsilon]]_{\deg_{\d_x} = -1}$ using the integrable system itself, that is, $\d_{t^{\mu,0}} G = \sum_{q=0}^\infty \d_{w^{\gamma,q}}\d_x^{q+1}Q^{\gamma}_{\mu,0}$.

\subsection{DR hierarchy of conservation laws}
\label{sec: DR F-CohFT}
%The $\bOm$-hierarchy and $\bPsi$-Dubrovin-Zhang hierarchy cannot be compared directly. The tool for their comparison is the so-called Buryak 
The DR hierarchy, originally inspired by the construction of the quantum hierarchy in symplectic field theory~\cite{Eliashberg-SFT}, was introduced in~\cite{Bur} for CohFTs. 
In ~\cite{BR-spin} the construction was further extended to the cases of P-CohFTs and F-CohFTs. We still work with the target space of finite dimension $N$, though it is possible to extend the construction to the infinite-dimensional setup, cf.~Remark~\ref{rem:infinite-dim}.

Let $\{\mathfrak{fc}_{g,n}\}$ be a $V$-valued F-CohFT. Following~\cite{BR-spin}, for any $\alpha,\beta=1,\dots,N$ and $p\ge 0$, define the flux $Q^\alpha_{\beta,p}\in \mathcal{A}_u[[\epsilon]]_{\deg_{\d_x} = 0}$ by
\begin{align}\label{eq:definition of flux DR}
Q^\alpha_{\beta,p} & \coloneqq \sum_{\substack{g,n\ge 0\\ 2g+n>0}}\frac{\epsilon^{2g}}{n!} \sum_{\substack{d_1,\dots,d_n\geq 0 \\ d_1+\cdots+d_n=2g}} \prod_{i=1}^n u^{\gamma_i,d_i}  \times
\\ \notag & \qquad \int_{\oM_{g,n+2}}\psi_2^p \Coeff{\prod_{i=1}^n a_i^{d_i}}\lambda_g \DR_g (-\suma, 0, a_1,\dots,a_n) 
\times 
\\ \notag & \qquad \qquad
e^{\alpha} \bigg(\mathfrak{fc}_{g,n+1}\Big(e_\beta\otimes \Vec{\bigotimes}_{i=1}^n e_{\gamma_i}\Big)\bigg).
\end{align}
Here $u^{\gamma_i,d_i}$ are jet coordinates defined in~\eqref{eq:jet:coords}. These fluxes define the commuting flows by~\cite[Thm.~5.1]{BR-spin}, that is, the system of conservation laws
\begin{align} \label{eq: DR conservation laws}
    \d_{t^{\beta,p}} u^\alpha = \d_x Q^\alpha_{\beta,p}
\end{align}
is integrable, and it is called the DR hierarchy associated to an F-CohFT.  Note that in particular the flow $\d_{t^{1,0}}$ is identified with $\d_x$, and we adopt this identification in what follows. The coordinates $u^\alpha$ used in the definition are called the \emph{natural coordinates}.

This hierarchy can also be described in terms of its vector potential and the F-string solution, which we now introduce. The vector potential is $X^{\DR,\alpha}\partial_{t^{\alpha, 0}}$, defined by 
\begin{align}
    X^{\DR,\alpha} & \coloneqq \sum_{\substack{g\ge 0,n\ge 1\\ 2g-1+n>0}} \frac{\epsilon^{2g}}{n!} \sum_{{d_1,\dots,d_n\geq 0}} \prod_{i=1}^n t^{\gamma_i,d_i} \times 
    \\ \notag & \qquad 
    \int_{\oM_{g,n+1}} \bigg[\Coeff{\prod_{i=1}^n a_i^{d_i}} \bA^1_{g,n}(a_1,\dots,a_n)\bigg]^\circlearrowright \times
    \\ \notag & \qquad \qquad e^{\alpha} \bigg(\mathfrak{fc}_{g,n}\Big(\Vec{\bigotimes}_{i=1}^n e_{\gamma_i}\Big)\bigg),
    & \alpha=1,\dots,N,
\end{align}

 where we recall that $[\cdots]^\circlearrowright$ is the operation of relabeling marked points 
%of ``$\mod (n+1)$ relabeling'' of the marked points 
that makes the 
%$(n+1)$-st 
last marked point the zeroth one, which is needed to match the convention we used in the definition of F-CohFT. The \textit{F-string solution} is defined by
\begin{align}
    u^{\alpha}_{\text{F-str}} \coloneqq \d_{t^{1,0}}X^{\DR,\alpha}, \qquad \alpha=1,\dots,N.
\end{align}
%be a special solution of~\eqref{eq: DR conservation laws} specified by $u^{\alpha}_{\text{F-str}}|_{t^{*,\geq 1} = 0} = t^{\alpha,0}$, $\alpha=1,\dots,N$.
This is in fact a special solution of~\eqref{eq: DR conservation laws} specified by $u^{\alpha}_{\text{F-str}}|_{t^{*,\geq 1} = 0} = t^{\alpha,0}$, $\alpha=1,\dots,N$, see~\cite[Thm.~4.9]{BS22} in combination with~\cite[Lemma~2.6]{BLS-DRDZ}. It can be expressed in terms of the class $\bA_{g,n+1}^{1}(a_{1},\dots,a_{n},0)$ as
\begin{align} \label{eq: def: F-str u}
      u^{\alpha}_{\text{F-str}} & \coloneqq \sum_{{g\ge 0,n\ge 1}} \frac{\epsilon^{2g}}{n!} \sum_{{d_1,\dots,d_n\geq 0}} \prod_{i=1}^n t^{\gamma_i,d_i} \times 
    \\ \notag & \qquad 
    \int_{\oM_{g,n+2}} \bigg[\Coeff{\prod_{i=1}^n a_i^{d_i}} \bA^1_{g,n+1}(a_1,\dots,a_n,0)\bigg]^\circlearrowright \times
    \\ \notag & \qquad \qquad e^{\alpha} \bigg(\mathfrak{fc}_{g,n+1}\Big(\Vec{\bigotimes}_{i=1}^n e_{\gamma_i} \otimes e_1\Big)\bigg),
        & \alpha=1,\dots,N.
\end{align}

Then, identifying $u^{\gamma,d}_{\text{F-str}}$ with $\d_{t^{1,0}}^d u^{\gamma}_{\text{F-str}}$, $\gamma =1,\dots, N$, $d\geq 0$, we can reconstruct $Q^\alpha_{\beta,p}$ as the unique elements of $\mathcal{A}_u[[\epsilon]]_{\deg_{\d_x} = 0}$ satisfying
\begin{align}
    \partial_{t^{\beta,p}}X^{\DR,\alpha} = Q^\alpha_{\beta,p}|_{u^{\gamma,q}= u^{\gamma,q}_{\text{F-str}},\,\gamma=1,\dots,N,\, q\geq 0}.
\end{align}
for all $\alpha,\beta=1,\dots,N$, $p\geq 0$. To this end, one needs to use the string equation and~\cite[Lemma 4.3]{BS22} for identification of fluxes, see also Remark~\ref{rem:RemarkIdentification} below.
We refer to~\cite[Sec.~4.4.1, 4.4.2]{BS22} for a full exposition of this approach. 

\subsection{Hamiltonian DR hierarchy}
\label{sec: DR for P-CohFT}
Let $\{\mathfrak{pc}_{g,n}\}$ be a $(V,\eta)$-valued P-CohFT.
%, where we recall that $V=\langle e_1,\dots,e_N\rangle$ with the inner product $\eta$. 
We can repeat the construction of the previous section using only part of the structure of P-CohFT, that is, the canonically associated F-CohFT. Then the formulas for the fluxes ~\cite{BR-spin} can be reproduced as follows. For any $\alpha,\beta=1,\dots,N$ and $p\ge 0$, define the flux $Q^\alpha_{\beta,p}\in \mathcal{A}_u[[\epsilon]]_{\deg_{\d_x} = 0}$ by
\begin{align}\label{eq:definition of flux DR PCohFT} 
Q^\alpha_{\beta,p} & \coloneqq \sum_{\substack{g,n\ge 0\\ 2g+n>0}}\frac{\epsilon^{2g}}{n!} \sum_{\substack{d_1,\dots,d_n\geq 0 \\ d_1+\cdots+d_n=2g}} \prod_{i=1}^n u^{\gamma_i,d_i}  \times
\\ \notag & \qquad \int_{\oM_{g,n+2}}\psi_2^p \Coeff{\prod_{i=1}^n a_i^{d_i}}\lambda_g \DR_g (-\suma, 0, a_1,\dots,a_n) 
\times 
\\ \notag & \qquad \qquad
\eta^{\alpha\mu} \mathfrak{pc}_{g,n+2}\Big(e_{\mu}\otimes e_\beta\otimes \Vec{\bigotimes}_{i=1}^n e_{\gamma_i}\Big).
\end{align}
However, in this case the integrable hierarchy~\eqref{eq: DR conservation laws} appears to be Hamiltonian. For any $1\le\alpha\le N$ and $p\ge -1$, define $h_{\alpha,p}\in \mathcal{A}_u[[\epsilon]]_{\deg_{\d_x} = 0}$ by
\begin{align}\label{eq:definition of DR hamiltonians}
h_{\alpha,p} & \coloneqq \eta_{1\mu} Q^{\mu}_{\alpha,p+1}. 
\end{align}
Consider the Poisson bracket $\{u^\alpha,u^\beta\}_{\eta^{-1}\d_x}\coloneqq \eta^{\alpha\beta} \delta(x-y)$.
The main result of \cite{Bur} states that 
\begin{equation}
    \{\bar h_{\alpha_1,d_1},\bar h_{\alpha_2,d_2}\}_{\eta^{-1}\d_x}=0
\end{equation}
for any $1\le\alpha_1,\alpha_2\le N$ and $d_1,d_2\ge 0$. This allows to rewrite the system of conservation laws in natural coordinates~\eqref{eq: DR conservation laws} as a Hamiltonian system:
%
%\begin{definition}The Hamiltonian hierarchy of PDEs corresponding to the Poisson bracket given by the operator $\eta^{\alpha\beta}\d_x$ and the Hamiltonian given by local functionals $\overline{g}_{\alpha,d}$ is called the Hamiltonian DR hierarchy. As a system of evolutionary equations it reads:
\begin{align}\label{eq: DR hierarchy Hamilonian form}
\d_{t^{\beta,p}} u^\alpha = \d_x Q^{\alpha}_{\beta,p} = \eta^{\alpha\mu}\d_x \delta_{u^\mu} \bar h_{\beta,p}.
\end{align}

In this setting of P-CohFT, the system is proved to be tau-symmetric in~\cite{BDGR1} and it can be described in the \textit{normal coordinates}
\begin{align}
    u^{\alpha}_{\mathrm{norm}} \coloneqq \eta^{\alpha\mu} {h}_{\mu,-1}
\end{align}
in terms of the tau function. 
\begin{remark}
    \label{rem: normal for F-CohFT}
    Note that in the context of F-CohFT, the same change of variable holds in the form 
    \begin{align}
        \label{eq: normal coordinates for F CohFT}
        u^{\alpha}_{\mathrm{norm}} & = \sum_{\substack{g,n\ge 0\\ 2g+n>0}}\frac{\epsilon^{2g}}{n!} \sum_{\substack{d_1,\dots,d_n\geq 0 \\ d_1+\cdots+d_n=2g}} \prod_{i=1}^n u^{\gamma_i,d_i}  \times
\\ \notag & \qquad \int_{\oM_{g,n+2}} \Coeff{\prod_{i=1}^n a_i^{d_i}}\lambda_g \DR_g (0,-\suma, a_1,\dots,a_n) 
\times 
\\ \notag & \qquad \qquad e^{\alpha} \bigg(\mathfrak{fc}_{g,n+1}\Big(e_1\otimes \Vec{\bigotimes}_{i=1}^n e_{\gamma_i}\Big)\bigg).
    \end{align}
    However, this does not provide normal coordinates in the usual sense, as the system is not tau-symmetric.
\end{remark}
The logarithm of the tau function is given by 
\begin{align}
    F^{\DR} & \coloneqq \sum_{\substack{g\ge 0,n\ge 1\\ 2g-2+n>0}} \frac{\epsilon^{2g}}{n!} \sum_{{d_1,\dots,d_n\geq 0}} \prod_{i=1}^n t^{\gamma_i,d_i} \times 
    \\ \notag & \qquad 
    \int_{\oM_{g,n}} \Coeff{\prod_{i=1}^n a_i^{d_i}} \bA_{g,n}(a_1,\dots,a_n) %\times
    %\\ \notag & \qquad \qquad  
    \mathfrak{pc}_{g,n}\Big(\Vec{\bigotimes}_{i=1}^n e_{\gamma_i}\Big).
\end{align}
Note that, by Lemma~\ref{lem:push-forward-A1}, we have $\d_{t^{1,0}}F^{\DR} = \eta_{1\mu}X^{\DR,\mu}$, where the latter $X^{\DR,\mu}$ are the ones defined for the canonically associated F-CohFT.
%
%The tau symmetry was established in \cite{BDGR20}: for $d_{1},d_{2}\geq0$ and $1\leq\alpha_{1},\alpha_{2}\leq N$ 
%\begin{align}
%\left\{g_{d_{1}-1,\alpha_{1}},\overline{g}_{d_{2},\alpha_2}\right\}_{\eta\d_x} = \left\{g_{d_{2}-1,\alpha_2},\overline{g}_{d_{1},\alpha_1}\right\}_{\eta\d_x}.
%\end{align}
%
The \textit{P-string solution} is defined by
\begin{align}
    u^{\alpha}_{\text{P-str}} \coloneqq \eta^{\alpha\mu}\d_{t^{\mu,0}}\d_{t^{1,0}}F^{\DR}, \qquad \alpha=1,\dots,N.
\end{align}
This is in fact a solution of~\eqref{eq: DR hierarchy Hamilonian form} considered in normal coordinates, see~\cite[Sec. 4.4.4]{BS22} in combination with~\cite[Lemma~2.6]{BLS-DRDZ}. It can be expressed in terms of the class $\bA_{g,n+1}^{1}(a_{1},\dots,a_{n},0)$ as
\begin{align}
      u^{\alpha}_{\text{P-str}} & \coloneqq \sum_{{g\ge 0,n\ge 1}} \frac{\epsilon^{2g}}{n!} \sum_{{d_1,\dots,d_n\geq 0}} \prod_{i=1}^n t^{\gamma_i,d_i} \times 
    \\ \notag & \qquad 
    \int_{\oM_{g,n+2}} \Coeff{\prod_{i=1}^n a_i^{d_i}} \bA^1_{g,n+1}(a_1,\dots,a_n,0) \times
    \\ \notag & \qquad \qquad \eta^{\alpha\mu}\mathfrak{pc}_{g,n+2}\Big(\Vec{\bigotimes}_{i=1}^n e_{\gamma_i} \otimes e_\mu \otimes e_1\Big),
        & \alpha=1,\dots,N.
\end{align}
(note the difference of this formula and Eq.~\eqref{eq: def: F-str u} for $u^{\alpha}_{\text{F-str}}$ for the canonically associated F-CohFT).
Then, identifying $u^{\gamma,d}_{\text{P-str}}$ with $\d_{t^{1,0}}^d u^{\gamma}_{\text{P-str}}$, $\gamma =1,\dots, N$, $d\geq 0$, we can reconstruct $Q^\alpha_{\beta,p}$ and $h_{\beta,p}$ as the unique elements of $\mathcal{A}_u[[\epsilon]]_{\deg_{\d_x} = 0}$ satisfying
\begin{align}
    \eta^{\alpha\mu}\partial_{t^{\mu,0}}\partial_{t^{\beta,p}}F^{\DR} = Q^\alpha_{\beta,p}|_{u^{\gamma,q}= u^{\gamma,q}_{\text{P-str}},\,\gamma=1,\dots,N,\, q\geq 0}; 
    \\ \notag 
    \partial_{t^{1,0}}\partial_{t^{\beta,p+1}}F^{\DR} = h_{\beta,p}|_{u^{\gamma,q}= u^{\gamma,q}_{\text{P-str}},\,\gamma=1,\dots,N,\, q\geq 0};     
\end{align}
for all $\alpha,\beta=1,\dots,N$, $p\geq 0$. 
We refer to~\cite[Sec.~4.4.4]{BS22} for a full exposition of this approach. 

\subsection{From F-CohFT to a system of conservation laws} 
\label{sec: F-CohFT to conservation laws}
Consider the vector field ${}^O\!X = {{}^O\!X}^{\alpha} \partial_{t^{\alpha,0}}$, defined in Eq.~(\ref{vector potential integrable observable}), associated to an F-CohFT $\{\mathfrak{fc}_{g,n}\}$ for an arbitrary system of integrable observables $\{O_{g,n}(a_1,\dots,a_n)\}$.

The goal is to construct an integrable system of conservation laws with a distinguished solution (the so-called \emph{topological solution}) given by $w_{\mathrm{top}}^{\alpha} \coloneqq \d_{t^{1,0}} {{}^O\!X}^{\alpha}$ and $w_{\mathrm{top}}^{\alpha,d} \coloneqq \d^{d+1}_{t^{1,0}} {{}^O\!X}^{\alpha}$ for $d\geq 0$. To this end, we have the following 

\begin{proposition} \label{prop: integrability of integrable observables for F-CohFT} (1) Assume $\{O_{g,n}(a_1,\dots,a_n)\}$ are integrable observables. Then there exists a unique system of fluxes $R^{\alpha}_{\beta,p}\in \mathcal{A}_w[[\epsilon]]_{\deg_{\d_x}=0}$ such that 
\begin{align} \label{eq: O-obs fluxed F-CohFT}
    \d_{t^{\beta,p}} {{}^O\!X}^{\alpha} = R^{\alpha}_{\beta,p}\big|_{w^{\gamma,q}=w^{\gamma,q}_{\mathrm{top}},\,\gamma=1,\dots,N,\, q\geq 0}.
\end{align}
The system of conservations laws 
\begin{align} \label{eq: O-obs conservation laws F-CohFT}
    \d_{t^{\beta,p}} w^\alpha = \d_x R^{\alpha}_{\beta,p}, \qquad \beta=1,\dots,N,\, p\geq 0,
\end{align}
is integrable and $w_{\mathrm{top}}^{\alpha}$ is a solution.

Moreover, in the dispersionless limit $\epsilon\to0$, the fluxes $R^{\alpha}_{\beta,p}\vert_{\epsilon=0}$ are independent of the choice of the integrable observable.

(2) If $\{O_{g,n}(a_1,\dots,a_n)\}$ are integrable observables of type $1$, then this system of conservation laws is Miura equivalent to the DR hierarchy of conservation laws associated to $\{\mathfrak{fc}_{g,n}\}$. The Miura transformation is given by
\begin{align} \label{eq:Miura-O-F-CohFT}
    u^{\alpha} & = w^{\alpha} - \d_x R^\alpha,
    \\ \notag
    R^\alpha & = \sum_{g=0}^\infty\sum_{n=1}^\infty \frac{\epsilon^{2g}}{n!} \sum_{\substack{q_1,\dots,q_n\geq 0 \\ q_1+\cdots+q_n=2g-1}} \prod_{i=1}^n w^{\gamma_i,q_i}
\times \\ \notag & \quad \qquad  
\int_{\oM_{g,n+1}} \bigg[\Coeff{b_1^0\prod_{i=1}^n a_i^{q_i}} {}^{O}\!B^{1}_{g,n} \bigg]^\circlearrowright
%\times \\ \notag & \qquad \qquad 
e^{\alpha}\bigg(\mathfrak{fc}_{g,n}\Big( \Vec{\bigotimes}_{i=1}^n e_{\gamma_i} \Big) \bigg).
\end{align}

(3) If $\{O_{g,n}(a_1,\dots,a_n)\}$ are integrable observables of type $2$, then we have the following formula: 
\begin{align}
R^{\alpha}_{\beta,p} & = \sum_{g=0}^\infty\sum_{n=1}^\infty \frac{\epsilon^{2g}}{n!} \sum_{\substack{q_1,\dots,q_n\geq 0 \\ q_1+\cdots+q_n=2g}} \prod_{i=1}^n w^{\gamma_i,q_i}  
\times \\ \notag & \qquad  
\int_{\oM_{g,n+2}} \bigg[\Coeff{b_1^p b_2^0\prod_{i=1}^n a_i^{q_i}} {}^{O}\!B^{2}_{g,n} \bigg]^\circlearrowright
%\times \\ \notag & \qquad \qquad 
e^{\alpha}\bigg(\mathfrak{fc}_{g,n+1}\Big( \Vec{\bigotimes}_{i=1}^n e_{\gamma_i} \otimes e_\beta\Big) \bigg).
\end{align}
%where $[\cdots]^\circlearrowright$ is the operation of ``$\mod (n+2)$ relabeling'' of the marked points that makes the $(n+2)$-nd marked point the zeroth one, which is needed to match the convention we used in the definition of F-CohFT. 
\end{proposition}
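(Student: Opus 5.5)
Following the strategy of \cite[Sec.~4]{BS22}, we take the DR hierarchy of conservation laws from Sec.~\ref{sec: DR F-CohFT} as the reference system and transport it to the $O$-observables by a Miura transformation built from the leveled rooted tree classes. The first step is to establish \emph{tree identities} expressing ${}^O\!X$ in terms of $X^{\DR}$ and vice versa. Expand the genus $g$, degree $\sum d_i$ part of $O_{g,n+1}(0,a_1,\dots,a_n)$ and of $\bA^1_{g,n}$. The alternating sum over leveled rooted trees in ${}^O\!B^1_{g,n}$ is, by construction, an inclusion–exclusion: each level above the root cuts out a boundary stratum. After pairing with the F-CohFT and using factorization, non-root vertices become derivatives $\d_{t^{\gamma,q}}$ of the potential, and the factors $a(e)$ become the shift $q\mapsto q+1$ in the jet variables, i.e.\ $\d_x$. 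The degree cutoff $d^{\mathrm{lvl}}(i)\le 2g^{\mathrm{lvl}}(i)-2+m$ plays a specific role: it separates the terms that are polynomial in jets of $w_{\mathrm{top}}$ (degree $\le 2g$ pieces) from the terms that are genuine higher $t$-derivatives.

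In concrete form, we aim to prove that for any homogeneous family satisfying string, dilaton and genus-0 normalization,
\begin{align}
\d_{t^{\beta,p}}{}^O\!X^\alpha=\Big[\sum \epsilon^{2g}\!\!\int\!\Coeff{b_1^pb_2^0} {}^O\!B^2_{g,n}\cdots\Big]_{w=w_{\mathrm{top}}} + (\text{terms of }\deg_a>2g),
\end{align}
where the remainder terms vanish under LRT-$2$. For type 2 this directly gives the existence of the fluxes $R^\alpha_{\beta,p}$ in part (3). The genus-0 normalization makes the $\epsilon^0$ part coincide with the principal hierarchy. This proves the dispersionless independence in part (1).

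For type 1, we use LRT-$1$ and Lemma~\ref{lemma: Type 0 relation} to write $\Coeff{b_1^0}{}^O\!B^1_{g,n}=\bA^1_{g,n}+(\deg_a\le 2g-1)$. Translated through the tree identity, this becomes $u_{\text{F-str}}=w_{\mathrm{top}}-\d_x R|_{w_{\mathrm{top}}}$ with $R$ as in \eqref{eq:Miura-O-F-CohFT}. Since the degree constraint $\sum q_i=2g-1$ makes $R$ differential-polynomial, this is a Miura transformation of the second kind. The same identity shows that $X^{\DR}$ and ${}^O\!X$ agree up to this change of coordinates, so the DR fluxes transport to fluxes $R^\alpha_{\beta,p}$. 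Integrability, i.e.\ commuting flows, is preserved under Miura transformations, and $w_{\mathrm{top}}$ is a solution because $u_{\text{F-str}}$ is one. This proves (2) and, for type 1, (1).

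The uniqueness in \eqref{eq: O-obs fluxed F-CohFT} follows from the string equation: $w_{\mathrm{top}}|_{t^{*,\ge1}=0}=t^{\alpha,0}$ up to $\epsilon$-corrections that are triangular in the genus. Hence the jets $w^{\gamma,q}_{\mathrm{top}}$ are algebraically independent, and a differential polynomial is determined by its evaluation on them (\cite[Lemma 4.3]{BS22}). In the type 2 case, the compatibility of the flows follows from the symmetry of the mixed derivatives $\d_{t^{\beta,p}}\d_{t^{\gamma,q}}{}^O\!X^\alpha$, again combined with this uniqueness.

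The main obstacle is the combinatorial tree identity itself, namely matching the signs $(-1)^{\ell(T)}$ and the level-degree cutoffs with the $t$-derivative expansion. We would attempt it by induction on the height $\ell(T)$, peeling off the top level. At each step the string and dilaton equations \eqref{eq:push-forward-O} would be used to absorb the $e_1$ insertions and the $b$-dependence. For $\bPsi$ this recovers \cite{BS22}.
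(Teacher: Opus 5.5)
Your proposal is correct and follows the paper's own route. The paper also rests on the tree decomposition of ${}^O\!X^\alpha$ and $\d_{t^{\beta,p}}{}^O\!X^\alpha$ through ${}^O\!B^1$ and ${}^O\!B^2$ (Lemma~\ref{lemma: vector potential expansion}, taken from \cite{BS22} using only the string, dilaton and homogeneity properties), then uses LRT-$1$ to identify the first summand with $X^{\DR,\alpha}$ and obtain the Miura transformation, and LRT-$2$ to kill the remainder and read off explicit fluxes, with uniqueness and compatibility coming from the triangularity of $w_{\mathrm{top}}$ (Remark~\ref{rem:RemarkIdentification}); your appeal to Lemma~\ref{lemma: Type 0 relation} is superfluous here, since it is only needed for the normal Miura statement in the P-CohFT case.
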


%Using the fluxes $R^{\alpha,0}_{\beta,p}$, we construct the integrable system:
%\begin{proposition}
%The system of conservations laws 
%\begin{align}
%    \d_{t^{\beta,p}} w^\alpha = \d_x R^{\alpha,0}_{\beta,p}, \qquad \beta=1,\dots,N,\, p\geq 0,
%\end{align}
%is integrable and is Miura equivalent to the DR hierarchy of conservation laws. 
%\end{proposition}

The proof follows \emph{mutatis mutandis} the proof of~\cite[Thm.~4.6 and Thm.~4.10]{BS22} presented there for $\{O_{g,n} = \bPsi_{g,n}\}$, with the further improvements and observations given in~\cite[Thm.~2.15]{BLRS} and~\cite[Lem.~2.6]{BLS-DRDZ}. Below we guide the reader through the main steps of the proof, with precise references, and also give an explicit formula for the Miura transformation in the case of type $1$ integrable observables.

\subsection{From P-CohFT to a Hamiltonian system}
\label{sec: P-CohFT to hamilto}
Consider the logarithm of the partition function ${}^O\!F$, defined in Eq.~(\ref{tau funtion integrable observable}), associated to a P-CohFT $\{\mathfrak{pc}_{g,n}\}$ for an arbitrary system of integrable observables $\{O_{g,n}(a_1,\dots,a_n)\}$. The goal is to construct an integrable system of conservation laws with a distinguished solution (the so-called \emph{topological solution}) given by $w_{\mathrm{top}}^{\alpha} \coloneqq \eta^{\alpha\mu}\d_{t^{1,0}} \d_{t^{\mu,0}}  {}^O\!F$ and $w_{\mathrm{top}}^{\alpha,d} \coloneqq \d^{d}_{t^{1,0}} w_{\mathrm{top}}^{\alpha}$ for $d\geq 0$. To this end, we have the following. 

\begin{proposition} \label{prop: integrability of integrable observables for P-CohFT} (1) Assume $\{O_{g,n}(a_1,\dots,a_n)\}$ are integrable observables. Then there exists a unique system of fluxes $R^{\alpha}_{\beta,p}\in \mathcal{A}_w[[\epsilon]]_{\deg_{\d_x}=0}$ such that 
\begin{align}
    \eta^{\alpha\mu} \d_{t^{\beta,p}}\d_{t^{\mu,0}} {}^O\!F = R^{\alpha}_{\beta,p}\big|_{w^{\gamma,q}=w^{\gamma,q}_{\mathrm{top}},\,\gamma=1,\dots,N,\, q\geq 0}.
\end{align}
The system of conservations laws 
\begin{align}
    \d_{t^{\beta,p}} w^\alpha = \d_x R^{\alpha}_{\beta,p}, \qquad \beta=1,\dots,N,\, p\geq 0,
\end{align}
is integrable and $w_{\mathrm{top}}^{\alpha}$ is a solution. Moreover, in the dispersionless limit $\epsilon\to0$, the fluxes $R^{\alpha}_{\beta,p}\vert_{\epsilon=0}$ are independent of the choice of the integrable observable.

(2) If $\{O_{g,n}(a_1,\dots,a_n)\}$ are integrable observables of type $1$, then this system of conservation laws is normal Miura equivalent to the DR hierarchy of conservation laws associated to a P-CohFT $\{\mathfrak{pc}_{g,n}\}$, and hence possesses a Hamiltonian structure and it is tau symmetric. 

%\xtodo{give explicit Miura} \todo{Normal Miura becomes explicit only if we also assume type $2$ integrability, see the proof. We can add point (4) with the assumption of both type $1$ and type $2$.}

(3) If $\{O_{g,n}(a_1,\dots,a_n)\}$ are integrable observables of type $2$, then we have the following formulas for the fluxes and the Hamiltonian densities: 
\begin{align} \label{eq:R-alpha-beta-p}
R^{\alpha}_{\beta,p} & = \sum_{g=0}^\infty\sum_{n=1}^\infty \frac{\epsilon^{2g}}{n!} \sum_{\substack{q_1,\dots,q_n\geq 0 \\ q_1+\cdots+q_n=2g}} \prod_{i=1}^n w^{\gamma_i,q_i}  
\times \\ \notag & \qquad  
\int_{\oM_{g,n+2}} \Coeff{b_1^p b_2^0\prod_{i=1}^n a_i^{q_i}} {}^{O}\!B^{2}_{g,n} 
%\times \\ \notag & \qquad \qquad 
\eta^{\alpha\mu}\mathfrak{pc}_{g,n+1}\Big( \Vec{\bigotimes}_{i=1}^n e_{\gamma_i} \otimes e_\beta\otimes e_\mu\Big);
\\ 
h_{\beta,p} & = \sum_{g=0}^\infty\sum_{n=1}^\infty \frac{\epsilon^{2g}}{n!} \sum_{\substack{q_1,\dots,q_n\geq 0 \\ q_1+\cdots+q_n=2g}} \prod_{i=1}^n w^{\gamma_i,q_i}  
\times \\ \notag & \qquad  
\int_{\oM_{g,n+2}} \Coeff{b_1^{p+1} b_2^0\prod_{i=1}^n a_i^{q_i}} {}^{O}\!B^{2}_{g,n} 
\mathfrak{pc}_{g,n+1}\Big( \Vec{\bigotimes}_{i=1}^n e_{\gamma_i} \otimes e_\beta\otimes e_1\Big);
\end{align}

(4) If $\{O_{g,n}(a_1,\dots,a_n)\}$ are integrable observables of type $1$ and type $2$ simultaneously, then the normal Miura transformation mentioned above can be given explicitly as
\begin{align} \label{eq: Normal Miura explicit}
    u^{\alpha}_{\mathrm{norm}} & = w^{\alpha}  - \eta^{\alpha\mu}\d_{x}\Big(\d_{w^{\zeta,k}} R \cdot \d_x^{d+1} R^\zeta_{\mu,0}\Big), 
    \\ \notag 
    R & = \sum_{g=0}^\infty\sum_{n=1}^\infty \frac{\epsilon^{2g}}{n!} \sum_{\substack{q_1,\dots,q_n\geq 0 \\ q_1+\cdots+q_n=2g-2}} \prod_{i=1}^n w^{\gamma_i,q_i}
%\times \\ \notag & \quad \qquad  
\int_{\oM_{g,n}} \Coeff{\prod_{i=1}^n a_i^{q_i}} {}^{O}\!B^{0}_{g,n} 
%\times \\ \notag & \qquad \qquad 
\mathfrak{pc}_{g,n}\Big( \Vec{\bigotimes}_{i=1}^n e_{\gamma_i} \Big),
\end{align}
and $R^\zeta_{\mu,0}$ are given above in Eq.~\eqref{eq:R-alpha-beta-p}.
\end{proposition}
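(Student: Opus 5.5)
We would follow the strategy of \cite[Thm.~4.6, 4.10, Sec.~4.4.4]{BS22} for $\{\bPsi_{g,n}\}$, checking at each step that only the properties of Definition~\ref{def:int-obs} are used. The plan is to pass to the canonically associated F-CohFT via~\eqref{eq:From-PCohFT-to-FCohFT} and reduce to Proposition~\ref{prop: integrability of integrable observables for F-CohFT}. By the string equation (constant term in $b$ of~\eqref{eq:push-forward-O}), $\d_{t^{1,0}}{}^O\!F=\eta_{1\mu}\,{}^O\!X^\mu$, and more generally $\eta^{\alpha\mu}\d_{t^{\mu,0}}{}^O\!F$ coincides with ${}^O\!X^\alpha$ for the associated F-CohFT. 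This identification makes the two topological solutions agree.

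\textbf{Part (1).} Existence, uniqueness, integrability and the independence of the dispersionless limit then follow from part (1) of Proposition~\ref{prop: integrability of integrable observables for F-CohFT}. Uniqueness relies on~\cite[Lemma~4.3]{BS22}, since the $w_{\mathrm{top}}^{\gamma,q}$ are algebraically independent once the string equation holds. Independence at $\epsilon=0$ comes from the genus-zero normalization.

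\textbf{Part (2).} Assume type $1$. Lemma~\ref{lemma: Type 0 relation} gives LRT-$0$, so ${}^O\!B^0_{g,n}-\bA_{g,n}$ has $a$-degree at most $2g-2$. We expand ${}^O\!F$ using the tree-sum inversion of the definition of ${}^O\!B^0$: the level-ordered sum over trees expresses $O$ in terms of ${}^O\!B^0$ glued with lower $O$'s. This writes ${}^O\!F-F^{\DR}$ as $G|_{w=w_{\mathrm{top}}}$ with $G\in\mathcal{A}_w[[\epsilon]]_{\deg_{\d_x}=-2}$, where the degree bound is exactly what makes $G$ a differential polynomial of $\d_x$-degree $-2$. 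Hence the topological tau functions differ by a normal Miura transformation, $u_{\mathrm{norm}}^\alpha=w^\alpha+\eta^{\alpha\mu}\d_x\d_{t^{\mu,0}}G$. Tau-symmetry and the Hamiltonian structure of the DR hierarchy in normal coordinates~\cite{BDGR1} then transfer to the $O$-system, because normal Miura transformations preserve them.

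\textbf{Parts (3) and (4).} For (3), the LRT-$2$ bound $\deg_a{}^O\!B^2_{g,n}\le 2g$ ensures the proposed expression is a genuine element of $\mathcal{A}_w[[\epsilon]]_{\deg_{\d_x}=0}$. Evaluating it on $w_{\mathrm{top}}$ and re-expanding the tree sum, as in~\cite[Thm.~2.15]{BLRS}, telescopes over levels to $\eta^{\alpha\mu}\d_{t^{\beta,p}}\d_{t^{\mu,0}}{}^O\!F$; this uses the dilaton equation for contracted legs. Uniqueness from (1) then identifies it with $R^\alpha_{\beta,p}$. The Hamiltonian densities follow by shifting $p\mapsto p+1$ and pairing with $e_1$. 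For (4), the $G$ of part (2) is precisely the $R$ built from ${}^O\!B^0$, and $\d_{t^{\mu,0}}G=\sum_q\d_{w^{\zeta,q}}R\cdot\d_x^{q+1}R^\zeta_{\mu,0}$ becomes explicit once (3) gives $R^\zeta_{\mu,0}$.

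The main obstacle is the telescoping step in (3), together with the analogous inversion in (2). There one must match the level-by-level degree truncations $d^{\mathrm{lvl}}(i)\le 2g^{\mathrm{lvl}}(i)-2+m$ with the substitution $w^{\gamma,q}\mapsto\d_x^q w_{\mathrm{top}}^\gamma$. We would first verify this in low genus against the $\bPsi$ case of~\cite{BS22}, then argue that the combinatorics never used the specific form of $\psi$-classes beyond string/dilaton.
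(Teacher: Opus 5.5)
Your proposal follows essentially the same route as the paper: pass to the canonically associated F-CohFT (where $\eta^{\alpha\mu}\d_{t^{\mu,0}}{}^O\!F={}^O\!X^\alpha$ holds directly from the definitions, so the string equation is not needed for this), use the expansion in ${}^O\!B^0,{}^O\!B^1,{}^O\!B^2$, identify the $t$-part of ${}^O\!F$ with $F^{\DR}$ via LRT-1 $\Rightarrow$ LRT-0, and make the normal Miura map explicit through $R^\zeta_{\mu,0}$. Two small corrections. In (3), the restriction $\sum_i q_i=2g$ already puts the formula in $\deg_{\d_x}=0$; LRT-2 is needed instead to kill the remainder with $\sum_i q_i\geq 2g+1$ in the decomposition of $\eta^{\alpha\mu}\d_{t^{\beta,p}}\d_{t^{\mu,0}}{}^O\!F$, so that evaluation on $w_{\mathrm{top}}$ is exact rather than only up to that order. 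In (2), the sign should be $u^\alpha_{\mathrm{norm}}=w^\alpha-\eta^{\alpha\mu}\d_x\d_{t^{\mu,0}}G$, since $G|_{w=w_{\mathrm{top}}}={}^O\!F-F^{\DR}$.
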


%Using the fluxes $R^{\alpha,0}_{\beta,p}$, we construct the integrable system:
%\begin{proposition}
%The system of conservations laws 
%\begin{align}
%    \d_{t^{\beta,p}} w^\alpha = \d_x R^{\alpha,0}_{\beta,p}, \qquad \beta=1,\dots,N,\, p\geq 0,
%\end{align}
%is integrable and is Miura equivalent to the DR hierarchy of conservation laws. 
%\end{proposition}

The proof follows \emph{mutatis mutandis} the argument of~\cite[Sec.~4.4.4]{BS22} presented there for $\{O_{g,n} = \bPsi_{g,n}\}$, which itself summarizes the original arguments of~\cite{BDGR1,BDGR20,BGR19}, with the further improvements and observations given in~\cite[Thm.~2.15]{BLRS} and~\cite[Lem.~2.6]{BLS-DRDZ}. Below we guide the reader through the main steps of the proof, with precise references, and also give an explicit formula for the normal Miura transformation in the case of integrable observables that are simultaneously of type $1$ and type $2$. 

\subsection{Proofs and further details} 
\label{sec: proofs and further details}
The main steps of the proofs of Prop.~\ref{prop: integrability of integrable observables for F-CohFT} and~\ref{prop: integrability of integrable observables for P-CohFT} are the following. 

\begin{lemma} (1) Let ${{}^O\!X}$ be the vector potential of an F-CohFT $\{ \mathfrak{fc}_{g,n}\}$. For any $\alpha=1,\dots,N$ we have 
\begin{align}
\d_{t^{1,0}} {{}^O\!X}^{\alpha} & = t^{\alpha,0} + \sum_{k=0}^\infty t^{\beta,k+1} \d_{t^{\beta,k}} {{}^O\!X}^{\alpha};
\\ 
\d_{t^{1,1}}  {{}^O\!X}^{\alpha} & = \sum_{k=0}^\infty t^{\beta,k} \d_{t^{\beta,k}} {{}^O\!X}^{\alpha} + \epsilon \partial_\epsilon {{}^O\!X}^{\alpha} - {{}^O\!X}^{\alpha}. 
\end{align}

(2) Let ${{}^O\!F}$ be the logarithm of the partition function of a P-CohFT $\{ \mathfrak{pc}_{g,n}\}$. We have 
\begin{align}
\d_{t^{1,0}} {{}^O\!F} & = \frac 12 \eta_{\alpha\beta}t^{\alpha,0}t^{\beta,0} + \sum_{k=0}^\infty t^{\alpha,k+1} \d_{t^{\alpha,k}} {{}^O\!F};
\\ 
\d_{t^{1,1}}  {{}^O\!F} & = \sum_{k=0}^\infty t^{\alpha,k} \d_{t^{\alpha,k}} {{}^O\!F} + \epsilon \partial_\epsilon {{}^O\!F} - 2\,{{}^O\!F}. 
\end{align}
\end{lemma}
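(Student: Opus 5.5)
The plan is to derive both identities directly from the push-forward property (string and dilaton equations) in Definition~\ref{def:int-obs}, applied to the correlators defining ${}^O\!X$ and ${}^O\!F$. The genus~$0$ normalization is used only to handle the unstable initial terms.

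\textbf{Derivatives in $t^{1,0}$ and $t^{1,1}$.} Differentiating ${}^O\!F$ in $t^{1,d}$ adds one marked point carrying the insertion $e=e_1$. Its coefficient is the class $\Coeff{b^d}O_{g,n+1}(a_1,\dots,a_n,b)$. By the flat unit axiom, $\mathfrak{pc}_{g,n+1}(\cdots\otimes e)=\pi^*\mathfrak{pc}_{g,n}(\cdots)$. The projection formula then reduces the integral over $\oM_{g,n+1}$ to
\[
\int_{\oM_{g,n}}\mathfrak{pc}_{g,n}(\cdots)\,\pi_*\Coeff{b^d}O_{g,n+1}.
\]
Here $\pi$ is not quite the map forgetting the last point, since the new point is listed first in the $n!$-symmetrized sum. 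This is harmless by $\mathfrak S_n$-equivariance and symmetry of the generating series.

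By \eqref{eq:push-forward-O}, for $d=0$ we get $\pi_*O_{g,n+1}|_{b=0}=(\sum_i a_i)\,O_{g,n}$. For $d=1$ we get $(2g-2+n)\,O_{g,n}$.

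\textbf{String equation.} Extracting $\Coeff{\prod a_i^{d_i}}$ from $\sum_i a_i\, O_{g,n}$ shifts one $d_i$ down by one. After re-indexing the sum over $n$ and $d$'s, this produces $\sum_k t^{\alpha,k+1}\partial_{t^{\alpha,k}}{}^O\!F$.

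\textbf{Dilaton equation.} The factor $2g-2+n$ is handled by two operators. The part $n$ is produced by the Euler operator $\sum_k t^{\alpha,k}\partial_{t^{\alpha,k}}$. The part $2g-2$ is produced by $\epsilon\partial_\epsilon$ acting on $\epsilon^{2g-2}$ in ${}^O\!F$. This gives $\sum t\partial_t{}^O\!F+\epsilon\partial_\epsilon{}^O\!F-2\,{}^O\!F$.

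\textbf{Vector potential.} For ${}^O\!X$ the same computation applies on $\oM_{g,n+1}$, using the F-CohFT flat unit axiom with the zeroth point fixed. The observable is evaluated at $a_0=0$, so the string term is $\sum_{i\ge1}a_i+0$. In the dilaton term, the Euler characteristic of $\oM_{g,n+1}$ is $2g-1+n$. The power $\epsilon^{2g}$ has $\epsilon\partial_\epsilon$-eigenvalue $2g$, so the constant becomes $-1$ instead of $-2$.

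\textbf{Unstable terms.} The remaining ingredient is the terms where the target moduli space after forgetting is unstable: $(g,n)=(0,3)$ for ${}^O\!F$ with $d=0$, and $(0,2)$ for ${}^O\!X$.

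For ${}^O\!F$, the genus~$0$ normalization says $O_{0,3}=1$ in degree~$0$. Together with $\mathfrak{pc}_{0,3}(e)=\eta$, this yields $\tfrac12\eta_{\alpha\beta}t^{\alpha,0}t^{\beta,0}$; the $\tfrac12$ comes from the $1/n!$ counting.

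For ${}^O\!X$, it is $\mathfrak{fc}_{0,2}(e)=\mathrm{Id}$ that gives the term $t^{\alpha,0}$.

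In the dilaton case the unstable contributions vanish. The $b^1$-coefficient on $\oM_{0,3}$ (respectively on $\oM_{0,2}$) is a degree-one class on a point, hence zero. This is consistent with the factor $2g-2+n=0$ at $(g,n)=(0,2)$.

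\textbf{Main obstacle.} The step needing the most care is bookkeeping: matching the $1/n!$ normalizations and the relabeling when the distinguished point is not last. The geometric input, namely push-forward plus flat unit, is immediate.
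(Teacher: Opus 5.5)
Your strategy (flat unit, projection formula and the push-forward axiom, with $\mathfrak{pc}_{0,3}(e)=\eta$ and genus-$0$ normalization giving the quadratic term, and $\mathfrak{fc}_{0,2}(e)=\mathrm{Id}_V$ giving the linear term) is the paper's argument. The paper's proof is a one-line appeal to these two properties. There is, however, a gap in your list of unstable cases for ${}^O\!F$.

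\textbf{The gap.} Besides the $\oM_{0,3}$ terms, ${}^O\!F$ contains the one-point terms on $\oM_{g,1}$, $g\geq 1$. After removing the unit insertion these have no partner: $O_{g,0}$ is not defined, ${}^O\!F$ has no $n=0$ part, and for $g=1$ the space $\oM_{1,0}$ does not exist. Differentiating these terms produces $t$-independent constants:
\begin{itemize}
\item in $\d_{t^{1,0}}{}^O\!F$, the constant $\epsilon^2\int_{\oM_{1,1}}\mathfrak{pc}_{1,1}(e_1)\Coeff{a_1^0}O_{1,1}(a_1)$. For $g\geq 2$ the analogous term vanishes, since $\mathfrak{pc}_{g,1}(e_1)=\pi^*\mathfrak{pc}_{g,0}$ and $\pi_*1=0$.
\item in $\d_{t^{1,1}}{}^O\!F$, the constant $\sum_{g\geq1}\epsilon^{2g}\int_{\oM_{g,1}}\mathfrak{pc}_{g,1}(e_1)\Coeff{a_1^1}O_{g,1}(a_1)$.
\end{itemize}
Both right-hand sides vanish at $t=0$, so these constants are unmatched, and they are not zero in general. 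For the trivial P-CohFT and $O=\bPsi$ one gets $\d_{t^{1,1}}{}^{\bPsi}\!F|_{t=0}=\epsilon^2\int_{\oM_{1,1}}\psi_1=\epsilon^2/24$. For the Hodge P-CohFT $\Lambda(x)$ the string constant $\epsilon^2x/24$ also appears.

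So your sentence ``in the dilaton case the unstable contributions vanish'' is false. In fact part (2), as displayed, holds only up to additive constants: one must either add these constants (the familiar $\langle\tau_1(e)\rangle_1$-type correction) or read the identities modulo constants. The paper's proof passes over the same point. It is harmless downstream, because the lemma is only used after a further derivative $\d_{t^{\mu,0}}$. For ${}^O\!X$ the problem does not arise: the zeroth point keeps every target $\oM_{g,1}$ stable, provided the $n=0$ constant terms of ${}^O\!X$ are kept.

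\textbf{Smaller points.}
\begin{itemize}
\item ${}^O\!F$ carries $\epsilon^{2g}$, not $\epsilon^{2g-2}$; that power sits in $\log {}^O\!Z={}^O\!F/\epsilon^2$. So $\epsilon\d_\epsilon$ contributes $2g$ and the explicit $-2\,{}^O\!F$ supplies the $-2$. As you phrased it, the $-2$ is counted twice.
\item For ${}^O\!X$ the unstable source is $\oM_{0,3}$ (zeroth point, one $t$-point, unit), not $\oM_{0,2}$.
\item Moving the unit insertion to the last slot requires $O_{g,n}$ to be invariant under simultaneous permutation of the marked points and the variables $a_i$, because the push-forward axiom only concerns the last point. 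Symmetry of the monomials $\prod t^{\alpha_i,d_i}$ alone does not give this. It is not among the listed axioms, but it holds for $\bPsi$, $\bA$, $\bOm$ and is implicit in the paper.
\end{itemize}
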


These are the string and the dilaton equation in the context of F-CohFTs and P-CohFTs, respectively. They follow from the push-forward property of $\{O_{g,n}(a_1,\dots,a_n)\}$ given in~\eqref{eq:push-forward-O} combined with the flat unit property of F-CohFT. 

\begin{remark} \label{rem:RemarkIdentification}
    One of the main consequences of the string equation is the following property of the so-called topological dependent coordinates, which are defined as $w_{\mathrm{top}}^{\alpha,d} \coloneqq \d^{d+1}_{t^{1,0}} {{}^O\!X}^{\alpha}$ in the context of F-CohFTs and $w_{\mathrm{top}}^{\alpha,d} \coloneqq \eta^{\alpha\mu}\d^{d+1}_{t^{1,0}} \d_{t^{\mu,0}} {{}^O\!F}$ in the context of P-CohFTs:
    \begin{align}
        w_{\mathrm{top}}^{\alpha,d} = t^{\alpha,d} + \delta^{\alpha}_{1} \delta^d_1 + r^{\alpha,d} + O(\epsilon^2),
    \end{align}
    where $r^{\alpha,d}$ is a series in the variables $\{t^{\beta,k}\}$ such that $\Coeff{\prod_{i=1}^n t^{\beta_i,k_i}} r^{\alpha,s} = 0 $ if $n=1$ or $\sum_{i=1}^n k_i \leq d$. This gives us a change of variables that maps an arbitrary formal power series in $t^{\alpha,d}$ to a formal power series in $w_{\mathrm{top}}^{\alpha,d}-\delta^{\alpha}_1\delta^{d}_1$, and we can use it to check various identities between elements of $\mathcal{A}_w[[\epsilon]]$ by substitution $w^{\alpha,d} = w_{\mathrm{top}}^{\alpha,d}$, cf.~\cite[Lemma 4.3]{BS22}
\end{remark}

Notice that these are the only equations for $\bPsi$ used in~\cite[Sec.~7.3]{BDGR1},~\cite[Sec.~3.4]{BGR19}, and~\cite[Sec. 4]{BS22} (along with the homogeneity, which is hidden in the notation that identifies $\prod_{i=1}^n\psi_i^{d_i}$ with $\Coeff{\prod_{i=1}^n\psi_i^{d_i}}\bPsi(a_1,\dots,a_n)$ as in~\cite[Sec. 2.5]{BLRS}) to prove the following statement. This is the reason why one can conclude that the same statement holds for any $\{O_{g,n}\}$ satisfying the string, the dilaton, and the homogenenity property:

\begin{lemma} \label{lemma: vector potential expansion} 
Let ${{}^O\!X}$ be the vector potential of an F-CohFT $\{ \mathfrak{fc}_{g,n}\}$. Let 
$$
w_{\mathrm{top}}^{\alpha} \coloneqq \d_{t^{1,0}} {{}^O\!X}^{\alpha}
$$ 
and $w_{\mathrm{top}}^{\alpha,d} \coloneqq \d^{d+1}_{t^{1,0}} {{}^O\!X}^{\alpha}$ for $d\geq 0$. Then there exist a unique $R^\alpha \in \mathcal{A}_w[[\epsilon]]_{\deg_{\d_x}=-1}$ and a unique $R^{\alpha}_{\beta,p}\in \mathcal{A}_w[[\epsilon]]_{\deg_{\d_x}=0}$ such that 
\begin{align}
    \Coeff{\epsilon^{2g}\prod_{i=1}^n t^{\zeta_iq_i}} \bigg( {{}^O\!X}^\alpha -R^\alpha|_{w^{\gamma,d}=w_{\mathrm{top}}^{\gamma,d}} \bigg) & = 0 & \text{for}\ &  
    \sum\nolimits_{i=1}^n q_i \leq 2g-1,
    \\
    \Coeff{\epsilon^{2g}\prod_{i=1}^n t^{\zeta_iq_i}} \bigg( \d_{t^{\beta,p}}{{}^O\!X}^\alpha -R^\alpha_{\beta,p}|_{w^{\gamma,d}=w_{\mathrm{top}}^{\gamma,d}} \bigg) & = 0 & \text{for}\  &
    \sum\nolimits_{i=1}^n q_i \leq 2g,
\end{align}
and any $1 \leq \zeta_i \leq N$.
More explicitly, we have
\begin{align} \label{eq:X-alpha-decomposition}
    {{}^O\!X}^{\alpha} &=  \sum_{g=0}^\infty\sum_{n=1}^\infty \frac{\epsilon^{2g}}{n!} \sum_{\substack{q_1,\dots,q_n\geq 0 \\ q_1+\cdots+q_n\geq 2g}} \prod_{i=1}^n t^{\gamma_i,q_i}  
\times \\ \notag & \quad \qquad  
\int_{\oM_{g,n+1}} \bigg[\Coeff{b_1^0\prod_{i=1}^n a_i^{q_i}} {}^{O}\!B^{1}_{g,n} \bigg]^\circlearrowright
%\times \\ \notag & \qquad \qquad 
e^{\alpha}\bigg(\mathfrak{fc}_{g,n}\Big( \Vec{\bigotimes}_{i=1}^n e_{\gamma_i} \Big) \bigg)
\\ \notag & \quad + \sum_{g=0}^\infty\sum_{n=1}^\infty \frac{\epsilon^{2g}}{n!} \sum_{\substack{q_1,\dots,q_n\geq 0 \\ q_1+\cdots+q_n=2g-1}} \prod_{i=1}^n w^{\gamma_i,q_i}_{\mathrm{top}}
\times \\ \notag & \quad \qquad  
\int_{\oM_{g,n+1}} \bigg[\Coeff{b_1^0\prod_{i=1}^n a_i^{q_i}} {}^{O}\!B^{1}_{g,n} \bigg]^\circlearrowright
%\times \\ \notag & \qquad \qquad 
e^{\alpha}\bigg(\mathfrak{fc}_{g,n}\Big( \Vec{\bigotimes}_{i=1}^n e_{\gamma_i} \Big) \bigg),
\end{align}
where the second summand is equal to $R^\alpha|_{w^{\gamma,d}=w_{\mathrm{top}}^{\gamma,d}}$, and
\begin{align} \label{eq:X-alpha-beta-p-decomposition} 
\d_{t^{\beta,p}}{{}^O\!X}^\alpha 
& = \sum_{g=0}^\infty\sum_{n=1}^\infty \frac{\epsilon^{2g}}{n!} \sum_{\substack{q_1,\dots,q_n\geq 0 \\ q_1+\cdots+q_n\geq 2g+1}} \prod_{i=1}^n t^{\gamma_i,q_i}  
\times \\ \notag & \qquad  
\int_{\oM_{g,n+2}} \bigg[\Coeff{b_1^p b_2^0\prod_{i=1}^n a_i^{q_i}} {}^{O}\!B^{2}_{g,n} \bigg]^\circlearrowright
%\times \\ \notag & \qquad \qquad 
e^{\alpha}\bigg(\mathfrak{fc}_{g,n+1}\Big( \Vec{\bigotimes}_{i=1}^n e_{\gamma_i} \otimes e_\beta\Big) \bigg)
\\ \notag 
& \quad + \sum_{g=0}^\infty\sum_{n=1}^\infty \frac{\epsilon^{2g}}{n!} \sum_{\substack{q_1,\dots,q_n\geq 0 \\ q_1+\cdots+q_n= 2g}} \prod_{i=1}^n w_{\mathrm{top}}^{\gamma_i,q_i}  
\times \\ \notag & \quad \qquad  
\int_{\oM_{g,n+2}} \bigg[\Coeff{b_1^p b_2^0\prod_{i=1}^n a_i^{q_i}} {}^{O}\!B^{2}_{g,n} \bigg]^\circlearrowright
%\times \\ \notag & \qquad \qquad 
e^{\alpha}\bigg(\mathfrak{fc}_{g,n+1}\Big( \Vec{\bigotimes}_{i=1}^n e_{\gamma_i} \otimes e_\beta\Big) \bigg),
\end{align}
where the second summand is equal to $R^\alpha_{\beta,p}|_{w^{\gamma,d}=w_{\mathrm{top}}^{\gamma,d}}$.
\end{lemma}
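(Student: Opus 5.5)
The plan is to prove the two expansions \eqref{eq:X-alpha-decomposition} and \eqref{eq:X-alpha-beta-p-decomposition} by an inductive inversion of the string change of variables. Uniqueness of $R^\alpha$ and $R^\alpha_{\beta,p}$ then follows from Remark~\ref{rem:RemarkIdentification}. That remark says the substitution $w^{\gamma,d}=w^{\gamma,d}_{\mathrm{top}}$ is triangular with respect to the degree $\sum q_i$ in the $t$-variables. So an element of $\mathcal{A}_w[[\epsilon]]_{\deg_{\d_x}=k}$ is determined by its coefficients of $\epsilon^{2g}\prod t^{\zeta_i,q_i}$ with $\sum q_i \le 2g+k$, and two differential polynomials agreeing on that range coincide.

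\textbf{Existence.} I would expand $w^{\gamma,q}_{\mathrm{top}}=\d^{q+1}_{t^{1,0}}{}^O\!X^\gamma$ as a sum over correlators. Each monomial $\prod_i w^{\gamma_i,q_i}_{\mathrm{top}}$ in the second summand of \eqref{eq:X-alpha-decomposition} then becomes a sum over rooted trees. Each factor $w_{\mathrm{top}}^{\gamma_i,q_i}$ is a vertex class $O_{g',n'+1}(0,\dots)$ glued via the F-CohFT factorization property to the $i$-th leg of the root, which carries $\Coeff{b_1^0\prod a_i^{q_i}}{}^O\!B^1$. The derivatives $\d_{t^{1,0}}^{q+1}$ insert units. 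These are removed using the string equation, i.e.\ the constant term in $b$ of \eqref{eq:push-forward-O}, which multiplies by the sum of the $a$'s of the descendant legs; this produces exactly the edge factors $a(e)$. Iterating, the substitution of $w_{\mathrm{top}}$ organizes itself into sums over leveled rooted trees, with levels recording the order of substitution. The signs $(-1)^{\ell(T)}$ and the truncation conditions $d^{\mathrm{lvl}}(i)\le 2g^{\mathrm{lvl}}(i)-2+m$ in ${}^O\!B$ are precisely the inclusion–exclusion that cancels the overcounting between the two summands. This is the same combinatorics as in \cite[Sec.~4]{BS22} and \cite[Thm.~2.15]{BLRS}, where only string, dilaton and homogeneity of $\bPsi$ were used. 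The dilaton part is needed for the genus contributions of unit insertions ($\epsilon\d_\epsilon$ terms). Homogeneity converts Chow degree into $a$-degree, so that $q$-degrees match $\deg_a$ and $\deg_{\d_x}$ counts correctly.

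\textbf{Key identity.} Concretely, I would show the identity
\[
O_{g,n+1}(0,a_1,\dots,a_n)=\sum_{T}(\text{level-$\le$ truncated part}) + \sum_T(\text{higher levels, root degree }=2g-1),
\]
i.e.\ the defining recursion of $\Coeff{b_1^0}{}^O\!B^1_{g,n}$ read backwards. One splits off the top level of a leveled tree where the truncation first fails and identifies the remaining factor as a $w_{\mathrm{top}}$-substitution. Because the truncation fails first at the root, it forces exactly $\sum q_i=2g-1$ at the root vertex, giving the second summand of \eqref{eq:X-alpha-decomposition}. The first summand has $\sum q_i\ge 2g$. This proves the first vanishing statement, with $R^\alpha$ read off as in \eqref{eq:Miura-O-F-CohFT}.

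\textbf{Flows.} For $\d_{t^{\beta,p}}{}^O\!X^\alpha$ I would run the same argument with two frozen legs. The derivative marks a leg carrying $e_\beta$ and $\psi$-type variable $b_1$, and $\Coeff{b_1^p}$ is extracted. The second frozen leg is the output point, with $b_2^0$. The first truncation failure now occurs at $\sum q_i=2g$, matching the condition $d^{\mathrm{lvl}}\le 2g^{\mathrm{lvl}}$ for $m=2$.

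\textbf{Main obstacle.} The expected main obstacle is the bookkeeping of this level-splitting bijection, in particular verifying that the edge weights $a(e)$ produced by repeated string equations match exactly and that trees with frozen legs at non-root vertices never appear. I would handle it by induction on the number of levels, checking the one-level case directly from \eqref{eq:push-forward-O}.
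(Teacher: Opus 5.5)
Your overall route matches the paper's. The paper proves this lemma only by observing that the $\bPsi$-arguments of \cite{BDGR1,BGR19,BS22} use nothing beyond the string and dilaton equations and homogeneity, with uniqueness coming from Remark~\ref{rem:RemarkIdentification} (i.e.\ \cite[Lemma~4.3]{BS22}). Your uniqueness argument is fine if you run it by induction on the power of $\epsilon$: at the lowest order only the genus-zero part of $w_{\mathrm{top}}$ enters, and that part is triangular.

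\textbf{The gap is in your existence mechanism.} Your key identity is a class-level identity obtained by reading the definition of $\Coeff{b_1^0}{}^O\!B^1_{g,n}$ backwards. In it, the substitution $w=w_{\mathrm{top}}$ is interpreted purely as attaching a new top level of $O$-vertices with edge factors $a(e)$. That identity is false. The reason is that $w^{\gamma,q}_{\mathrm{top}}=t^{\gamma,q}+\delta^\gamma_1\delta^q_1+\cdots$ contains a constant dilaton shift, coming from $\oM_{0,3}$ with two unit insertions, and no tree attachment produces it.

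\textbf{Counterexample in genus one.} Take $O=\bPsi$. The degree-zero part of $O_{1,n+1}(0,a_1,\dots,a_n)$ is $1$. So, up to symmetry factors, ${}^{\bPsi}\!X^\alpha$ has coefficient $\int_{\oM_{1,n+1}}e^\alpha\big(\mathfrak{fc}_{1,n}(\cdots)\big)$ at $\epsilon^2\prod_i t^{\zeta_i,0}$. Both terms on the right of your identity, however, have $a$-degree $\geq 1$: the first has degree $\geq 2$, and the second has root degree $2g_r-1=1$. In \eqref{eq:X-alpha-decomposition} this coefficient is produced by the genus-one part of $R^\alpha$ only through $w^{1,1}_{\mathrm{top}}=1+\cdots$. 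It matches because
$\pi_*\Coeff{a_{n+1}}\Coeff{b_1^0}{}^{\bPsi}\!B^1_{1,n+1}=\pi_*\psi_{n+1}-\sum_{I\ni n+1}\pi_*\delta_I=(n+1)-n=1$.
Here the $\delta_I$ are the rational-tail divisors from the height-one trees, and only the $n$ divisors $\delta_{\{j,n+1\}}$ survive the push-forward.

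\textbf{What is actually needed in general.} If you carry out your inclusion--exclusion honestly, it does not close up. It leaves over exactly the part of $\Coeff{b_1^0}{}^O\!B^1_{g,n}$ of $a$-degree $\leq 2g-2$, with an arbitrary (possibly empty) new top level of $O$-vertices attached. This remainder must be cancelled by the terms of $R^\alpha|_{w=w_{\mathrm{top}}}$ in which $k\geq 1$ factors $w^{1,1}$ are replaced by $1$. By the flat unit, these terms are push-forwards $\frac{1}{k!}\pi_*\Coeff{a_{n+1}\cdots a_{n+k}}$ of the degree-$(2g-1)$ part of $\Coeff{b_1^0}{}^O\!B^1_{g,n+k}$. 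So you need a dilaton-type push-forward identity for the $B$-classes: these push-forwards must equal the degree-$(2g-1-k)$ part of $\Coeff{b_1^0}{}^O\!B^1_{g,n}$, at least after pairing with the F-CohFT.

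This is where the $b^1$-coefficient of \eqref{eq:push-forward-O} genuinely enters. It has to be combined with cancellations between trees in which the forgotten leg sits on different vertices or feeds into the edge factors. As the count $(n+1)-n$ above shows, it is not the naive dilaton equation with factor $2g-2+n$. This, rather than the bookkeeping of edge weights, is the real obstacle. Your remark that dilaton handles ``$\epsilon\partial_\epsilon$ terms'' does not identify it. The same correction is needed for \eqref{eq:X-alpha-beta-p-decomposition}, where the critical degree is $2g$.
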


The integrability statement for F-CohFTs is a direct corollary.

\begin{proof}[Proof of Prop.~\ref{prop: integrability of integrable observables for F-CohFT}] Assume that $\{O_{g,n}\}$ are type $1$ integrable observables. Then the first summand in~\eqref{eq:X-alpha-decomposition} is equal to $X^{\DR,\alpha}$. More precisely, we have
\begin{align}
    X^{\DR,\alpha} = {{}^O\!X}^{\alpha} - R^\alpha|_{w^{\gamma,d}=w_{\mathrm{top}}^{\gamma,d}},
\end{align}
which implies 
\begin{align}
\d_{t^{1,0}}X^{\DR,\alpha} = \d_{t^{1,0}} {{}^O\!X}^{\alpha} - \d_x R^\alpha|_{w^{\gamma,d}=w_{\mathrm{top}}^{\gamma,d}},    
\end{align}
or equivalently
\begin{align}
    u^{\alpha}_{\text{F-str}} = w_{\mathrm{top}}^{\alpha} - \d_x R^\alpha|_{w^{\gamma,d}=w_{\mathrm{top}}^{\gamma,d}},
\end{align}
and thus gives the Miura transformation, 
$u^{\alpha} = w^{\alpha} - \d_x R^\alpha$.
This Miura transformation can be reverted, namely, there exist a unique $T^\alpha \in \mathcal{A}_u[[\epsilon]]_{\deg_{\d_x}=-1}$ such that 
\begin{align}
    w_{\mathrm{top}}^{\alpha} = u^{\alpha}_{\text{F-str}} + \d_x T^\alpha|_{u^{\gamma,d}=u^{\alpha,d}_{\text{F-str}}}. 
\end{align}
Then we can use the DR hierarchy of conservation laws to obtain an integrable system of conservation laws 
\begin{align} \label{eq: conservation laws type $1$}
    \partial_{t^{\beta,p}}w^{\alpha} = \d_x \Big( Q^{\alpha}_{\beta,p} + \sum\nolimits_{d\geq 0} \partial_{u^{\gamma,d}} T^\alpha \cdot \d_x^{d+1} Q^{\alpha}_{\beta,p} \Big)\Big|_{u^{\alpha,d} = w^{\alpha,d} - \d^{d+1}_x R^\alpha}.
\end{align}
It is integrable since by construction it is Miura equivalent to the DR hierarchy, which is integrable. Moreover, $w_{\mathrm{top}}^{\alpha}$ gives a solution since $u^{\alpha}_{\text{F-str}}$ gives a solution of the DR hierarchy. This proves the proposition for the integrable observables of type $1$. 

Now assume that $\{O_{g,n}\}$ are type $2$ integrable observables. Then the first summand in~\eqref{eq:X-alpha-beta-p-decomposition} is equal to $0$. This implies that
\begin{align}
    \d_{t^{\beta,p}} w^\alpha_{\mathrm{top}} = \d_x R^{\alpha}_{\beta,p}|_{w^{\gamma,d}=w_{\mathrm{top}}^{\gamma,d}},
\end{align}
hence the system of conservation laws 
\begin{align} \label{eq: conservation laws type $2$}
    \d_{t^{\beta,p}} w^\alpha = \d_x R^{\alpha}_{\beta,p}
\end{align}
is integrable, and $w^\alpha_{\mathrm{top}}$ gives a solution.

Moreover, in genus $0$, all integrable observable coincide, therefore they all provide the same integrable system in the limit $\epsilon=0$.
\end{proof}

\begin{remark} \label{rem: type $1$ II for FCohFT clarification}
Both type $1$ and type $2$ integrable observables provide an integrable hierarchy of conservation laws. The ones for type $1$~\eqref{eq: conservation laws type $1$} do not necessarily come with explicit formulae. However, they are related via the Miura transformation $u^{\alpha} = w^{\alpha} - \d_x R^\alpha$ to the DR hierarchy of conservation laws, with this Miura transformation being fully explicit (in one of the directions, in the other one has to formally invert it). On the other hand, the conservations laws for type $2$~\eqref{eq: conservation laws type $2$} are explicit, although their connection with the DR hierarchy is in principle lost. 

For the integrable observables $\{\bPsi_{g,n}\}$ and $\{\bOm_{g,n}\}$ we have both types of integrability simultaneously, since they are integrable observables of type 1 and type 2. For the integrable observables $\{\bA_{g,n}\}$ we have only type 1 instead, we employ the observation above to establish the link with DR in usual coordinates. It turns out that the $\bA$-hierarchy coincides with the DR hierarchy in normal coordinates, see Sec.~\ref{sec: identification A and DR}.
\end{remark}

Once we start with a P-CohFT, we can still consider the canonically associated F-CohFT and use Lemma~\ref{lemma: vector potential expansion} to establish integrability. However, in this case we have a more refined statement that gives a normal Miura equivalence with the DR hierarchy in normal coordinates and it allows to endow the integrable system with a Hamiltonian structure. 

\begin{lemma} \label{lemma: log partition expansion} Let ${{}^O\!F}$ be the logarithm of the partition function of a P-CohFT $\{ \mathfrak{pc}_{g,n}\}$. Let $w_{\mathrm{top}}^{\alpha} \coloneqq \eta^{\alpha\mu }\d_{t^{1,0}} \d_{t^{\mu,0}} {{}^O\!F}$ and $w_{\mathrm{top}}^{\alpha,d} \coloneqq \eta^{\alpha\mu } \d^{d+1}_{t^{1,0}} \d_{t^{\mu,0}} {{}^O\!F}$ for $d\geq 0$. Then there exist unique 
\begin{equation}
    R \in \mathcal{A}_w[[\epsilon]]_{\deg_{\d_x}=-2},\quad 
    R^\alpha \in \mathcal{A}_w[[\epsilon]]_{\deg_{\d_x}=-1},\quad
    R^{\alpha}_{\beta,p}\in \mathcal{A}_w[[\epsilon]]_{\deg_{\d_x}=0}
\end{equation} 
such that
\begin{align}
    \Coeff{\epsilon^{2g}\prod_{i=1}^n t^{\zeta_iq_i}} \bigg( {{}^O\!F} -R|_{w^{\gamma,d}=w_{\mathrm{top}}^{\gamma,d}} \bigg) & = 0 & \text{for}\ &  
    \sum\nolimits_{i=1}^n q_i \leq 2g-2,
    \\ \label{eq F-R alpha}
    \Coeff{\epsilon^{2g}\prod_{i=1}^n t^{\zeta_iq_i}} \bigg( \eta^{\alpha\mu } \d_{t^{\mu,0}}{{}^O\!F} -R^\alpha|_{w^{\gamma,d}=w_{\mathrm{top}}^{\gamma,d}} \bigg) & = 0 & \text{for}\ &  
    \sum\nolimits_{i=1}^n q_i \leq 2g-1,
    \\ \label{eq F-R alpha beta p}
    \Coeff{\epsilon^{2g}\prod_{i=1}^n t^{\zeta_iq_i}} \bigg( \eta^{\alpha\mu } \d_{t^{\beta,p}}\d_{t^{\mu,0}}{{}^O\!F} -R^\alpha_{\beta,p}|_{w^{\gamma,d}=w_{\mathrm{top}}^{\gamma,d}} \bigg) & = 0 & \text{for}\  &
    \sum\nolimits_{i=1}^n q_i \leq 2g,
\end{align}
and any $1 \leq \zeta_i \leq N$. More explicitly, we have the following:
\begin{align} \label{eq:F-decomposition}
    {{}^O\!F} &=  \sum_{g=0}^\infty\sum_{n=1}^\infty \frac{\epsilon^{2g}}{n!} \sum_{\substack{q_1,\dots,q_n\geq 0 \\ q_1+\cdots+q_n\geq 2g-1}} \prod_{i=1}^n t^{\gamma_i,q_i}  
%\times \\ \notag & \quad \qquad  
\int_{\oM_{g,n}} \Coeff{\prod_{i=1}^n a_i^{q_i}} {}^{O}\!B^{0}_{g,n} 
%\times \\ \notag & \qquad \qquad 
\mathfrak{pc}_{g,n}\Big( \Vec{\bigotimes}_{i=1}^n e_{\gamma_i} \Big) 
\\ \notag & \quad + \sum_{g=0}^\infty\sum_{n=1}^\infty \frac{\epsilon^{2g}}{n!} \sum_{\substack{q_1,\dots,q_n\geq 0 \\ q_1+\cdots+q_n=2g-2}} \prod_{i=1}^n w^{\gamma_i,q_i}_{\mathrm{top}}
%\times \\ \notag & \quad \qquad  
\int_{\oM_{g,n}} \Coeff{\prod_{i=1}^n a_i^{q_i}} {}^{O}\!B^{0}_{g,n} 
%\times \\ \notag & \qquad \qquad 
\mathfrak{pc}_{g,n}\Big( \Vec{\bigotimes}_{i=1}^n e_{\gamma_i} \Big),
\end{align}
where the second summand is equal to $R|_{w^{\gamma,d}=w_{\mathrm{top}}^{\gamma,d}}$,
\begin{align} \label{eq:F-alpha-decomposition}
    \eta^{\alpha\mu } \d_{t^{\mu,0}}{{}^O\!F} &=  \sum_{g=0}^\infty\sum_{n=1}^\infty \frac{\epsilon^{2g}}{n!} \sum_{\substack{q_1,\dots,q_n\geq 0 \\ q_1+\cdots+q_n\geq 2g}} \prod_{i=1}^n t^{\gamma_i,q_i}  
\times \\ \notag & \quad \qquad  
\int_{\oM_{g,n+1}} \Coeff{b_1^0\prod_{i=1}^n a_i^{q_i}} {}^{O}\!B^{1}_{g,n} 
%\times \\ \notag & \qquad \qquad 
\eta^{\alpha\mu}\mathfrak{pc}_{g,n+1}\Big( \Vec{\bigotimes}_{i=1}^n e_{\gamma_i} \otimes e_\mu \Big) 
\\ \notag & \quad + \sum_{g=0}^\infty\sum_{n=1}^\infty \frac{\epsilon^{2g}}{n!} \sum_{\substack{q_1,\dots,q_n\geq 0 \\ q_1+\cdots+q_n=2g-1}} \prod_{i=1}^n w^{\gamma_i,q_i}_{\mathrm{top}}
\times \\ \notag & \quad \qquad  
\int_{\oM_{g,n+1}} \Coeff{b_1^0\prod_{i=1}^n a_i^{q_i}} {}^{O}\!B^{1}_{g,n} 
%\times \\ \notag & \qquad \qquad 
\eta^{\alpha\mu}\mathfrak{pc}_{g,n+1}\Big( \Vec{\bigotimes}_{i=1}^n e_{\gamma_i} \otimes e_\mu \Big),
\end{align}
where the second summand is equal to $R^\alpha|_{w^{\gamma,d}=w_{\mathrm{top}}^{\gamma,d}}$, and
\begin{align} \label{eq:F-alpha-beta-p-decomposition} 
\eta^{\alpha\mu } \d_{t^{\beta,p}}\d_{t^{\mu,0}}{{}^O\!F}
& = \sum_{g=0}^\infty\sum_{n=1}^\infty \frac{\epsilon^{2g}}{n!} \sum_{\substack{q_1,\dots,q_n\geq 0 \\ q_1+\cdots+q_n\geq 2g+1}} \prod_{i=1}^n t^{\gamma_i,q_i}  
\times \\ \notag & \qquad  
\int_{\oM_{g,n+2}} \Coeff{b_1^p b_2^0\prod_{i=1}^n a_i^{q_i}} {}^{O}\!B^{2}_{g,n} 
%\times \\ \notag & \qquad \qquad 
\eta^{\alpha\mu}\mathfrak{pc}_{g,n+2}\Big( \Vec{\bigotimes}_{i=1}^n e_{\gamma_i} \otimes e_\beta\otimes e_\mu\Big) \bigg)
\\ \notag 
& \quad + \sum_{g=0}^\infty\sum_{n=1}^\infty \frac{\epsilon^{2g}}{n!} \sum_{\substack{q_1,\dots,q_n\geq 0 \\ q_1+\cdots+q_n= 2g}} \prod_{i=1}^n w_{\mathrm{top}}^{\gamma_i,q_i}  
\times \\ \notag & \quad \qquad  
\int_{\oM_{g,n+2}} \Coeff{b_1^p b_2^0\prod_{i=1}^n a_i^{q_i}} {}^{O}\!B^{2}_{g,n} 
%\times \\ \notag & \qquad \qquad 
\eta^{\alpha\mu}\mathfrak{pc}_{g,n+2}\Big( \Vec{\bigotimes}_{i=1}^n e_{\gamma_i} \otimes e_\beta\otimes e_\mu\Big) \bigg),
\end{align}
where the second summand is equal to $R^\alpha_{\beta,p}|_{w^{\gamma,d}=w_{\mathrm{top}}^{\gamma,d}}$.
\end{lemma}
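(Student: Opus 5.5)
The proof runs parallel to that of Lemma~\ref{lemma: vector potential expansion}, and so to \cite[Sec.~4.4.4]{BS22}. It uses only the string, dilaton and homogeneity properties of $\{O_{g,n}\}$ together with the genus $0$ normalization.

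\emph{Step 1: a tree expansion.} Write each correlator $\int_{\oM_{g,n}} \mathfrak{pc}_{g,n}\Coeff{\prod a_i^{d_i}}O_{g,n}$ as a sum over leveled rooted trees, using the inversion formula that defines ${}^O\!B^0_{g,n}$. The sum over level functions with sign $(-1)^{\ell(T)}$ is a M\"obius-type inversion of the relation ``$O$ equals $B$ plus a boundary correction''. For each $T$ with $\ell(T)\geq 1$, collect the vertices at levels $\geq 1$. The factorization property of the P-CohFT turns the pushforward along $\mathrm{gl}_T$ into products of lower correlators glued by $\eta^{\alpha\beta}$.

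\emph{Step 2: resumming into $w_{\mathrm{top}}$.} Each non-root subtree attached at an edge $e$ carries the factor $a(e)$, and it only sees $a$ through $a(e)=\sum_{l\in DL(e)}a_l$. By homogeneity, a class of total $a$-degree $q$ in $a(e)$ is matched with $\d_{t^{1,0}}^{q}$ of a two-point function. The string equation (the first identity of the lemma on string and dilaton equations) shows that the sum over all subtrees hanging at an edge reproduces exactly $w^{\gamma,q}_{\mathrm{top}}=\eta^{\gamma\mu}\d^{q+1}_{t^{1,0}}\d_{t^{\mu,0}}{}^O\!F$. The genus $0$ normalization fixes the leading terms $t^{\gamma,q}+\delta^\gamma_1\delta^q_1$, as in Remark~\ref{rem:RemarkIdentification}.

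The constraint $d^{\mathrm{lvl}}(i)\leq 2g^{\mathrm{lvl}}(i)-2$ in ${}^O\!B^0$ separates the terms into two groups. Terms saturating degree exactly $2g-2$ at the top level are those where the whole tree has been absorbed into $w_{\mathrm{top}}$-variables; they form the second summand of \eqref{eq:F-decomposition}. Terms with $q_1+\dots+q_n\geq 2g-1$ are left in $t$-variables; they form the first summand. The same bookkeeping with one frozen leg, and then with two, gives \eqref{eq:F-alpha-decomposition} and \eqref{eq:F-alpha-beta-p-decomposition}. There the frozen legs carry $b_1$ (giving $\psi$-powers at $e_\beta$, extracted via $\Coeff{b_1^p}$) and $b_2^0$ (the $\d_{t^{\mu,0}}$ insertion), and the bounds become $2g-1$ and $2g$ respectively.

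\emph{Step 3: uniqueness and the vanishing conditions.} Define $R$, $R^\alpha$ and $R^\alpha_{\beta,p}$ as the second summands with $w_{\mathrm{top}}$ replaced by formal jet variables $w$. Homogeneity puts them in degrees $-2$, $-1$ and $0$. The first summands only involve monomials with $\sum q_i$ above the stated thresholds, so the three vanishing conditions hold. Uniqueness follows from Remark~\ref{rem:RemarkIdentification}: substituting $w=w_{\mathrm{top}}$ is injective on $\mathcal{A}_w[[\epsilon]]$ of fixed $\deg_{\d_x}$, when restricted to the coefficients of total $t$-degree $\leq 2g+\deg_{\d_x}$ (see \cite[Lemma~4.3]{BS22}).

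\emph{Main obstacle.} The hard part is Step 2, namely checking that the level constraint and the signs $(-1)^{\ell(T)}$ produce exactly the split into ``$\geq$'' and ``$=$'' ranges. In particular, the dilaton equation must supply the correct $2g-2+n$ factors when tree vertices are absorbed into $w_{\mathrm{top}}$. I would first verify this for height $\leq 1$, then argue by induction on the height using the definition of $d^{\mathrm{lvl}}$, mirroring \cite[Thm.~2.15]{BLRS}.
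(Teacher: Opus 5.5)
Your proposal follows the paper's route. The paper does not redo the tree combinatorics either. It observes that the arguments of \cite{BDGR1,BGR19,BS22}, with the tree classes identified as in \cite{BLRS}, use only the string, dilaton and homogeneity properties. It then obtains the $R^\alpha$ and $R^\alpha_{\beta,p}$ parts by applying Lemma~\ref{lemma: vector potential expansion} to the canonically associated F-CohFT, and gets uniqueness from Remark~\ref{rem:RemarkIdentification} and \cite[Lemma~4.3]{BS22}.

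One correction to your Step~2 heuristic. It is the lower part of a tree that sees a top-level vertex only through $a(e)$, not the other way round. Moreover, only the top-level vertices, which carry unconstrained $O$-classes, resum via the string equation into the stable part of $w_{\mathrm{top}}$. So the induction on height should peel off one level at a time, with the constant term of $w^{1,1}_{\mathrm{top}}$ handled by the dilaton equation.
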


\begin{proof}[Proof of Prop.~\ref{prop: integrability of integrable observables for P-CohFT}]
Note that Eqs.~\eqref{eq F-R alpha}, \eqref{eq F-R alpha beta p}, \eqref{eq:F-alpha-decomposition}, and~\eqref{eq:F-alpha-beta-p-decomposition} just repeat the corresponding equations in Lemma~\ref{lemma: vector potential expansion} applied to the F-CohFT canonically associated to $\{ \mathfrak{pc}_{g,n}\}$ by the correspondence given in~Eq.~\eqref{eq:From-PCohFT-to-FCohFT}. Thus, we have all statements immediately except for the normal Miura equivalence. 

To this end we employ Lemma~\ref{lemma: Type 0 relation}, applicable for all type $1$ integrable observables. It implies that the first summand in~\eqref{eq:F-decomposition} is equal to $F^{\DR}$. More precisely, we have
\begin{align}
    F^{\DR} = {{}^O\!F} - R|_{w^{\gamma,d}=w_{\mathrm{top}}^{\gamma,d}},
\end{align}
which implies 
\begin{align}
    \eta^{\alpha\mu}\d_{t^{1,0}}\d_{t^{\mu,0}} F^{\DR} = \eta^{\alpha\mu}\d_{t^{1,0}}\d_{t^{\mu,0}} {{}^O\!F} - \eta^{\alpha\mu}\d_{t^{1,0}}\d_{t^{\mu,0}} R|_{w^{\gamma,d}=w_{\mathrm{top}}^{\gamma,d}}.
\end{align}
This is already a normal Miura transformation, but its description can only be made explicit if there is an explicit formula for $\d_{t^{\mu,0}} w^{\zeta,0}$. As we have seen in the proof of Prop.~\ref{prop: integrability of integrable observables for F-CohFT}, cf. also Remark~\ref{rem: type $1$ II for FCohFT clarification}, the fluxes for this system are really explicit only if we also assume type $2$ integrability. Then we have:
\begin{align}
\eta^{\alpha\mu}\d_{t^{1,0}}\d_{t^{\mu,0}}F^{\DR} = \eta^{\alpha\mu}\d_{t^{1,0}}\d_{t^{\mu,0}} {{}^O\!F} - \eta^{\alpha\mu}\d_{x}\Big(\d_{w^{\zeta,k}} R \cdot \d_x^{d+1} R^\zeta_{\mu,0}\Big) |_{w^{\gamma,d}=w_{\mathrm{top}}^{\gamma,d}},    
\end{align}
or equivalently
\begin{align} \label{eq: Normal Miura}
    u^{\alpha}_{\text{P-str}} = w_{\mathrm{top}}^{\alpha}  - \eta^{\alpha\mu}\d_{x}\Big(\d_{w^{\zeta,k}} R \cdot \d_x^{d+1} R^\zeta_{\mu,0}\Big) |_{w^{\gamma,d}=w_{\mathrm{top}}^{\gamma,d}},    
\end{align}
and thus gives the desired normal Miura transformation explicitly.
\end{proof}

\begin{remark} \label{rem:TwoMiura} Let us stress that for a P-CohFT we derived two different Miura transformations that identify the corresponding integrable system with two different presentations of the DR hierarchy: in the natural and in the normal coordinates. The first Miura transformation can be written explicitly under the assumption of type $1$ integrability and is given by 
\begin{align}
\label{eq: change to usual DR coordinates}
u^\alpha = w^\alpha - \d_x R^\alpha,    
\end{align}
where $R^\alpha$ is determined by the second summand in \eqref{eq:F-alpha-decomposition}. The second Miura transformation is normal, and it can be written explicitly only under assumption that both type $1$ and type $2$ integrability hold. Then it is given by
\begin{align}
\label{eq: change to normal coordinates}
u^\alpha_{\mathrm{norm}} = w^\alpha - \eta^{\alpha\mu}\d_{x}\Big(\d_{w^{\zeta,k}} R \cdot \d_x^{d+1} R^\zeta_{\mu,0}\Big),    
\end{align}
where $R$ is determined by the second summand in \eqref{eq:F-decomposition} and $R^\zeta_{\mu,0}$ is determined by the second summand in \eqref{eq:F-alpha-beta-p-decomposition}.
\end{remark}

\subsection{Identification of the \texorpdfstring{$\bA$}{A}-hierarchies and DR hierarchies}
\label{sec: identification A and DR}
For P-CohFT and F-CohFT, the DR hierarchies  can be written in the natural coordinates (\ref{eq: DR conservation laws}) and normal coordinates \eqref{eq: normal coordinates for F CohFT}. We remarked, however, that for F-CohFT the normal coordinates are not normal \textit{per s\'e}, see Remark~\ref{rem: normal for F-CohFT}.

For P-CohFT, the integrable system associated to the integrable observable $\{\bA_{g,n} \}$ coincides with the DR hierarchy in normal coordinates, since ${{}^{\bA}\!F}=F^{\DR}$. For F-CohFT, we have the following observation:
\begin{proposition} \label{prop: A-DR normal F-CohFT} Let $\{\mathfrak{fc}_{g,n}\}$ be an F-CohFT. For the system of integrable observables $\{\bA_{g,n}\}$, the integrable system associated to $\{\mathfrak{fc}_{g,n}\}$ by Eq.~\eqref{eq: O-obs conservation laws F-CohFT} (with the fluxes given by Eq.~\eqref{eq: O-obs fluxed F-CohFT}) is obtained from the DR hierarchy given by Eq.~\eqref{eq: DR conservation laws} via the Miura transformation given by Eq.~\eqref{eq: normal coordinates for F CohFT}.
\end{proposition}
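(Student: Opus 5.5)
Our approach is to compare the topological solution of the $\bA$-system directly with the F-string solution of the DR hierarchy, and to check that the map relating them is exactly the change of coordinates~\eqref{eq: normal coordinates for F CohFT}. By Prop.~\ref{prop: integrability of integrable observables for F-CohFT}(2) and its proof, for type $1$ observables the $\bA$-system is obtained from the DR hierarchy by the Miura transformation $u^\alpha = w^\alpha - \d_x R^\alpha$, with $R^\alpha$ built from $\Coeff{b_1^0}{}^{\bA}\!B^1_{g,n}$ in degree $2g-1$. Both systems are determined by their fluxes, and these are reconstructed uniquely from the vector potential and the special solution (Remark~\ref{rem:RemarkIdentification}). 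It therefore suffices to prove one identity: the inverse of this Miura transformation coincides with~\eqref{eq: normal coordinates for F CohFT}. Concretely, we show that $w^\alpha_{\mathrm{top}} = \d_{t^{1,0}}{}^{\bA}\!X^\alpha$ equals the right-hand side of~\eqref{eq: normal coordinates for F CohFT} evaluated at $u^{\gamma,d} = u^{\gamma,d}_{\text{F-str}}$.

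Step 1 computes ${}^{\bA}\!X^\alpha$. In~\eqref{vector potential integrable observable} the integrand is $\bA_{g,n+1}(0,a_1,\dots,a_n)$, and we want to rewrite it through $\bA^1_{g,n}(a_1,\dots,a_n)$. Since $\bA_{g,n+1}(0,a_1,\dots,a_n) = \frac{1}{\suma}\pi_*\bA^1_{g,n+1}(0,a_1,\dots,a_n)$, the string equation for $\bA^1$ (Lemma~\ref{lem:push-forward-A1}, the same fact used to show $\d_{t^{1,0}}F^{\DR}=\eta_{1\mu}X^{\DR,\mu}$) should give
\[
{}^{\bA}\!X^\alpha = \sum_{g,n}\frac{\epsilon^{2g}}{n!}\sum_{d_i}\prod_i t^{\gamma_i,d_i}\int_{\oM_{g,n+1}}\Bigl[\Coeff{\prod a_i^{d_i}}\widetilde{\bA}_{g,n}\Bigr]^{\circlearrowright} e^\alpha\bigl(\mathfrak{fc}_{g,n}(\textstyle\Vec{\bigotimes} e_{\gamma_i})\bigr),
\]
with the zeroth point playing the role of the point carrying $a=0$, rather than the point carrying $-\suma$ as in $X^{\DR,\alpha}$. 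The difference ${}^{\bA}\!X^\alpha - X^{\DR,\alpha}$ is therefore the discrepancy between inserting the F-CohFT output at the $0$-weighted leg versus at the $-\suma$ leg.

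Step 2 recognizes this discrepancy as a function of the F-string solution. Split $\bA^1_{g,n+1}(a_1,\dots,a_n,0)$ according to $\deg_a$. The degree-$2g$ piece equals $\lambda_g\DR_g(0,-\suma,a_1,\dots,a_n)$ up to the $\psi$-type corrections coming from $1/(1-t^*\psi_0)$. The higher-degree corrections are boundary terms (rubber splitting over Losev--Manin levels), and these reorganize into substituting $u_{\text{F-str}}$ into the degree-$2g$ part, by the same tree-splitting argument as in~\cite[Lemma 2.6]{BLS-DRDZ} and~\cite[Thm.~4.9]{BS22}. Differentiating by $t^{1,0}$, which on the DR side is $\d_x$, then yields
\[
w_{\mathrm{top}}^\alpha = u^\alpha_{\mathrm{norm}}\big|_{u=u_{\text{F-str}}}.
\]

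Step 3 concludes. Since $u^\alpha_{\mathrm{norm}} = u^\alpha + O(\epsilon^2)$ is a Miura transformation, the relation in Step 2 identifies the fluxes of the two systems. By the uniqueness in Lemma~\ref{lemma: vector potential expansion} and Remark~\ref{rem:RemarkIdentification}, it follows that the $\bA$-system is the image of the DR hierarchy under~\eqref{eq: normal coordinates for F CohFT}.

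The main obstacle is Step 2: showing that the non-top-degree part of $\bA^1_{g,n+1}(a_1,\dots,a_n,0)$, with the output at the $0$-leg, exactly reproduces the substitution $u\mapsto u_{\text{F-str}}$. We would attack it first by localizing the $t^*\psi_0$ factor onto boundary strata of the rubber space. This gives a recursive formula over two-level trees, which should match term by term the composition of the DR generating series with $u_{\text{F-str}}$.
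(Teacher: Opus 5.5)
Your overall architecture matches the paper's: establish $w^\alpha_{\mathrm{top}} = u^\alpha_{\mathrm{norm}}\big|_{u=u_{\text{F-str}}}$, then transport the fluxes via the chain rule and the uniqueness in Remark~\ref{rem:RemarkIdentification}. Your Step~3 is fine. The gap is Step~2, which carries the entire content of the proof: it is only asserted, and your plan for it lacks the two ingredients that make it work.

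First, $\bA^1_{g,n+1}(a_1,\dots,a_n,0)$ is defined by polynomial extrapolation from positive weights. So there is no rubber space in which the $0$-weighted leg has an image in the Losev--Manin space, and nothing over which to ``split over levels''. You need Lemma~\ref{lem:A1star-A}, which identifies this value with $\bA^{1,\star}_{g,n}$, the $\lambda_g$-rubber class with one free marked point. Its proof uses DR polynomiality and an analysis of the divisor $D_{n+1}$.

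Second, you need the relation $\psi_0=\Delta_1$ on $LM_{2g+n}$, anchored at the image $x_1$ of that free point. This writes $\frac{t^*\psi_0}{1-t^*\psi_0}=\frac{t^*\Delta_1}{1-t^*\psi_0}$ as a sum of two-level contributions, as in the proof of Lemma~\ref{lem:A1star-A} and Eq.~\eqref{eq:TRR-LosevManin-psi-0}. The anchoring is what makes the answer recognizable. It forces the free point (the output in $w_{\mathrm{top}}$) and $\infty$ (carrying $e_1$) onto the top level, whose class is then exactly the $\lambda_g\DR_g(0,-\suma,\dots)$ of~\eqref{eq: normal coordinates for F CohFT}. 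The lower levels carry $\bA^1$-classes times edge weights, which by the string equation assemble into $u^{\gamma,d}_{\text{F-str}}=\d_{t^{1,0}}^{d+1}X^{\DR,\gamma}$.

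A generic ``localize $t^*\psi_0$ onto boundary strata'' or a split by $\deg_a$ does not pin this down. If the relation is anchored at a branch point instead, the output can land on a lower level, and the result is not visibly of the form~\eqref{eq: normal coordinates for F CohFT}. The route via \cite[Thm.~4.9]{BS22} is a valid alternative, but there too the needed modification at the root vertex is exactly the part you leave undone.

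Your Step~1 bookkeeping also needs repair. The integrand of ${}^{\bA}\!X^\alpha$ is $\frac1{\suma}\pi_*\bA^1_{g,n+1}(0,a_1,\dots,a_n)$, with $\pi$ forgetting the $(-\suma)$-point. Lemma~\ref{lem:push-forward-A1} concerns forgetting the $b$-point, so it does not remove the $\frac1{\suma}$, and your $\widetilde{\bA}_{g,n}$ is never actually defined.

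Work instead with $w_{\mathrm{top}}=\d_{t^{1,0}}{}^{\bA}\!X^\alpha$. Cor.~\ref{cor:push-forward-A} cancels the $\frac1{\suma}$, and the flat unit lets you pull back to $\oM_{g,n+2}$. Lemma~\ref{lem:A1star-A} then shows that $w_{\mathrm{top}}$ and $u_{\text{F-str}}$ (Eq.~\eqref{eq: def: F-str u}) are integrals of the same class $\bA^{1,\star}_{g,n}$, with the output and $e_1$ exchanged between the free point and the $(-\suma)$-point. This is the precise form of your ``discrepancy'' intuition, and it is the input on which the Losev--Manin decomposition above acts.
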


The proof of this proposition is given in Sec.~\ref{sub:sec:prop A-DR proof}. Thus, by slight abuse of terminology under the caveat above, one can say that the $\bA$-hierarchy for F-CohFTs (in normal coordinates) coincides with the DR hierarchy in normal coordinates. 

Now, the fact that the $\{\bA_{g,n} \}$-integrable system coincides with the DR hierarchy in normal coordinates, both in the case of F-CohFT and P-CohFT, has a nice consequence. Namely, since $\{\bA_{g,n} \}$ are integrable observables of type $1$, it allows to obtain a formula to invert the Miura transformation from the natural to normal coordinates. 

In the case of an F-CohFT, the expression of the normal coordinates in terms of the natural coordinates is given by Eq.~\eqref{eq: normal coordinates for F CohFT}, and specializing Eq.~\eqref{eq:Miura-O-F-CohFT} to the case $\{O_{g,n}=\bA_{g,n} \}$ and using Prop.~\ref{prop: A-DR normal F-CohFT}, we have:
\begin{align} \label{eq:Miura-A-F-CohFT}
    u^{\alpha} & = u^{\alpha}_{\mathrm{norm}} - \sum_{g=0}^\infty\sum_{n=1}^\infty \frac{\epsilon^{2g}}{n!} \sum_{\substack{q_1,\dots,q_n\geq 0 \\ q_1+\cdots+q_n=2g-1}} \prod_{i=1}^n u^{\gamma_i,q_i}_{\mathrm{norm}}
\times \\ \notag & \quad \qquad  
\int_{\oM_{g,n+1}} \bigg[\Coeff{b_1^0\prod_{i=1}^n a_i^{q_i}} {}^{\bA}\!B^{1}_{g,n} \bigg]^\circlearrowright
%\times \\ \notag & \qquad \qquad 
e^{\alpha}\bigg(\mathfrak{fc}_{g,n}\Big( \Vec{\bigotimes}_{i=1}^n e_{\gamma_i} \Big) \bigg).
\end{align}

\begin{corollary} For any F-CohFT $\{\mathfrak{fc}_{g,n}\}$ the Miura transformations~\eqref{eq: normal coordinates for F CohFT} and~\eqref{eq:Miura-A-F-CohFT} are inverse to each other. 
\end{corollary}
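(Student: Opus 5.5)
The plan is to read both transformations as Miura maps between the same pair of coordinate systems on the phase space of the DR hierarchy of $\{\mathfrak{fc}_{g,n}\}$, and to check that they agree on the special solution. The first map is~\eqref{eq: normal coordinates for F CohFT}; it sends $u$ to $u_{\mathrm{norm}}$. The second is~\eqref{eq:Miura-A-F-CohFT}; it sends $u_{\mathrm{norm}}$ to $u$.

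By Prop.~\ref{prop: A-DR normal F-CohFT}, the $\bA$-integrable system in the coordinates $w$ is exactly the image of the DR hierarchy under~\eqref{eq: normal coordinates for F CohFT}. Moreover, it carries the topological solution $w^\alpha_{\mathrm{top}}=\d_{t^{1,0}}{}^{\bA}\!X^\alpha$. Its DR counterpart is $u^\alpha_{\text{F-str}}$. So the first step is to record the identity
\begin{align*}
w^{\alpha}_{\mathrm{top}} = u^\alpha_{\mathrm{norm}}\big|_{u^{\gamma,d}=u^{\gamma,d}_{\text{F-str}}}.
\end{align*}
This is the content of the proof of Prop.~\ref{prop: A-DR normal F-CohFT}, specialized to the string solution.

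The second step uses that $\{\bA_{g,n}\}$ are type $1$ integrable observables (Thm.~\ref{thm: A}). Then Prop.~\ref{prop: integrability of integrable observables for F-CohFT}(2), through the proof via Lemma~\ref{lemma: vector potential expansion}, gives
\begin{align*}
u^\alpha_{\text{F-str}} = w^\alpha_{\mathrm{top}} - \d_x R^\alpha\big|_{w^{\gamma,d}=w^{\gamma,d}_{\mathrm{top}}},
\end{align*}
where $R^\alpha$ is built from $\Coeff{b_1^0}{}^{\bA}\!B^1_{g,n}$. This is precisely~\eqref{eq:Miura-A-F-CohFT} evaluated at $u_{\mathrm{norm}}=w_{\mathrm{top}}$.

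Composing the two maps, the composite $\Phi\colon u\mapsto u_{\mathrm{norm}}\mapsto u$ is a Miura transformation of the second kind. It fixes the solution $u_{\text{F-str}}$ together with all its $x$-derivatives, since $\d_x=\d_{t^{1,0}}$ on both sides. Write $\Phi(u)^\alpha-u^\alpha=\epsilon\,\Delta^\alpha$ with $\Delta^\alpha\in\mathcal{A}_u[[\epsilon]]$. Then $\Delta^\alpha|_{u^{\gamma,d}=u^{\gamma,d}_{\text{F-str}}}=0$ as a power series in $\{t^{\zeta,k}\}$.

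The final step invokes the identification principle of Remark~\ref{rem:RemarkIdentification} (cf.~\cite[Lemma 4.3]{BS22}). Since $u^{\gamma,d}_{\text{F-str}}=t^{\gamma,d}+\delta^\gamma_1\delta^d_1+\cdots$ is an invertible change of variables, a differential polynomial vanishing on the string solution vanishes identically. Hence $\Phi=\mathrm{id}$.

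The same argument applied in the other order, using the formal invertibility of both transformations, shows that the other composite is also the identity. Alternatively, a one-sided inverse of an invertible Miura transformation is automatically two-sided.

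The main obstacle is the first step. One must ensure that Prop.~\ref{prop: A-DR normal F-CohFT} indeed matches topological solutions, and not merely the flows. This requires checking that ${}^{\bA}\!X^\alpha$ and the composition $X^{\DR}$ with~\eqref{eq: normal coordinates for F CohFT} agree as series. I expect to do this via the string equation for $\bA$ and the DR-cycle formula relating $\bA^1_{g,n+1}(\dots,0)$ to $\lambda_g\DR_g$, which is the mechanism behind Lemma~\ref{lem:push-forward-A1}.
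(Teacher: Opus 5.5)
Your proposal is correct and is essentially the paper's argument. The corollary comes from combining Prop.~\ref{prop: A-DR normal F-CohFT}, whose proof establishes $w^\alpha_{\mathrm{top}} = u^\alpha_{\mathrm{norm}}\big|_{u^{\gamma,d}=u^{\gamma,d}_{\text{F-str}}}$, with Prop.~\ref{prop: integrability of integrable observables for F-CohFT}(2) for the type $1$ observables $\{\bA_{g,n}\}$, which gives $u^\alpha_{\text{F-str}} = w^\alpha_{\mathrm{top}} - \d_x R^\alpha\big|_{w^{\gamma,d}=w^{\gamma,d}_{\mathrm{top}}}$; your appeal to Remark~\ref{rem:RemarkIdentification} only spells out the identification step the paper leaves implicit. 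The step you flag as the main obstacle is already the displayed identity in the proof of Prop.~\ref{prop: A-DR normal F-CohFT} (string equation for $\bA$, Lemma~\ref{lem:A1star-A}, and the Losev--Manin relation $\psi_0=\Delta_1$), and you are right to keep the $\d_x$ in front of the correction term, which is missing from the paper's display~\eqref{eq:Miura-A-F-CohFT}.
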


In the case of a P-CohFT, the expression of the normal coordinates in terms of the natural coordinates is given by a specialization of Eq.~\eqref{eq:definition of flux DR PCohFT}:  %

\begin{align} \label{eq:u-norm-u}
u^{\alpha}_{\mathrm{norm}} & = \sum_{\substack{g,n\ge 0\\ 2g+n>0}}\frac{\epsilon^{2g}}{n!} \sum_{\substack{d_1,\dots,d_n\geq 0 \\ d_1+\cdots+d_n=2g}} \prod_{i=1}^n u^{\gamma_i,d_i}  \times
\\ \notag & \qquad \int_{\oM_{g,n+2}} \Coeff{\prod_{i=1}^n a_i^{d_i}}\lambda_g \DR_g (-\suma, 0, a_1,\dots,a_n) 
\times 
\\ \notag & \qquad \qquad \eta^{\alpha\mu}
 \mathfrak{pc}_{g,n+2}\Big(e_1 \otimes e_\mu \otimes \Vec{\bigotimes}_{i=1}^n e_{\gamma_i}\Big).
\end{align}
%Its inverse exists, and 
Using Prop.~\ref{prop: integrability of integrable observables for F-CohFT} and~\ref{prop: integrability of integrable observables for P-CohFT}, more precisely, Remark~\ref{rem:TwoMiura}, we can explicitly write its inverse as
\begin{align} \label{eq:u-u-norm}
    u^{\alpha} & = u^{\alpha}_{\mathrm{norm}} - \d_x \sum_{g=0}^\infty\sum_{n=1}^\infty \frac{\epsilon^{2g}}{n!} \sum_{\substack{q_1,\dots,q_n\geq 0 \\ q_1+\cdots+q_n=2g-1}} \prod_{i=1}^n u^{\gamma_i,q_i}_{\mathrm{norm}}
\times \\ \notag & \quad \qquad  
\int_{\oM_{g,n+1}} \Coeff{b_1^0\prod_{i=1}^n a_i^{q_i}} {}^{\bA}\!B^{1}_{g,n} 
%\times \\ \notag & \qquad \qquad 
\eta^{\alpha\mu}\mathfrak{pc}_{g,n+1}\Big( \Vec{\bigotimes}_{i=1}^n e_{\gamma_i} \otimes e_\mu \Big).
\end{align}
%Indeed, since ${{}^{\bA}\!F}=F^{\DR}$, the integrable system associated to the integrable observable $\{\bA_{g,n} \}$ coincides with the DR hierarchy in normal coordinates. Therefore, the change of variable (\ref{eq: change to usual DR coordinates}) 
This equation is a specialization of (\ref{eq: change to usual DR coordinates}) to the case of the integrable observable $\{\bA_{g,n} \}$, thus it expresses the natural coordinates of DR in terms of the normal coordinates:

\begin{corollary} For any P-CohFT $\{\mathfrak{pc}_{g,n}\}$ the Miura transformations~\eqref{eq:u-norm-u} and~\eqref{eq:u-u-norm} are inverse to each other. 
\end{corollary}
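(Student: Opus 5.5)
The corollary should follow from the two Miura transformations derived for the integrable observables $\{\bA_{g,n}\}$ in a P-CohFT, using the uniqueness statements of Lemma~\ref{lemma: log partition expansion}. Since ${}^{\bA}\!F=F^{\DR}$, the topological solution for $\{\bA_{g,n}\}$ is
\begin{align}
w^\alpha_{\mathrm{top}}=\eta^{\alpha\mu}\d_{t^{1,0}}\d_{t^{\mu,0}}F^{\DR}=u^\alpha_{\text{P-str}},
\end{align}
and this is the DR string solution in normal coordinates, since $u^\alpha_{\mathrm{norm}}=\eta^{\alpha\mu}h_{\mu,-1}$ evaluated on the string solution. By Theorem~\ref{thm: A}, $\{\bA_{g,n}\}$ are type~$1$ integrable observables. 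Hence Prop.~\ref{prop: integrability of integrable observables for P-CohFT}, in the form of Remark~\ref{rem:TwoMiura}, gives
\begin{align}
u^\alpha_{\text{F-str}}\text{-type identity:}\qquad \tilde u^\alpha_{\mathrm{str}}=w^\alpha_{\mathrm{top}}-\d_x R^\alpha|_{w=w_{\mathrm{top}}},
\end{align}
where $\tilde u^\alpha_{\mathrm{str}}$ is the natural-coordinate DR string solution for the canonically associated F-CohFT. This identity is the first Miura transformation in the proof of Prop.~\ref{prop: integrability of integrable observables for F-CohFT}, and its $R^\alpha$ is exactly the second summand of~\eqref{eq:F-alpha-decomposition} with $O=\bA$. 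After substituting $w=u_{\mathrm{norm}}$, this is precisely~\eqref{eq:u-u-norm}.

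Next, on the string solution in natural coordinates, the normal coordinates are given by~\eqref{eq:u-norm-u} evaluated at $u=u_{\text{str}}$; this comes from the tau-structure reconstruction of $h_{\mu,-1}$ in Sec.~\ref{sec: DR for P-CohFT}. Composing the two maps therefore yields an identity of formal power series in $\{t^{\gamma,q}\}$:
\begin{align}
u_{\mathrm{norm}}\circ u\,(w_{\mathrm{top}})=w_{\mathrm{top}}.
\end{align}
The same composition holds in the other order, $u\circ u_{\mathrm{norm}}(u_{\mathrm{str}})=u_{\mathrm{str}}$.

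Finally, I will lift these identities from the special solution to differential polynomials using Remark~\ref{rem:RemarkIdentification} (cf.~\cite[Lemma~4.3]{BS22}). The jets of the topological, equivalently string, solution are $t^{\alpha,d}+\delta^\alpha_1\delta^d_1+(\text{higher})$, and these are algebraically independent. So two elements of $\mathcal{A}_w[[\epsilon]]$ that agree after the substitution $w^{\gamma,d}=w^{\gamma,d}_{\mathrm{top}}$ are equal. Applying this to the composites shows that both compositions are the identity in $\mathcal{A}[[\epsilon]]$. Both transformations are close to the identity, so each one-sided inverse is two-sided, and one direction alone already suffices.

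The main obstacle will be bookkeeping. I must check that the $\d_x$ acting on the solution is $\d_{t^{1,0}}$ in both coordinate systems, and that the P-string normal-coordinate solution really coincides with $w_{\mathrm{top}}$ for $\bA$. For the latter I will verify that~\eqref{eq:u-norm-u} applied to $u_{\text{str}}$ reproduces $\eta^{\alpha\mu}\d_{t^{1,0}}\d_{t^{\mu,0}}F^{\DR}$, via the relation $\d_{t^{1,0}}F^{\DR}=\eta_{1\mu}X^{\DR,\mu}$ and the flux reconstruction formulas.
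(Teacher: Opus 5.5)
Your proposal is correct and is essentially the paper's argument. The paper identifies the $\bA$-hierarchy with the DR hierarchy in normal coordinates via ${}^{\bA}\!F=F^{\DR}$, and then reads off~\eqref{eq:u-u-norm} as the type~$1$ Miura transformation of Remark~\ref{rem:TwoMiura} specialized to $O=\bA$, hence as the inverse of~\eqref{eq:u-norm-u}. Your added steps only make explicit what the paper leaves implicit: the check that~\eqref{eq:u-norm-u} sends $u_{\text{F-str}}$ to $u_{\text{P-str}}=w_{\mathrm{top}}$ (via $h_{\mu,-1}=\eta_{1\nu}Q^{\nu}_{\mu,0}$ and $\d_{t^{1,0}}F^{\DR}=\eta_{1\mu}X^{\DR,\mu}$), the lift through Remark~\ref{rem:RemarkIdentification}, and the group property of Miura transformations.
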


Note also that the specializations of~\eqref{eq: normal coordinates for F CohFT} and~\eqref{eq:Miura-A-F-CohFT} for the F-CohFT canonically associated to a P-CohFT recover~\eqref{eq:u-norm-u} and~\eqref{eq:u-u-norm}.

\subsection{Diagrams of Miura transformations} 
Let us summarize the Miura transformations established above (those of Remark~\ref{rem:TwoMiura} and the transformations between the DR hierarchies in natural and normal coordinates) via the following diagrams.

For any P-CohFT and any integrable observable $\{O_{g,n}\}$, we have the following change of coordinates between the coordinates $w^{\alpha}$ of the hierarchy associated to $\{O_{g,n}\}$, the natural coordinates $u^{\alpha}$ of the DR hierarchy, and the normal coordinates $u^{\alpha}_{\mathrm{norm}}$ of the DR hierarchy:   

\begin{figure}[H]
\begin{center}
\begin{tikzpicture}[
    >={Stealth[length=2.5mm]},
    node distance=2.5cm,
    every node/.style={font=\normalsize}
]

% Titles
\node (titleDR) at (0, 2.4) {DR hierarchy};
\node (titleO)  at (7, 2.4) {$O$-hierarchy};

% Left side: DR-hierarchies box
\node (Uusual) at (0, 1.5) {$u^{\alpha}$};
\node (Unorma) at (0, -1.5) {$u^{\alpha}_{\mathrm{norm}}$};

% Dotted box around DR-hierarchies
\draw[dotted, thick] (-2, -2.1) rectangle (2, 2.1);

% Dotted box around O-hierarchies
\draw[dotted, thick] (5, -2.1) rectangle (9, 2.1);

% Curved arrows between Uusual and Unorma
\draw[->, thick] (Uusual) to[bend right=40] node[left,  midway] {(\ref{eq:u-norm-u})} (Unorma);
\draw[->, thick] (Unorma) to[bend right=40] node[right, midway] {(\ref{eq:u-u-norm})} (Uusual);

% Right side: O^a
 \node (Oa) at (7, 0) {$w^{\alpha}$};

% Arrows from O^a to the two U's
\draw[->, thick] (Oa) -- node[above, midway, sloped] {(\ref{eq: change to usual DR coordinates})} node[below, midway, sloped] {type 1} (Uusual);
\draw[->, thick] (Oa) -- 
node[above, midway, sloped]{(\ref{eq: change to normal coordinates}), normal}
node[below, midway, sloped] {type 1 and 2} (Unorma);

\end{tikzpicture}
\end{center}
\caption{Miura transformations for P-CohFTs}
\label{fig: Miura P-CohFT}
\end{figure}

Each arrow represents a Miura transformation, and the arrow (\ref{eq: change to normal coordinates}) is furthermore a normal Miura transformation. Note that the explicit formula~\eqref{eq: change to normal coordinates} requires $\{O_{g,n}\}$ to be integrable observables of type 1 and type 2 simultaneously, while for~\eqref{eq: change to usual DR coordinates} type 1 is sufficient. 

In the F-CohFT case, the transformation indicated by the lower arrow does not hold any longer. The upper arrow is replaced by \eqref{eq:Miura-O-F-CohFT} and it requires the type 1 integrability. In the meanwhile, \eqref{eq:u-norm-u} and~\eqref{eq:u-u-norm} are replaced by~\eqref{eq: normal coordinates for F CohFT} and~\eqref{eq:Miura-A-F-CohFT}, respectively. We get:
% only the top arrow (with label (\ref{eq: change to usual DR coordinates})) survives --- it is replaced by \eqref{eq:Miura-O-F-CohFT} and it requires the type 1 integrability. In the meanwhile, \eqref{eq:u-norm-u} and~\eqref{eq:u-u-norm} are replaced by~\eqref{eq: normal coordinates for F CohFT} and~\eqref{eq:Miura-A-F-CohFT}, respectively.
\begin{figure}[H]
\begin{center}
\begin{tikzpicture}[
    >={Stealth[length=2.5mm]},
    node distance=2.5cm,
    every node/.style={font=\normalsize}
]

% Titles
\node (titleDR) at (0, 2.4) {DR hierarchy};
\node (titleO)  at (7, 2.4) {$O$-hierarchy};

% Left side: DR-hierarchies box
\node (Uusual) at (0, 1.5) {$u^{\alpha}$};
\node (Unorma) at (0, -1.5) {$u^{\alpha}_{\mathrm{norm}}$};

% Dotted box around DR-hierarchies
\draw[dotted, thick] (-2, -2.1) rectangle (2, 2.1);

% Dotted box around O-hierarchies
\draw[dotted, thick] (5, -2.1) rectangle (9, 2.1);

% Curved arrows between Uusual and Unorma
\draw[->, thick] (Uusual) to[bend right=40] node[left,  midway] {(\ref{eq: normal coordinates for F CohFT})} (Unorma);
\draw[->, thick] (Unorma) to[bend right=40] node[right, midway] {(\ref{eq:Miura-A-F-CohFT})} (Uusual);

% Right side: O^a
 \node (Oa) at (7, 0) {$w^{\alpha}$};

% Arrows from O^a to the two U's
\draw[->, thick] (Oa) -- node[above, midway, sloped] {(\ref{eq:Miura-O-F-CohFT})}
node[below, midway, sloped] {type 1}(Uusual);
%\draw[->, thick] (Oa) -- node[below, midway, sloped] {(\ref{eq: change to normal coordinates})} (Unorma);

\end{tikzpicture}
\end{center}
\caption{Miura transformations for F-CohFTs}
\label{fig: F-CohFT}
\end{figure}

Finally, in the exceptional case of $\{O_{g,n} = \bA_{g,n}\}$, the integrable observables are only of type $1$, but we have identified the $\bA$-hierarchy with the DR hierarchy in the normal coordinates, so, in the P-CohFT case (Figure~\ref{fig: Miura P-CohFT}), \eqref{eq: change to normal coordinates} is trivial and \eqref{eq: change to usual DR coordinates} coincides with \eqref{eq:u-u-norm}. In the F-CohFT case (Figure~\ref{fig: F-CohFT}), (\ref{eq:Miura-O-F-CohFT}) coincides with (\ref{eq:Miura-A-F-CohFT}).

To complete the picture, we collect in the following table the logarithm of tau functions and components of vector potentials controlling the integrable system in each coordinate: 
% \begin{center}
% \begin{tabular}{|c|c|c|}
% \hline 
% Coordinate & Tau function & Vector potential\tabularnewline
% \hline 
% \hline 
% $u^{\alpha}$ & -- & $X^{{\rm DR},\alpha}$\tabularnewline
% \hline 
% $u^{\alpha}_{{\rm norm}}$ & $F^{{\rm DR}}=\,^{\bA}\!F$ & $\,^{\bA}\!X^{\alpha}$\tabularnewline
% \hline 
% $w^{\alpha}$ & $\,^{O}\!F$ & $\,^{O}\!X$\tabularnewline
% \hline 
% \end{tabular}
% \par\end{center}
\begin{center}
\begin{tabular}{lcc}
\toprule
Coordinate & Logarithm of the tau function & Components of the vector potential \\
\midrule
$u^{\alpha}$                 & ---                              & $X^{{\rm DR},\alpha}$ \\
$u^{\alpha}_{{\rm norm}}$    & $F^{{\rm DR}} = \,^{\bA}\!F$    & $\,^{\bA}\!X^{\alpha}$ \\
$w^{\alpha}$                 & $\,^{O}\!F$                     & $\,^{O}\!X^{\alpha}$ \\
\bottomrule
\end{tabular}
\end{center}
Recall that tau functions are defined for P-CohFTs only, whereas vector potentials are defined for P-CohFTs and F-CohFTs. 

\subsection{Interrelations between the obtained hierarchies}
\label{sec: interrelations}
As we discussed above, for the integrable observables $\{\bPsi_{g,n}\}$, the integrable systems associated to F-CohFTs and P-CohFTs by Prop.~\ref{prop: integrability of integrable observables for F-CohFT} and~\ref{prop: integrability of integrable observables for P-CohFT} are called the Dubrovin-Zhang hierarchies. 
For the integrable observables $\{\bA_{g,n}\}$, we get the DR hierarchies in normal coordinates. 
% of conservations laws both for F-CohFTs~\cite{BR-spin} and the Hamiltonian DR hierarchies for P-CohFTs~\cite{Bur,BR-spin} that can also be considered in the normal coordinates~\cite{BDGR1}.

An immediate corollary of  Prop.~\ref{prop: integrability of integrable observables for F-CohFT} and~\ref{prop: integrability of integrable observables for P-CohFT} is the following:

\begin{corollary} Consider the Dubrovin-Zhang, the Double Ramification, and $\bOm$- hierarchies.
\begin{enumerate}
    \item For any P-CohFT or F-CohFT, they have the same dispersionless limit.
    \item For any F-CohFT, the three systems of conservation laws are connected by Miura transformations of second kind.
    \item For any P-CohFT, the three integrable hierarchies are tau symmetric, Hamiltonian, and all normal Miura equivalent to each other.
\end{enumerate}
\end{corollary}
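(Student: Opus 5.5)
The corollary follows from Theorems~\ref{thm: Psi}, \ref{thm: A}, \ref{thm: Hej} together with Propositions~\ref{prop: integrability of integrable observables for F-CohFT} and~\ref{prop: integrability of integrable observables for P-CohFT}. First I would confirm which observables satisfy which relations. $\{\bPsi_{g,n}\}$ and $\{\bOm_{g,n}\}$ are of type $1$ and type $2$, while $\{\bA_{g,n}\}$ is of type $1$. So all three are type $1$ integrable observables, and the DR hierarchy, in its normal coordinates, is the $\bA$-hierarchy by Sec.~\ref{sec: identification A and DR}.

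For item (1), I would use the final clause of part (1) of each proposition. In the dispersionless limit the fluxes $R^\alpha_{\beta,p}|_{\epsilon=0}$ do not depend on the observable, because all integrable observables agree in genus $0$ by the genus-zero normalization. So the $\bPsi$-, $\bOm$- and $\bA$-hierarchies share one limit. The DR hierarchy in natural coordinates differs from the $\bA$-hierarchy by the Miura transformation~\eqref{eq: normal coordinates for F CohFT} (resp.~\eqref{eq:u-norm-u}). At $\epsilon=0$ this transformation is the identity: only the $g=0$, $n=1$, $d_1=0$ term survives, and $\lambda_0\DR_0(0,-a_1,a_1)=1$ gives $u^\alpha_{\mathrm{norm}}=u^\alpha+O(\epsilon)$. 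Hence the DR hierarchy has the same dispersionless limit as well.

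For item (2), I would apply part (2) of Prop.~\ref{prop: integrability of integrable observables for F-CohFT} to $O=\bPsi$ and to $O=\bOm$. Each gives an explicit Miura transformation $u^\alpha=w^\alpha-\d_x R^\alpha$ to the DR hierarchy of conservation laws. This transformation is close to the identity, i.e.\ of the second kind: $R^\alpha$ has $\deg_{\d_x}=-1$, and the only terms with $q_1+\dots+q_n=2g-1\ge0$ have $g\ge1$, so they carry a positive power of $\epsilon$. Miura transformations of the second kind form a group, so I would compose one map with the inverse of the other to connect the Dubrovin--Zhang hierarchy with the $\bOm$-hierarchy.

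For item (3), part (2) of Prop.~\ref{prop: integrability of integrable observables for P-CohFT} shows that the $\bPsi$- and $\bOm$-hierarchies are normal Miura equivalent to the DR hierarchy in normal coordinates, and hence Hamiltonian and tau symmetric. The DR hierarchy itself is Hamiltonian by~\cite{Bur} and tau symmetric by~\cite{BDGR1}. Normal Miura transformations form a group, so composing gives normal Miura equivalence between any two of the three hierarchies.

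The main obstacle is bookkeeping rather than substance: which presentation of the DR hierarchy is meant in each item. For F-CohFTs, the normal coordinates are not genuinely normal (Remark~\ref{rem: normal for F-CohFT}), so item (2) should only claim second-kind Miura equivalence and should work with the natural coordinates $u^\alpha$. For P-CohFTs, the tau-symmetric statement should be made in the normal coordinates $u^\alpha_{\mathrm{norm}}$, in which the DR hierarchy coincides with the $\bA$-hierarchy.
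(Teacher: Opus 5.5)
Your proposal is correct and follows the paper's route. The paper states the corollary as an immediate consequence of Propositions~\ref{prop: integrability of integrable observables for F-CohFT} and~\ref{prop: integrability of integrable observables for P-CohFT}, applied to the type~$1$ observables $\bPsi$ and $\bOm$, with $\bA$ identified with the DR hierarchy in normal coordinates and the pairwise equivalences obtained by composing (normal) Miura transformations. Your extra check that the natural-to-normal DR transformation is the identity at $\epsilon=0$ fills in a step the paper leaves implicit; the vanishing of the $g=0$, $n\ge 2$ terms there follows from the flat unit axiom by a dimension count.
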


\begin{remark} It is important to stress that we don't have a construction of a direct (normal) Miura transformation between the Dubrovin-Zhang and the $\bOm$-hierarchies. Their (normal) Miura equivalence is a consequence of their relations with the DR hierarchy.
\end{remark}

Moreover, let us state the following direct corollary of~\cite[Prop. 5.3]{BLS-Quantum}:

\begin{proposition} \label{prop:coincide}
(1) If $\deg_{R^\bullet} \mathfrak{fc}_{g,n} \leq g-2+n$ for all $g$ and $n$, then the $\bOm$-hierarchy coincides with the DR hierarchy of conservation laws. 

(2) If $\deg_{R^\bullet} \mathfrak{pc}_{g,n} \leq g-2+n$ for all $g$ and $n$, then the $\bOm$-hierarchy coincides with the Hamiltonian DR hierarchy in normal coordinates. 
\end{proposition}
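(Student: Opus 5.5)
We will compare the two vector potentials (resp. the two tau functions) directly rather than the hierarchies. The approach is to show that under the degree assumption the $\bOm$-observables and the $\bA$-observables give identical correlators after pairing with the F-CohFT (resp. P-CohFT). The conclusion then follows from the identification of the $\bA$-hierarchy with the DR hierarchy in Sec.~\ref{sec: identification A and DR}.

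\textbf{Step 1.} We invoke \cite[Prop.~5.3]{BLS-Quantum}, which we read as the cohomological statement
\begin{equation*}
\deg_{R^\bullet}\big(\bOm_{g,n}(a_1,\dots,a_n)-\bA_{g,n}(a_1,\dots,a_n)\big) \geq 2g-3+n,
\end{equation*}
and its analogue for the class with one extra point carrying argument $0$ on $\oM_{g,n+1}$. More precisely, we expect the difference to have no components in Chow degree $\le 2g-3+n$, equivalently in degree $<2g-2+n$ in the first case (and the shifted analogue in the second). Heuristically, the two classes agree in low degree because LRT-$0$ holds for both by Lemma~\ref{lemma: Type 0 relation} (both are type $1$), and ${}^{\bA}\!B^0_{g,n}=\bA_{g,n}$ up to higher-degree corrections. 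We will take the precise form of this low-degree agreement from \emph{op.~cit.}

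\textbf{Step 2.} Insert the agreement into ${}^{\bOm}\!X$ and ${}^{\bA}\!X$ (resp. ${}^{\bOm}\!F$ and ${}^{\bA}\!F$). Consider a correlator
\begin{equation*}
\int_{\oM_{g,n+1}} \mathfrak{fc}_{g,n}\cdot \Coeff{\prod a_i^{d_i}}(\cdots).
\end{equation*}
It is nonzero only if the two degrees add up to $\dim\oM_{g,n+1}=3g-2+n$. The hypothesis $\deg_{R^\bullet}\mathfrak{fc}_{g,n}\le g-2+n$ forces the observable part to have degree at least $2g$. In that range the difference of the two classes must vanish, so the correlators coincide. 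The genus-$0$ and small cases are covered by the genus zero normalization. Hence ${}^{\bOm}\!X={}^{\bA}\!X$, and likewise ${}^{\bOm}\!F={}^{\bA}\!F=F^{\DR}$.

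\textbf{Step 3.} Since the hierarchies are reconstructed uniquely from the vector potential (resp. tau function) via Lemma~\ref{lemma: vector potential expansion} and Remark~\ref{rem:RemarkIdentification}, the two hierarchies coincide. For P-CohFTs, the $\bA$-hierarchy is the DR hierarchy in normal coordinates, which gives (2). For F-CohFTs, Prop.~\ref{prop: A-DR normal F-CohFT} identifies the $\bA$-hierarchy with the DR hierarchy in the coordinates \eqref{eq: normal coordinates for F CohFT}.

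\textbf{Main obstacle.} Statement (1) claims equality with the DR hierarchy itself, i.e. in natural coordinates, so we must also show that the Miura transformation \eqref{eq: normal coordinates for F CohFT} is trivial under the hypothesis. It suffices that all terms with $g\ge1$ vanish. Such a term has $\lambda_g\DR_g$ of degree $2g$ on $\oM_{g,n+2}$, whose dimension is $3g-1+n$. Nonvanishing would then require $\deg\mathfrak{fc}_{g,n+1}=g-1+n$. This exceeds the allowed bound $g-2+(n+1)=g-1+n$ only in the borderline case, so a finer argument is needed there. We would try the $\lambda_g$ vanishing on the boundary combined with the fact that the first point is $e_1$, using the flat unit to pull back along the forgetful map. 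If the degree count still fails to close, we would fall back on the finer form of \cite[Prop.~5.3]{BLS-Quantum}, expecting it to state directly that the normal and natural coordinates agree.
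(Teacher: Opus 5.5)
Your overall skeleton is the paper's: prove ${}^{\bOm}\!X={}^{\bA}\!X$ and ${}^{\bOm}\!F={}^{\bA}\!F$, then invoke Sec.~\ref{sec: identification A and DR}. The paper gets these two equalities, under the degree hypothesis, simply by citing \cite{BLS-Quantum}.

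The gap is in how you supply them in Steps~1--2: the degree ranges do not match. In Step~2 you correctly find that, under the hypothesis, only components of the observable of Chow degree $\ge 2g$ on $\oM_{g,n+1}$ (resp.\ $\ge 2g-1$ on $\oM_{g,n}$) can pair nontrivially with the F-CohFT (resp.\ P-CohFT). So you need $\bOm$ and $\bA$ to agree in \emph{high} degree.

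The statement you ascribe to \cite[Prop.~5.3]{BLS-Quantum} gives agreement in \emph{low} degree only. On $\oM_{g,n}$ it leaves the degrees $2g-2+n,\dots,3g-3+n$ uncovered, and similarly after the shift on $\oM_{g,n+1}$. This range is nonempty for every $g\ge1$, so ``in that range the difference of the two classes must vanish'' does not follow.

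The heuristic also misreads LRT-$0$. The inequality $\deg_a({}^{O}\!B^0_{g,n}-\bA_{g,n})\le 2g-2$ means agreement in degrees $\ge 2g-1$, with the corrections in \emph{lower} degree. Moreover, it compares $\bA_{g,n}$ with ${}^{\bOm}\!B^0_{g,n}$, not with $\bOm_{g,n}$. These two differ by the multi-vertex leveled trees, which need not vanish as classes; they disappear only after pairing with a degree-bounded CohFT.

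A self-contained fix uses decompositions you already have. Since $\{\bOm_{g,n}\}$ is of type~$1$, the proofs of Prop.~\ref{prop: integrability of integrable observables for F-CohFT} and~\ref{prop: integrability of integrable observables for P-CohFT} give ${}^{\bOm}\!X^\alpha=X^{\DR,\alpha}+R^\alpha|_{w^{\gamma,d}=w^{\gamma,d}_{\mathrm{top}}}$ and ${}^{\bOm}\!F=F^{\DR}+R|_{w^{\gamma,d}=w^{\gamma,d}_{\mathrm{top}}}$.
\begin{itemize}
\item Each coefficient of $R^\alpha$ integrates the Chow degree $2g-1$ part of $\Coeff{b_1^0}{}^{\bOm}\!B^1_{g,n}$ against $\mathfrak{fc}_{g,n}$ over $\oM_{g,n+1}$. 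The total degree is $\le 3g-3+n<3g-2+n$, so $R^\alpha=0$.
\item Each coefficient of $R$ integrates a class of degree $2g-2$ against $\mathfrak{pc}_{g,n}$ over $\oM_{g,n}$. The total degree is $\le 3g-4+n<3g-3+n$, so $R=0$.
\end{itemize}
This gives ${}^{\bOm}\!X=X^{\DR}$, i.e.\ (1) directly in natural coordinates, and ${}^{\bOm}\!F=F^{\DR}$, i.e.\ (2).

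This route also removes your ``main obstacle''. Independently, your flat-unit idea there does work. The class $\mathfrak{fc}_{g,n+1}(e_1\otimes\cdots)$ is the pull-back along forgetting the $e_1$-point. By the projection formula, each $g\ge1$ term of \eqref{eq: normal coordinates for F CohFT} becomes an integral over $\oM_{g,n+1}$ of $\mathfrak{fc}_{g,n}$ against a class of degree $2g-1$. That forces $\deg_{R^\bullet}\mathfrak{fc}_{g,n}=g-1+n$, which the hypothesis excludes. You were right to flag this point; the paper's one-line proof leaves it implicit.
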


\begin{proof} The second statement here is implied by \cite[Thm.~2]{BLS-Quantum} that can be interpreted in our terms as the statement that ${}^\bA\!F = {}^\bOm\!F$ in the case $\deg_{R^\bullet} \mathfrak{pc}_{g,n} \leq g-2+n$.

The first statement is also available in \cite{BLS-Quantum} as a part of the proof of \cite[Thm. 2]{BLS-Quantum}, namely, it is the way \cite[Prop. 5.3]{BLS-Quantum} is used in the proof of Cor.~5.5 in \emph{op. cit.} In our terms here it can be interpreted as the statement that ${}^\bA\!X = {}^\bOm\!X$ in the case $\deg_{R^\bullet} \mathfrak{fc}_{g,n} \leq g-2+n$ for all $g$ and $n$. 
\end{proof}

\subsection{Further remarks} \label{sec: application mLRT relations} In this section we briefly discuss, following~\cite[Sec.~4]{BS22}, implications of LRT-$m$ relations for $m\geq 2$ that we did not include in the definition of integrable observables. 
Let us do it only for F-CohFTs, as in \emph{op.~cit.}, since for P-CohFTs the corresponding statement is straightforward via Eq.~\eqref{eq:From-PCohFT-to-FCohFT} 

\begin{lemma} \label{lemma: full system of relations for FCohFT} (1) Let ${{}^O\!X}$ be the vector potential of an F-CohFT $\{ \mathfrak{fc}_{g,n}\}$. Let $w_{\mathrm{top}}^{\alpha} \coloneqq \d_{t^{1,0}} {{}^O\!X}^{\alpha}$ and $w_{\mathrm{top}}^{\alpha,d} \coloneqq \d^{d+1}_{t^{1,0}} {{}^O\!X}^{\alpha}$ for $d\geq 0$. For $k\geq 1$ define
\begin{align}
{{}^O\!X}^{\alpha,p}_{(\beta_1,q_1),\dots,(\beta_k,q_k)} & \coloneqq 
\prod_{i=1}^k \d_{t^{\beta_i,q_i}} \sum_{g,n} \frac{\epsilon^{2g}}{n!} \sum_{d_1,\dots,d_n=0}^\infty \prod_{i=1}^n t^{\alpha_i,d_i} \times \\ \notag & \int_{\oM_{g,1+n}} e^{\alpha} \bigg(\mathfrak{fc}\Big( \Vec{\bigotimes}_{i=1}^n e_{\alpha_i}\Big) \bigg) 
\Coeff{a_0^p\prod_{i=1}^n a_i^{d_i}} O_{g,n+1}(a_0,a_1,\dots,a_n) 
\end{align}
Then there exist and unique $R^{(\alpha,p)}_{(\beta_1,q_1),\dots,(\beta_k,q_k)} \in \mathcal{A}_w[[\epsilon]]_{\deg_{\d_x}=k-1}$ such that 
\begin{align}
    \Coeff{\epsilon^{2g}\prod_{i=1}^n t^{\zeta_iq_i}} \bigg( {{}^O\!X}^{\alpha,p}_{(\beta_1,q_1),\dots,(\beta_k,q_k)} -R^{(\alpha,p)}_{(\beta_1,q_1),\dots,(\beta_k,q_k)}|_{w^{\gamma,d}=w_{\mathrm{top}}^{\gamma,d}} \bigg) & = 0
\end{align}
for $\sum\nolimits_{i=1}^n q_i \leq 2g-1+k$. 
More explicitly, we have the following:
\begin{align} \label{eq:X-alpha-beta-p-ktimes-decomposition} 
& {{}^O\!X}^{\alpha,p}_{(\beta_1,q_1),\dots,(\beta_k,q_k)} 
 = 
\\ \notag & \quad \sum_{g=0}^\infty\sum_{n=1}^\infty \frac{\epsilon^{2g}}{n!} \sum_{\substack{q_1,\dots,q_n\geq 0 \\ q_1+\cdots+q_n\geq 2g+k}} \prod_{i=1}^n t^{\gamma_i,q_i}  
\times \\ \notag & \qquad  
\int_{\oM_{g,n+k+1}} \bigg[\Coeff{\prod_{i=1}^k b_i^{q_i} b_{k+1}^p\prod_{i=1}^n a_i^{q_i}} {}^{O}\!B^{k+1}_{g,n} \bigg]^\circlearrowright
%\times \\ \notag & \qquad \qquad 
e^{\alpha}\bigg(\mathfrak{fc}_{g,n+1}\Big( \Vec{\bigotimes}_{i=1}^n e_{\gamma_i} \otimes \Vec{\bigotimes}_{i=1}^k e_{\beta_i}\Big) \bigg)
\\ \notag 
& \quad + \sum_{g=0}^\infty\sum_{n=1}^\infty \frac{\epsilon^{2g}}{n!} \sum_{\substack{q_1,\dots,q_n\geq 0 \\ q_1+\cdots+q_n= 2g-1+k}} \prod_{i=1}^n w_{\mathrm{top}}^{\gamma_i,q_i}  
\times \\ \notag & \quad \qquad  
\int_{\oM_{g,n+k+1}} \bigg[\Coeff{\prod_{i=1}^k b_i^{q_i} b_{k+1}^p\prod_{i=1}^n a_i^{q_i}} {}^{O}\!B^{k+1}_{g,n} \bigg]^\circlearrowright
%\times \\ \notag & \qquad \qquad 
e^{\alpha}\bigg(\mathfrak{fc}_{g,n+1}\Big( \Vec{\bigotimes}_{i=1}^n e_{\gamma_i} \otimes \Vec{\bigotimes}_{i=1}^k e_{\beta_i}\Big) \bigg),
\end{align}
where the second summand is equal to $R^{(\alpha,p)}_{(\beta_1,q_1),\dots,(\beta_k,q_k)}|_{w^{\gamma,d}=w_{\mathrm{top}}^{\gamma,d}}$. 
\end{lemma}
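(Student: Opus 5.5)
The strategy is to follow the proof of Lemma~\ref{lemma: vector potential expansion} (itself a transcription of \cite[Sec.~4]{BS22}), now with $k+1$ frozen legs instead of one or two. We prove the explicit decomposition~\eqref{eq:X-alpha-beta-p-ktimes-decomposition}; existence and uniqueness of $R^{(\alpha,p)}_{(\beta_1,q_1),\dots,(\beta_k,q_k)}$ then follow at once. Uniqueness comes from Remark~\ref{rem:RemarkIdentification}: $w^{\gamma,d}_{\mathrm{top}}=t^{\gamma,d}+\delta^\gamma_1\delta^d_1+(\text{higher order})$, so a differential polynomial of fixed $\deg_{\d_x}=k-1$ is determined by its substitution truncated to $\sum q_i\le 2g-1+k$.

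First we rewrite the right-hand side as a series in the $t$'s. Substitute into the second summand the expansion of each $w^{\gamma_i,q_i}_{\mathrm{top}}=\d^{q_i+1}_{t^{1,0}}{}^O\!X^{\gamma_i}$, i.e.\ the $(0,q_i)$-descendant one-point tree expansion. Geometrically, each such factor corresponds to gluing an extra non-root vertex carrying $O(v)$ (with $0$ inserted at its negative half-edge) onto a leg of degree $q_i$ of the root. The shift $\delta^\gamma_1\delta^d_1$ is absorbed by the dilaton equation. Iterating produces sums over stable rooted trees with levels: the vertices at level $j$ come from the $j$-th iterated substitution, and the edge factors $a(e)$ come from the string equation ($\pi_*$ of $O$ producing $\sum a_i$). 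The $k+1$ frozen legs sit at the root: the $k$ from the derivatives $\d_{t^{\beta_i,q_i}}$ and the one carrying $\alpha$ with $a_0$-degree $p$. The signs $(-1)^{\ell(T)}$ arise from inverting the triangular substitution.

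Next we compare coefficients. Consider a fixed monomial $\epsilon^{2g}\prod t^{\gamma_i,q_i}$ with $\sum q_i\ge 2g+k$. Its coefficient on the left is $\int O_{g,n+k+1}\cdot\mathfrak{fc}$. On the right it is the same integral minus all tree corrections whose non-top levels satisfy $d^{\mathrm{lvl}}(i)\le 2g^{\mathrm{lvl}}(i)-2+(k+1)$. Here the threshold $2g-1+k=2g-2+m$ with $m=k+1$ is exactly the cutoff in the definition of ${}^O\!B^{k+1}_{g,n}$. Using the factorization property of the F-CohFT, the boundary pushforward $(\mathrm{gl}_T)_*$ pairs with $\mathfrak{fc}$ to yield products of lower correlators. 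This reproduces the telescoping that defines ${}^O\!B^{k+1}_{g,n}$. The monomials with $\sum q_i\le 2g-1+k$ are collected into the second summand, which has $\deg_{\d_x}=k-1$ after the homogeneity property identifies $a$-degree with jet degree. The result is the claimed identity.

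The main obstacle is the combinatorial bookkeeping in this comparison: we must check that the level conditions $d^{\mathrm{lvl}}(i)\le 2g^{\mathrm{lvl}}(i)-2+m$ emerge exactly from the truncation at each substitution step, with frozen legs staying at the root, and that the $b_i$-degree extraction $\Coeff{b_i^{q_i}}$ commutes with this process. We would handle it by induction on the height $\ell(T)$, as in \cite[Lemma~4.5]{BS22} and \cite[Thm.~2.15]{BLRS}. There $m$ enters only through the root vertex's Euler characteristic, so the argument for $m=1,2$ carries over verbatim. Only the string, dilaton and homogeneity properties of $\{O_{g,n}\}$ are used, so no LRT-$m$ hypothesis is needed.
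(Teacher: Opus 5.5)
Your proposal follows essentially the same route as the paper. The paper gives no separate argument for this lemma; it transports the proof of \cite[Sec.~4]{BS22} (written for $\bPsi_{g,n}$) to arbitrary $\{O_{g,n}\}$, noting that only the string, dilaton and homogeneity properties, together with the flat unit and factorization axioms of the F-CohFT, enter there. Your reduction of existence and uniqueness to the explicit decomposition via Remark~\ref{rem:RemarkIdentification}, and your matching of the cutoff $2g-1+k=2g-2+m$ with the level constraints in ${}^{O}\!B^{k+1}_{g,n}$, are what that transported argument uses, and you likewise leave the combinatorial verification itself to \cite{BS22}.
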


If $\{O_{g,n}\}$ satisfies LRT-$(k+1)$, the first summand in \eqref{eq:X-alpha-beta-p-ktimes-decomposition} vanishes and we obtain the following formula.
\begin{corollary} For any $k\geq 1$, if $\{O_{g,n}\}$ satisfies LRT-$(k+1)$ then 
\begin{align} 
     {{}^O\!X}^{\alpha,p}_{(\beta_1,q_1),\dots,(\beta_k,q_k)} = R^{(\alpha,p)}_{(\beta_1,q_1),\dots,(\beta_k,q_k)}|_{w^{\gamma,d}=w_{\mathrm{top}}^{\gamma,d}}
\end{align}
in the notation of Lemma~\ref{lemma: full system of relations for FCohFT}. 
\end{corollary}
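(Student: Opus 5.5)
The corollary follows at once from Lemma~\ref{lemma: full system of relations for FCohFT}. The plan is to show that the first summand in~\eqref{eq:X-alpha-beta-p-ktimes-decomposition} vanishes under LRT-$(k+1)$, so that only the second summand survives. That second summand is by definition $R^{(\alpha,p)}_{(\beta_1,q_1),\dots,(\beta_k,q_k)}|_{w^{\gamma,d}=w_{\mathrm{top}}^{\gamma,d}}$.

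The key step is a degree count. The first summand is a sum over $g,n$ and multi-indices $(q_1,\dots,q_n)$ with $\sum_i q_i \ge 2g+k$. Each term involves the coefficient of a monomial $\prod_{i=1}^n a_i^{q_i}$, of total $a$-degree $\sum_i q_i$, extracted from ${}^{O}\!B^{k+1}_{g,n}$ after fixing the $b$-exponents. LRT-$(k+1)$, with $m=k+1\ge 2$, gives
\[
\deg_a {}^{O}\!B^{k+1}_{g,n} \le 2g-2+(k+1) = 2g-1+k .
\]
Here $\deg_a$ is the degree in $a_1,\dots,a_n$ with the $b$'s treated as constants, so the bound holds for every $b$-coefficient separately. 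Any monomial in the $a$'s of degree at least $2g+k > 2g-1+k$ therefore has zero coefficient. Hence every term of the first summand is zero, and the summand vanishes identically as a formal power series in the $t^{\gamma,q}$.

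Substituting this into~\eqref{eq:X-alpha-beta-p-ktimes-decomposition} leaves exactly the claimed identity. The only thing to check carefully, and the closest thing to an obstacle, is the degree convention. One must confirm that LRT-$m$ bounds $\deg_a$ in the sense of the $a$-variables alone, which is the convention introduced before the definition of ${}^O\!B$, and not the total Chow degree. Otherwise the $b$-exponents could shift the bound. With the stated convention the argument is immediate.
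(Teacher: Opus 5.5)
Your proposal is correct and follows the paper's own argument. LRT-$(k+1)$ gives $\deg_a {}^{O}\!B^{k+1}_{g,n}\leq 2g-1+k$ for every fixed $b$-coefficient, so every term of the first summand in the decomposition of Lemma~\ref{lemma: full system of relations for FCohFT} vanishes, since those terms have $\sum_i q_i\geq 2g+k$. What remains is exactly $R^{(\alpha,p)}_{(\beta_1,q_1),\dots,(\beta_k,q_k)}|_{w^{\gamma,d}=w_{\mathrm{top}}^{\gamma,d}}$.
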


Note that once it is established that ${{}^O\!X}^{\alpha,p}_{(\beta_1,q_1)}$ is equal to a differential polynomial in $\mathcal{A}_w[[\epsilon]]_{\deg_{\d_x}=k-1}$ upon substitution $w^{\gamma,d}=w_{\mathrm{top}}^{\gamma,d}$, one can use the integrable system to prove analogous statement for any $k\geq 1$. This indicates that there should be a way to derive LRT-$m$ for all $m\geq 2$ just from LRT-$2$. As we mentioned in Remark~\ref{rem:LiuWang}, it is indeed the core of the argument proposed in~\cite{LiuWang}.

%%%%%%%%%%%%%%%%%%%%%%%%%%%%%%%%%%%%%%%%%%%%%%%%%%%%%%%%%%%%%%%%%%%%%%%%%
\vspace{1cm}
\section{Proofs of three examples of integrable observables}
\label{sec:proofs}
%%%%%%%%%%%%%%%%%%%%%%%%%%%%%%%%%%%%%%%%%%%%%%%%%%%%%%%%%%%%%%%%%%%%%%%%%

\subsection{Properties of \texorpdfstring{$\bPsi_{g,n}$}{Psign}} In this section we prove Theorem~\ref{thm: Psi}. This theorem is not new, all required properties are either obvious or proved elsewhere in the literature, so we just give a guide to the references.  

The homogeneity property of $\bPsi_{g,n}$ is immediate, and its the push-forward property follow from standard properties of the $\psi$-classes, see~\cite[Eqs.~(2.36) and (2.44)]{Witten-KdV}. The genus $0$ normalization holds by definition. 

The leveled rooted tree relations for $\bPsi$
%, known as the DR--DZ relations, 
were originally conjectured in a bit different form in~\cite{BDGR1,BDGR20} for $m=0$, then revisited and improved in \cite{BGR19} (still for $m=0$, though $m=1$ is somehow present there as well), further generalized as conjectures for any $m$ in \cite{BS22}, improved in~\cite{BLRS}, further improved and reduced to the master relations in~\cite{BLS-DRDZ}, and finally proved in \cite{BSS25}.

\subsection{Properties of \texorpdfstring{$\bA_{g,n}$}{Agn}} The goal of this section is to present a proof of Theorem~\ref{thm: A}.

\subsubsection{Polynomiality, homogeneity, and genus \texorpdfstring{$0$}{0} properties of \texorpdfstring{$\bA_{g,n}$}{Agn}}

\label{subsec: polynomiality genus 0 A}

The class 
\begin{align}
s_{*}\left(t^{*}\left(\psi_{0}^{d}\right)\left[\overline{\mathcal{M}}{}_{g}^{\sim}\left(\mathbb{P}^{1},a_{1},\dots,a_{n},-\suma\right)\right]^{\mathrm{vir}}\right)
\end{align} 
is a polynomial in the variables $a_{1},\dots,a_{n}$ of degree $2g+d$ as shown in \cite[Proposition 2.3]{BLS-Quantum}. It is also apparent from the proof that when restricted to the locus of compact type curves, this becomes a homogeneous polynomial of degree $2g+d$ (since the DR-cycle $s_{*}(\left[\overline{\mathcal{M}}{}_{g}^{\sim}\left(\mathbb{P}^{1},a_{1},\dots,a_{n},-\suma\right)\right]^{\mathrm{vir}})$ is homogenous polynomial of degree $2g$ on the compact type locus \cite{Hain}). Since the virtual dimension of the moduli space of rubber maps is $2g-2+n$ and $\lambda_{g}$ vanishes on the complement of the compact type locus, the class 
\begin{align}
	\lambda_{g}s_{*}\left(t^{*}\left(\psi_{0}^{d}\right)\left[\overline{\mathcal{M}}{}_{g}^{\sim}\left(\mathbb{P}^{1},a_{1},\dots,a_{n},-\suma\right)\right]^{\mathrm{vir}}\right)
\end{align}
that we denoted above by $\bA^1_{g,n}(a_1,\dots,a_n)$ (see Eq.~\eqref{eq: A-1})
is of Chow degree $2g+d$ and a homogeneous polynomial of degree $2g+d$. Finally, its push-forward by $\pi$ is divisible by $\suma$ as shown in \cite[Proposition 2.3]{BLS-Quantum}. This proves the polynomiality and homogeneity properties of $\bA_{g,n}$.

The genus $0$ property $\bA_{0,n}(a_1,\dots,a_n) = \bPsi_{0,n}(a_1,\dots,a_n)$ is an instance of the so-called $A=B$ relation originally conjectured in~\cite[Conj.~2.5]{BGR19} for any genus and proved in genus $0$ in Prop.~4.4 in \emph{op. cit}. To recognize this, one has to use the interpretation of the $A$-side of this conjecture given in~\cite[Lem.~2.7]{BLS-DRDZ}, the interpretation of the $B$-side of the conjecture given in~\cite[Thm.~2.15]{BLRS}, while for the genus $0$ case there are different proofs available in~\cite[Prop.~4.4]{BGR19} and ~\cite[Thm.~2.3]{BS22}, or one can refer to the genus $0$ specialization of~\cite[Thm.~2]{BLS-DRDZ} or~\cite[Sec.~2.2]{BSS25}.

\subsubsection{Push-forward property of \texorpdfstring{$\bA_{g,n}$}{Agn}}
\label{subsubsec: push-forward A}

It is proved in~\cite[Prop.~4.2 and~4.5]{BGR19}, once these statements are combined with the conceptual explanation of the nature of $\bA_{g,n}$ class give in~\cite[Lemma 2.6]{BLS-DRDZ}. However, the revisited definition of the $\bA_{g,n}$ obtained in~\cite{BLS-DRDZ} that we use here prompts a new proof, which we present below. 

First, consider the class $\bA^1_{g,n}(a_1,\dots,a_n,b)$.
For this class we prove the following lemma:
\begin{lemma} \label{lem:push-forward-A1} Let $\pi'\colon \oM_{g,n+2}\to \oM_{g,n+1}$ be the map that forgets the $(n+1)$-st marked point. In $R^*(\oM_{g,n+1})[a_1,\dots,a_n,b]$ we have:
\begin{align} \label{eq:statment-lemma-push-A}
	\pi'_* \bA^1_{g,n+1}(a_1,\dots,a_n,b) = (\suma + (2g-1+n) b) \bA^1_{g,n}(a_1,\dots,a_n) + O(b^2).
\end{align}
\end{lemma}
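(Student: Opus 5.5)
We compare the class $\bA^1_{g,n+1}(a_1,\dots,a_n,b)$ with the pullback of $\bA^1_{g,n}(a_1,\dots,a_n)$ along $\pi'$, expanding in $b$ up to first order. The marked points of $\oM_{g,n+2}$ are $1,\dots,n$, then $n+1$ (the point with weight $b$, which is forgotten), then the point $n+2$ over $\infty$ with weight $-\suma-b$.

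\emph{Constant term in $b$.} At $b=0$, the $(n+1)$-st point has ramification $0$ over $0$, i.e.\ it is an unconstrained marked point of the source. The rubber moduli space with a zero-weight point maps to $\overline{\mathcal{M}}_g^{\sim}(\mathbb{P}^1,a_1,\dots,a_n,-\suma)$, and its virtual class is pulled back. The DR cycle and $\lambda_g$ are compatible with this forgetful map: $\DR_g(a,0,-\suma)=\pi'^*\DR_g(a,-\suma)$ and $\lambda_g=\pi'^*\lambda_g$.

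The only subtlety is $t^*\psi_0$. When the extra point is forgotten, the target Losev--Manin space loses a point, and $t^*\psi_0$ changes by a boundary correction. This correction is supported where the zero-weight point sits alone on a target level together with $0$-side data. Since such a level is unstable, I expect this correction to vanish or to be absorbed by the divisor $D$ where point $n+1$ collides with a point $i$ at level $0$.

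Pushing forward then gives the degree-zero factor. Because $\pi'_*$ lowers degree by one, the constant term comes from the $\psi_0$-expansion, and I expect it to be $\suma$. The cleanest route is the identity
\begin{align}
t^*\psi_0 = \pi'^* t^*\psi_0 + [\text{locus where } n+1 \text{ forms the bottom level}],
\end{align}
together with the fact that the divisor class $a_i\cdot$(point $n+1$ near point $i$) pushes forward to $a_i$. Summing over $i$ yields $\suma\,\bA^1_{g,n}$.

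\emph{Linear term in $b$.} To find the dilaton coefficient, I would instead use polynomiality and homogeneity (Sec.~\ref{subsec: polynomiality genus 0 A}). The coefficient of $b^1$ in $\bA^1_{g,n+1}$ is obtained by differentiating at $b=0$. For this I would use the DR-cycle formula restricted to compact type (Hain's formula): on compact type, $\lambda_g\DR_g$ is $\lambda_g\Theta^g/g!$, where $\Theta$ is quadratic in the weights. Its $b$-linear part is $b\left(\sum_i a_i\,\cdot\right)$-type divisor terms, together with $-\psi_{n+1}$-type terms coming from the weight $b$ at point $n+1$.

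Pushing forward the class $b\psi_{n+1}$ times a pulled-back class yields $b(2g-2+n+1)$ by the dilaton equation for $\psi$ on $\oM_{g,n+2}$, where the point $n+2$ is also counted. Together with the boundary divisors and the $t^*\psi_0$ correction, this should give the coefficient $2g-1+n$.

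\emph{Main obstacle and fallback.} The main obstacle is bookkeeping the $t^*\psi_0$ correction and the boundary terms at order $b^1$. If this direct approach gets messy, I would fall back on the known string and dilaton equations for $\bA$ proved in~\cite{BGR19}, combined with~\cite[Lemma 2.6]{BLS-DRDZ}, and use them to pin down the two coefficients.
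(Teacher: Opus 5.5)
There is a genuine gap, and it sits in the linear term, which is where all the content of the lemma lies. Hain's formula only describes the $d=0$ summand $\lambda_g\DR_g(a_1,\dots,a_n,b,-\suma-b)$ of $\bA^1_{g,n+1}$, and that summand contributes nothing. Its push-forward along $\pi'$ is $O(b^2)$ by \cite[Prop.~2.12]{BLS-Quantum}, which is consistent with $\bA^1_{g,n}$ having no component of degree $2g-1$. Both coefficients $\suma$ and $2g-1+n$ come from the summands $\lambda_g s_*(t^*\psi_0^d[\cdots]^{\mathrm{vir}})$ with $d\geq 1$, and Hain's formula does not see these. Moreover, the $\psi_{n+1}$-term of Hain's $\Theta$ is $\frac{b^2}{2}\psi_{n+1}$, which is quadratic in $b$. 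So there is no $b\,\psi_{n+1}$ term at linear order to which the dilaton equation could be applied; the $b$-linear part of $\Theta$ is $\suma b\,\psi_{n+2}$ plus boundary cross terms. Your sentence ``together with the boundary divisors and the $t^*\psi_0$ correction, this should give the coefficient $2g-1+n$'' therefore stands in for the actual proof. The fallback does not work either. \cite{BGR19} gives string and dilaton for $\bA_{g,n}$ on $\oM_{g,n}$, which is Corollary~\ref{cor:push-forward-A}. The paper obtains that corollary from this lemma by a further push-forward, and the finer statement on $\oM_{g,n+1}$ cannot be recovered from it.

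The missing idea is to compare target and source classes on the rubber space with the weight-$b$ point: $t^*\psi_0 = b\, s^*\psi_{n+1} + D_{n+1}$, where $D_{n+1}$ is the divisor where the point $n+1$ lies on an unstable component contracted by $s$ (cf.~\cite{BSSZ}). The paper splits
\begin{equation*}
\frac{1}{1-t^*\psi_0} = 1 + \frac{b\, s^*\psi_{n+1}}{1-t^*\psi_0} + \frac{D_{n+1}}{1-t^*\psi_0}
\end{equation*}
and treats the three pieces separately:
\begin{itemize}
\item The first piece is $O(b^2)$ after $\pi'_*$.
\item In the second piece one may set $b=0$ in the remaining factor by polynomiality; the dilaton equation then yields $(2g-1+n)b\,\bA^1_{g,n}$.
\item In the third piece, the stratum whose lower level is a genus-$0$ component carrying only the node, $n+1$ and $n+2$ gives $\suma\,\bA^1_{g,n}$, via the gluing factor $\suma$. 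All other strata are $O(b^2)$ by \cite[Prop.~2.12]{BLS-Quantum} applied to the lower level.
\end{itemize}

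This works directly for $b>0$. By contrast, your constant-term argument presupposes that the $b^0$-coefficient equals the class on the space with a free marked point, with $\psi_0$ taken on the Losev--Manin space containing that point's image. That identification is Lemma~\ref{lem:A1star-A}, which the paper itself proves with the same split.

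Once the term $b\,s^*\psi_{n+1}$ is in hand, your ``pulled-back class'' idea can in fact finish the dilaton count. The difference between that $\psi_0$ and the pull-back of $\psi_0$ is the bubble divisor at $0$. Its contribution after $s_*$ is supported on the divisors of curves with a rational tail carrying exactly $i$ and $n+1$, and $\psi_{n+1}$ restricts to zero there. This would be a shortcut compared with the paper, which instead expands $t^*\psi_0$ via \cite[Eq.~(1)]{BSSZ}, applies dilaton on each lower-level component, and reassembles via \cite[Eq.~(2)]{BSSZ}.
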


\begin{remark}\label{rem:push-forward-A-no-lambda} The proof that we give below is an improved paraphrase of the arguments given in~\cite[Sec.~6]{BLS-Quantum}. Note that the argument and the statement work also for the class 
	\begin{align}
		s_* \left(\frac{1}{1-t^*\psi_0} \left[\oM_{g}^{\sim}\left(\mathbb{P}^1,a_1,\dots,a_n,b,-\suma-b \right)\right]^{\mathrm{vir}}\right),
	\end{align}
but we don't need this generality here.
\end{remark}

An immediate corollary of this lemma is the following property of $\bA_{g,n}$ that is a part of the statement of Theorem~\ref{thm: A}:
\begin{corollary}\label{cor:push-forward-A} Let $\pi\colon \oM_{g,n+1}\to \oM_{g,n}$ be the map that forgets the last marked point. In $R^*(\oM_{g,n})[a_1,\dots,a_n,b]$ we have:
	\begin{align}
		\pi_* \bA_{g,n+1}(a_1,\dots,a_n,b) = (\suma + (2g-2+n) b) \bA_{g,n}(a_1,\dots,a_n) + O(b^2).
	\end{align}
\end{corollary}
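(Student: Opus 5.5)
The plan is to deduce Corollary~\ref{cor:push-forward-A} directly from Lemma~\ref{lem:push-forward-A1} by applying one more forgetful push-forward and using the definition $\bA_{g,n}=\frac{1}{\suma}\pi_*\bA^1_{g,n}$. Write $\pi\colon\oM_{g,n+1}\to\oM_{g,n}$ for the map forgetting the last point, and $\tilde\pi\colon \oM_{g,n+2}\to\oM_{g,n+1}$ for the map forgetting the last point (the one carrying the total $-\suma-b$). Then
\begin{equation*}
\bA_{g,n+1}(a_1,\dots,a_n,b) = \frac{1}{\suma+b}\,\tilde\pi_*\bA^1_{g,n+1}(a_1,\dots,a_n,b).
\end{equation*}
Applying $\pi_*$ and using $\pi\circ\tilde\pi = \pi\circ\pi'$ (both maps forget the points labelled $n+1$ and $n+2$; only the order differs, and the composite $\oM_{g,n+2}\to\oM_{g,n}$ is the same), I get
\begin{equation*}
(\suma+b)\,\pi_*\bA_{g,n+1}(a_1,\dots,a_n,b) = \pi_*\pi'_*\bA^1_{g,n+1}(a_1,\dots,a_n,b).
\end{equation*}
One subtlety is that $\pi'_*$ lands on $\oM_{g,n+1}$ with the remaining point being the former $(n+2)$-nd one, which carries the $\bA^1$ ``last'' point. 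This matches the labelling in Lemma~\ref{lem:push-forward-A1}, so $\pi_*$ then forgets it.

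By Lemma~\ref{lem:push-forward-A1} the right-hand side equals
\begin{equation*}
(\suma+(2g-1+n)b)\,\pi_*\bA^1_{g,n}(a_1,\dots,a_n)+O(b^2) = (\suma+(2g-1+n)b)\,\suma\,\bA_{g,n}(a_1,\dots,a_n)+O(b^2).
\end{equation*}
Writing $\pi_*\bA_{g,n+1} = C_0 + bC_1 + O(b^2)$, I compare coefficients of $(\suma+b)(C_0+bC_1)$ with $\suma(\suma+(2g-1+n)b)\bA_{g,n}$. The constant term gives $\suma C_0=\suma^2\bA_{g,n}$, so $C_0=\suma\bA_{g,n}$, since we work in a polynomial ring with no zero divisors. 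The $b^1$ coefficient gives $C_0+\suma C_1=(2g-1+n)\suma\bA_{g,n}$, hence $\suma C_1=(2g-2+n)\suma\bA_{g,n}$ and $C_1=(2g-2+n)\bA_{g,n}$. This is exactly the claimed statement.

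The only delicate points are, first, the divisibility and cancellation by $\suma$ and $\suma+b$. These are legitimate because all classes are polynomials in $R^*\otimes\Q[a,b]$, which is torsion-free over $\Q[a,b]$, and the divisibility of push-forwards by $\suma$ is already established in Sec.~\ref{subsec: polynomiality genus 0 A}. Second, one must check the labelling and ordering of forgotten points, which is the step I would verify most carefully. The unstable edge cases, such as $(g,n)=(0,2)$ where $\oM_{g,n}$ is not defined, are excluded by the hypotheses $2g-2+n>0$.
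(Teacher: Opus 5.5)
Your proposal is correct and follows the paper's proof essentially verbatim. Like the paper, you push forward the defining relation $(\suma+b)\bA_{g,n+1}=\pi_*\bA^1_{g,n+1}$, commute the two forgetful maps, apply Lemma~\ref{lem:push-forward-A1}, and divide by $\suma+b$.

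The only difference is cosmetic and concerns the last step. You compare the $b^0$ and $b^1$ coefficients directly in the torsion-free $\Q[a_1,\dots,a_n]$-module $R^*(\oM_{g,n})\otimes_\Q\Q[a_1,\dots,a_n]$. The paper instead expands $\frac{\suma(\suma+(2g-1+n)b)}{\suma+b}$ as a power series in $b$ for fixed positive integers $a_i$ and then invokes polynomiality. Both ways of dividing are valid.
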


Let us first derive Corollary~\ref{cor:push-forward-A} from Lemma~\ref{lem:push-forward-A1}, and then prove the latter lemma. 

\begin{proof}[Proof of Corollary~\ref{cor:push-forward-A}] Slightly abusing notation, let $\pi,\pi'\colon \oM_{g,n+2}\to \oM_{g,n+1}$ be the maps that forget the last and the next to the last marked points, respectively, and $\pi\colon \oM_{g,n+1}\to \oM_{g,n}$ be the map that forgets the last marked point. 

Recall that $(\suma+b)\bA_{g,n+1}(a_1,\dots,a_n,b) = \pi_* \bA^1_{g,n+1}(a_1,\dots,a_n,b)$. Moreover,
\begin{align}
	& (\suma+b)\pi_*\bA_{g,n+1}(a_1,\dots,a_n,b) = \pi_*\pi_*\bA^1_{g,n+1}(a_1,\dots,a_n,b) 
	\\ \notag 
	& \qquad \qquad= \pi_*\pi'_*\bA^1_{g,n+1}(a_1,\dots,a_n,b) 
	\\ \notag 
	& \qquad \qquad= (\suma + (2g-1+n) b) \pi_* \bA^1_{g,n}(a_1,\dots,a_n) + O(b^2)
	\\ \notag 
	& \qquad \qquad = \suma (\suma + (2g-1+n) b) \bA_{g,n}(a_1,\dots,a_n) + O(b^2).
\end{align}
This is an equality of two polynomials in $a_1,\dots,a_n,b$, and the right hand side is divisible by $(\suma+b)$. We can instead consider it as an equality of two formal power series in $b$ with $a_1,\dots,a_n$ being fixed positive integers.  Then we can use that 
\begin{align}
	\frac{\suma (\suma + (2g-1+n) b)}{\suma+b} & = (\suma + (2g-1+n) b)\Big(1-\frac b\suma + O(b^2)\Big) \\ \notag 
	& = \suma + (2g-2+n) b + O(b^2),
\end{align}
which implies that 
\begin{align}
 \pi_*\bA_{g,n+1}(a_1,\dots,a_n,b) =  (\suma + (2g-2+n) b) \bA_{g,n}(a_1,\dots,a_n) + O(b^2)	
\end{align}
in this realm (as formal power series in $b$ with $a_1,\dots,a_n$ being fixed positive integers), and using the polynomiality property on both sides of this equation establishes the equality of the respective polynomials. 
\end{proof}

\begin{proof}[Proof of Lemma~\ref{lem:push-forward-A1}] As we mentioned in Remark~\ref{rem:push-forward-A-no-lambda}, we proceed with the argument that does not use the $\lambda_g$ factor, that is, we in fact prove that 
\begin{align}
	& \pi'_* s_* \left(\frac{1}{1-t^*\psi_0} \left[\oM_{g}^{\sim}\left(\mathbb{P}^1,a_1,\dots,a_n,b,-\suma-b \right)\right]^{\mathrm{vir}}\right)
	\\ \notag & 
	= (\suma + (2g-1+n)b) \, s_* \left(\frac{1}{1-t^*\psi_0} \left[\oM_{g}^{\sim}\left(\mathbb{P}^1,a_1,\dots,a_n,-\suma \right)\right]^{\mathrm{vir}}\right) + O(b^2).
\end{align}

We proceed as in~\cite[Sec.~6.2]{BLS-Quantum}. We use that
\begin{align} \label{eq: split 1 / 1-psi}
	\frac{1}{1-t^*\psi_0} = 1+ \frac{b\cdot s^* \psi_{n+1}}{1-t^*\psi_0}  + \frac{D_{n+1}}{1-t^*\psi_0},
\end{align}
where $D_{n+1}$ is divisor of maps to the degenerate target, where the $(n+1)$-st point lies on an unstable component of the source curve that is contracted by the map $s$, cf.~\cite{BSSZ}. Then we have:
\begin{align} \label{eq:lemma-push-A-1summand}
	& \pi'_* s_* \left( \left[\oM_{g}^{\sim}\left(\mathbb{P}^1,a_1,\dots,a_n,b,-\suma-b \right)\right]^{\mathrm{vir}}\right)
	= O(b^2)
\end{align}
by~\cite[Prop.~2.12]{BLS-Quantum}. 

Moreover, 
\begin{align} \label{eq:lemma-push-A-2summand}
	& \pi'_* s_* \left(\frac{b\cdot s^* \psi_{n+1}}{1-t^*\psi_0} \left[\oM_{g}^{\sim}\left(\mathbb{P}^1,a_1,\dots,a_n,b,-\suma-b \right)\right]^{\mathrm{vir}}\right)
	\\ \notag & 
	\overset{(1)}{=} b\cdot \pi'_* s_* \left(\frac{s^* \psi_{n+1}}{1-t^*\psi_0} \left[\oM_{g}^{\sim}\left(\mathbb{P}^1,a_1,\dots,a_n,b,-\suma-b \right)\right]^{\mathrm{vir}}\right)\bigg|_{b=0}  + O(b^2)
	\\ \notag & 
	\overset{(2)}{=} b\cdot \pi'_* \left(\psi_{n+1} s_* \left(\frac{1}{1-t^*\psi_0} \left[\oM_{g}^{\sim}\left(\mathbb{P}^1,a_1,\dots,a_n,0,-\suma \right)\right]^{\mathrm{vir}}\right)\right)  + O(b^2)
	\\ \notag & 
	\overset{(3)}{=} 
	%b\cdot \pi'_* \left(\psi_{n+1} (\pi')^* s_* \left(\frac{1}{1-t^*\psi_0} \left[\oM_{g}^{\sim}\left(\mathbb{P}^1,a_1,\dots,a_n,-\suma \right)\right]^{\mathrm{vir}}\right)\right)  + O(b^2)
	%	\\ \notag & 
	%\overset{(4)}{=} 
	(2g-1+n)b\cdot s_* \left(\frac{1}{1-t^*\psi_0} \left[\oM_{g}^{\sim}\left(\mathbb{P}^1,a_1,\dots,a_n,-\suma \right)\right]^{\mathrm{vir}}\right)  + O(b^2)\,.
\end{align}
Here equality (1) uses the polynomiality of the class~\cite[Prop.~2.3]{BLS-Quantum}, equality (2) combines polynomiality of the class and the projection formula, and equality (3) is proved in the following way. We notice that 
\begin{align} \label{eq:push-A-replacement-pullback}
& \pi'_* \left(\psi_{n+1} s_* \left(\frac{1}{1-t^*\psi_0} \left[\oM_{g}^{\sim}\left(\mathbb{P}^1,a_1,\dots,a_n,0,-\suma \right)\right]^{\mathrm{vir}}\right)\right)
\\ \notag &
 =  \pi'_* \left(\psi_{n+1} s_* \left( \left[\oM_{g}^{\sim}\left(\mathbb{P}^1,a_1,\dots,a_n,0,-\suma \right)\right]^{\mathrm{vir}}\right) \right)
\\ \notag & \qquad  
+  \pi'_* \left(\psi_{n+1}  s_* \left(\frac{t^*\psi_0}{1-t^*\psi_0} \left[\oM_{g}^{\sim}\left(\mathbb{P}^1,a_1,\dots,a_n,0,-\suma \right)\right]^{\mathrm{vir}}\right) \right)
\\ \notag &
=  \pi'_* \left(\psi_{n+1} s_* \left( \left[\oM_{g}^{\sim}\left(\mathbb{P}^1,a_1,\dots,a_n,0,-\suma \right)\right]^{\mathrm{vir}}\right) \right)
\\ \notag 
& \qquad +
\sum_{k=1}^\infty \frac 1{k!} \sum_{\substack{g_1+g_2+k-1 = g\\ 
		I_1\sqcup I_2 = \{1,\dots,n\} \\ (k,g_2,I_2)\not=(1,0,\emptyset)}}
	\sum_{\substack{a'_1,\dots a'_k \in \Z_{\geq 1} \\ \sum_{i=1}^k a'_i = \sum\nolimits_{j\in I_1}a_j}} \prod_{i=1}^k a'_i \cdot 
\\ \notag &\qquad \qquad \pi'_* \rho_* \bigg( s_* \left(\frac{{1}}{1-t^*\psi_0} \left[\oM_{g_1}^{\sim}\left(\mathbb{P}^1,\{a_i\}_{i\in I_1}, \{-a'_j\}_{j=1}^k \right)\right]^{\mathrm{vir}}\right) \otimes 
\\ \notag & \qquad \qquad \qquad \qquad 
\psi_{|I_2|+2} s_*\left( \left[\oM_{g_2}^{\sim}\left(\mathbb{P}^1,\{a'_j\}_{j=1}^k,\{a_i\}_{i\in I_2},0,-\suma \right)\right]^{\mathrm{vir}}\right) \bigg),
\end{align}
Here in the second sum we use evaluation of $t^*\psi_0$ in the numerator according to~\cite[Eq.~(1)]{BSSZ}. The sum runs over all irreducible strata of two-component curves connected by $k\geq 1$ nodes, and we assume with the points labelled by $I_1$ on the component of genus $g_1$ and the points labelled by $I_2\cup \{n+1,n+2\}$ are on the component of genus $g_2$, and $\rho$ denotes the corresponding boundary morphism. The case $(g_2,I_2)=(0,\emptyset)$ is excluded since it vanishes after multiplication by the $\psi_{n+1}$. Note that 
\begin{align}
s_* \left( \left[\oM_{g}^{\sim}\left(\mathbb{P}^1,a_1,\dots,a_n,0,-\suma \right)\right]^{\mathrm{vir}}\right) = (\pi')^*s_* \left( \left[\oM_{g}^{\sim}\left(\mathbb{P}^1,a_1,\dots,a_n,-\suma \right)\right]^{\mathrm{vir}}\right)
\end{align}
Applying this equality and subsequently the projection formula, we see that the right hand side of~\eqref{eq:push-A-replacement-pullback} can be rewritten as
\begin{align} \label{eq:push-A-replacement-pullback-2}
\eqref{eq:push-A-replacement-pullback} & =  \pi'_* \left(\psi_{n+1} (\pi')^* s_* \left( \left[\oM_{g}^{\sim}\left(\mathbb{P}^1,a_1,\dots,a_n,-\suma \right)\right]^{\mathrm{vir}}\right) \right)
\\ \notag 
&\quad  +
\sum_{k=1}^\infty \frac 1{k!} \sum_{\substack{g_1+g_2+k-1 = g\\ 
		I_1\sqcup I_2 = \{1,\dots,n\} \\ (k,g_2,I_2)\not=(1,0,\emptyset)}}
\sum_{\substack{a'_1,\dots a'_k \in \Z_{\geq 1} \\ \sum_{i=1}^k a'_i = \sum\nolimits_{j\in I_1}a_j}} \prod_{i=1}^k a'_i \cdot 
\\ \notag & \qquad \rho_* \bigg( s_* \left(\frac{{1}}{1-t^*\psi_0} \left[\oM_{g_1}^{\sim}\left(\mathbb{P}^1,\{a_i\}_{i\in I_1},\{-a'_j\}_{j=1}^k \right)\right]^{\mathrm{vir}}\right) \otimes 
\\ \notag & \qquad \qquad  
\pi'_* \left(\psi_{|I_2|+2} (\pi')^* s_*\left( \left[\oM_{g_2}^{\sim}\left(\mathbb{P}^1,\{a'_j\}_{j=1}^k,\{a_i\}_{i\in I_2},-\suma \right)\right]^{\mathrm{vir}}\right) \right) \bigg)
%\\ \notag & 
\end{align}
\begin{align}
& =
(2g-1+n) \cdot s_* \left( \left[\oM_{g}^{\sim}\left(\mathbb{P}^1,a_1,\dots,a_n,-\suma \right)\right]^{\mathrm{vir}}\right)
\\ \notag 
&\quad  +
\sum_{k=1}^\infty \frac 1{k!} \sum_{\substack{g_1+g_2+k-1 = g\\ 
		I_1\sqcup I_2 = \{1,\dots,n\} \\ (k,g_2,I_2)\not=(1,0,\emptyset)}}
\sum_{\substack{a'_1,\dots a'_k \in \Z_{\geq 1} \\ \sum_{i=1}^k a'_i = \sum\nolimits_{j\in I_1}a_j}} \prod_{i=1}^k a'_i \cdot  (2g_2+|I_2|+k-1)\cdot 
\\ \notag & \qquad \qquad \rho_* \bigg( s_* \left(\frac{{1}}{1-t^*\psi_0} \left[\oM_{g_1}^{\sim}\left(\mathbb{P}^1,\{a_i\}_{i\in I_1},\{-a'_j\}_{j=1}^k \right)\right]^{\mathrm{vir}}\right) \otimes 
\\ \notag & \qquad \qquad  \qquad \quad
s_*\left( \left[\oM_{g_2}^{\sim}\left(\mathbb{P}^1,\{a'_j\}_{j=1}^k,\{a_i\}_{i\in I_2},-\suma \right)\right]^{\mathrm{vir}}\right)  \bigg)
\end{align}
Using~\cite[Eq.~(2)]{BSSZ}, the latter expression assembles into
\begin{align}
	\eqref{eq:push-A-replacement-pullback-2} & = (2g-1+n) \cdot s_* \left( \left[\oM_{g}^{\sim}\left(\mathbb{P}^1,a_1,\dots,a_n,-\suma \right)\right]^{\mathrm{vir}}\right)
	\\ \notag & \quad 
	+ (2g-1+n) \cdot  s_* \left(\frac{t^*\psi_0}{1-t^*\psi_0} \left[\oM_{g}^{\sim}\left(\mathbb{P}^1,a_1,\dots,a_n,-\suma \right)\right]^{\mathrm{vir}}\right)
	\\ \notag & = (2g-1+n) \cdot  s_* \left(\frac{1}{1-t^*\psi_0} \left[\oM_{g}^{\sim}\left(\mathbb{P}^1,a_1,\dots,a_n,-\suma \right)\right]^{\mathrm{vir}}\right),
\end{align}
which exactly the expression that we use in equality (3) of Eq.~\eqref{eq:lemma-push-A-1summand}.

Finally, 
\begin{align} \label{eq:push-forward-A-case-Div}
	& \pi'_* s_* \left(\frac{D_{n+1}}{1-t^*\psi_0} \left[\oM_{g}^{\sim}\left(\mathbb{P}^1,a_1,\dots,a_n,b,-\suma-b \right)\right]^{\mathrm{vir}}\right)
	\\ \notag & 
	{=}\sum_{k=1}^\infty \frac 1{k!} \sum_{\substack{g_1+g_2+k-1 = g\\ 
			I_1\sqcup I_2 = \{1,\dots,n\} }}
	\sum_{\substack{a'_1,\dots a'_k \in \Z_{\geq 1} \\ \sum_{i=1}^k a'_i = \sum\nolimits_{j\in I_1}a_j}} \prod_{i=1}^k a'_i \cdot  
	\\ \notag &\qquad \qquad \pi'_* \rho_* \bigg( s_* \left(\frac{{1}}{1-t^*\psi_0} \left[\oM_{g_1}^{\sim}\left(\mathbb{P}^1,\{a_i\}_{i\in I_1},\{-a'_j\}_{j=1}^k \right)\right]^{\mathrm{vir}}\right) \otimes 
	\\ \notag & \qquad \qquad \qquad \qquad 
	s_*\left( \left[\oM_{g_2}^{\sim}\left(\mathbb{P}^1,\{a'_j\}_{j=1}^k,\{a_i\}_{i\in I_2},b,-\suma-b \right)\right]^{\mathrm{vir}}\right) \bigg),
\end{align}
where the sum is over all irreducible strata of two-component curves connected by $k\geq 1$ nodes, and we assume with the points labelled by $I_1$ on the component of genus $g_1$ and the points labelled by $I_2\cup \{n+1,n+2\}$ are on the component of genus $g_2$ (cf.~\cite[Sec.~2]{BSSZ}). Here $\rho$ denotes the corresponding boundary morphism. The latter expression can be further rewritten as 
\begin{align} \label{eq:lemma-push-A-3summand}
	\eqref{eq:push-forward-A-case-Div} & = \suma \cdot 
	s_* \left(\frac{{1}}{1-t^*\psi_0} \left[\oM_{g}^{\sim}\left(\mathbb{P}^1,a_1,\dots,a_n,-\suma \right)\right]^{\mathrm{vir}}\right)
	\\ \notag & \qquad
	+ \sum_{k=1}^\infty \frac 1{k!} \sum_{\substack{g_1+g_2+k-1 = g\\ 
			I_1\sqcup I_2 = \{1,\dots,n\} \\ (k,g_2,I_2)\not=(1,0,\emptyset)}}
	\sum_{\substack{a'_1,\dots a'_k \in \Z_{\geq 1} \\ \sum_{i=1}^k a'_i = \sum\nolimits_{j\in I_1}a_j}} \prod_{i=1}^k a'_i \cdot 
	\\ \notag &\quad \qquad \qquad  \rho_* \bigg( s_* \left(\frac{{1}}{1-t^*\psi_0} \left[\oM_{g_1}^{\sim}\left(\mathbb{P}^1,\{a_i\}_{i\in I_1},\{-a'_j\}_{j=1}^k \right)\right]^{\mathrm{vir}}\right) \otimes 
	\\ \notag & \quad \qquad \qquad \qquad \quad 
	\pi'_* s_*\left( \left[\oM_{g_2}^{\sim}\left(\mathbb{P}^1,\{a'_j\}_{j=1}^k,\{a_i\}_{i\in I_2},b,-\suma-b \right)\right]^{\mathrm{vir}}\right) \bigg)
	\\ \notag & = \suma \cdot 
	s_* \left(\frac{{1}}{1-t^*\psi_0} \left[\oM_{g}^{\sim}\left(\mathbb{P}^1,a_1,\dots,a_n,-\suma \right)\right]^{\mathrm{vir}}\right) + O(b^2),
\end{align}
where for the second equality we use~\cite[Prop.~2.12]{BLS-Quantum}. Now the sum of~\eqref{eq:lemma-push-A-1summand} and the final expressions in~\eqref{eq:lemma-push-A-2summand} and~\eqref{eq:lemma-push-A-3summand} gives the right hand side of~\eqref{eq:statment-lemma-push-A}, which proves the statement of the lemma. 
\end{proof}

\subsubsection{The master relation for \texorpdfstring{$\bA_{g,n}$}{Agn}} 
\label{subsubsec:master-A}
In this section we prove the $1$-st master relation, or M-1, for $\{\bA_{g,n}\}$ in the sense of Def.~\ref{def: master relation}. To this end, we use the techniques from the previous subsection. In particular, it is more convenient to work with the following classes:

Let $a_1,\dots,a_n$ be positive integers. Define the following class in $R^\bullet(\oM_{g,n+2})$:
\begin{equation} \label{eq: A-1-star}
\bA_{g,n}^{1,\star}(a_{1},\dots,a_{n})\coloneqq\lambda_{g}s_{*}\left(\frac{1}{1-t^{*}\psi_{0}}\left[\overline{\mathcal{M}}_{g,1}^{\sim}\left(\mathbb{P}^{1},a_{1},\dots,a_{n},-\suma\right)\right]^{\mathrm{vir}}\right),
\end{equation}
where $\overline{\mathcal{M}}_{g,1}^{\sim}\left(\mathbb{P}^{1},a_{1},\dots,a_{n},-\suma\right)$ denotes the moduli space of rubber maps of Sec.~\ref{Sec: def A and A1}, with one additional free marked point over $\mathbb{P}^{1}\backslash\left\{ 0,\infty\right\} $. The first $n$ marked points correspond to the multiplicities $a_1,\dots,a_n$, the $(n+1)$-st marked point is the free one, and the $(n+2)$-and marker point corresponds to the multiplicity $-\suma$. 
Define also 
\begin{equation}
\bA^\star_{g,n}\coloneqq \frac{1}{\suma}\pi_{*}\bA_{g,n+1}^{1,\star} \in R^\bullet(\oM_{g,n+1}), 
\end{equation}
where $\pi\colon \oM_{g,n+2} \to \oM_{g,n+1}$ is the morphism forgetting the last marked point. 

\begin{lemma} \label{lem:A1star-A} We have
\begin{align} \label{eq:A-star-A}
    \bA_{g,n}^{1,\star}(a_{1},\dots,a_{n}) & = \Coeff{b_1^0} \bA_{g,n+1}^{1}(a_1,\dots,a_n,b_1); \\ \notag 
    \bA_{g,n}^{\star}(a_{1},\dots,a_{n}) & = \Coeff{b_1^0} \bA_{g,n+1}(a_1,\dots,a_n,b_1).
\end{align}
In particular, both classes $\bA_{g,n}^{1,\star}(a_{1},\dots,a_{n})$ and $\bA_{g,n}^{\star}(a_{1},\dots,a_{n})$ depend polynomially on $a_1,\dots,a_n$ and Eq.~\eqref{eq:A-star-A} gives equalities of classes in $R^\bullet(\oM_{g,n+2})\otimes_\Q \Q[a_1,\dots,a_n]$ and 
$R^\bullet(\oM_{g,n+1})\otimes_\Q \Q[a_1,\dots,a_n]$, respectively. 
\end{lemma}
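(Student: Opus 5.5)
The first identity is the main content; the second follows by pushing forward. For the first, the plan is to compare the rubber moduli spaces directly. Consider $\oM_{g}^{\sim}(\mathbb{P}^1,a_1,\dots,a_n,b_1,-\suma-b_1)$ and specialize to $b_1=0$. A marked point of multiplicity $0$ over $0$ is simply a marked point in the source curve whose image is arbitrary. Modulo the issue of where it maps, this is the same as a free marked point.

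More precisely, by the polynomiality of $s_*\big(t^*\psi_0^d[\cdots]^{\mathrm{vir}}\big)$ in all multiplicities (\cite[Prop.~2.3]{BLS-Quantum}), $\Coeff{b_1^0}\bA^1_{g,n+1}(a_1,\dots,a_n,b_1)$ equals the evaluation at $b_1=0$. I would then interpret the zero-multiplicity space $\oM_{g}^{\sim}(\mathbb{P}^1,a_1,\dots,a_n,0,-\suma)$ via the standard comparison with $\oM_{g,1}^{\sim}(\mathbb{P}^1,a_1,\dots,a_n,-\suma)$. A point with relative contact order $0$ on the rubber target is a free point, and the two moduli spaces carry the same virtual class pushed forward under $s$. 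The labelling of marked points also matches: the zero-multiplicity point is the $(n+1)$-st and $-\suma$ is the $(n+2)$-nd.

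The step I expect to be the main obstacle is the compatibility of the insertion $t^*\psi_0$. The target maps go to Losev--Manin spaces with different numbers of points, since the free point adds a marked point on the target. I would handle this by the splitting formula for $t^*\psi_0$ from \cite[Eq.~(1)]{BSSZ}, in the same way as in Eq.~\eqref{eq:push-A-replacement-pullback}. Expanding $\frac{1}{1-t^*\psi_0}$ via repeated boundary degenerations expresses both sides recursively as sums over two-level rubber strata. Each stratum contains factors of the same type, with the $(n+1)$-st point on one level.

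Matching these recursions term by term, by induction on $g$ and $n$ and on the power of $t^*\psi_0$, gives the first identity. The base case, with no $\psi$-insertion, is the identification of the zero-multiplicity DR cycle with the pullback, i.e. $\DR_g(a_1,\dots,a_n,0,-\suma)=(\pi')^*\DR_g(a_1,\dots,a_n,-\suma)$, the same identity used above. Multiplying by $\lambda_g$ preserves everything.

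For the second identity, apply $\pi_*$, forgetting the last point, and divide by $\suma$. On the right, $\Coeff{b_1^0}\bA_{g,n+1}=\Coeff{b_1^0}\frac{1}{\suma+b_1}\pi_*\bA^1_{g,n+1}$. Because $\pi_*\bA^1$ is divisible by $\suma+b_1$ as a polynomial, the constant term in $b_1$ is $\frac1\suma\pi_*\Coeff{b_1^0}\bA^1_{g,n+1}$. This equals $\bA^\star_{g,n}$ by the first identity. Polynomiality of both starred classes is then inherited from the right-hand sides.
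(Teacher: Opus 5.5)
Your reduction of the second identity to the first (push forward, divide by $\suma$, use that $\pi_*\bA^1_{g,n+1}$ is divisible by $\suma+b_1$) is fine. So is your base case, that the $\Coeff{b_1^0}$-part of the plain rubber class is the free-point class. The gap is the central step, the comparison of the $t^*\psi_0$-insertions.

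The splitting you invoke, in the form used in Eq.~\eqref{eq:push-A-replacement-pullback}, lives on a space with a point of contact order $0$. It is the pullback along $t$ of $\psi_0=\Delta_1$ on the Losev--Manin target, with $x_1$ the image of that point, and it places that point on the lower level. On $\oM_{g}^{\sim}(\mathbb{P}^1,a_1,\dots,a_n,b_1,-\suma-b_1)$ with $b_1>0$ there is no such point: $x_{n+1}$ maps to $0$, and all $2g+n$ marked points of the target are unlabelled branch points. Both target maps land in $LM_{2g+n}$; what differs is not the number of points but that on the free side one of them is labelled.

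So ``expanding both sides recursively and matching term by term'' has nothing to act on for the $b_1$-side as written. Suppose you substitute a splitting that does apply there, such as averaging over target points or splitting with respect to some $x_i$, $i\le n$. You then get strata where $x_{n+1}$ lies on a non-trivial upper-level component, with edge degrees summing to $\sum_{i\in I_1}a_i+b_1$. You also get strata where $x_{n+1}$ is the only point over $0$ on the upper level, which have no free-side counterpart. Extracting $\Coeff{b_1^0}$ is then not termwise, and needs interpolation and vanishing arguments that your plan does not contain.

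The missing ingredient is the relation $t^*\psi_0=b_1\,s^*\psi_{n+1}+D_{n+1}$ on the $b_1$-side, i.e.\ Eq.~\eqref{eq: split 1 / 1-psi}. Here $D_{n+1}$ is the locus where $x_{n+1}$ sits on a trivial bubble, so after applying $s$ it appears on the lower level next to $x_{n+2}$. With this relation no induction is needed:
\begin{itemize}
\item The middle term of \eqref{eq: split 1 / 1-psi} has no $b_1^0$-part, by polynomiality.
\item In every stratum of $D_{n+1}$, the factor $\frac{1}{1-t^*\psi_0}$ restricts to the upper level, and the edge degrees sum to $\sum_{i\in I_1}a_i$, independently of $b_1$.
\item The variable $b_1$ occurs only in the lower-level factor, which is a plain rubber class.
\item Your base case applied to that factor turns the $b_1^0$-part of the $D_{n+1}$-term into $s_*\bigl(t^*\bigl(\frac{\Delta_1}{1-\psi_0}\bigr)[\oM_{g,1}^{\sim}(\mathbb{P}^1,a_1,\dots,a_n,-\suma)]^{\mathrm{vir}}\bigr)$.
\end{itemize}
Only now does $\psi_0=\Delta_1$ on $LM_{2g+n}$ enter, on the free side. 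The identity $1+\frac{t^*\psi_0}{1-t^*\psi_0}=\frac{1}{1-t^*\psi_0}$ then gives the first identity.
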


\begin{proof} We prove it for $\bA_{g,n+1}^{1}$, and then apply the push-forward to obtain the statement for $\bA_{g,n+1}$. We use the same ideas as in the proof of Lemma~\ref{lem:push-forward-A1}, and in particular, the factor $\lambda_g$ is not necessary for this argument. Recall Eq.~\eqref{eq: split 1 / 1-psi}. Applying it in our situation, we have
\begin{align} \label{eq: coeff b_1 0}
	& \Coeff{b_1^0} s_* \left(\frac{1}{1-t^*\psi_0} \left[\oM_{g}^{\sim}\left(\mathbb{P}^1,a_1,\dots,a_n,b_1,-\suma-b_1 \right)\right]^{\mathrm{vir}}\right) = 
	\\ \notag &  
	\Coeff{b_1^0} s_* \left(\left[\oM_{g}^{\sim}\left(\mathbb{P}^1,a_1,\dots,a_n,b_1,-\suma-b_1 \right)\right]^{\mathrm{vir}}\right) 
	\\ \notag &      
    + \Coeff{b_1^0} s_* \left(\frac{1}{1-t^*\psi_0} D_{n+1}\left[\oM_{g}^{\sim}\left(\mathbb{P}^1,a_1,\dots,a_n,b_1,-\suma-b_1 \right)\right]^{\mathrm{vir}}\right).
\end{align}
Note that 
\begin{align} \label{eq:s-push-forward-b1}
& \Coeff{b_1^0} s_* \left(\left[\oM_{g}^{\sim}\left(\mathbb{P}^1,a_1,\dots,a_n,b_1,-\suma-b_1 \right)\right]^{\mathrm{vir}}\right) 
\\ \notag & = s_*\left[\oM_{g,1}^{\sim}\left(\mathbb{P}^1,a_1,\dots,a_n,-\suma \right)\right]^{\mathrm{vir}},
\end{align}
where we use the polynomiality of the $\DR$ cycle~\cite{Pixton,spelier2024polynomialitydoubleramificationcycle}.

The second summand in Eq.~\eqref{eq: coeff b_1 0} can be rewritten  as 
\begin{align}
	\label{eq: no idea of good name}
    %& \Coeff{b_1^0} D_{n+1}\left[\oM_{g}^{\sim}\left(\mathbb{P}^1,a_1,\dots,a_n,b_1,-\suma-b_1 \right)\right]^{\mathrm{vir}} 
    %\\ \notag & = t^* \Delta_{1}  \left[\oM_{g,1}^{\sim}\left(\mathbb{P}^1,a_1,\dots,a_n,-\suma\right)\right]^{\mathrm{vir}},
    %\\
    & \Coeff{b_1^0} s_* \left(t^* \Big(\frac{1}{1-\psi_0} \Delta_{1}\Big)\left[\oM_{g,1}^{\sim}\left(\mathbb{P}^1,a_1,\dots,a_n,-\suma \right)\right]^{\mathrm{vir}}\right)
\end{align}
where $\Delta_1$ is the divisor in the Losev-Manin space $LM_{2g+n}$ that is composed of two-component curves such that the point $x_1$ lies on the same component as $\infty$. Here the point $x_1$ is distinguished as the image of the free marked point
%in $\oM_{g,1}^{\sim}\left(\mathbb{P}^1,a_1,\dots,a_n,-\suma\right)$
under the map $t$.
Indeed, writing $D_{n+1}$ explicitly, we get
\begin{align}
&\frac{1}{1-t^{*}\psi_{0}} D_{n+1}\left[\oM^{\sim}_{g}\left(\mathbb{P}^{1},a_{1},\dots,a_{n},b_{1},-\suma-b_{1}\right)\right]^{\mathrm{vir}}=\\
 & \notag \sum_{\substack{g_{1}+g_{2}=g-k+1\\
I_{1}\sqcup I_{2}=\left\{ 1,\dots,n\right\} 
}
}\sum_{p\geq0}\sum_{k_{1}+\cdots+k_{p}=|a_{I_{1}}|}
 \frac{k_1\cdots k_p}{p!}\left(\frac{1}{1-t^{*}\psi_{0}}\left[\oM^{\sim,\bullet}_{g_{1}}\left(\mathbb{P}^{1},a_{I_{1}},-k_{1},\dots,-k_{p}\right)\right]^{\mathrm{vir}}\right)\\
& \notag \quad\quad\quad \, \quad\quad\quad\quad\quad\, \quad\quad\quad \, \quad\quad \quad\quad \boxtimes\left[\oM^{\sim}_{g_{2}}\left(\mathbb{P}^{1},a_{I_{2}},k_{1},\dots,k_{p},b_{1},-\suma-b_{1}\right)\right]^{\mathrm{vir}}
\end{align}
where $\boxtimes$ indicates the glueing of the the two moduli of maps at the $p$ marked points, and $\bullet$ indicates that the source curve can be disconnected. After applying $s_*$, we can extract the coefficient of $b_1^0$ using Eq.~\eqref{eq:s-push-forward-b1}, since $b_1$ only appears in the component of genus $g_2$. This gives Eq.~(\ref{eq: no idea of good name}).

Now note that by the topological recursion relation on $LM_{2g+n}$, $\psi_0 = \Delta_1$, the right hand side of~\eqref{eq: coeff b_1 0} is equal to
\begin{align} \label{eq:TRR-LosevManin-psi-0}
    & s_* \left( \left( 1 + \frac{t^*\psi_0}{1-t^*\psi_0}\right)\left[\oM_{g,1}^{\sim}\left(\mathbb{P}^1,a_1,\dots,a_n,-\suma \right)\right]^{\mathrm{vir}} \right)
    \\ \notag & 
    =s_{*}\left(\frac{1}{1-t^{*}\psi_{0}}\left[\overline{\mathcal{M}}_{g,1}^{\sim}\left(\mathbb{P}^{1},a_{1},\dots,a_{n},-\suma\right)\right]^{\mathrm{vir}}\right).
\end{align}
Multiplying this computation by $\lambda_g$, we obtain the desired identity. 
\end{proof}

Using this Lemma, we can reformulate the $1$-st master relation for $\{A_{g,n}\}$ as the following vanishing:
\begin{align}
&\deg_{R^\bullet} \bigg( \mathbb{D}_{g,n+1}\left(a_{1},\dots,a_{n}\right)+\bA^\star_{g,n}(a_1,\dots,a_n)
\\ \notag & \qquad 
  +\sum_{\substack{T\in\SRT(g,n,1)\\ 
\ell\in\mathcal{L}(t),\,\ell(T)=1
}
}\biggl(\prod_{e\in E(T)}a(e)\biggr)(b_{T})_{*}\bigg(\bA^{\star}(v_{r})\otimes\bigotimes_{v\in V_{nr}(T)}\bD(v)\bigg) \bigg) \leq 2g-1,
\end{align}
where $\bA^{\star}(v_{r}) \coloneqq \bA^{\star}_{g(v_{r}),|H_+(v_r)|} (a(h_1),\dots,a(h_{|H_+(v_r)|})$. Note that the degree of this expression is by construction bounded from below by $2g-1$, so in fact we want to prove that its degree is exactly $2g-1$. Note also that this expression is equal to
\begin{align}
& \frac{1}{\suma} \pi_* \bigg( 
    \bA^{1,\star}_{g,n}(a_1,\dots,a_n)
\\ \notag & \qquad 
  +\sum_{\substack{T\in\SRT(g,n,2)\\ 
\ell\in\mathcal{L}(t),\,\ell(T)=1
}
}\biggl(\prod_{e\in E(T)}a(e)\biggr)(b_{T})_{*}\bigg(\bA^{1,\star}(v_{r})\otimes\bigotimes_{v\in V_{nr}(T)}\bD(v)\bigg)\bigg),
\end{align}
where $\bA^{1,\star}(v_{r}) \coloneqq \bA^{1,\star}_{g(v_{r}),|H_+(v_r)|} (a(h_1),\dots,a(h_{|H_+(v_r)|})$, and where $\pi\colon \oM_{g,n+2}\to \oM_{g,n+1}$ forgets the last marked point. The term $\mathbb{D}_{g,n+1}\left(a_{1},\dots,a_{n}\right)$ appears under this push-forward when $g(v_r)=0$ and $|H_+(v_r)|=1$, that is, the component corresponding to the root vertex is contracted by $\pi$. Thus we reduce the $1$-st master relation to the following lemma:

\begin{lemma} For any $g\geq 0$, $n\geq 1$, $a_1,\dots,a_n>0$, the Chow degree of
    \begin{align} \label{eq:A1star-relation}
    & \bA^{1,\star}_{g,n}(a_1,\dots,a_n)
\\ \notag & 
  +\sum_{\substack{T\in\SRT(g,n,2)\\ 
\ell\in\mathcal{L}(t),\,\ell(T)=1
}
}\biggl(\prod_{e\in E(T)}a(e)\biggr)(b_{T})_{*}\bigg(\bA^{1,\star}(v_{r})\otimes\bigotimes_{v\in V_{nr}(T)}\bD(v)\bigg)\bigg)
\end{align}
is bounded from above by $2g$.
\end{lemma}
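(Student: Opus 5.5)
We plan to prove the bound by recognising the expression in \eqref{eq:A1star-relation} as (essentially) $\lambda_g s_*$ of a class on the rubber moduli space $\oM_{g,1}^{\sim}(\mathbb{P}^1,a_1,\dots,a_n,-\suma)$ whose degree is manifestly controlled. Since $\lambda_g$ kills the non-compact type locus, and the virtual class pushed forward has Chow degree $2g$, only the powers $(t^*\psi_0)^d$, $d\geq 1$, can exceed degree $2g$. So it suffices to show that the tree sum exactly cancels all terms $s_*\big((t^*\psi_0)^d[\cdots]^{\mathrm{vir}}\big)$ with $d\geq1$ in $\bA^{1,\star}_{g,n}$, up to terms of degree $\leq 2g$.

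\textbf{Step 1.} Expand $\frac{1}{1-t^*\psi_0}=1+\frac{t^*\psi_0}{1-t^*\psi_0}$ on the Losev--Manin space $LM_{2g+n}$ with the distinguished point $x_1$ (the image of the free marked point). Then use the topological recursion $\psi_0=\Delta_1$, as in the proof of Lemma~\ref{lem:A1star-A}, together with the splitting formula for rubber virtual classes along $t^*\Delta_1$ (\cite[Eq.~(1)]{BSSZ}). This writes $\frac{t^*\psi_0}{1-t^*\psi_0}[\cdots]^{\mathrm{vir}}$ as a sum over two-level degenerations. The bottom level, containing $0$, carries $\frac{1}{1-t^*\psi_0}$ and the points $a_i$, $i\in I_1$. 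The top level contains $\infty$ and the free point and carries the plain virtual class, glued along $k$ nodes with multiplicities $a'_j$, weighted by $\prod a'_j/k!$.

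\textbf{Step 2.} Match this with the tree sum. Trees in $\SRT(g,n,2)$ of height one have the root carrying the two frozen legs (the free point and $-\suma$) and non-root vertices attached by single edges with weight $a(e)$. After multiplying by $\lambda_g$, only compact-type degenerations survive; these are exactly the ones where the top rubber component is connected to several bottom components by single nodes. The bottom pieces with a $\frac{1}{1-t^*\psi_0}$ factor and only a negative end $-a(e)$ are, after $s_*$, exactly the classes $-\bD(v)$, by comparing with $\bD_{g,n+1}=-\lambda_g\DR_g/(1+\suma\psi_{n+1})$. Here one uses that iterating $t^*\psi_0=\Delta$ on the bottom level produces the $1/(1+a\psi)$ at the node via the standard identification $t^*\psi_\infty$ versus $\psi$ at the node divided by the multiplicity. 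The top piece glued with a further $\frac1{1-t^*\psi_0}$ reassembles into $\bA^{1,\star}(v_r)$.

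\textbf{Step 3.} Conclude that, modulo terms where all $\psi_0$-powers have been exhausted, the sum cancels. What remains is $\lambda_g s_*[\cdots]^{\mathrm{vir}}$-type classes (degree exactly $2g$) plus corrections where $t^*\psi_0$ falls on the top level. The latter should again be of degree $\le 2g$ because the top level carries only $[\cdot]^{\mathrm{vir}}$ with $\lambda_g$.

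\textbf{Main obstacle.} The hardest step is the precise bookkeeping in Step 2. This means getting the signs and the factor $(1+a(e)\psi)^{-1}$ from the comparison of $\psi_0$ on the rubber target with cotangent lines at the node, and checking that all multi-level degenerations are accounted for once, consistently with the level-one restriction $\ell(T)=1$. I would first verify the identity in genus $0$ and for $g=1$, $n=1$ to fix normalisations, then run an induction on the number of levels.
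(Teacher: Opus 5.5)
Your overall strategy is the right one: write the expression as $\lambda_g\DR_g(a_1,\dots,a_n,0,-\suma)$ plus $\lambda_g s_*$ of $t^*$ of a positive-degree Losev--Manin class, and show that class is zero. But the matching in Step~2, which carries all the content, is incorrect.

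After one use of $\psi_0=\Delta_1$, $\psi_0$ restricts to a divisor $D$ as the $\psi$-class at $0$ on the lower factor. So you get $\frac{\psi_0}{1-\psi_0}=\sum_{D\in\Delta_1}(b_D)_*\frac{1}{1-\psi_0^{\mathrm{bot}}}$: the geometric series sits at $0$ on the lower level, and the upper level is bare. The tree terms have the opposite shape. The root (upper level, containing $\infty$ and the free point) carries $\bA^{1,\star}(v_r)$, i.e.\ $\frac{1}{1-\psi_{0'}}$ at the node. Each non-root vertex carries $\bD(v)$, i.e.\ $-\frac{1}{1+\psi_{\infty'}}$ at the node.

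You assert that a lower piece $\lambda s_*\bigl(\frac{1}{1-t^*\psi_0}[\cdots]^{\mathrm{vir}}\bigr)$ with one negative end is exactly $-\bD(v)$. That amounts to $\bA^1_{g,n}=-\bD_{g,n+1}$, which already fails in genus $0$. Pushing forward along the map forgetting the last point, $\pi_*\bA^1_{0,n}=\suma\,\bPsi_{0,n}$ has degree-zero part $\suma$, whereas $\pi_*\frac{1}{1+\suma\psi_{n+1}}$ has degree-zero part $-(n-2)\suma$. Moreover, the extra $\frac{1}{1-t^*\psi_0}$ on the upper level needed to form $\bA^{1,\star}(v_r)$ is never produced by your Step~1.

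The missing ingredient is a Losev--Manin identity that moves the series from $0$ to the node and flips it on the lower side:
\[
\frac{\psi_0}{1-\psi_0}=\sum_{D\in\Delta_1}(b_D)_*\frac{1}{(1+\psi_{\infty'})(1-\psi_{0'})}\quad\text{in }A^*(LM_{2g+n}).
\]
This is precisely the statement $Y=0$ in the paper's proof. It requires repeated TRR with resummation of multi-level strata. In degree two, $\psi_0^2=\sum_D(b_D)_*\psi_0^{\mathrm{bot}}$, and you write $\psi_0^{\mathrm{bot}}=-\psi_{\infty'}+(\text{boundary of the lower factor})$. The three-level strata from the boundary term are exactly those produced by $\psi_{0'}=\Delta_1$ on the upper factor, which gives $\sum_D(b_D)_*(\psi_{0'}-\psi_{\infty'})$.

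Given this identity, the argument closes as follows. Pull back by $t$ and split the virtual class. By $\lambda_g$, keep only lower levels whose connected components are each attached by a single node. The factor $-\frac{1}{1+t^*\psi_{\infty'}}$ on that lower level then yields $\prod_v\bD(v)$ with weights $\prod_e a(e)$, and the whole expression equals $\lambda_g\DR_g(a_1,\dots,a_n,0,-\suma)$ exactly.

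Your Step~3 also cannot work as written. Any surviving term carrying a positive-degree Losev--Manin class has Chow degree strictly greater than $2g$, so no term ``left on the top level'' is harmless; all of them must cancel exactly. An induction on the number of levels on the rubber side does not help either: only height-one trees appear, so the multi-level strata must be resummed on the Losev--Manin side, which is what the identity above accomplishes.
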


\begin{remark} Note that the degree of~\eqref{eq:A1star-relation} is also bounded by $2g$ from below, by construction. So, in fact, its degree is just equal to $2g$, and~\eqref{eq:A1star-relation} is equal to 
\begin{align}
    \lambda_g\DR_g(a_1,\dots,a_n,0,-\suma).
\end{align}
\end{remark}

\begin{proof} It is a slight refinement of the argument used in the proof of~\cite[Eq.~(5.12)]{BLS-Omega}. Indeed, following exaclty the same reasoning as in \emph{op.~cit.}, one can show that~\eqref{eq:A1star-relation} is equal to the sum of $\lambda_g\DR_g(a_1,\dots,a_n,0,-\suma)$ and 
\begin{align}
    \lambda_g s_*\Big( t^*Y \left[\overline{\mathcal{M}}_{g,1}^{\sim}\left(\mathbb{P}^{1},a_{1},\dots,a_{n},-\suma\right)\right]^{\mathrm{vir}}\Big),
\end{align}
where $Y$ is the class on the Losev-Manin space defined as
\begin{align}
    Y\coloneqq \frac {\psi_0}{1-\psi_0} + \sum_{i,j\geq 0} (-1)^{i+1}\sum_{D\in \Delta_1} (b_D)_* \psi^i_{\infty'}\psi^j_{0'},
\end{align}
where the sum runs over all irreducible divisors in $\Delta_1$, and the classes $\psi_{\infty'},\psi_{0'}$ are the $\psi$-classes at the node on the components that contain $0$ and $\infty$, respectively. Using the topological recursion relation on the Losev-Manin space, it is immediate to see that $Y=0$ in $A^*(LM_{2g+n})$. Thus the degree of~\eqref{eq:A1star-relation} is equal to $2g$ (and in particular is bounded from above by $2g$). 
\end{proof}

As we have seen above, this lemma implies the $1$-st master relation for $\{\bA_{g,n}\}$, and this completes the proof of Thm.~\ref{thm: A} by Lemma~\ref{lem: LRS - master equivalence}. 

\subsubsection{Proof of Prop.~\ref{prop: A-DR normal F-CohFT}}
\label{sub:sec:prop A-DR proof}
Consider the integrable system associated to $\{\mathfrak{fc}_{g,n}\}$ for the system of integrable observables $\{\bA_{g,n}\}$. Notice that in this case $w^\alpha_{\mathrm{top}} = \d_{t^{1,0}} {}^{\bA}\!X^\alpha$ can be rewritten using 
%Lemma~\ref{lem:A1star-A} twice, with an evaluation of $\psi_0 = \Delta_1$ in between
the string equation in Cor.~\ref{cor:push-forward-A} and subsequently the pull-back to get the class $\bA^1$ and then Lemma~\ref{lem:A1star-A}, as
\begin{align}
      w^\alpha_{\mathrm{top}} & = \sum_{{g\ge 0,n\ge 1}} \frac{\epsilon^{2g}}{n!} \sum_{{d_1,\dots,d_n\geq 0}} \prod_{i=1}^n t^{\gamma_i,d_i} \times 
    \\ \notag & \qquad 
    \int_{\oM_{g,n+2}} \Coeff{\prod_{i=1}^n a_i^{d_i}} \big[(\mathrm{swap}_{n+1,n+2})_*\bA_{g,n}^{1,\star}(a_{1},\dots,a_{n})\big]^\circlearrowright \times
    \\ \notag & \qquad \qquad e^{\alpha}\mathfrak{fc}_{g,n+1}\Big(\Vec{\bigotimes}_{i=1}^n e_{\gamma_i} \otimes e_1\Big),
        & \alpha=1,\dots,N,
\end{align}
where the class $\bA_{g,n}^{1,\star}(a_{1},\dots,a_{n})$ is a variation of $\bA_{g,n}^{1}$ with one extra point defined in Eq.~\eqref{eq: A-1-star} and ${\mathrm{swap}}_{n+1,n+2}$ interchanges the last two marked points.
Note the difference with $u^{\alpha}_{\text{F-str}}$ given by Eq.~\eqref{eq: def: F-str u}, which can also be described using Lemma~\ref{lem:A1star-A} as 
\begin{align} \label{eq: def: F-str u Ver 2}
      u^{\alpha}_{\text{F-str}} & \coloneqq \sum_{{g\ge 0,n\ge 1}} \frac{\epsilon^{2g}}{n!} \sum_{{d_1,\dots,d_n\geq 0}} \prod_{i=1}^n t^{\gamma_i,d_i} \times 
    \\ \notag & \qquad 
    \int_{\oM_{g,n+2}} \bigg[\Coeff{\prod_{i=1}^n a_i^{d_i}} \bA^{1,\star}_{g,n}(a_1,\dots,a_n)\bigg]^\circlearrowright \times
    \\ \notag & \qquad \qquad e^{\alpha} \bigg(\mathfrak{fc}_{g,n+1}\Big(\Vec{\bigotimes}_{i=1}^n e_{\gamma_i} \otimes e_1\Big)\bigg).
        & \alpha=1,\dots,N.
\end{align}

Applying Eq.~\eqref{eq:TRR-LosevManin-psi-0} and using $\psi_0 = \Delta_1$ as in the proof of Lemma~\ref{lem:A1star-A}, one can notice that these two series in $\{t^{\gamma,d}\}$ are connected by the Miura tranformation~\eqref{eq: normal coordinates for F CohFT}:
\begin{align}
    w^\alpha_{\mathrm{top}} & = \sum_{\substack{g,n\ge 0\\ 2g+n>0}}\frac{\epsilon^{2g}}{n!} \sum_{\substack{d_1,\dots,d_n\geq 0 \\ d_1+\cdots+d_n=2g}} \prod_{i=1}^n u^{\gamma_i,d_i}_{\text{F-str}}  \times
\\ \notag & \qquad \int_{\oM_{g,n+2}} \Coeff{\prod_{i=1}^n a_i^{d_i}}\lambda_g \DR_g (0,-\suma, a_1,\dots,a_n) 
\times 
\\ \notag & \qquad \qquad e^{\alpha} \bigg(\mathfrak{fc}_{g,n+1}\Big(e_1\otimes \Vec{\bigotimes}_{i=1}^n e_{\gamma_i}\Big)\bigg).
\end{align}
This automatically implies that the corresponding fluxes are related by the same Miura transformation. Indeed, we have the identity of the formal power series in $\{t^{\gamma,d}\}$ given as
\begin{align}
\d_x R^{\alpha}_{\beta,p}\big|_{w^{\gamma,q} = w^{\gamma,q}_{\mathrm{top}}} & = \d_{t^{\beta,p}} w^\alpha_{\mathrm{top}} 
%\\ \notag & 
= \sum_{s\geq 0}
(\d_{u^{\zeta,s}}u^{\alpha}_{\mathrm{norm}})\big|_{u^{\gamma,q} = u^{\gamma,q}_{\mathrm{F-str}}} \d_x^{q} \d_{t^{\beta,p}} u^{\zeta}_{\mathrm{F-str}}
\\ \notag & = \sum_{s\geq 0}
(\d_{u^{\zeta,s}}u^{\alpha}_{\mathrm{norm}})\big|_{u^{\gamma,q} = u^{\gamma,q}_{\mathrm{F-str}}} \d_x^{q+1} Q^\zeta_{\beta,p}\big|_{u^{\gamma,q} = u^{\gamma,q}_{\mathrm{F-str}}},
\end{align}
where $R^\alpha_{\beta,p}$ are the fluxes of the integrable system associated to $\{\mathfrak{fc}_{g,n}\}$ using the integrable observables $\{\bA_{g,n}\}$, and $Q^\zeta_{\beta,p}$ are the fluxes of the DR hierarchy given by Eq.~\eqref{eq:definition of flux DR}.
Since $w^{\gamma,q} = w^{\gamma,q}_{\mathrm{top}}$ can be alternatively presented as $w^{\gamma,d} = u^{\gamma,d}_{\mathrm{norm}}\big|_{u^{\gamma,q} = u^{\gamma,q}_{\mathrm{F-str}}}$  and the identity of the formal power series in $\{t^{\gamma,d}\}$ implies the identity in the dependent variables (see Remark~\ref{rem:RemarkIdentification} or~\cite[Lemma 4.3]{BS22}), we indeed see that the fluxes of the integrable system associated to $\{\mathfrak{fc}_{g,n}\}$ using the integrable observables $\{\bA_{g,n}\}$ are obtained from the fluxed of the DR hierarchy by the Miura transformation~\eqref{eq: normal coordinates for F CohFT}.

\begin{remark}
    As an alternative proof, one can apply the argument given in~\cite[Proof of Thm.~4.9]{BS22} to ${}^{\bA}\!X^\alpha$, which itself is an adaptation to the setting of F-CohFTs of the arguments given in~\cite{BDGR1,BDGR20}. It still works, with an adjustment of the root vertex in the emerging graphs that indeed amounts to the Miura transformation~\eqref{eq: normal coordinates for F CohFT}.
\end{remark}

\subsection{Properties of \texorpdfstring{$\bOm_{g,n}$}{HHejgn}} The goal of this section is to give a proof of Theorem~\ref{thm: Hej}. 

\subsubsection{Polynomiality, homogeneity, and genus \texorpdfstring{$0$}{0} properties of \texorpdfstring{$\bOm_{g,n}$}{Hejgn}} \label{subsec: Poly Homo g0 Hejgn}
The polynomiality properties of $\bOm_{g,n}$ were established in \cite[Proposition 3.4]{BLRS}. The genus $0$ property follows from \cite[Lemma 3.8]{BLRS} after using Eq.~(\ref{eq:Omega-negative-ai-psi}). The homogeneity property is proved in~\cite[Prop. 3.5]{BLRS}. 

\subsubsection{Push-forward property of \texorpdfstring{$\bOm_{g,n}$}{HHejgn}}
\label{subsubsec: push-forward HHej}
We use  Eq.~(\ref{eq:Omega-negative-ai-psi}) to write $\bOm_{g,n+1}\left(a_{1},\dots,a_{n},b\right)$ as 
\begin{align}
\label{eq:Hej-class-with-b}
\underline{\underset{\,}{\lambda_{g}\left(\suma+b\right)^{1-g}\frac{\Omega_{g,n+1}^{[\suma+b]}(\suma+b,0;\suma+b-a_{1},\dots,\suma+b-a_{n},\suma)}{\prod_{i=1}^{n}(1-a_{i}\psi_{i})}}}\times\frac{1}{(1-b\psi_{n+1})}.	
\end{align}
We will use that the underlined term is a polynomial in $a_1,\dots,a_n,b$ which is clear from the proof of Proposition~3.4 in \cite{BLRS}. In addition, we repeatedly use the pull-back property and the $r$-symmetry of $\Omega$-classes, see \cite[Theorem~4.1]{GLN23}.

\smallskip
\paragraph{\textbf{Constant coefficient in $b$.}}
When $b=0$, the pull-back property of $\Omega$-classes yields
\begin{align}
	\bOm_{g,n+1}\left(a_{1},\dots,a_{n},0\right)=\pi^{*}\left(\lambda_{g}\suma^{1-g}\Omega_{g,n}^{[\suma]}(\suma,0;\suma-a_{1},\dots,\suma-a_{n})\right)\frac{1}{\prod_{i=1}^{n}(1-a_{i}\psi_{i})}.
\end{align}
Pushing forward by $\pi$ this equation yields $\suma\,\bOm_{g,n}\left(a_{1},\dots,a_{n}\right)$, using the formula $\pi_* \bPsi_{g,n+1}\left(a_{1},\dots,a_{n},0\right) = \suma \bPsi_{g,n}\left(a_{1},\dots,a_{n}\right)$. 
\smallskip
\paragraph{\textbf{Linear coefficient in $b$.}}
We first extract the linear coefficient of $b$ in the class $\bOm_{g,n+1}\left(a_{1},\dots,a_{n},b\right)$ using Eq.~(\ref{eq:Hej-class-with-b}). Expanding the geometric series in $b\psi_{n+1}$ at order $0$ and $1$ gives two terms. The term of order $1$ yields
\begin{align}
	\lambda_{g}\suma^{1-g}\frac{\Omega_{g,n+1}^{[\suma]}(\suma,0;\suma-a_{1},\dots,\suma-a_{n},\suma)}{\prod_{i=1}^{n}(1-a_{i}\psi_{i})}\times\psi_{n+1}.
\end{align}
Pushing forward via $\pi$ gives $(2g-2+n)\bOm_{g,n}\left(a_{1},\dots,a_{n}\right)$, using the pull-back property of $\Omega$ and  $\pi_*(\bPsi_{g,n+1}(a_1,\dots,a_n,0)\psi_{n+1})=(2g-2+n)\bPsi_{g,n+1}(a_1,\dots,a_n)$. Therefore, it remains to show that the push-forward of the second term vanishes, explicitly we prove that
\begin{align}
\label{eq:vanishing-term}
	\frac{\partial}{\partial b}\left[\pi_{*}\left(\lambda_{g}\left(\suma+b\right)^{1-g}\frac{\Omega_{g,n+1}^{[\suma+b]}(\suma+b,0;\suma+b-a_{1},\dots,\suma+b-a_{n},\suma)}{\prod_{i=1}^{n}(1-a_{i}\psi_{i})}\right)\right]_{b=0}=0.
\end{align}

To prove this, we apply Chiodo's formula \cite{Chiodo08} for the $\Omega$-class. Since the class is multiplied by $\lambda_g$, it suffices to restrict to the locus of compact type curves. On this locus, Chiodo's formula takes the following form:

\begin{lemma} \label{lem:dilaton:hey} On the moduli space of compact type curves, we have
\begin{align}
	\Omega_{g,n+1}^{[\suma+b]}(\suma+b,0;\suma+b-a_{1},\dots,\suma+b-a_{n},\suma)=\left(\suma+b\right)^{2g-1}e^{f\left(a_{1},\dots,a_{n},b\right)}
	%\exp\left(f\left(a_{1},\dots,a_{n},b\right)\right)
\end{align}
where $f\left(a_{1},\dots,a_{n},b\right)$ is given by
\begin{align}
\sum_{m\geq1}\!\frac{\left(-1\right)^{m}\!\left(\suma+b\right)^{m}}{m\left(m+1\right)}&\Bigg(  B_{m+1}\kappa_{m}-\sum_{i=1}^{n}B_{m+1}\left(\frac{\suma-a_{i}}{\suma+b}\right)\psi_{i}^{m}-B_{m+1}\left(\frac{\suma}{\suma+b}\right)\psi_{n+1}^{m}\nonumber\\
 & +\frac{1}{2}\!\sum_{\substack{g_{1}+g_{2}=g\\
I\sqcup J=\left\{ 1,\dots,n+1\right\} 
}
}\!B_{m+1}\left(\frac{w\left(h_{1}\right)}{\suma+b}\right)\left(\xi_{g_{1},g_{2}}^{I,J}\right)_{*}\frac{\psi_{h_{1}}^{m}-\left(-\psi_{h_{2}}\right)^{m}}{\psi_{h_{1}}+\psi_{h_{2}}}\Bigg).
\end{align}
Here the last sum runs over stable boundary divisors with a separating node:
\[
\xi_{g_{1},g_{2}}^{I,J}:\overline{\mathcal{M}}_{g_{1},\left|I\right|+1}\times\overline{\mathcal{M}}_{g_{2},\left|J\right|+1}\rightarrow\overline{\mathcal{\mathcal{M}}}_{g,n+1}
\]
is the gluing morphism, $h_{1}$ (resp. $h_{2}$) denotes the half-edge on the component of genus $g_{1}$ (resp. $g_{2}$), and the weight is 
\[
w\left(h_{1}\right)=\begin{cases}
\sum_{i\in I}a_{i}, & {\rm if\,}n+1\notin I,\\
b-\sum_{i\in I\backslash\left\{ n+1\right\} }a_{i}, & {\rm if\,}n+1\in I.
\end{cases}
\]
We omit to write in the summation the stability condition which holds. Finally, $B_{m}\left(x\right)$ denotes the $m$-th Bernoulli polynomial defined by $\frac{te^{tx}}{e^{t}-1}=\sum_{m=0}^{\infty}B_{m}\left(x\right)\frac{t^{m}}{m!}$. The $m$-th Bernoulli number is given by $B_{m}=B_{m}\left(0\right)$.
\end{lemma}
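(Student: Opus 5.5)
The plan is to specialise Chiodo's formula~\cite{Chiodo08} for the total Chern class of $-R^*\pi_*\mathcal L$ to the present parameters and then restrict it to the compact type locus. Chiodo's formula expresses
\[
\epsilon_*\Bigl(\sum_i x^i c_i(-R^*\pi_*\mathcal L)\Bigr)
\]
as a sum over stable graphs with weightings $w$ modulo $r$. Each graph contributes a vertex term $\exp$ of Bernoulli-polynomial combinations of $\kappa_m$ and $\psi_i^m$, together with an edge term involving $\frac{\psi_h^m-(-\psi_{h'})^m}{\psi_h+\psi_{h'}}$. The prefactor is $r^{2g-1-h^1(\Gamma)}/|\mathrm{Aut}|$. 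Here $r=x=\suma+b$ and $s=0$, and the insertions are $\suma+b-a_i$ at $i\le n$ and $\suma$ at $n+1$. On compact type curves only trees contribute, so $h^1(\Gamma)=0$ and the prefactor is uniformly $r^{2g-1}$.

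The key structural point is that on a tree the weighting at every edge is uniquely determined, with no sum over $r^{h^1}$ weightings. At a separating node the weight $w(h_1)$ is fixed by the modular constraint on the component. One computes it as $-\sum_{i\in I}(\text{insertion}_i)$ modulo $r$, with $s=0$. The result is $w(h_1)\equiv\sum_{i\in I}a_i$ when $n+1\notin I$, because the terms $\suma+b$ vanish mod $r$. When $n+1\in I$ it is $\equiv -\suma+\sum_{i\in I\setminus\{n+1\}}a_i$, which equals $b-\sum_{i\in I\setminus\{n+1\}}a_i$ modulo $r=\suma+b$. This gives exactly the stated $w(h_1)$. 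I will take representatives in $[0,r)$, which is legitimate for large positive $a_i$ and $b$; polynomiality then extends the identity.

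The next step is to rewrite the graph sum. The standard argument, as in~\cite{GLN23} and~\cite{BLS-Omega}, uses the fact that the sum over trees with exponentiated vertex and edge terms reassembles into a single exponential. The edge terms become boundary pushforwards $\frac12\sum(\xi^{I,J}_{g_1,g_2})_*$, and self-intersections are produced automatically by the exponential. The vertex terms use $B_{m+1}(w_i/r)$ with $w_i=\suma+b-a_i$ and $w_{n+1}=\suma$. Using the symmetry $B_{m+1}(1-x)=(-1)^{m+1}B_{m+1}(x)$, I convert $B_{m+1}\bigl((\suma+b-a_i)/r\bigr)$ into $B_{m+1}\bigl((\suma-a_i)/(\suma+b)\bigr)$. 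The accompanying sign, combined with Chiodo's normalisation $(-1)^{m+1}\frac{B_{m+1}(\cdot)}{m(m+1)}x^m$, produces the overall factor $(-1)^m r^m/(m(m+1))$ displayed in $f$. The same conversion applies to the edge Bernoulli terms, with the two choices of half-edge related by $w(h_2)=r-w(h_1)$. The factor $\frac12$ compensates for the ordered choice of $(I,J)$.

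I expect the main obstacle to be the sign and convention bookkeeping. This includes getting the exact form of $f$, the $(-1)^m$ versus $(-1)^{m+1}$ signs, the use of $B_{m+1}(x)$ versus $B_{m+1}(1-x)$, and the correctness of the exponentiation of tree contributions with automorphism factors. I would resolve these by matching with the compact-type form of Chiodo's formula already written in~\cite[Sec.~5]{BLS-Omega} for $\hej$-type classes, rather than rederiving them from scratch.
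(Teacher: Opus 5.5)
Your skeleton matches the paper's proof: Chiodo's formula, only separating nodes on the compact type locus, node weights forced by the markings, a uniform factor $(\suma+b)^{2g-1}$, and a single exponential. The routes differ only slightly. The paper exponentiates Chiodo's Chern characters directly, $\Omega^{[x]}=\epsilon_*\exp\bigl(\sum_{m\ge1}(-x)^m(m-1)!\,\mathrm{ch}_m\bigr)$. It then gets the power of $r$ as $r^{|E|}\cdot r^{2g-|V|}=r^{2g-1}$, from the $\frac r2$ in the boundary term times the degree of $\epsilon$ over a tree stratum. You instead start from the graph-sum form with $r^{2g-1-h^1(\Gamma)}$ and re-exponentiate. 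That is the derivation of the graph form run backwards, and it is fine.

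The gap is in the bookkeeping, where two of your steps are false.

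First, $-\suma+\sum_{i\in I\setminus\{n+1\}}a_i\equiv b+\sum_{i\in I\setminus\{n+1\}}a_i \pmod{\suma+b}$, not $b-\sum_{i\in I\setminus\{n+1\}}a_i$. The two differ by $2\sum_{i\in I\setminus\{n+1\}}a_i$. Only the plus sign is compatible with $w(h_1)+w(h_2)\equiv 0$ when you apply the rule to both orderings of the same node.

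Second, the reflection $B_{m+1}(1-x)=(-1)^{m+1}B_{m+1}(x)$ turns $B_{m+1}\bigl(\frac{\suma+b-a_i}{\suma+b}\bigr)$ into $(-1)^{m+1}B_{m+1}\bigl(\frac{a_i}{\suma+b}\bigr)$. It never gives $B_{m+1}\bigl(\frac{\suma-a_i}{\suma+b}\bigr)$, whose argument differs by the non-integer $\frac{b}{\suma+b}$.

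No reflection is needed in any case. The uniform prefactor $\frac{(-1)^m(\suma+b)^m}{m(m+1)}$ comes directly from $c(-E)=\exp\bigl(\sum_m(-1)^m(m-1)!\,\mathrm{ch}_m(E)\bigr)$ and the $\frac{1}{(m+1)!}$ in Chiodo's formula. It also multiplies the $\kappa_m$ and $\psi_{n+1}$ terms, whose arguments $0$ and $\frac{\suma}{\suma+b}$ you leave unreflected. Reflecting only the $\psi_i$ terms would therefore create a spurious relative sign $(-1)^{m+1}$.

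Carried out correctly, your argument yields $-B_{m+1}\bigl(\frac{\suma+b-a_i}{\suma+b}\bigr)\psi_i^m$ and $w(h_1)=b+\sum_{i\in I\setminus\{n+1\}}a_i$. The displayed statement has misprints at exactly these two places. The corrected version is what the paper actually computes with afterwards:
\begin{itemize}
\item it produces the term $B_2\bigl(\frac{\suma-a_i}{\suma+b}\bigr)-B_2\bigl(\frac{\suma+b-a_i}{\suma+b}\bigr)$ in $P_1$;
\item it produces the argument $\frac{a_I+b}{\suma+b}$ in $P_m$;
\item it is what makes $\partial_bP_1|_{b=0}=2g/\suma$ come out.
\end{itemize}
So you should prove the corrected formula and flag the misprints, rather than asserting identities that make the printed version come out.
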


\begin{proof}
In general, we have
\begin{align}
	\Omega^{[x]}(r,s;a_{1},\dots,a_{n})=\epsilon_{*}\exp\left(\sum_{m\geq1}\left(-x\right)^{m}\left(m-1\right)!{\rm ch}_{m}\left(r,s;a_{1}\dots,a_{n}\right)\right)
\end{align}
where Chern characters are given in \cite[Theorem 1.1.1]{Chiodo08}. Since we restrict to the compact type locus, only the boundary divisors corresponding to separating nodes appear in the Chern characters. Therefore, the weight between $0$ and $r-1$ assigned to each node is uniquely determined by the weights of the markings. Expanding the exponential yields a sum over trees. Each edge contributes a factor $r$, so a tree with $|E|$ edges contributes a factor $r^{|E|}$. Moreover, for a fixed tree with $|V|$ vertices, the restriction of $\epsilon$ to the associate boundary stratum has degree $r^{2g-|V|}$. Combining both contributions, we obtain an overall $r^{2g-1}$.
\end{proof}

Using Chiodo's formula, the left hand side of Eq.~(\ref{eq:vanishing-term}) writes
\begin{align}
	\underline{\underset{\,}{\pi_{*}\left(\frac{\left(\frac{g}{\suma}+\frac{\partial}{\partial b}f\left(a_{1},\dots,a_{n},b\right)\Big\vert_{b=0}\right)}{\prod_{i=1}^{n}(1-a_{i}\psi_{i})}\right)}} \lambda_{g}\Omega_{g,n}^{[\suma]}(\suma,0;\suma-a_{1},\dots,\suma-a_{n}). 
\end{align}
We now prove that underlined term vanishes.
%push-forward by $\pi$ of the term in parenthesis vanishes. 
Using 
\begin{align}
\bPsi_{g,n+1}(a_1,\dots,a_n,0)=\pi^*\bPsi_{g,n}(a_1,\dots,a_n)(1+\sum_{i=1}^{n}a_{i}\delta_{\left(i,n+1\right)}),
\end{align}
where $\delta_{\left(i,n+1\right)}$ is the boundary divisor of curves such that  the markings $i$ and $n+1$ lie on a genus $0$ component and the remaining markings lie on the other component, this vanishing is reduced to
\begin{align}
\label{eq:final-reduction}
	\frac{\partial}{\partial b}\left[\pi_{*}f\left(a_{1},\dots,a_{n},b\right)+\pi_{*}\left(\sum_{i=1}^{n}a_{i}\delta_{\left(i,n+1\right)}f\left(a_{1},\dots,a_{n},b\right)\right)\right]_{b=0}=-g.
\end{align}

We first compute the push-forwards by $\pi$. We have 
\begin{align}
\pi_{*}f\left(a_{1},\dots,a_{n},b\right) & =\sum_{m\geq1}\frac{\left(-1\right)^{m}\left(\suma+b\right)^{m}}{m\left(m+1\right)}P_{m}\left(a_{1},\dots,a_{n},b\right)
\end{align}
where
\begin{align}
P_{1}&\left(a_{1},\dots,a_{n},b\right) = \left(B_{2}-B_{2}\left(\tfrac{\suma}{\suma+b}\right)\right)\kappa_{0}
		+\sum_{i=1}^{n}\left(B_{2}\left(\tfrac{\suma-a_{i}}{\suma+b}\right)-B_{2}\left(\tfrac{\suma+b-a_{i}}{\suma+b}\right)\right),
\end{align}
and when $m\geq2$: 
\begin{align}
P_{m}&\left(a_{1},\dots,a_{n},b\right) =\\
& \left(B_{m+1}-B_{m+1}\left(\tfrac{\suma}{\suma+b}\right)\right)\kappa_{m-1}+\sum_{i=1}^{n}\left(B_{m+1}\left(\tfrac{\suma-a_{i}}{\suma+b}\right)-B_{m+1}\left(\tfrac{\suma+b-a_{i}}{\suma+b}\right)\right)\psi_{i}^{m-1}\nonumber\\
 & +\frac{1}{2}\sum_{\substack{g_{1}+g_{2}=g\nonumber\\
I\sqcup J=\left\{ 1,\dots,n\right\} 
}
}\left(B_{m}\left(\tfrac{a_{I}+b}{\suma+b}\right)-B_{m+1}\left(\tfrac{a_{I}}{\suma+b}\right)\right)\left(\xi_{g_{1},g_{2}}^{I,J}\right)_{*}\frac{\psi_{h_{1}}^{m-1}-\left(-\psi_{h_{2}}\right)^{m-1}}{\psi_{h_{1}}+\psi_{h_{2}}}.
\end{align}
We used the notation $a_I:=\sum_{i\in I} a_i$ and similarly for $J$. 
On the other hand, we have 
\begin{align}
	\pi_{*}\!\left(\sum_{i=1}^{n}a_{i}\delta_{\left(i,n+1\right)}f\left(a_{1},\dots,a_{n},b\right)\right) \!=\!\sum_{m\geq1}\frac{\left(-1\right)^{m}\left(\suma+b\right)^{m}}{m\left(m+1\right)} Q_{m}\left(a_{1},\dots,a_{n},b\right),
\end{align}
where 
\begin{align}
Q_{m}&\left(a_{1},\dots,a_{n},b\right)=\\
 & \suma B_{m+1}\kappa_{m}-\sum_{i=1}^{n}\left((\suma-a_{i})B_{m+1}\left(\tfrac{\suma+b-a_{i}}{\suma+b}\right)+a_{i}B_{m+1}\left(\tfrac{\suma-a_{i}}{\suma+b}\right)\right)\psi_{i}^{m}\nonumber\\
 & +\frac{1}{2}\sum_{\substack{g_{1}+g_{2}=g\\
I\sqcup J=\left\{ 1,\dots,n\right\} 
}
}\left(a_{I}B_{m}\left(\tfrac{a_{I}+b}{\suma+b}\right)+a_{J}B_{m+1}\left(\tfrac{a_{I}}{\suma+b}\right)\right)\left(\xi_{g_{1},g_{2}}^{I,J}\right)_{*}\frac{\psi_{h_{1}}^{m}-\left(-\psi_{h_{2}}\right)^{m}}{\psi_{h_{1}}+\psi_{h_{2}}}.\nonumber
\end{align}
Finally, Eq.~(\ref{eq:final-reduction}) directly follows from the following lemma. 
\begin{lemma}\label{lem:dilaton:hey:2}
We have 
\begin{align}
	\frac{\partial P_{1}\left(a_{1},\dots,a_{n},b\right)}{\partial b}\Bigg\vert_{b=0}=\frac{2g}{\suma}
\end{align}
and for $m\geq1$ we have 
\begin{align}
P_{m}\left(a_{1},\dots,a_{n},b\right)\Big\vert_{b=0} & =0,\\
\frac{\partial Q_{m}\left(a_{1},\dots,a_{n},b\right)}{\partial b}\Bigg\vert_{b=0} & =0,\\
\frac{\partial P_{m+1}\left(a_{1},\dots,a_{n},b\right)}{\partial b}\Bigg\vert_{b=0} & =\frac{m+2}{\suma^2}Q_{m}\left(a_{1},\dots,a_{n},b\right)\Big\vert_{b=0}.
\end{align}
\end{lemma}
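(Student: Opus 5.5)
The whole lemma is a direct computation with Bernoulli polynomials. We will treat the tautological classes $\kappa$, $\psi_i^{m}$ and the boundary pushforwards as independent symbols and compare their coefficients separately. The tools are the difference identity $B_{k}(x+1)-B_k(x)=kx^{k-1}$, the reflection $B_k(1-x)=(-1)^kB_k(x)$, and the derivative rule $B_k'(x)=kB_{k-1}(x)$.

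\textbf{Vanishing at $b=0$.} Setting $b=0$, each bracket in $P_m$ has the form $B_{m+1}(y)-B_{m+1}(y')$ with $y'-y\in\{0,1\}$. The $\kappa$ coefficient is $B_{m+1}(0)-B_{m+1}(1)$, which is $0$ for $m\ge1$ since $m+1\ge2$. The $\psi_i$ coefficient is $B_{m+1}(\tfrac{\suma-a_i}{\suma})-B_{m+1}(\tfrac{\suma-a_i}{\suma})=0$. The boundary coefficient is read with the appropriate index, consistent with the shift $m\to m-1$ in the $\psi$ powers. Its two arguments coincide at $b=0$ (both equal $a_I/\suma$), so it vanishes as well. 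This gives $P_m|_{b=0}=0$.

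\textbf{Derivatives of $Q_m$.} Differentiating in $b$ at $0$ gives terms of the form $-\tfrac{y}{\suma}(m+1)B_m(y)$ from the denominator, plus $\tfrac{1}{\suma}(m+1)B_m(y)$ wherever $b$ also appears in the numerator. Collecting the coefficient of $\psi_i^m$ gives
\[
(m+1)B_m\!\big(1-\tfrac{a_i}{\suma}\big)\cdot\Big(\tfrac{(\suma-a_i)a_i}{\suma^2}\Big)-(m+1)B_m\!\big(\tfrac{\suma-a_i}{\suma}\big)\,\tfrac{a_i(\suma-a_i)}{\suma^2},
\]
which cancels. The boundary coefficient $a_IB_m(\cdot)+a_JB_{m+1}(\cdot)$ is handled the same way, using $a_I+a_J=\suma$. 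This proves $\partial_bQ_m|_{b=0}=0$.

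\textbf{Derivative of $P_1$.} For the first identity, $\partial_b$ of $B_2-B_2(\tfrac{\suma}{\suma+b})$ at $0$ is $B_2'(1)/\suma=1/\suma$. Here $\kappa_0=2g-2+n$ on $\oM_{g,n}$. The $\psi_i$ terms (degree zero, so constants) each contribute $B_2'(x_i)\tfrac{-x_i}{\suma}-B_2'(x_i+\tfrac{a_i}{\suma})\cdot\tfrac{a_i/\suma-(x_i+a_i/\suma)}{\suma}$ with $x_i=\tfrac{\suma-a_i}{\suma}$; we will sum these to get $-1/\suma$ each. The total is then $(2g-2+n-n)/\suma$. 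This is off from the claimed value $2g/\suma$ by $2/\suma$, so we need to recheck the exact form of $\kappa_0$ and the signs of the $\psi_i$ terms and reconcile the counting. This bookkeeping is where we expect mistakes to creep in.

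\textbf{Relation between $\partial_bP_{m+1}$ and $Q_m$.} This is the main step. Differentiate $P_{m+1}$ at $b=0$ termwise using $B_{m+2}'=(m+2)B_{m+1}$. The derivative of an argument $\tfrac{c+\epsilon b}{\suma+b}$ at $0$ is $\tfrac{\epsilon\suma-c}{\suma^2}$. Every term then acquires the common factor $\tfrac{m+2}{\suma^2}$ times a linear combination of $B_{m+1}$ values with integer-linear weights in the $a$'s. We then match each tautological class against $Q_m|_{b=0}$ in turn:
\begin{itemize}
\item For $\kappa_m$, we get $(m+2)B_{m+1}(1)\cdot\suma/\suma^2$, which equals $\tfrac{m+2}{\suma^2}\,\suma B_{m+1}$.
\item For $\psi_i^m$, we use $B_{m+1}(x)$ versus $B_{m+1}(x+\tfrac{a_i}{\suma})$ with weights $-(\suma-a_i)$ and $-a_i$, reproducing the second line of $Q_m$.
\item For the boundary terms, the weights become $a_I$ and $a_J$.
\end{itemize}
The main obstacle is this sign and weight bookkeeping, including possible uses of $B_k(x+1)-B_k(x)$ when arguments exceed $1$. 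We will check it first on small cases ($m=1$, $n=1$) before writing the general matching.
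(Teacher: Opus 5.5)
Your approach is the same direct computation the paper invokes: compare coefficients of $\kappa$, $\psi_i$ and boundary classes separately, using $B_k'=kB_{k-1}$ and $B_k(1)=B_k$ for $k\ge 2$. Your arguments for $P_m|_{b=0}=0$ and $\partial_bQ_m|_{b=0}=0$ are fine, provided the boundary coefficients of both $P_m$ and $Q_m$ are read with $B_{m+1}$ in both slots. You do this for $P_m$, but for $Q_m$ you still quote $a_IB_m(\cdot)+a_JB_{m+1}(\cdot)$, and with mixed indices the cancellation you claim does not happen.

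\textbf{The gap: the first identity.} You leave it unresolved, and the $2/\suma$ discrepancy is an arithmetic error in your proposal, not a problem with $\kappa_0$ or with the statement. In the $P_1$ step you evaluate $\frac{\suma+b-a_i}{\suma+b}$ at $b=0$ as $x_i+\frac{a_i}{\suma}=1$, and take its $b$-derivative to be $-\frac{x_i}{\suma}$. In fact, at $b=0$ both arguments equal $x_i=1-\frac{a_i}{\suma}$, as you correctly used in the vanishing step. By your own rule $\frac{\epsilon-y}{\suma}$, their derivatives at $0$ are $-\frac{\suma-a_i}{\suma^2}$ and $+\frac{a_i}{\suma^2}$. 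Hence the $i$-th degree-zero term contributes
\[
B_2'(x_i)\Big(-\frac{\suma-a_i}{\suma^2}-\frac{a_i}{\suma^2}\Big)=-\frac{2x_i-1}{\suma}=-\frac{1}{\suma}+\frac{2a_i}{\suma^2},
\]
not $-\frac{1}{\suma}$. Summing over $i$ and using $\sum_i a_i=\suma$ gives $-\frac{n-2}{\suma}$. Adding $\frac{\kappa_0}{\suma}=\frac{2g-2+n}{\suma}$ yields exactly $\frac{2g}{\suma}$. The missing $\frac{2}{\suma}$ comes from the $a_i$-dependence of $B_2'(x_i)=1-\frac{2a_i}{\suma}$.

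\textbf{The same misreading affects the last identity.} In your $\psi_i^m$ matching there is no pair $B_{m+1}(x)$, $B_{m+1}(x+\frac{a_i}{\suma})$, and no need for $B_k(x+1)-B_k(x)$, since no argument exceeds $1$ at $b=0$. The $b$-derivative at $0$ of the $\psi_i^m$-coefficient of $P_{m+1}$ is
\[
(m+2)B_{m+1}(x_i)\Big(-\frac{\suma-a_i}{\suma^2}-\frac{a_i}{\suma^2}\Big)=-\frac{m+2}{\suma}B_{m+1}(x_i).
\]
This is $\frac{m+2}{\suma^2}$ times $-\suma B_{m+1}(x_i)$, the coefficient of $\psi_i^m$ in $Q_m|_{b=0}$. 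Note that the weights $\suma-a_i$ and $a_i$ land on the opposite Bernoulli terms compared with $Q_m$. The same happens for the boundary terms, where $a_J$ and $a_I$ both multiply $B_{m+1}(a_I/\suma)$. The identity holds precisely because the two arguments coincide at $b=0$. With these corrections your outline becomes a complete proof along the paper's lines.
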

\begin{proof}
This follows from a direct computation using the identities $\d_x B_{m+1}\left(x\right)=\left(m+1\right)B_{m}\left(x\right)$ for $m\geq1$, together with $B_{m}\left(1\right)=B_m$ for $m>1$,  $B_{1}\left(x\right)=x-\frac{1}{2}$, and $\kappa_{0}=2g-2+n$ on $\overline{\mathcal{M}}_{g,n}$.
\end{proof}
This concludes the proof. 

\subsubsection{Leveled rooted tree relations for \texorpdfstring{$\bOm_{g,n}$}{Hejgn}} 

Using Lemmata~\ref{lem: LRS - master equivalence} and~\ref{lem: geometric master}, we see that it is sufficient to prove Prop.~\ref{prop: geometric master} for $\{\bOm_{g,n}\}$. Then it implies the full set of leveled rooted tree relations, and in particular type $1$ and type $2$ integrability of $\bOm_{g,n}$. 

The proof of Prop.~\ref{prop: geometric master} for $\{\bOm_{g,n}\}$ requires localization technique and we give it in the next Section. This completes the proof of Thm.~\ref{thm: Hej}.

%%%%%%%%%%%%%%%%%%%%%%%%%%%%%%%%%%%%%%%%%%%%%%%%%%%%%%%%%%%%%%%%%%%%%%%%%
\vspace{1cm}
%\section{Localization relations/ SRT relation for $\bOm$/Master for $\bOm$}
\section{Geometric master relations for \texorpdfstring{$\{\bOm_{g,n}\}$}{HHejgn} via localization}
\label{sec:localization}
%%%%%%%%%%%%%%%%%%%%%%%%%%%%%%%%%%%%%%%%%%%%%%%%%%%%%%%%%%%%%%%%%%%%%%%%%

%\begin{remark}
%It seems to work for
%\[
%\bOm(v)\coloneqq\begin{cases}
%\bOm_{g(v),|H(v)|}\left(a_{1},\dots,a_{n},b_{1},\dots,b_{m}\right)\prod_{i=1}^{m}\left(1-b_{i}\psi_{i+n}\right) & v=v_{r}\\
%\bOm_{g(v),|H(v)|}\left(a_{1},\dots,a_{n},b_{v}\right)\left(1-b_{v}\psi_{v}\right) & v\neq v_{r}
%\end{cases}
%\]
%where $b_{v}$ is a new variable associated to vertex $v$ and $\psi_{v}$ is the $\psi$-class at $h\in H_{-}\left(v\right)$. Could we say anything about it? In terms of integrability? \todo{investigate}
%\end{remark}

The goal of this section is to prove Prop.~\ref{prop: geometric master}. As we have mentioned there, only the $\{\bOm_{g,n}\}$ part of it is new, and in the latter case once we specialize $b_{1}=\cdots=b_{m}=0$, we recover \cite[Theorem 4.1]{BLS-Omega}.  The proof below adapts the localization setup of Section 4 of \cite{BLS-Omega} to the present setting, where the marked points carry a nontrivial orbifold structure. We indicate only the required modifications.

\subsection{Stable relative maps to orbifold target}

Let $g\geq0$ and $m\geq1$. Let $a_{1},\dots,a_{n},b_{1},\dots,b_{m}$ be positive integers and denote
\begin{equation}
A=\sum_{i=1}^{n}a_{i}+\sum_{j=1}^{m}b_{j},\qquad\suma=\sum_{i=1}^{n}a_{i}.\label{eq:condition-ai-bi}
\end{equation}
Let $\mathbb{P}^{1}\left[A\right]$ be the projective line with a single orbifold point $B\mathbb{Z}_{A}$ at $\infty\in\mathbb{P}^{1}$. We consider the moduli space 
\[
\overline{\mathcal{M}}_{g,(A-b_{1},\dots,A-b_{m})}\left(\mathbb{P}^{1}\left[A\right],a_{1},\dots,a_{n}\right)
\]
of twisted stable maps to the relative pair $\left(\mathbb{P}^{1}\left[A\right],0\right)$ defined in \cite{AGV08}. It parametrizes maps 
\[
f:C\rightarrow P
\]
such that
\begin{itemize}
\item $C$ is a twisted curve of genus $g$ with $n+m$ marked points, the first $n$ are non orbifold points and the remaining $m$ are orbifold points, we do not trivialize the $m$ marked gerbes,
\item $P$ is a glueing of a chain of $l\geq0$ copies of $\mathbb{P}^{1}$ to $0\in\mathbb{P}^{1}\left[a\right]$,
\item $f$ is a relative twisted stable map such that
\begin{itemize}
\item the ramification profile over $0$ is given by the partition $\left(a_{1},\dots,a_{n}\right)$,
\item all the orbifold points (markings and nodes) of $C$ are sent to $B\mathbb{Z}_{A}\in\mathbb{P}^{1}\left[A\right]$, in particular the $m$ orbifold markings are mapped according to
\[
A-b_{1},\dots,A-b_{m},
\]
that is the stabilizer at the $i$th orbifold marking is $\mathbb{Z}_{A}/\mathbb{Z}_{\gcd \left(A,A-b_{i}\right)}\simeq\mathbb{Z}_{\mathrm{ord}\left(A-b_{i}\right)}$, where $\mathrm{ord}\left(A-b_{i}\right)$ is the order of $A-b_{i}$ in $\mathbb{Z}_{A}$, and the map of stabilizer $\mathbb{Z}_{\mathrm{ord}\left(A-b_{i}\right)}\rightarrow\mathbb{Z}_{A}$ is given by $1\rightarrow A-b_{i}$. 
\end{itemize}
\end{itemize}

% \begin{remark}
% We can assume that the stabilizers at the markings and the nodes of $C$ are all equal to $\mathbb{Z}_{A}$. Indeed, in the definition of \cite{AGV08}, the stabilizers at the nodes are not required to be $\mathbb{Z}_{A}$, but only subgroups thereof, to guarantee the injectivity of the map of stabilizers induced by $f$. However, via the identification between the moduli space of twisted maps $\overline{\mathcal{M}}_{g,(A-b_{1},\dots,A-b_{m})}\left(B\mathbb{Z}_{A}\right)$ and the moduli of $A$-spin structures $\overline{\mathcal{M}}_{g,m}^{1/A}\left(A-b_{1},\dots,A-b_{m}\right)$ constructed by Chiodo \cite{Chiodo08-stable}, one can use Chiodo's convention and therefore assume that all stabilizers are $\mathbb{Z}_{A}$. Both conventions yield the same result, see \cite[Remark 2]{JPPZ}.

% Is this only when we apply localization ?

% Say equivalently we can suppose that $A$ is coprime with any ``orbifold structure'' appearing at the nodes and markings. \todo{remove?}

% SO WHAT : could we act like if $A$ is coprime with any ``orbifold structure'' \todo{clarify this}    
% \end{remark}

We refer to \cite{AGV08,JPT} for a detailed definition of the moduli space of twisted stable maps to $\left(\mathbb{P}^{1}\left[A\right],0\right)$. This moduli space has a virtual fundamental class of virtual dimension
\[
\textrm{vdim}\left[\overline{\mathcal{M}}_{g,(A-b_{1},\dots,A-b_{m})}\left(\mathbb{P}^{1}\left[A\right],a_{1},\dots,a_{n}\right)\right]^{{\rm vir}}=2g-1+n.
\]

\subsection{\texorpdfstring{$\mathbb{C}^{*}$}{C-star}-action and fixed loci}

We consider the $\mathbb{C}^{*}$-action on $\mathbb{P}^{1}$ given by
\[
t\cdot\left[z_{0}:z_{1}\right]=\left[z_{0}:tz_{1}\right]
\]
which lifts to $\mathbb{P}^{1}\left[A\right]$ and to $\overline{\mathcal{M}}_{g,(A-b_{1},\dots,A-b_{m})}\left(\mathbb{P}^{1}\left[A\right],a_{1},\dots,a_{n}\right)$.

As in \cite[Section 4.3]{BLS-Omega}, each fixed locus is labelled by a bipartite graph $\Phi$. Each vertex $v\in V\left(\Phi\right)$ is decorated by a genus $g\left(v\right)$ and labelled by $0\in\mathbb{P}^{1}\left[A\right]$ or $\infty\in\mathbb{P}^{1}\left[A\right]$, we denote by $V_{0}\left(\Phi\right)$ and $V_{\infty}\left(\Phi\right)$ the set of vertices over $0$ and $\infty$, respectively. Each edge $e\in E\left(\Phi\right)$ carries a nonzero degree $d_{e}$. Finally, there are $n+m$ legs: the $n$ first, called \emph{regular} legs, correspond to the $n$ ramification points, while the $m$ remaining, called \emph{frozen} legs, correspond to the $m$ orbifold marked points. 

A vertex $v$ is stable if 
\[
2g(v)-2+\left|E_{v}\right|+m(v)+n(v)>0,
\]
where $|E_{v}|$ is the cardinal of the set of edges attached to $v$, $n(v)$ denotes the number of regular legs attached to $v$ and $m(v)$ denotes the number of frozen legs attached to $v$. We denote by $V_{0}^{st}\left(\Phi\right)$ (resp. $V_{\infty}^{st}\left(\Phi\right)$), the set of stable vertices over $0$ (resp. $\infty$).

There is a glueing map
\[
\iota:\overline{\mathcal{M}}_{\Phi}\rightarrow\overline{\mathcal{M}}_{g,(A-b_{1},\dots,A-b_{m})}\left(\mathbb{P}^{1}\left[A\right],a_{1},\dots,a_{n}\right)
\]
of degree $\text{\ensuremath{\left|{\rm Aut}\left(\Phi\right)\right|\prod_{e\in E\left(\Phi\right)}d_{e}} }$ such that
\[
\overline{\mathcal{M}}_{\Phi}=\begin{cases}
\left(\prod_{v\in V_{0}\left(\Phi\right)}\overline{\mathcal{M}}_{v}\right)^{\sim}\times\prod_{v\in V_{\infty}^{st}\left(\Phi\right)}\overline{\mathcal{M}}_{v}^{A}, & {\rm if\,the\,target\,expands}\\
\prod_{v\in V_{\infty}^{st}\left(\Phi\right)}\overline{\mathcal{M}}_{v}^{A}, & {\rm if\,the\,target\,does\,not\,expand}
\end{cases}
\]
where
\begin{itemize}
\item $\overline{\mathcal{M}}_{v}^{A}$, for $v\in V_{\infty}^{st}\left(\Phi\right)$, stands for the moduli space of $A$-spin structures 
\[
\overline{\mathcal{M}}_{g\left(v\right)}^{A,0}\Big(A-d_{e_{1}},\dots,A-d_{e_{\left|E_{v}\right|}},A-b_{i_{1}},\dots,A-b_{i_{m\left(v\right)}}\Big)
\]
where $e_{1},\dots,e_{\left|E_{v}\right|}$ are the indices of the edges attached to $v$, and $i_{1},\dots,i_{m\left(v\right)}$ are the indices of the orbifold marked points attached to $v$, 
\item $\overline{\mathcal{M}}_{v}$, for $v\in V_{0}\left(\Phi\right)$, stands for the moduli space
\[
\overline{\mathcal{M}}_{g\left(v\right),m\left(v\right)}\left(a_{i_{1}^{v}},\dots,a_{i_{\left|L_{r}^{v}\right|}^{v}},-d_{e_{1}},\dots,-d_{e_{\left|E_{v}\right|}}\right),
\]
of relative (not rubber) maps to the pair $\left(\mathbb{P}^{1},0\cup\infty\right)$, where $L_{r}^{v}=\left\{ i_{1}^{v},\dots,i_{\left|L_{r}^{v}\right|}^{v}\right\} \subseteq\left\{ 1,\dots,n\right\} $ is the set of indices of the regular legs attached to $v$, 
\item the tilde indicates that two products of maps are identified if they differ by an element of $\mathbb{C}^{*}$ in each $\mathbb{P}^{1}$ of the target.
\end{itemize}
\begin{lemma}
Each graph $\Phi$ of a fixed locus has a unique (stable or unstable) vertex over $\infty\in\mathbb{P}^{1}\left[A\right]$.
\end{lemma}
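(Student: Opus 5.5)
We need to prove that each fixed-locus graph $\Phi$ has exactly one vertex over $\infty\in\mathbb{P}^{1}[A]$. The plan is to exploit the orbifold structure at $\infty$. The crucial observation is that the orbifold markings and all edge contributions at $\infty$ must satisfy a monodromy (balancing) condition, and this forces connectivity at $\infty$ once we compare with the profile over $0$.

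\textbf{Step 1: monodromy at a vertex over $\infty$.} For $v\in V_\infty(\Phi)$ consider the associated $A$-spin (twisted map to $B\mathbb{Z}_A$) data. An edge of degree $d_e$ meets $\infty$ with twisting $A-d_e \pmod A$, and the frozen legs at $v$ carry twistings $A-b_i$. The existence of an $A$-th root forces, for a stable vertex,
\[
\sum_{e\in E_v} d_e+\sum_{i\in\text{frozen}(v)} b_i\equiv 0 \pmod{A}.
\]
This is the modular constraint \eqref{eq: modular constraint} with $s=0$. For an unstable vertex over $\infty$, meaning a single edge end with no legs, the map near $\infty$ is a degree-$d_e$ cover of the orbifold disc, which requires the trivial monodromy $d_e\equiv 0\pmod A$. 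An unstable vertex carrying one frozen leg gives instead the condition $d_e\equiv b_i\pmod A$ in the $2$-valent sense; this case is handled the same way, since the balancing is an equality of twistings.

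\textbf{Step 2: a counting argument.} The total degree is $\sum_e d_e=\suma$, since every edge goes from $0$ to $\infty$. Summing the congruences over all vertices at $\infty$, each of which has nonzero contribution $\sum_{e\in E_v}d_e+\sum b_i>0$, we find that each vertex contributes a positive multiple of $A$. The grand total is
\[
\suma+\sum_j b_j=A,
\]
so there is exactly one vertex over $\infty$. At least one such vertex exists because $\Phi$ has edges ($\suma>0$) and the orbifold markings must map to $\infty$. This also shows that all frozen legs sit at that unique vertex.

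\textbf{Main obstacle.} The delicate point is making Step 1 rigorous in the conventions of \cite{AGV08}, where nodal stabilizers may be proper subgroups of $\mathbb{Z}_A$ and gerbes are not trivialized. One also has to check that unstable configurations over $\infty$ obey the same positive-multiple-of-$A$ condition; this is the smooth or orbifold disc case. I would handle this by identifying the fixed-locus data at $\infty$ with $A$-spin structures with $s=0$, following \cite[Section 4.3]{BLS-Omega} and \cite{JPT}, and reading off the degree condition on the orbifold line bundle $f^*\mathcal{O}(\infty)$ restricted to the contracted component.
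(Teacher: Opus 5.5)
Your strategy uses the same two inputs as the paper: the modular constraint at vertices over $\infty$, and the count $\sum_e d_e+\sum_j b_j=\suma+\sum_j b_j=A$. However, Step~1 has a sign error that breaks Step~2 as written.

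Take an unstable vertex over $\infty$ with one edge and one frozen leg. The marking is then the point over $\infty$ of the edge component itself. Its twist must be inverse to the twist $A-d_e$ that you assign to that half-edge on the vertex side; equivalently, the degenerate two-pointed vertex with twists $A-d_e$, $A-b_i$ must be balanced. So the condition is $d_e+b_i\equiv 0\pmod A$, i.e.\ $d_e=A-b_i$, exactly as in the paper, and not $d_e\equiv b_i$. Setting the two twists equal is where the slip occurs.

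This matters. With $d_e\equiv b_i$, the quantity $d_e+b_i$ need not be a multiple of $A$. For instance, take $n=m=2$, $a_1=a_2=b_1=b_2=1$, $A=4$, and the connected graph with one vertex over $0$ carrying both regular legs and two degree-$1$ edges, each ending at an unstable vertex over $\infty$ with one frozen leg. This graph satisfies your Step~1, yet it has two vertices over $\infty$.

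You also omit the remaining unstable type over $\infty$: genus $0$, two edges, no legs, i.e.\ a node between two edge components. Regular legs lie over $0$, so there are exactly three unstable types. For this one, balancing at the node gives $d_e+d_{e'}\equiv 0\pmod A$. With these corrections, every vertex $v$ over $\infty$ satisfies $\sum_{e\in E_v}d_e+\sum_{i\in\mathrm{frozen}(v)}b_i\equiv 0\pmod A$ with positive left-hand side, and your Step~2 then goes through.

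Once repaired, your argument is a more uniform packaging of the paper's. The paper splits into two cases:
\begin{itemize}
\item If some vertex over $\infty$ is stable, the modular constraint together with $\suma+\sum_j b_j=A$ forces it to carry all frozen legs and all edges, so it is the only one.
\item If all vertices over $\infty$ are unstable, each frozen leg sits on a vertex with a single edge of degree $d_i=A-b_i$. Then $\sum_{i=1}^m(A-b_i)\le\suma=A-\sum_j b_j$ forces $m=1$ and a single edge of degree $\suma$.
\end{itemize}
The paper's case split never has to analyse leg-less unstable vertices over $\infty$, because the degree bound already rules out extra edges. Your single counting inequality needs the balancing condition at every vertex type, but it treats stable and unstable vertices on the same footing.
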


\begin{proof}
First, if $v\in V_{\infty}\left(\Phi\right)$ is stable, it must carry all $m$ frozen legs and be adjacent to all edges, it is therefore unique. Indeed, on every moduli space of $A$-spin structure, the sum of twists at the markings should be a multiple of $A$, together with the conditions (\ref{eq:condition-ai-bi}) and the fact that edge degrees add up to the total degree $\suma$ of the map, this forces the result.

Suppose now that all vertices of $V_{\infty}\left(\Phi\right)$ are unstable. There are in particular $m$ vertices of $V_{\infty}\left(\Phi\right)$ with a unique adjacent edge and a unique frozen leg. For such a vertex carrying the $(n+i)$th marking, $i=1,\dots,m$, denote by $d_{i}$ the degree of the edge. The map exists only if $d_{i}=A-b_{i}$. Since the sum of the degrees of the edges is $\suma$, conditions (\ref{eq:condition-ai-bi}) are only satisfied when $m=1$ and $V_{\infty}\left(\Phi\right)$ consists of a single vertex carrying the unique frozen leg and adjacent to the unique edge.
\end{proof}

\subsection{The virtual localization formula}

Let $u=c_{1}\left(\mathcal{O}_{\mathbb{P}^{\infty}}\left(-1\right)\right)$ be a generator of the equivariant cohomology of the point. In
\[
H^{*}\left(\overline{\mathcal{M}}_{g,(A-b_{1},\dots,A-b_{m})}\left(\mathbb{P}^{1}\left[A\right],a_{1},\dots,a_{n}\right)\right)\otimes\mathbb{Q}[u,u^{-1}],
\]
the virtual localization formula \cite{GraPandha99,GraVakil05} is
\begin{equation}
\left[\overline{\mathcal{M}}_{g,(A-b_{1},\dots,A-b_{m})}\left(\mathbb{P}^{1}\left[A\right],a_{1},\dots,a_{n}\right) \right]^{{\rm vir}}=\sum_{\Phi}\frac{1}{\left|{\rm Aut}\Phi\right|}\frac{1}{\prod d_{e}}\iota_{*}\left(\frac{\left[\overline{\mathcal{M}}_{\Phi}\right]^{{\rm vir}}}{e_{\mathbb{C}^{*}}\left(\mathcal{N}_{\Phi}^{{\rm vir}}\right)}\right),\label{eq: loca formula}
\end{equation}
where $\mathcal{N}_{\Phi}^{{\rm vir}}$ is the virtual normal bundle of the fixed locus $\Phi$ and we denote by $e_{\mathbb{C}^{*}}\left(\mathcal{N}_{\Phi}^{{\rm vir}}\right)$ its equivariant Euler class. The term $\iota_{*}\left(\left[\overline{\mathcal{M}}_{\Phi}\right]^{{\rm vir}}/e_{\mathbb{C}^{*}}\left(\mathcal{N}_{\Phi}^{{\rm vir}}\right)\right)$ is computed in the following way. We refer to \cite{JPT,JPPZ} for further details, and note in particular that gcd factors can be avoided using \cite[Remark~2]{JPPZ}. 
\begin{itemize}
\item If the target does not expand it contributes a factor of $1$, otherwise it contributes the factor 
\begin{equation}
-\frac{\prod_{e\in E\left(\Phi\right)}d_{e}}{u+\tilde{\psi}_{\infty}}\left[\left(\prod_{v\in V_{0}\left(\Phi\right)}\overline{\mathcal{M}}_{v}\right)^{\sim}\right]^{{\rm vir}}
\end{equation}
where the class $\tilde{\psi}_{\infty}$ is pull-back by the forgetful map keeping the target curve of the $\psi$-class at the point $\infty$ in the Losev-Manin space.
\item The vertex over $\infty$ contributes in the following way:
\begin{itemize}
\item if it is unstable, it contributes with $1$,
\item if it is stable, it contributes with
\begin{equation}
\prod_{e\in E\left(\Phi\right)}d_{e}\cdot\left(\frac{a}{u}\right)^{2-g-m}\frac{\tilde{\Omega}_{g\left(v\right),|E_v|+m(v)}^{\left[\frac{A}{u}\right]}\Big(A,0;A\!-\!d_{e_{1}},\dots,A\!-\!d_{e_{\left|E_v\right|}},A\!-\!b_{1},\dots,A\!-\!b_{m}\Big)}{\prod_{i=1}^{\left|E_v\right|}\left(1-d_{e_{i}}\psi_{e_{i}}\right)},
\end{equation}
where $\tilde{\Omega}_{g,n}^{[x]}(r,s;a_1,\dots,a_n)
	\!:=\!
	\sum_{i\geq 0} x^i c_i\left( -R^*\pi_* \mathcal L \right) 
	\!\in\!
	R^{*}(\overline{\mathcal{M}}^{r,s}_{g}(a_1, \dots, a_n))$
is such that its push forward by the forgetful map $\epsilon \colon
	\overline{\mathcal{M}}^{r,s}_{g}(a_1, \ldots, a_n)
	\rightarrow
	\overline{\mathcal{M}}_{g,n}$ gives the $\Omega$-class. 
\end{itemize}
\end{itemize}

\subsection{Proof of the geometric master relations}

We prove the $\{\bOm_{g,n}\}$ part of Prop.~\ref{prop: geometric master} with the following sequence of steps.
\begin{itemize}
\item We extract the negative power of $u$ on both sides of the localization formula Eq.~(\ref{eq: loca formula}). Since $\left[\overline{\mathcal{M}}_{g,m}\left(\mathbb{P}^{1}\left[a\right],a_{1},\dots,a_{n}\right)\right]^{{\rm vir}}$ is a class in equivariant cohomology, it has no negative power in $u$, this yields a list of relations. 
\item Then, we push each relation to $\overline{\mathcal{M}}_{g,n+m}$ by the source morphism $$s:\overline{\mathcal{M}}_{g,(A-b_{1},\dots,A-b_{m})}\left(\mathbb{P}^{1}\left[A\right],a_{1},\dots,a_{n}\right)\rightarrow\overline{\mathcal{M}}_{g,n+m}.$$ 
\item Finally, we intersect each relation with $\lambda_{g}$, since this class vanishes on the complement of the moduli space of compact type, the only graphs surviving are trees.
\end{itemize}
\begin{remark}
Unlike in \cite{BLS-Omega}, we do not need to force the $m$ frozen markings to lie over $\infty$, as they are nontrivial orbifold points and are therefore automatically mapped there.
\end{remark}

There are 3 types of graphs, for each graph type we give the contribution of
\[
s_{*}\left(\frac{1}{\left|{\rm Aut}\Phi\right|\prod d_{e}}\iota_{*}\left(\frac{\left[\overline{\mathcal{M}}_{\Phi}\right]^{{\rm vir}}}{e_{\mathbb{C}^{*}}\left(\mathcal{N}_{\Phi}^{{\rm vir}}\right)}\right)\right)\lambda_{g}.
\]
The computation of the class $\tilde{\psi}_{\infty}$ is done using \cite[Lemma 4.5]{BLS-Omega}.
\begin{enumerate}
\item (DR type, only for $m=1$) The DR tree has one unstable vertex over $\infty$ of genus~$0$ with the unique frozen leg attached to it, one stable vertex of genus $g$ over $0$ where the $n$ regular legs are attached, and a unique edge connecting these vertices. Its contribution is 
\[
\delta_{m,1}u^{2g-1}\sum_{d\geq0}\frac{\left(\bD_{g,n+1}(a_{1},\dots,a_{n})\right)_{\deg_{R^\bullet} = d}}{u^{d}}.
\]
\item ($\Omega$ type) The $\Omega$ tree has a unique stable vertex of genus $g$ over $\infty$ with $m$ frozen legs and $n$ edges attached to it, each edge is connected to an unstable vertex over $0$ with one regular leg. The degrees of the edges are $a_{1},\dots,a_{n}$. Its contribution is
\[
A^{1-m}u^{2g-2+m}\sum_{d\geq0}\frac{\left(\bOm_{g,n}^{m}(a_{1},\dots,a_{n},b_{1},\dots,b_{m})\prod_{i=1}^{m}\left(1-b_{i}\psi_{i}\right)\right)_{\deg_{R^\bullet} = d}}{u^{d}}.
\]
where we used Eq.~(\ref{eq:Omega-negative-ai-psi}). 
\item (Mixed type) A mixed type tree has a unique stable vertex over $\infty$ and at least one stable vertex over $0$. Its contribution is 
\begin{align}
& A^{1-m}u^{2g-2+m}
%\!\!\!\!
\sum_{\substack{T\in{\rm SRT}(g,n,m)\\
\ell\in\mathcal{L}(t),\,\ell(T)=1
}
}
%\!
\biggl(\prod_{e\in E(T)}\frac{a(e)}{u}\biggr)
\times
\\ \notag & 
(b_{T})_{*}\bigg(\sum_{d_{1}\geq0}\frac{\left(\bOm(v_{r})\prod_{i=1}^{m}\left(1-b_{i}\psi_{i}\right)\right)_{\deg_{R^\bullet} = d_{1}}}{u^{d_{1}}}\otimes
\bigotimes_{v\in V_{nr}(T)}\sum_{d_{2}\geq0}\frac{\left(\bD(v)\right)_{\deg_{R^\bullet} = d_{2}}}{u^{d_{2}}}\bigg).
\end{align}
\end{enumerate}
Extracting the coefficient of $u^{-d}$ for $d>0$ yields the geometric master relation of %Theorem~\ref{thm: loca-relation}. 
Prop.~\ref{prop: geometric master}.

%%%%%%%%%%%%%%%%%%%%%%%%%%%%%%%%%%%%%%%%%%%%%%%%%%%%%%%%%%%%%%%%%%%%%
\vspace{1cm}
\section{Examples}
\label{sec:examples}
%%%%%%%%%%%%%%%%%%%%%%%%%%%%%%%%%%%%%%%%%%%%%%%%%%%%%%%%%%%%%%%%%%%%%

\subsection{A new proof of the Witten conjecture} \label{sec: new proof WK}The Witten conjecture~\cite{Witten-KdV} stated that the partition function of the trivial CohFT (that is, in our terms we can say that we have a partial CohFT on a one-dimensional vector space $V=\langle e_1\rangle$, $\eta_{1,1}=1$, and $\mathfrak{pc}_{g,n}(e_1^{\otimes n}) = 1\in R^\bullet(\oM_{g,n})$ for all $g$ and $n$) satisfies the Korteweg-de Vries (KdV) equations. 

There are by now many proofs and their variations, where one first uses some geometric idea to compute the intersection numbers of $\psi$-classes on $\oM_{g,n}$, and then typically some technique on the side of integrable systems to connect it to the KdV equations or some other equivalent formulations. See~\cite{yang2026kdvintegrabilityguecorrelators} for the most recent instance of a variation of the argument on the side of integrable systems, and~\cite{AHIS} for the most recent proof that does use a new geometric idea. We also refer to both papers for an overview of other existing proofs.

For us it is important to count the original geometric ideas, since it is a more difficult part of each proof. We list them here in the historical order with a reference to the first proof that employed that idea:
\begin{enumerate}
    \item \cite{Kontsevich} uses the ribbon graph model for moduli spaces;
    \item \cite{KazLan,OkoPan} use the ELSV formula for Hurwitz numbers (\cite{KazLan} is published earlier, but~\cite{OkoPan} has appeared earlier as a preprint);
    \item \cite{Mirz} uses symplectic reduction for Weil-Petersson volumes;
    \item \cite{AHIS} uses reduction of $\psi$-classes on $\DR$ cycles. 
\end{enumerate}
The proof we give below uses, as we shall see, a new geometric ingredient, namely, the master relation of~\cite{BSS25}, which is based on two geometric inputs: (1)~localization of spin Gromov-Witten theory of the projective line, and (2)~the degeneration of the $\DR$ cycles over the divisors in the Losev-Manin space. Although the first input sounds close to the ELSV-type formula, which can itself be obtained by localization of usual Gromov-Witten theory of the projective line, cf.~\cite{OkoPan}, the geometry behind it and the resulting formulas are drastically different. The second geometric input is close to what is used in~\cite{AHIS}, but it is employed in a completely different way in the argument.
% The proof that we give below uses, as we shall see, a new geometric ingredient, namely, the master relation of~\cite{BSS25}, which is based on localization of spin Gromov-Witten theory of the projective line, along with the degeneration of the $\DR$ cycles over the divisors in the Losev-Manin space. Note that though the first geometric insight sounds close to the ELSV-type formula that can be obtained by localization of usual Gromov-Witten theory of the projective line, cf.~\cite{OkoPan}, the geometry behind it\xtodo{it refers to what?}\stodo{That geometry of spin GW theory of $P^1$ and usual GW theory of $P^1$ is different} and the resulting formulas are drastically different. The second geometric ingredient is close to what is used in~\cite{AHIS}, but it is employed in a completely different way in the argument.

Before turning to the proof, we record the following observation:

\begin{lemma} The Dubrovin-Zhang, $\DR$, and $\bOm$ hierarchies coincide for the trivial P-CohFT.
\end{lemma}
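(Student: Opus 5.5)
We want: for the trivial P-CohFT ($V=\langle e_1\rangle$, $\mathfrak{pc}_{g,n}=1$), the partition functions ${}^{\bPsi}\!Z$, ${}^{\bA}\!Z$ and ${}^{\bOm}\!Z$ coincide. By Proposition~\ref{prop: integrability of integrable observables for P-CohFT}, each hierarchy is reconstructed uniquely from its log tau function through the topological solution and Remark~\ref{rem:RemarkIdentification}, so equality of ${}^O\!F$ suffices. Since the CohFT is the fundamental class, ${}^O\!F$ is built from $\int_{\oM_{g,n}}\Coeff{\prod a_i^{d_i}}O_{g,n}(a_1,\dots,a_n)$. By homogeneity, only the part of $O_{g,n}$ of Chow degree $3g-3+n$ contributes, and this part is a homogeneous polynomial of degree $3g-3+n$ in the $a_i$.

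\textbf{Step 1: $\bOm$ versus $\bA$.} I would invoke Proposition~\ref{prop:coincide}(2). Its hypothesis $\deg_{R^\bullet}\mathfrak{pc}_{g,n}=0\le g-2+n$ holds for all stable $(g,n)$ except $(g,n)=(0,1),(0,2),(1,0)$. The first two do not occur in $\oM_{0,n}$, and $(1,0)$ has no legs; in any case the statement is phrased for the families $\mathfrak{pc}_{g,n}$ that actually occur, with $n\geq 1$ in the definition of ${}^O\!F$. Hence ${}^{\bOm}\!F={}^{\bA}\!F=F^{\DR}$, so the $\bOm$-hierarchy equals the DR hierarchy in normal coordinates. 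Alternatively, on $\oM_{g,n}$ one has $\bOm_{g,n}=\lambda_g\cdot(\cdots)$, so only degree-$(2g-3+n)$ parts of the $\Omega$-factor survive, and this can be compared with $\bA$ directly. I would cite \cite[Thm.~2]{BLS-Quantum} instead.

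\textbf{Step 2: $\bPsi$ versus $\bA$.} This is the main obstacle. Here I would use the explicit normal Miura transformation \eqref{eq: Normal Miura explicit} of Proposition~\ref{prop: integrability of integrable observables for P-CohFT}(4), valid since $\bPsi$ is of both types. It shows $u_{\mathrm{norm}}=w-\partial_x(\cdots)$ with
\[
R=\sum\epsilon^{2g}\int_{\oM_{g,n}}\Coeff{\prod a_i^{q_i}}{}^{\bPsi}\!B^0_{g,n},\qquad \textstyle\sum q_i=2g-2 .
\]
I must show $R=0$ for the trivial CohFT, i.e.\ that $\int_{\oM_{g,n}}$ of the degree-$(2g-2)$ part of ${}^{\bPsi}\!B^0_{g,n}$ vanishes. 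By homogeneity (Remark~\ref{rem:homogeneity}), that part lies in $R^{2g-2}(\oM_{g,n})$. Its integral is zero unless $2g-2=3g-3+n$, i.e.\ $g+n=1$, which is impossible for $n\ge1$ with $g\geq 1$; for $g=0$ the degree $-2$ is empty. Thus $R=0$, the normal Miura transformation is the identity, and ${}^{\bPsi}\!F=F^{\DR}+R|_{w_{\mathrm{top}}}=F^{\DR}$ by \eqref{eq:F-decomposition} together with Lemma~\ref{lemma: Type 0 relation}.

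The same dimension count also gives Step 1 uniformly: the correction term is an integral of a Chow-degree-$(2g-2)$ class over $\oM_{g,n}$, which is never top-dimensional. I would therefore present both comparisons via \eqref{eq:F-decomposition}, using LRT-$1$, hence LRT-$0$, for $\bPsi$, $\bA$ and $\bOm$ (Theorems~\ref{thm: Psi}, \ref{thm: A}, \ref{thm: Hej}). In each case the first summand is $F^{\DR}$ and the second vanishes by degree, so all three log tau functions coincide and hence so do the hierarchies. The one point needing care is that the degree bookkeeping in \eqref{eq:F-decomposition} matches the Chow degree. This holds because the root has no frozen legs ($m=0$), so $\deg_a=\deg_{R^\bullet}$.
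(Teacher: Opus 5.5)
Your proof is correct, but for the Dubrovin--Zhang versus DR comparison it follows a different route from the paper. The paper's proof is just two citations: \cite{Bur} for DZ $=$ DR in the trivial case, and Prop.~\ref{prop:coincide} (i.e.\ \cite[Thm.~2]{BLS-Quantum}) for DR $=$ $\bOm$. Your Step~1 is the latter citation.

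For Step~2 you stay inside the integrable-observables framework. Since $\bPsi$ satisfies LRT-$1$, it satisfies LRT-$0$ by Lemma~\ref{lemma: Type 0 relation}, so the first summand of \eqref{eq:F-decomposition} is $F^{\DR}$ and ${}^{\bPsi}\!F=F^{\DR}+R|_{w=w_{\mathrm{top}}}$. The correction $R$ pairs $\mathfrak{pc}_{g,n}$ only with classes of Chow degree $2g-2<3g-3+n$, so it vanishes for the trivial P-CohFT. Uniqueness of the fluxes then identifies the hierarchies.

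What your route buys:
\begin{itemize}
\item It is uniform. As you note, it applies verbatim to $\bA$, $\bOm$ and any other type~$1$ observable, so Prop.~\ref{prop:coincide} is not even needed.
\item It gives the stronger statement ${}^{\bPsi}\!F={}^{\bA}\!F={}^{\bOm}\!F$.
\item It shows that a discrepancy can only come from the component of $\mathfrak{pc}_{g,n}$ of Chow degree exactly $g-1+n$. This is the same mechanism behind Prop.~\ref{prop:coincide} and the multiple Hodge example in Sec.~\ref{sec:examples}.
\item It avoids a hidden dependence. \cite{Bur} identifies the DR hierarchy of the trivial CohFT with KdV, so the statement DZ $=$ DR drawn from there ultimately rests on the Witten--Kontsevich theorem (DZ $=$ KdV). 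Your argument is independent of it, which is worth having given where the lemma sits.
\end{itemize}
The price is that you lean on the much heavier LRT-$1$ results for $\bPsi$ \cite{BSS25,BLS-DRDZ} and for $\bOm$ \cite{BLS-Omega}.

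Two small remarks.
\begin{itemize}
\item The pairs $(0,1),(0,2),(1,0)$ you list as exceptions in Step~1 are unstable. The hypothesis of Prop.~\ref{prop:coincide} therefore holds with no exceptions at all.
\item Both your argument and the paper's land on the DR hierarchy in normal coordinates. The same dimension count applied to \eqref{eq:u-norm-u} gives $u_{\mathrm{norm}}=u$ for the trivial P-CohFT: only $(g,n)=(0,1)$ survives. So the distinction between natural and normal coordinates is immaterial here.
\end{itemize}
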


\begin{proof} The Dubrovin-Zhang and $\DR$ hierarchies are proved to coincide in this case in~\cite{Bur}. The coincidence of the $\DR$ and the $\bOm$ hierarchies follows from Prop.~\ref{prop:coincide}.
\end{proof}

\begin{proposition}\label{prop:Witten} The partition function of the trivial P-CohFT satisfies the KdV equation.
\end{proposition}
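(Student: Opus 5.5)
The plan is to read the KdV equation directly off the type $2$ integrable observables structure of $\{\bPsi_{g,n}\}$ (Theorem~\ref{thm: Psi}), specialized to the trivial P-CohFT, and to compute the resulting flux explicitly. For $N=1$, $V=\langle e_1\rangle$, $\eta_{11}=1$, the partition function $Z={}^{\bPsi}\!Z$ is the Witten generating series. Set $w=w_{\mathrm{top}}=\d_{t^{1,0}}^2\,{}^{\bPsi}\!F$. By Proposition~\ref{prop: integrability of integrable observables for P-CohFT}(3), since $\{\bPsi_{g,n}\}$ is of type $2$, the second derivatives satisfy $\d_{t^{1,0}}\d_{t^{1,p}}{}^{\bPsi}\!F = R_{1,p}|_{w^{q}=w^{q}_{\mathrm{top}}}$. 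Here the flux $R_{1,p}$ is given by the explicit formula~\eqref{eq:R-alpha-beta-p}: it is a sum over $g,n$ of monomials $\prod w^{q_i}$ with $\sum q_i=2g$, with coefficient $\int_{\oM_{g,n+2}}\Coeff{b_1^pb_2^0\prod a_i^{q_i}}{}^{\bPsi}\!B^2_{g,n}$. Therefore $\d_{t^{1,p}}w=\d_x R_{1,p}$, and it suffices to compute $R_{1,1}$ and check that $\d_{t^{1,1}}w=\d_x R_{1,1}$ is the KdV equation $w_{t^{1,1}} = w w_x + \tfrac{\epsilon^2}{12}w_{xxx}$.

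The first step is the degree bookkeeping. The integrand is a homogeneous class whose Chow degree equals its total $(a,b)$-degree (Remark~\ref{rem:homogeneity}), and it must equal $\dim\oM_{g,n+2}=3g-1+n$. With $p=1$ and $\sum q_i=2g$ this gives $2g+1=3g-1+n$, i.e.\ $g+n=2$. So only $(g,n)=(0,2)$ and $(1,1)$ contribute, and $R_{1,1}=c_0\,w^2/2+c_1\,\epsilon^2 w^{2}$, where $w^{2}=w_{xx}$.

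The second step evaluates the two constants. In genus $0$ the LRT classes reduce to $\bPsi_{0,4}$ plus boundary corrections. Only the root-only tree survives the level constraint at $(0,2,2)$, and $\int_{\oM_{0,4}}\Coeff{b_1}\psi_3=1$, so $c_0=1$. This recovers the dispersionless Hopf/Burgers part, consistent with the dispersionless limit being independent of the observable. For $(g,n)=(1,1)$ we need $\int_{\oM_{1,3}}\Coeff{b_1 b_2^0 a_1^2}{}^{\bPsi}\!B^2_{1,1}$. Its contributions are the root-only term $\int_{\oM_{1,3}}\psi_1^2\psi_2=\tfrac{1}{12}$ (by string/dilaton from $\int_{\oM_{1,1}}\psi_1=\tfrac1{24}$) together with the level-$1$ trees allowed by $d^{\mathrm{lvl}}(0)\le 2g^{\mathrm{lvl}}(0)$. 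A genus-$0$ root carrying both frozen legs and one edge to a genus-$1$ vertex carrying leg $1$ needs $d(v_r)\le 0$. It contributes $-a_1\cdot(\text{degree-}1\text{ class})$, i.e.\ $-\int \psi_1\cdot(\psi\text{ at }b_1)$ evaluated on a boundary divisor. Evaluating this and the genus-$1$-root tree (where the constraint $d\le 2$ is mild) should give $c_1=\tfrac1{12}-\tfrac1{24}\cdot(\dots)$. The main obstacle is getting this finite sum of boundary contributions and their signs exactly right. As a safeguard, I would first try to fix $c_1$ without computing LRT terms at all. The quantity $\d_{t^{1,0}}\d_{t^{1,1}}{}^{\bPsi}F$ restricted to the appropriate order in $\epsilon^2$ and $t$ can be compared with $R_{1,1}$ evaluated at $w_{\mathrm{top}}$, using only the known genus-$1$ correlator $\langle\tau_1\rangle_1=\tfrac1{24}$ and the genus-$0$ part of $w_{\mathrm{top}}$ (Remark~\ref{rem:RemarkIdentification}). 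This yields $c_1=\tfrac1{12}$ by a one-line matching of the coefficient of $t^{1,2}$ (or $\epsilon^2 t^{1,1}t^{1,0}$-type terms).

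Finally, with $R_{1,1}=\tfrac{w^2}{2}+\tfrac{\epsilon^2}{12}w_{xx}$, the flow $\d_{t^{1,1}}w=\d_xR_{1,1}$ is exactly KdV in Witten's normalization, and $w=\d_x^2\log Z\cdot\epsilon^2$ solves it. This proves the proposition. The higher flows also come out as the KdV hierarchy, since they commute with the $t^{1,1}$ flow and have the right dispersionless limit. The preceding Lemma is not needed for this route, but it shows consistently that the same equation arises from DR.
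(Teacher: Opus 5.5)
Your route is the paper's: read $R^1_{1,1}$ off the type~$2$ flux formula \eqref{eq:R-alpha-beta-p} for $\{\bPsi_{g,n}\}$, use the dimension count to reduce to $(g,n)=(0,2),(1,1)$, and evaluate two integrals. Your final flux $\frac12 w^2+\frac{\epsilon^2}{12}w_{xx}$ is correct. However, the boundary corrections you anticipate for $(g,n)=(1,1)$ do not exist, and your description of them is wrong.

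Any tree in $\SRT_{1,1,2}$ with an edge has a non-root leaf vertex. That leaf has one negative half-edge and at most one regular leg, so stability forces it to have genus $1$, and then every other vertex has genus $0$. In particular there is no genus-$1$-root tree. If $\sigma_1$ is not on that leaf, the edge entering it has $a(e)=0$. If $\sigma_1$ is on the leaf, any intermediate non-root vertex would be unstable. So the only candidate is a genus-$0$ root carrying both frozen legs, joined to a genus-$1$ vertex carrying $\sigma_1$.

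For that tree the level condition forces $d(v_r)\le 0$, and in any case $O(v_r)=\bPsi_{0,3}(a_1,b_1,b_2)=1$ on $\oM_{0,3}$. There is no $\psi$-class at the frozen leg. The tree therefore equals $-a_1(\mathrm{gl}_T)_*\big(1\otimes\frac{1}{1-a_1\psi_1}\big)$, has no $b_1$-dependence, and contributes nothing to $\Coeff{b_1^1b_2^0a_1^2}$. The same reason, not the level condition, removes the two-vertex tree at $(0,2)$. So only the one-vertex graphs contribute, as the paper states, and $c_1=\int_{\oM_{1,3}}\psi_1^2\psi_2=\frac1{12}$ with no correction.

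Your safeguard is nonetheless a valid way to conclude. Compare the coefficient of $\epsilon^2 t^{1,2}$ on both sides of $\d_{t^{1,1}}\d_{t^{1,0}}{}^{\bPsi}\!F=R^1_{1,1}|_{w=w_{\mathrm{top}}}$:
\begin{itemize}
\item the left side gives $\int_{\oM_{1,3}}\psi_1\psi_3^2=\frac1{12}$;
\item the $\frac12 w^2$ term on the right contributes nothing, since the genus-$0$ and genus-$1$ parts of $w_{\mathrm{top}}$ vanish at $t=0$;
\item the genus-$0$ part of $w^{1,2}_{\mathrm{top}}$ is $t^{1,2}+\cdots$, so the remaining term gives $c_1$.
\end{itemize}
Hence $c_1=\frac1{12}$. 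This evaluates the very same intersection number as the direct computation, so it gains nothing once the trees are handled correctly. Your closing remark on higher flows is not needed for the statement.
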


\begin{proof} We have to compute the flux $R^1_{1,1}$ for the time $t^{1,1}$ in the notation of Eq.~\eqref{eq:R-alpha-beta-p}, for 
$\{O_{g,n}\} = \{\bPsi_{g,n}\}$. We have:
\begin{align} 
R^{1}_{1,1} & = \sum_{g=0}^\infty\sum_{n=1}^\infty \frac{\epsilon^{2g}}{n!} \sum_{\substack{q_1,\dots,q_n\geq 0 \\ q_1+\cdots+q_n=2g}} \prod_{i=1}^n w^{1,q_i}  
\int_{\oM_{g,n+2}} \Coeff{b_1^1 b_2^0\prod_{i=1}^n a_i^{q_i}} {}^{\bPsi}\!B^{2}_{g,n}.
\end{align}
For dimensional reasons, the only contributing graphs in ${}^{\bPsi}\!B^{2}_{g,n}$ are the one-vertex graphs for $(g,n)=(0,2)$ with $q_1=q_2=0$ and $(g,n)=(1,1)$ with $q_1=2$. Thus we have:
\begin{align} 
R^{1}_{1,1} & = \frac 12 (w^{1,0})^2 \int_{\oM_{0,4}} \psi_1^0\psi_2^0\psi_3^1\psi_4^0 + \epsilon^2 w^{1,2} \int_{\oM_{1,3}} \psi_1^2\psi_2^1\psi_3^0.  
\end{align}
Both integrals can be computed using the string and the dilaton equations and the initial value $\int_{\oM_{1,1}} \psi_1^1 = \frac 1{24}$. We have
\begin{align} 
R^{1}_{1,1} & = \frac 12 (w^{1,0})^2 + \frac{\epsilon^2}{12} w^{1,2}.
\end{align}
This implies
\begin{align}
    \d_{t^{1,1}} w^{1,1} = w^{1,0}w^{1,1} + \frac{\epsilon^2}{12} w^{1,3},
\end{align}
which is the celebrated Korteweg-de Vries equation, also found in the literature (substituting the $w$ with their definition and dropping the primary field index as the CohFT is one-dimensional) as
\begin{equation}
    \frac{\partial U}{\partial t^1} = U \frac{\partial U}{\partial t^0} + \frac{\epsilon^2}{12} \frac{\partial^3 U}{\partial (t^0)^3}, 
    \qquad\qquad
    U = \frac{\partial^2 (^{\bPsi}F)}{\partial (t^0)^2},
\end{equation}
and the potential $^{\bPsi}F$ evaluated at the trivial partial CohFT $\mathfrak{pc}_{g,n}(e_1^{\otimes n}) = 1.$
\end{proof}

Together with the string and the dilaton equations, the KdV equation uniquely determines the partition function of the trivial P-CohFT (that is, it uniquely determines all intersection numbers of $\psi$-classes), and these three equations together imply the whole KdV hierarchy~\cite{Witten-KdV}.

\begin{remark} Let us also guide the reader through the references that were used to achieve such a simple proof. One has to start with~\cite{BSS25}, which proves the geometric master relations for $\{\bPsi_{g,n}\}$. Then, using the combinatorial analysis of the emerging graphical formulas and factorization ruled for the $\DR$ classes over the divisors in the Losev-Manin space, one goes through the argument in~\cite{BLS-DRDZ,BLS-Omega} in order to prove~\cite[Conj.~1]{BS22}, which implies Thm.~4.7 in \emph{op.~cit.}, that is summarized in our more general setup as parts of Prop.~\ref{prop: integrability of integrable observables for F-CohFT} and~\ref{prop: integrability of integrable observables for P-CohFT} that we use in our proof of the Witten conjecture / Prop.~\ref{prop:Witten}. 
\end{remark}

\subsection{An example: \texorpdfstring{$\bOm$}{HHej}-hierarchy and the DR hierarchy do not coincide for multiple Hodge CohFTs}

It is quite difficult to make explicit computations with the $\bOm$ classes and ${}^\bOm\!B_{g,n}^m$ classes that are needed to study the properties of the $\bOm$-hierarchies. However, in the view of Prop.~\ref{prop:coincide} it is important to show that there are examples when $\bOm$-hierarchy doesn't coincide with the DR hierarchy in the normal coordinates. 

To this end, we consider the logarithm of the tau function ${}^\bOm\!F$ for the $N=1$ CohFT given by the products of Hodge classes
\begin{equation}
    \prod_{i=1}^M \Lambda(x_i), \qquad \qquad \Lambda(x) \coloneqq \sum_{j=0}^\infty x^j \lambda_j,
\end{equation}
For this CohFT we have the following formula:
\begin{align}
{}^\bOm\!F = {}^\bA\!F + R|_{w^{1,d}=w_{\mathrm{top}}^{1,d}},
\end{align}
where %we omit the primary field index for $w$ since $N=1$, and 
\begin{align}
    R &= \frac{\sum_{i<j} (x_i^2x_j+x_ix_j^2) + 2 \sum_{i<j<k} x_ix_jx_k}{362880} \epsilon^6 w^{1,4}  + \text{h.o.t.},
\end{align}

where the higher order terms (h.o.t.) are either of higher degree in $w^{1,d}$ or in $\epsilon$, $B_k$ are Bernoulli numbers and $e_i$ and $p_i$ are elementary symmetric and power sums, respectively. 

In order to compute this, one has to notice that the integrals 
\begin{align}
\sum_{g=0}^\infty\sum_{n=1}^\infty \frac{\epsilon^{2g}}{n!} \sum_{\substack{q_1,\dots,q_n\geq 0 \\ q_1+\cdots+q_n=2g-2}} \prod_{i=1}^n w^{1,q_i}
%\times \\ \notag & \quad \qquad  
\int_{\oM_{g,n}} \Coeff{\prod_{i=1}^n a_i^{q_i}} {}^{\bOm}\!B^{0}_{g,n} 
%\times \\ \notag & \qquad \qquad 
\prod_{i=1}^M \Lambda(x_i)
\end{align}
are non-trivial only if we take the homogenenous $\deg_{R^\bullet} = g-1+n$ part in $\prod_{i=1}^M \Lambda(x_i)$. Moreover, since ${}^{\bOm}\!B^{0}_{g,n}$ contains $\lambda_g$ as a factor, we have vanishing once we take at least one factor of $\lambda_g$ in $\prod_{i=1}^M \Lambda(x_i)$, or at least two factors of $\lambda_{g-1}$, since $\lambda_{g-1}^2 = 2 \lambda_{g}\lambda_{g-2}$. This immediately implies that the non-trivial contributions to $R$ are only in genus $g\geq 3$. 

In the latter case, for $g=3$ and $n=1$, the possible non-trivial contributions from $\prod_{i=1}^M \Lambda(x_i)$ are of the form $\lambda_2 \lambda_1$, multiplied by $x_i^2x_j$, or $\lambda_1^3 = 2\lambda_2\lambda_1$, multiplied by $x_ix_jx_k$. Then the only non-trivially contributing graph ${}^{\bOm}\!B^{0}_{3,1}$ is the one-vertex graph. In fact, for multiple vertices graphs we would have either $\lambda_1^2$ on a vertex of genus $1$, or $\lambda_2^2$ on a vertex of genus $2$. Therefore the integral reduces to 
\begin{align}
&
\int_{\oM_{3,1}} \Coeff{a_1^{4}} {}^{\bOm}\!B^{0}_{3,1} 
%\times \\ \notag & \qquad \qquad 
\prod_{i=1}^M \Lambda(x_i) 
\\ \notag &
= \int_{\oM_{3,1}} \Omega_{g,1}^{[1]}(1,0;-1) \lambda_3 \lambda_2 \lambda_1 \Big(\sum\nolimits_{i<j} (x_i^2x_j+x_ix_j^2) + 2 \sum\nolimits_{i<j<k} x_ix_jx_k\Big).
\end{align}
The integral can be computed using, for instance, that 
\begin{align}
\Omega_{g,1}^{[1]}(1,0;-1)_{\deg_{R^\bullet} = 1} - \Big(-\frac 1{12} \kappa_1 + \frac{13}{12} \psi_1\Big)
\end{align}
is supported on the boundary strata where $\lambda_3 \lambda_2 \lambda_1$ vanishes. Adding and substracting $\frac{\psi_1}{12}$, one obtains that 
$$
\Omega_{g,1}^{[1]}(1,0;-1)_{\deg_{R^\bullet} = 1} - \Big( \psi_1 - \lambda_1 \Big)
$$
is supported on the boundary strata. Since $\lambda_3\lambda_2\lambda_1^2$ vanishes by  $\lambda_1^2 = 2\lambda_2$, $\lambda_2^2 = 2\lambda_3\lambda_1$ and $\lambda_3^2 = 0$, we are left to compute the integral
\begin{align}
\int_{\oM_{3,1}} \lambda_3 \lambda_2 \lambda_1 \psi_1 = 4\int_{\oM_{3}} \lambda_3 \lambda_2 \lambda_1 = 2 \int_{\oM_{3}} \lambda_2^3 = 2 \frac{|B_4||B_6|}{24 \cdot 24} = \frac{1}{362880}
\end{align}
where we employed dilaton equation, again the relation $\lambda_2^2 = 2\lambda_3\lambda_1$ and the formula $\int_{\oM_{g}} \lambda_{g-1}^3 = \frac{|B_{2g}||B_{2g-2}|}{2g(2g-2)(2g-2)!}$ in~\cite{FabPan}.
Notice that for $M=1$, the leading term of $R$ we have just computed vanishes, in agreement with Prop.~\ref{prop:coincide}.

%%%%%%%%%%%%%%%%%%%%%%%%%%%%%%%%%%%%%%%%%%%%
\vspace{1cm}

% \bibliographystyle{alphaurl}
% \bibliography{HejBiblio}

\printbibliography

@article{AGV08,
 author = {Abramovich, D. and Graber, T. and Vistoli, A.},
 title = {Gromov-{Witten} theory of {Deligne}-{Mumford} stacks},
 fjournal = {American Journal of Mathematics},
 journal = {Am. J. Math.},
 issn = {0002-9327},
 volume = {130},
 number = {5},
 pages = {1337--1398},
 year = {2008},
 language = {English},
 doi = {10.1353/ajm.0.0017},
 zbMATH = {5363941},
 Zbl = {1193.14070}
}

@article {ArsLor2,
    AUTHOR = {Arsie, A. and Lorenzoni, P.},
     TITLE = {Flat {$F$}-manifolds, {M}iura invariants, and integrable
              systems of conservation laws},
   JOURNAL = {J. Integrable Syst.},
  FJOURNAL = {Journal of Integrable Systems},
    VOLUME = {3},
      YEAR = {2018},
    NUMBER = {1},
     PAGES = {xyy004, 38},
      ISSN = {2058-5985},
   MRCLASS = {35Q53 (35A30 37K05)},
  MRNUMBER = {3800641},
MRREVIEWER = {Xian\ Guo\ Geng},
       DOI = {10.1093/integr/xyy004},
       URL = {https://doi.org/10.1093/integr/xyy004},
}

@article {ArsLor1,
    AUTHOR = {Arsie, A. and Lorenzoni, P.},
     TITLE = {Complex reflection groups, logarithmic connections and bi-flat
              {F}-manifolds},
   JOURNAL = {Lett. Math. Phys.},
  FJOURNAL = {Letters in Mathematical Physics},
    VOLUME = {107},
      YEAR = {2017},
    NUMBER = {10},
     PAGES = {1919--1961},
      ISSN = {0377-9017,1573-0530},
   MRCLASS = {53D45 (20F55 37K25)},
  MRNUMBER = {3690038},
       DOI = {10.1007/s11005-017-0963-x},
       URL = {https://doi.org/10.1007/s11005-017-0963-x},
}

@article{ABLR,
 author = {Arsie, A. and Buryak, A. and Lorenzoni, P. and Rossi, P.},
 title = {Flat {F}-manifolds, {F}-{CohFTs}, and integrable hierarchies},
 fjournal = {Communications in Mathematical Physics},
 journal = {Commun. Math. Phys.},
 issn = {0010-3616},
 volume = {388},
 number = {1},
 pages = {291--328},
 year = {2021},
 language = {English},
 doi = {10.1007/s00220-021-04109-8},
 zbMATH = {7424951},
 Zbl = {1489.37083}
}

@article {ABLR-2,
    AUTHOR = {Arsie, A. and Buryak, A. and Lorenzoni, P.
              and Rossi, P.},
     TITLE = {Semisimple flat {F}-manifolds in higher genus},
   JOURNAL = {Comm. Math. Phys.},
  FJOURNAL = {Communications in Mathematical Physics},
    VOLUME = {397},
      YEAR = {2023},
    NUMBER = {1},
     PAGES = {141--197},
      ISSN = {0010-3616,1432-0916},
   MRCLASS = {53D45 (14N35)},
  MRNUMBER = {4538285},
MRREVIEWER = {Jie-Zhu\ Lin},
       DOI = {10.1007/s00220-022-04450-6},
       URL = {https://doi.org/10.1007/s00220-022-04450-6},
}

@article {Blot,
    AUTHOR = {Blot, X.},
     TITLE = {The quantum {W}itten-{K}ontsevich series and one-part double
              {H}urwitz numbers},
   JOURNAL = {Geom. Topol.},
  FJOURNAL = {Geometry \& Topology},
    VOLUME = {26},
      YEAR = {2022},
    NUMBER = {4},
     PAGES = {1669--1743},
      ISSN = {1465-3060,1364-0380},
   MRCLASS = {14H10 (05A99 14N35 53D55)},
  MRNUMBER = {4504448},
MRREVIEWER = {William\ Liu},
       DOI = {10.2140/gt.2022.26.1669},
       URL = {https://doi.org/10.2140/gt.2022.26.1669},
}

@misc{BLS-Quantum,
      title={Cohomological representations of quantum tau functions}, 
      author={Xavier Blot and Danilo Lewański and Sergey Shadrin},
      year={2024},
      eprint={2411.03499},
      archivePrefix={arXiv},
      primaryClass={math.AG},
      url={https://arxiv.org/abs/2411.03499}, 
}

@misc{BLS-DRDZ,
      title={On the strong DR/DZ equivalence conjecture}, 
      author={Xavier Blot and Danilo Lewa{\'n}ski and Sergey Shadrin},
      year={2024},
      eprint={2405.12334},
      archivePrefix={arXiv},
      primaryClass={math.AG},
      url={https://arxiv.org/abs/2405.12334}, 
}

@misc{BLS-Omega,
      title={Rooted trees with level structures, $\Omega$-classes and double ramification cycles}, 
      author={Xavier Blot and Danilo Lewa{\'n}ski and Sergey Shadrin},
      year={2024},
      eprint={2406.06205},
      archivePrefix={arXiv},
      primaryClass={math.AG},
      url={https://arxiv.org/abs/2406.06205}, 
}

@article{BLRS,
 author = {Blot, X. and Lewa{\'n}ski, D. and Rossi, P. and Shadrin, S.},
 title = {Stable tree expressions with omega-classes and double ramification cycles},
 fjournal = {Journal of Geometry and Physics},
 journal = {J. Geom. Phys.},
 issn = {0393-0440},
 volume = {209},
 pages = {17},
 note = {Id/No 105391},
 year = {2025},
 language = {English},
 doi = {10.1016/j.geomphys.2024.105391},
 zbMATH = {7972436},
 Zbl = {1566.14061}
}

@article {BSS25,
    AUTHOR = {Blot, Xavier and Sauvaget, Adrien and Shadrin, Sergey},
	TITLE = {The master relation for polynomiality and equivalences of
	integrable systems},
	JOURNAL = {Bull. Lond. Math. Soc.},
	FJOURNAL = {Bulletin of the London Mathematical Society},
	VOLUME = {57},
	YEAR = {2025},
	NUMBER = {2},
	PAGES = {599--604},
	ISSN = {0024-6093,1469-2120},
	MRCLASS = {14H10 (37K10 53D45)},
	MRNUMBER = {4861898},
	MRREVIEWER = {Alessandro\ Giacchetto},
	DOI = {10.1112/blms.13215},
	URL = {https://doi.org/10.1112/blms.13215},
}

@article{Bur,
 author = {Buryak, A.},
 title = {Double ramification cycles and integrable hierarchies},
 fjournal = {Communications in Mathematical Physics},
 journal = {Commun. Math. Phys.},
 issn = {0010-3616},
 volume = {336},
 number = {3},
 pages = {1085--1107},
 year = {2015},
 language = {English},
 doi = {10.1007/s00220-014-2235-2},
 zbMATH = {6424627},
 Zbl = {1329.14103}
}

@article{BDGR1,
 author = {Buryak, A. and Dubrovin, B. and Gu{\'e}r{\'e}, J. and Rossi, P.},
 title = {Tau-structure for the double ramification hierarchies},
 fjournal = {Communications in Mathematical Physics},
 journal = {Commun. Math. Phys.},
 issn = {0010-3616},
 volume = {363},
 number = {1},
 pages = {191--260},
 year = {2018},
 language = {English},
 doi = {10.1007/s00220-018-3235-4},
 zbMATH = {6967377},
 Zbl = {1431.53095}
}

@article{BDGR20,
 author = {Buryak, A. and Dubrovin, B. and Gu{\'e}r{\'e}, J. and Rossi, P.},
 title = {Integrable systems of double ramification type},
 fjournal = {IMRN. International Mathematics Research Notices},
 journal = {Int. Math. Res. Not.},
 issn = {1073-7928},
 volume = {2020},
 number = {24},
 pages = {10381--10446},
 year = {2020},
 language = {English},
 doi = {10.1093/imrn/rnz029},
 zbMATH = {7323449},
 Zbl = {1464.37071}
}

@article{BGR19,
 author = {Buryak, A. and Gu{\'e}r{\'e}, J. and Rossi, P.},
 title = {{DR}/{DZ} equivalence conjecture and tautological relations},
 fjournal = {Geometry \& Topology},
 journal = {Geom. Topol.},
 issn = {1465-3060},
 volume = {23},
 number = {7},
 pages = {3537--3600},
 year = {2019},
 language = {English},
 doi = {10.2140/gt.2019.23.3537},
 zbMATH = {7152164},
 Zbl = {1469.14065}
}

@article{BPS1,
 author = {Buryak, A. and Posthuma, H. and Shadrin, S.},
 title = {A polynomial bracket for the {Dubrovin}-{Zhang} hierarchies},
 fjournal = {Journal of Differential Geometry},
 journal = {J. Differ. Geom.},
 issn = {0022-040X},
 volume = {92},
 number = {1},
 pages = {153--185},
 year = {2012},
 language = {English},
 doi = {10.4310/jdg/1352211225},
 zbMATH = {6130615},
 Zbl = {1259.53079}
}

@article {BPS2,
    AUTHOR = {Buryak, A. and Posthuma, H. and Shadrin, S.},
     TITLE = {On deformations of quasi-{M}iura transformations and the
              {D}ubrovin-{Z}hang bracket},
   JOURNAL = {J. Geom. Phys.},
  FJOURNAL = {Journal of Geometry and Physics},
    VOLUME = {62},
      YEAR = {2012},
    NUMBER = {7},
     PAGES = {1639--1651},
      ISSN = {0393-0440,1879-1662},
   MRCLASS = {37K10 (37K05 37K35)},
  MRNUMBER = {2922026},
MRREVIEWER = {Hao\ Xu},
       DOI = {10.1016/j.geomphys.2012.03.006},
       URL = {https://doi.org/10.1016/j.geomphys.2012.03.006},
}

@article{BR-spin,
 author = {Buryak, A. and Rossi, P.},
 title = {Extended {{\(r\)}}-spin theory in all genera and the discrete {KdV} hierarchy},
 fjournal = {Advances in Mathematics},
 journal = {Adv. Math.},
 issn = {0001-8708},
 volume = {386},
 pages = {48},
 note = {Id/No 107794},
 year = {2021},
 language = {English},
 doi = {10.1016/j.aim.2021.107794},
 zbMATH = {7367632},
 Zbl = {1473.14048}
}

@article{BR-partial,
 author = {Buryak, A. and Rossi, P.},
 title = {Quadratic double ramification integrals and the noncommutative {KdV} hierarchy},
 fjournal = {Bulletin of the London Mathematical Society},
 journal = {Bull. Lond. Math. Soc.},
 issn = {0024-6093},
 volume = {53},
 number = {3},
 pages = {843--854},
 year = {2021},
 language = {English},
 doi = {10.1112/blms.12464},
 zbMATH = {7381913},
 Zbl = {1469.14066}
}

@article{BS22,
 author = {Buryak, A. and Shadrin, S.},
 title = {Tautological relations and integrable systems},
 fjournal = {{\'E}pijournal de G{\'e}om{\'e}trie Alg{\'e}brique. EPIGA},
 journal = {{\'E}pijournal de G{\'e}om. Alg{\'e}br., EPIGA},
 issn = {2491-6765},
 volume = {8},
 pages = {44},
 note = {Id/No 12},
 year = {2024},
 language = {English},
 doi = {10.46298/epiga.2024.10382},
 zbMATH = {7939129},
 Zbl = {1552.14017}
}

@article {BSSZ,
	AUTHOR = {Buryak, A. and Shadrin, S. and Spitz, L. and Zvonkine, D.},
	TITLE = {Integrals of {$\psi$}-classes over double ramification cycles},
	JOURNAL = {Amer. J. Math.},
	FJOURNAL = {American Journal of Mathematics},
	VOLUME = {137},
	YEAR = {2015},
	NUMBER = {3},
	PAGES = {699--737},
	ISSN = {0002-9327,1080-6377},
	MRCLASS = {14H10 (14C25)},
	MRNUMBER = {3357119},
	MRREVIEWER = {Yongnam\ Lee},
	DOI = {10.1353/ajm.2015.0022},
	URL = {https://doi.org/10.1353/ajm.2015.0022},
}

@article{Chiodo08,
 author = {Chiodo, A.},
 title = {Towards an enumerative geometry of the moduli space of twisted curves and {{\(r\)}}th roots},
 fjournal = {Compositio Mathematica},
 journal = {Compos. Math.},
 issn = {0010-437X},
 volume = {144},
 number = {6},
 pages = {1461--1496},
 year = {2008},
 language = {English},
 doi = {10.1112/S0010437X08003709},
 zbMATH = {5382620},
 Zbl = {1166.14018}
}

@article {Dubrovin-Zhang-g1,
    AUTHOR = {Dubrovin, B. and Zhang, Y.},
     TITLE = {Bi-{H}amiltonian hierarchies in {$2$}{D} topological field
              theory at one-loop approximation},
   JOURNAL = {Comm. Math. Phys.},
  FJOURNAL = {Communications in Mathematical Physics},
    VOLUME = {198},
      YEAR = {1998},
    NUMBER = {2},
     PAGES = {311--361},
      ISSN = {0010-3616,1432-0916},
   MRCLASS = {53D45 (37K10 37K30 57R56)},
  MRNUMBER = {1672512},
MRREVIEWER = {Alexandre\ I.\ Kabanov},
       DOI = {10.1007/s002200050480},
       URL = {https://doi.org/10.1007/s002200050480},
}

@incollection {Dub-TFT,
    AUTHOR = {Dubrovin, B.},
     TITLE = {Geometry of {$2$}{D} topological field theories},
 BOOKTITLE = {Integrable systems and quantum groups ({M}ontecatini {T}erme,
              1993)},
    SERIES = {Lecture Notes in Math.},
    VOLUME = {1620},
     PAGES = {120--348},
 PUBLISHER = {Springer, Berlin},
      YEAR = {1996},
      ISBN = {3-540-60542-8},
   MRCLASS = {58D29 (14N10 20F55 32G34 58F07 81T40)},
  MRNUMBER = {1397274},
MRREVIEWER = {N.\ J.\ Hitchin},
       DOI = {10.1007/BFb0094793},
       URL = {https://doi.org/10.1007/BFb0094793},
}

@misc{dubrovin2001normalformshierarchiesintegrable,
      title={Normal forms of hierarchies of integrable {PDE}s, {F}robenius manifolds and {G}romov - {W}itten invariants}, 
      author={Dubrovin, B. and Zhang, Y.},
      year={2001},
      eprint={math/0108160},
      archivePrefix={arXiv},
      primaryClass={math.DG},
    howpublished = {Preprint, {arXiv}:math/0108160 [math.{DG}]},
      url={https://arxiv.org/abs/math/0108160}, 
      note={See also the 2005 updated version.},
}

@incollection {Eliashberg-SFT,
    AUTHOR = {Eliashberg, Y.},
     TITLE = {Symplectic field theory and its applications},
 BOOKTITLE = {International {C}ongress of {M}athematicians. {V}ol. {I}},
     PAGES = {217--246},
 PUBLISHER = {Eur. Math. Soc., Z\"urich},
      YEAR = {2007},
      ISBN = {978-3-03719-022-7},
   MRCLASS = {53D40 (53D35 53D45)},
  MRNUMBER = {2334192},
MRREVIEWER = {Michael\ J.\ Usher},
       DOI = {10.4171/022-1/10},
       URL = {https://doi.org/10.4171/022-1/10},
}

@article {EguchiXiong,
    AUTHOR = {Eguchi, T. and Xiong, C.-S.},
     TITLE = {Quantum cohomology at higher genus: topological recursion
              relations and {V}irasoro conditions},
   JOURNAL = {Adv. Theor. Math. Phys.},
  FJOURNAL = {Advances in Theoretical and Mathematical Physics},
    VOLUME = {2},
      YEAR = {1998},
    NUMBER = {1},
     PAGES = {219--229},
      ISSN = {1095-0761,1095-0753},
   MRCLASS = {14H15 (14C17 14N10 17B68 32G81)},
  MRNUMBER = {1635867},
MRREVIEWER = {Alexandre\ I.\ Kabanov},
       DOI = {10.4310/ATMP.1998.v2.n1.a9},
       URL = {https://doi.org/10.4310/ATMP.1998.v2.n1.a9},
}

@incollection {Getzler,
    AUTHOR = {Getzler, E.},
     TITLE = {Operads and moduli spaces of genus {$0$} {R}iemann surfaces},
 BOOKTITLE = {The moduli space of curves ({T}exel {I}sland, 1994)},
    SERIES = {Progr. Math.},
    VOLUME = {129},
     PAGES = {199--230},
 PUBLISHER = {Birkh\"{a}user Boston, Boston, MA},
      YEAR = {1995},
      ISBN = {0-8176-3784-2},
   MRCLASS = {18C10 (14H10 18D99 18G10 55P99)},
  MRNUMBER = {1363058},
MRREVIEWER = {J.\ Stasheff},
       DOI = {10.1007/978-1-4612-4264-2\{_}8}

@article {Giv,
    AUTHOR = {Givental, A.~B.},
     TITLE = {Gromov-{W}itten invariants and quantization of quadratic
              {H}amiltonians},
      NOTE = {Dedicated to the memory of I. G. Petrovskii on the occasion of
              his 100th anniversary},
   JOURNAL = {Mosc. Math. J.},
  FJOURNAL = {Moscow Mathematical Journal},
    VOLUME = {1},
      YEAR = {2001},
    NUMBER = {4},
     PAGES = {551--568, 645},
      ISSN = {1609-3321,1609-4514},
   MRCLASS = {53D45 (14N35)},
  MRNUMBER = {1901075},
MRREVIEWER = {Domenico\ Fiorenza},
       DOI = {10.17323/1609-4514-2001-1-4-551-568},
       URL = {https://doi.org/10.17323/1609-4514-2001-1-4-551-568},
}

@article{GraVakil05,
 author = {Graber, T. and Vakil, R.},
 title = {Relative virtual localization and vanishing of tautological classes of moduli spaces of curves},
 fjournal = {Duke Mathematical Journal},
 journal = {Duke Math. J.},
 issn = {0012-7094},
 volume = {130},
 number = {1},
 pages = {1--37},
 year = {2005},
 language = {English},
 doi = {10.1215/S0012-7094-05-13011-3},
 zbMATH = {5004321},
 Zbl = {1088.14007}
}

@article{GraPandha99,
 author = {Graber, T. and Pandharipande, R.},
 title = {Localization of virtual classes},
 fjournal = {Inventiones Mathematicae},
 journal = {Invent. Math.},
 issn = {0020-9910},
 volume = {135},
 number = {2},
 pages = {487--518},
 year = {1999},
 language = {English},
 doi = {10.1007/s002220050293},
 zbMATH = {1275767},
 Zbl = {0953.14035}
}

@article{GLN23,
 author = {Giacchetto, A. and Lewa{\'n}ski, D. and Norbury, P.},
 title = {An intersection-theoretic proof of the {Harer}-{Zagier} formula},
 fjournal = {Algebraic Geometry},
 journal = {Algebr. Geom.},
 issn = {2313-1691},
 volume = {10},
 number = {2},
 pages = {130--147},
 year = {2023},
 language = {English},
 doi = {10.14231/AG-2023-004},
 zbMATH = {7656907},
 Zbl = {1506.14109}
}

@incollection{Hain,
 author = {Hain, R.},
 title = {Normal functions and the geometry of moduli spaces of curves},
 booktitle = {Handbook of moduli. Volume I},
 pages = {527--578},
 year = {2015},
 publisher = {Somerville, MA: International Press; Beijing: Higher Education Press},
 language = {English},
 zbMATH = {6492642},
 Zbl = {1322.14049}
}

@article{JPPZ,
 author = {Janda, F. and Pandharipande, R. and Pixton, A. and Zvonkine, D.},
 title = {Double ramification cycles on the moduli spaces of curves},
 fjournal = {Publications Math{\'e}matiques},
 journal = {Publ. Math., Inst. Hautes {\'E}tud. Sci.},
 issn = {0073-8301},
 volume = {125},
 pages = {221--266},
 year = {2017},
 language = {English},
 doi = {10.1007/s10240-017-0088-x},
 zbMATH = {6766883},
 Zbl = {1370.14029}
}

@article{JPT,
 author = {Johnson, P. and Pandharipande, R. and Tseng, H.-H.},
 title = {Abelian {Hurwitz}-{Hodge} integrals},
 fjournal = {Michigan Mathematical Journal},
 journal = {Mich. Math. J.},
 issn = {0026-2285},
 volume = {60},
 number = {1},
 pages = {171--198},
 year = {2011},
 language = {English},
 doi = {10.1307/mmj/1301586310},
 zbMATH = {5899030},
 Zbl = {1222.14119}
}

@article {KontsevichManin,
	AUTHOR = {Kontsevich, M. and Manin, Y.},
	TITLE = {Gromov-{W}itten classes, quantum cohomology, and enumerative
	geometry},
	JOURNAL = {Comm. Math. Phys.},
	FJOURNAL = {Communications in Mathematical Physics},
	VOLUME = {164},
	YEAR = {1994},
	NUMBER = {3},
	PAGES = {525--562},
	ISSN = {0010-3616,1432-0916},
	MRCLASS = {14N10 (53C15 58D10 58F05)},
	MRNUMBER = {1291244},
	MRREVIEWER = {Dietmar\ A.\ Salamon},
	URL = {http://projecteuclid.org/euclid.cmp/1104270948},
}

@article {LiuRuanZhang,
    AUTHOR = {Liu, S.-Q. and Ruan, Y. and Zhang, Y.},
     TITLE = {B{CFG} {D}rinfeld-{S}okolov hierarchies and {FJRW}-theory},
   JOURNAL = {Invent. Math.},
  FJOURNAL = {Inventiones Mathematicae},
    VOLUME = {201},
      YEAR = {2015},
    NUMBER = {2},
     PAGES = {711--772},
      ISSN = {0020-9910,1432-1297},
   MRCLASS = {81T45 (14N35 37K10 53D45)},
  MRNUMBER = {3370624},
MRREVIEWER = {Xiaobin\ Li},
       DOI = {10.1007/s00222-014-0559-3},
       URL = {https://doi.org/10.1007/s00222-014-0559-3},
}

@article {LiuWang,
    AUTHOR = {Liu, X. and Wang, C.},
     TITLE = {On a tautological relation conjectured by {B}uryak and
              {S}hadrin},
   JOURNAL = {Sci. China Math.},
  FJOURNAL = {Science China. Mathematics},
    VOLUME = {69},
      YEAR = {2026},
    NUMBER = {2},
     PAGES = {285--308},
      ISSN = {1674-7283,1869-1862},
   MRCLASS = {14N35 (14H10)},
  MRNUMBER = {5012765},
       DOI = {10.1007/s11425-024-2436-x},
       URL = {https://doi.org/10.1007/s11425-024-2436-x},
}

@article {LiuWangZhang,
    AUTHOR = {Liu, S.-Q. and Wang, Z. and Zhang, Y.},
     TITLE = {Linearization of {V}irasoro symmetries associated with
              semisimple {F}robenius manifolds},
   JOURNAL = {Adv. Math.},
  FJOURNAL = {Advances in Mathematics},
    VOLUME = {460},
      YEAR = {2025},
     PAGES = {Paper No. 110046, 32},
      ISSN = {0001-8708,1090-2082},
   MRCLASS = {53D45 (37K25 37K30)},
  MRNUMBER = {4831844},
MRREVIEWER = {Iskander\ A.\ Taimanov},
       DOI = {10.1016/j.aim.2024.110046},
       URL = {https://doi.org/10.1016/j.aim.2024.110046},
}

@article {LorenzoniPedtoniRaimondo,
    AUTHOR = {Lorenzoni, P. and Pedroni, M. and Raimondo, A.},
     TITLE = {{$F$}-manifolds and integrable systems of hydrodynamic type},
   JOURNAL = {Arch. Math. (Brno)},
  FJOURNAL = {Universitatis Masarykianae Brunensis. Facultas Scientiarum
              Naturalium. Archivum Mathematicum},
    VOLUME = {47},
      YEAR = {2011},
    NUMBER = {3},
     PAGES = {163--180},
      ISSN = {0044-8753,1212-5059},
   MRCLASS = {53D45 (37K25)},
  MRNUMBER = {2852379},
MRREVIEWER = {Dafeng\ Zuo},
}

@misc{lorenzoni2026generalisedbihamiltonianstructureshydrodynamic,
      title={Generalised (bi-){H}amiltonian structures of hydrodynamic type and (bi-)flat {F}-manifolds}, 
      author={Lorenzoni, P. and Wang, Z.},
      year={2026},
      eprint={2604.12819},
      archivePrefix={arXiv},
      primaryClass={math-ph},
      url={https://arxiv.org/abs/2604.12819}, 
    howpublished = {Preprint, {arXiv}:2604.12819 [math-ph]},
}

@misc{ShaWang,
      title={Structure of {D}ubrovin-{Z}hang free energy functions and universal identities}, 
      author={Shadrin, S. and Wang, Z.},
      year={2025},
      eprint={2405.00276},
      archivePrefix={arXiv},
      primaryClass={math-ph},
      url={https://arxiv.org/abs/2405.00276}, 
    howpublished = {Preprint, {arXiv}:2405.00276 [math-ph]},
}

@article {Tel,
    AUTHOR = {Teleman, C.},
     TITLE = {The structure of 2{D} semi-simple field theories},
   JOURNAL = {Invent. Math.},
  FJOURNAL = {Inventiones Mathematicae},
    VOLUME = {188},
      YEAR = {2012},
    NUMBER = {3},
     PAGES = {525--588},
      ISSN = {0020-9910,1432-1297},
   MRCLASS = {57R56 (18D10 53D45)},
  MRNUMBER = {2917177},
MRREVIEWER = {Julia\ Bergner},
       DOI = {10.1007/s00222-011-0352-5},
       URL = {https://doi.org/10.1007/s00222-011-0352-5},
}

@incollection {Witten-KdV,
	AUTHOR = {Witten, E.},
	TITLE = {Two-dimensional gravity and intersection theory on moduli
	space},
	BOOKTITLE = {Surveys in differential geometry ({C}ambridge, {MA}, 1990)},
	PAGES = {243--310},
	PUBLISHER = {Lehigh Univ., Bethlehem, PA},
	YEAR = {1991},
	ISBN = {0-8218-0168-6},
	MRCLASS = {32G15 (14C17 14H15 32G81 58F07 81T40)},
	MRNUMBER = {1144529},
	MRREVIEWER = {Steven\ Rosenberg},
}

@article {LPSZ17,
    AUTHOR = {Lewa\'nski, D. and Popolitov, A. and Shadrin, S.
              and Zvonkine, D.},
     TITLE = {Chiodo formulas for the {$r$}-th roots and topological
              recursion},
   JOURNAL = {Lett. Math. Phys.},
  FJOURNAL = {Letters in Mathematical Physics},
    VOLUME = {107},
      YEAR = {2017},
    NUMBER = {5},
     PAGES = {901--919},
      ISSN = {0377-9017,1573-0530},
   MRCLASS = {14H10 (14N10 14N35)},
  MRNUMBER = {3633029},
       DOI = {10.1007/s11005-016-0928-5},
       URL = {https://doi.org/10.1007/s11005-016-0928-5},
}

@misc{yang2026kdvintegrabilityguecorrelators,
      title={Kd{V} integrability in GUE correlators}, 
      author={Yang, D.},
      year={2026},
      eprint={2603.24956},
      archivePrefix={arXiv},
      primaryClass={math-ph},
      url={https://arxiv.org/abs/2603.24956}, 
    howpublished = {Preprint, {arXiv}:2603.24956 [math-ph]},
}

@article {AHIS,
    AUTHOR = {Alexandrov, A. and Hern\'{a}ndez Iglesias, F.
              and Shadrin, S.},
     TITLE = {Buryak-{O}kounkov formula for the {$n$}-point function and a
              new proof of the {W}itten conjecture},
   JOURNAL = {Int. Math. Res. Not. IMRN},
  FJOURNAL = {International Mathematics Research Notices. IMRN},
      YEAR = {2021},
    NUMBER = {18},
     PAGES = {14296--14315},
      ISSN = {1073-7928,1687-0247},
   MRCLASS = {14N35 (14H10 37K10)},
  MRNUMBER = {4320810},
MRREVIEWER = {Dragos\ Nicolae\ Oprea},
       DOI = {10.1093/imrn/rnaa024},
       URL = {https://doi.org/10.1093/imrn/rnaa024},
}

@article {Kontsevich,
    AUTHOR = {Kontsevich, M.},
     TITLE = {Intersection theory on the moduli space of curves and the
              matrix {A}iry function},
   JOURNAL = {Comm. Math. Phys.},
  FJOURNAL = {Communications in Mathematical Physics},
    VOLUME = {147},
      YEAR = {1992},
    NUMBER = {1},
     PAGES = {1--23},
      ISSN = {0010-3616,1432-0916},
   MRCLASS = {32G15 (14H15 58F07 81T40)},
  MRNUMBER = {1171758},
MRREVIEWER = {Claude\ Itzykson},
       URL = {http://projecteuclid.org/euclid.cmp/1104250524},
}

@incollection {OkoPan,
    AUTHOR = {Okounkov, A. and Pandharipande, R.},
     TITLE = {Gromov-{W}itten theory, {H}urwitz numbers, and matrix models},
 BOOKTITLE = {Algebraic geometry---{S}eattle 2005. {P}art 1},
    SERIES = {Proc. Sympos. Pure Math.},
    VOLUME = {80},
     PAGES = {325--414},
 PUBLISHER = {Amer. Math. Soc., Providence, RI},
      YEAR = {2009},
      ISBN = {978-0-8218-4702-2},
   MRCLASS = {14N35 (14H15)},
  MRNUMBER = {2483941},
MRREVIEWER = {Hsian-Hua\ Tseng},
       DOI = {10.1090/pspum/080.1/2483941},
       URL = {https://doi.org/10.1090/pspum/080.1/2483941},
}

@article {Mirz,
    AUTHOR = {Mirzakhani, M.},
     TITLE = {Weil-{P}etersson volumes and intersection theory on the moduli
              space of curves},
   JOURNAL = {J. Amer. Math. Soc.},
  FJOURNAL = {Journal of the American Mathematical Society},
    VOLUME = {20},
      YEAR = {2007},
    NUMBER = {1},
     PAGES = {1--23},
      ISSN = {0894-0347,1088-6834},
   MRCLASS = {14H15 (14N35 32G15)},
  MRNUMBER = {2257394},
MRREVIEWER = {Hsian-Hua\ Tseng},
       DOI = {10.1090/S0894-0347-06-00526-1},
       URL = {https://doi.org/10.1090/S0894-0347-06-00526-1},
}

@article {KazLan,
    AUTHOR = {Kazarian, M. E. and Lando, S. K.},
     TITLE = {An algebro-geometric proof of {W}itten's conjecture},
   JOURNAL = {J. Amer. Math. Soc.},
  FJOURNAL = {Journal of the American Mathematical Society},
    VOLUME = {20},
      YEAR = {2007},
    NUMBER = {4},
     PAGES = {1079--1089},
      ISSN = {0894-0347,1088-6834},
   MRCLASS = {14H70 (14H10 37K10)},
  MRNUMBER = {2328716},
MRREVIEWER = {Hsian-Hua\ Tseng},
       DOI = {10.1090/S0894-0347-07-00566-8},
       URL = {https://doi.org/10.1090/S0894-0347-07-00566-8},
}

@article {FabPan,
    AUTHOR = {Faber, C. and Pandharipande, R.},
     TITLE = {Hodge integrals and {G}romov-{W}itten theory},
   JOURNAL = {Invent. Math.},
  FJOURNAL = {Inventiones Mathematicae},
    VOLUME = {139},
      YEAR = {2000},
    NUMBER = {1},
     PAGES = {173--199},
      ISSN = {0020-9910,1432-1297},
   MRCLASS = {14N35},
  MRNUMBER = {1728879},
MRREVIEWER = {Jim\ A.\ Bryan},
       DOI = {10.1007/s002229900028},
       URL = {https://doi.org/10.1007/s002229900028},
}

@misc{Pixton,
  author       = {Pixton, A.},
  title        = {DR cycle polynomiality and related results},
  year = {2023},
  howpublished = {Published on A. Pixton personal website, \url{https://websites.umich.edu/~pixton/papers/DRpoly.pdf}},
  url          = {https://websites.umich.edu/~pixton/papers/DRpoly.pdf},
}

@misc{spelier2024polynomialitydoubleramificationcycle,
      title={Polynomiality of the double ramification cycle}, 
      author={Pim Spelier},
      year={2024},
      eprint={2401.17421},
      archivePrefix={arXiv},
      primaryClass={math.AG},
      url={https://arxiv.org/abs/2401.17421}, 
}

%%%%%%%%%%%%%%%%%%%%%%%%%%%%%%%%%%%%%%%%%%%%
\end{document}